\newtheorem{thm}{Theorem}[section]
\newtheorem{lemma}[thm]{Lemma}
\newtheorem{cor}[thm]{Corollary}
\newtheorem{prop}[thm]{Proposition}
\theoremstyle{definition} 
\newtheorem{mydef}[thm]{Definition}
\newtheorem{example}[thm]{Example}
\theoremstyle{remark}
\newtheorem{rmk}[thm]{Remark}
\newcommand\Ban{{\rm Ban}}
\newcommand\Cond{{\rm Cond}}
\newcommand\eq{{\rm eq}}
\newcommand\Hom{{\rm Hom}}
\newcommand\im{{\rm im}}
\newcommand\Mod{{\rm Mod}}
\newcommand\Snm{{\rm Snm}}
\newcommand\spc{{\rm sp}}
\newcommand\Spa{{\rm Spa}}
\newcommand\Spec{{\rm Spec}} 
\newcommand\tf{{\rm tf}}
\newcommand\tor{{\rm tor}}
\begin{document}

\title{Banach modules, almost mathematics and condensed mathematics}
\author{Dimitri Dine}
\date{}
\maketitle

\begin{abstract}We study the relationship between almost mathematics, condensed mathematics and the categories of seminormed and Banach modules over a Banach ring $A$, with submetric (norm-decreasing) $A$-module homomorphisms for morphisms. If $A$ is a Banach ring with a norm-multiplicative topologically nilpotent unit $\varpi$ contained in the closed unit ball $A_{\leq1}$ such that $\varpi$ admits a compatible system of $p$-power roots $\varpi^{1/p^{n}}$ with \begin{equation*}\lVert\varpi^{1/p^{n}}\rVert=\lVert\varpi\rVert^{1/p^{n}}\end{equation*}for all $n$, we prove that the "almost closed unit ball" functor \begin{equation*}M\mapsto M_{\leq1}^{a}\end{equation*}is an equivalence between the category $\Ban_{A}^{\leq1}$ of Banach $A$-modules and submetric $A$-module maps and the category of $\varpi$-adically complete, $\varpi$-torsion-free almost $(A_{\leq1}, (\varpi^{1/p^{\infty}}))$-modules. We also obtain an analogous result for Banach algebras and almost algebras. The main novelty in our approach is that we show that the norm on the Banach module $M$ is completely determined by the corresponding almost $A_{\leq1}$-module $M_{\leq1}^{a}$, rather than being determined only up to equivalence.

We deduce from our results the existence of a natural fully faithful embedding \begin{equation*}\Ban_{A}^{\leq1}\hookrightarrow \Cond(A_{\leq1}, (\varpi^{1/p^{\infty}})_{A_{\leq1}})\end{equation*}of $\Ban_{A}^{\leq1}$ into the category of (static) condensed almost $(A_{\leq1}, (\varpi^{1/p^{\infty}}))$-modules in the sense of Mann, which factors through the full subcategory \begin{equation*}\Cond_{\blacksquare}(A_{\leq1}, (\varpi^{1/p^{\infty}})_{A_{\leq1}})\end{equation*}of solid almost $(A_{\leq1}, (\varpi^{1/p^{\infty}})_{A_{\leq1}})$-modules. If $A$ is a perfectoid Tate ring and the perfectoid space $\Spa(A, A^{\circ})$ is totally disconnected, we show that this embedding is also symmetric monoidal when $\Ban_{A}^{\leq1}$ is endowed with the complete tensor product and $\Cond_{\blacksquare}(A_{\leq1}, (\varpi^{1/p^{\infty}})_{A_{\leq1}})$ is endowed with (an almost analog of) the solid tensor product. \end{abstract}

\tableofcontents

\section{Introduction}

Almost mathematics first appeared in Faltings's work on $p$-adic Hodge theory and has subsequently become a powerful tool across $p$-adic arithmetic geometry as well as in mixed-characteristic commutative algebra. The purpose of this paper is to describe the relationship between almost mathematics and nonarchimedean analysis over a general Tate Banach ring, where the Tate Banach ring need not be a Banach algebra over any nonarchimedean field. We use our results to establish a new connection between classical functional analysis over a Tate Banach ring and the more modern language of condensed mathematics introduced by Clausen-Scholze \cite{Condensed}, \cite{ScholzeAnalytic}. To achieve these goals, we generalize to Tate Banach rings a classical concept from functional analysis over nonarchimedean fields $K$, namely, the concept of a gauge seminorm of a $K^{\circ}$-lattice in a $K$-vector space (see Schneider \cite{Schneider}, \S2). 

\subsection{The gauge seminorm on a module}

Our definition of the gauge seminorm on a module over a general Banach ring (or seminormed ring) is as follows. 
\begin{mydef}[Gauge seminorm]For a seminormed ring $(A, \lVert\cdot\rVert)$ with topologically nilpotent unit $\varpi$, an $A$-module $M$ and an $A_{\leq1}$-submodule $M_{0}$ of $M$ with $M_{0}[\varpi^{-1}]=M$ we call the function \begin{equation*}\lVert\cdot\rVert_{M,M_{0},\ast}: M\to\mathbb{R}_{\geq0}\end{equation*}given by \begin{equation*}\lVert x\rVert_{M,M_{0},\ast}=\inf\{\, r>0\mid x\in A_{\leq r}\cdot M_{0}\,\}, x\in M,\end{equation*}the gauge, or gauge seminorm, of $M_{0}$ in $M$.\end{mydef}In the above definition (and in the rest of this paper), the notation $A_{\leq r}$ (or, more precisely, $(A, \lVert\cdot\rVert)_{\leq r}$) refers to the closed unit ball of radius $r$ in a seminormed abelian group $(A, \lVert\cdot\rVert)$.

\subsection{Main results on Banach modules and almost modules}

For a seminormed ring $(A, \lVert\cdot\rVert)$, we write $(A, \lVert\cdot\rVert)^{\times,m}$ or, when the seminorm is understood from the context, $A^{\times,m}$ for the subgroup of $A^{\times}$ consisting of those units $\varpi$ in $A$ which are multiplicative with respect to the seminorm $\lVert\cdot\rVert$, i.e., which satisfy \begin{equation*}\lVert \varpi f\rVert=\lVert \varpi \rVert\lVert f\rVert\end{equation*}for all $f\in A$. We call a seminormed module $(M, \lVert\cdot\rVert_{M})$ over $(A, \lVert\cdot\rVert)$ submetric if its seminorm $\lVert\cdot\rVert_{M}$ satisfies $\lVert fx\rVert_{M}\leq \lVert f\rVert\lVert x\rVert$. Our main theorem can now be stated as follows.
\begin{thm}[Proposition \ref{Description of norms 1}, Theorem \ref{Banach modules and lattices}, Theorem \ref{Seminormed modules and almost lattices}]\label{Main theorem}Let $(A, \lVert\cdot\rVert)$ be a Banach ring with a norm-multiplicative topologically nilpotent unit $\varpi$ of norm $\leq1$ and suppose that $\varpi$ admits a compatible system $(\varpi^{1/p^{n}})_{n}$ of $p$-power roots in $A$ satisfying $\lVert\varpi^{1/p^{n}}\rVert=\lVert\varpi\rVert^{1/p^{n}}$ for all $n\geq1$. We perform almost mathematics relative to the setup $(A_{\leq1}, (\varpi^{1/p^{\infty}}))$. Then, for every submetric Banach $(A, \lVert\cdot\rVert)$-module $(M, \lVert\cdot\rVert_{M})$, the seminorm $\lVert\cdot\rVert_{M}$ on $M$ is explicitly given by \begin{equation*}\lVert x\rVert_{M}=\lVert\cdot\rVert_{M,M_{\leq1},\ast}.\end{equation*}Moreover, the functors \begin{equation*}(M, \lVert\cdot\rVert_{M})\mapsto M_{\leq1}^{a}\end{equation*}and \begin{equation*}M_{0}^{a}\mapsto (M_{0}[\varpi^{-1}], \lVert\cdot\rVert_{M,M_{0},\ast})\end{equation*}form a pair of quasi-inverse equivalences between the category $\Ban_{A}^{\leq1}$ of submetric Banach $(A, \lVert\cdot\rVert)$-modules with submetric $A$-module maps and the category of $\varpi$-adically complete, $\varpi$-torsion-free almost modules over the closed unit ball $A_{\leq1}$.\end{thm}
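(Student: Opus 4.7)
The plan is to establish the three assertions in sequence: first the explicit norm formula $\lVert x\rVert_{M}=\lVert x\rVert_{M,M_{\leq 1},\ast}$, then essential surjectivity of $M \mapsto M_{\leq 1}^{a}$, and finally full faithfulness.

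I would begin by extracting from the hypotheses that $\lVert \varpi^{k/p^{n}}\rVert=\lVert\varpi\rVert^{k/p^{n}}$ for every $k, n \geq 0$ and that each $\varpi^{1/p^{n}}$ is itself norm-multiplicative on $A$ (and hence, by a squeeze using submetricity, acts norm-multiplicatively on $M$ as well). The norm-multiplicativity of $\varpi^{1/p^{n}}$ follows by collapsing the chain $\lVert\varpi f\rVert\leq\lVert\varpi^{1/p^{n}}\rVert^{p^{n}-1}\lVert\varpi^{1/p^{n}}f\rVert\leq\lVert\varpi\rVert\lVert f\rVert$ to equalities throughout. Writing $c := \lVert\varpi\rVert$, topological nilpotence forces $c\in (0,1)$, and the set $\{c^{q} : q\in\mathbb{Z}[1/p]\}$ is then dense in $\mathbb{R}_{> 0}$. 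The inequality $\lVert x\rVert_{M}\leq\lVert x\rVert_{M,M_{\leq 1},\ast}$ is immediate from submetricity. For the reverse, given $x$ with $\lVert x\rVert_{M} = s > 0$ and $\epsilon > 0$, I would choose $q\in\mathbb{Z}[1/p]$ with $c^{-q}\in[s, s+\epsilon)$; then $\lVert\varpi^{q}x\rVert_{M} = c^{q}s \leq 1$ gives $\varpi^{q}x \in M_{\leq 1}$, hence $x = \varpi^{-q}\cdot(\varpi^{q}x) \in A_{\leq s+\epsilon}\cdot M_{\leq 1}$ and thus $\lVert x\rVert_{M,M_{\leq 1},\ast}\leq s+\epsilon$.

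The functor $M\mapsto M_{\leq 1}^{a}$ is well-defined because $M_{\leq c^{n}} = \varpi^{n}M_{\leq 1}$ by the same rescaling, so the $\varpi$-adic topology on $M_{\leq 1}$ agrees with the subspace topology from $M$; since $M_{\leq 1}$ is closed in $M$ it is $\varpi$-adically complete, and $\varpi$-torsion-freeness is immediate. For essential surjectivity, given a $\varpi$-adically complete, $\varpi$-torsion-free almost module $N^{a}$ with representative $N$, I would form $M := N[\varpi^{-1}]$ equipped with the gauge seminorm. The gauge is a norm by $\varpi$-adic separatedness of $N$; completeness of $M$ is obtained by using boundedness of a Cauchy sequence to uniformly rescale it into $N$ by a single power of $\varpi$ (via the almost-iso just below), showing the rescaled sequence is $\varpi$-adically Cauchy in $N$ by the same density argument, and invoking $\varpi$-adic completeness of $N$. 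The crucial identification $N^{a}\simeq M_{\leq 1}^{a}$ follows from the trivial inclusion $N\subset M_{\leq 1}$ together with the observation that $x\in M_{\leq 1}$ forces $x\in A_{\leq c^{-q}}N$ for every $q\in\mathbb{Z}[1/p]_{> 0}$, equivalently $\varpi^{1/p^{n}}x\in N$ for all $n\geq 1$, so $(\varpi^{1/p^{\infty}})\cdot (M_{\leq 1}/N) = 0$.

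Full faithfulness is then the easy part: a submetric $A$-linear $f\colon M \to M'$ restricts to an $A_{\leq 1}$-linear $M_{\leq 1} \to M'_{\leq 1}$ and hence to an almost module map, while conversely an almost $A_{\leq 1}$-module map $N^{a}\to (N')^{a}$ induces, after localizing at $\varpi$, a well-defined $A$-linear map $N[\varpi^{-1}]\to N'[\varpi^{-1}]$ whose submetricity is immediate from the gauge description of the norm. In my view, the main obstacle is the joint task of proving completeness of $(N[\varpi^{-1}], \lVert\cdot\rVert_{M,N,\ast})$ together with the almost isomorphism $N^{a}\simeq (N[\varpi^{-1}])_{\leq 1}^{a}$: both depend essentially on the hypothesis of norm-compatible $p$-power roots of $\varpi$, which is exactly what forces $\{c^{q} : q\in\mathbb{Z}[1/p]\}$ to be dense in $\mathbb{R}_{> 0}$ and makes the almost ideal $(\varpi^{1/p^{\infty}})$ interact correctly with the gauge seminorm.
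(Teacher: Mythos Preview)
Your proposal is correct and follows essentially the same route as the paper. The paper organizes the argument by first developing a general gauge-seminorm formalism under the weaker hypothesis that $\lVert A^{\times,m}\rVert$ is dense in $\mathbb{R}_{>0}$ (Proposition~\ref{Description of norms 1}), introduces the operation $(M_{0})_{\ast}=\bigcap_{r>1}A_{\leq r}\cdot M_{0}$ and identifies it with the almost-elements functor (Lemmas~\ref{Closed unit ball and module of almost elements}, \ref{Module of almost elements and functor of almost elements}, \ref{Two modules of almost elements}), and then passes through an intermediate equivalence with modules satisfying $(M_{0})_{\ast}=M_{0}$ (Theorems~\ref{Seminormed modules and lattices}, \ref{Banach modules and lattices}) before reaching the almost statement (Theorem~\ref{Seminormed modules and almost lattices}); you instead work directly with the specific roots $\varpi^{1/p^{n}}$ throughout, which is shorter but less modular. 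One small point you leave implicit is the well-definedness of the inverse functor (independence of the representative $N$): this follows from your own computation, since your argument that $\varpi^{1/p^{n}}M_{\leq1}\subseteq N$ for all $n$ actually gives $M_{\leq1}=\bigcap_{n}\varpi^{-1/p^{n}}N=(N^{a})_{\ast}$ on the nose, and the right-hand side depends only on $N^{a}$.
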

The difficulty of the above theorem lies in establishing that the seminorm of any submetric seminormed module $M$ is literally equal to the gauge seminorm $\lVert\cdot\rVert_{M,M_{\leq1},\ast}$ of its closed unit ball and not just equivalent to it. 

Note that the hypothesis on the Banach ring $(A, \lVert\cdot\rVert)$ in Theorem \ref{Main theorem} is satisfied for any perfectoid Tate ring $A$ equipped with a power-multiplicative norm defining its topology. It is also satisfied for any Banach algebra over a perfectoid Tate ring. In the body of the paper, we prove the first part of the theorem under a milder assumption on the Banach ring $A$: It suffices that to assume that the subgroup $\lVert A^{\times,m}\rVert$ of $\mathbb{R}_{\geq0}$ is dense (equivalently, non-discrete). We also have the following analog of Theorem \ref{Main theorem} for Banach algebras.
\begin{thm}[Theorem \ref{Seminormed algebras and almost lattices}]\label{Main theorem for algebras}Let $(A, \lVert\cdot\rVert)$ be a Banach ring and suppose that there exists a norm-multiplicative topologically nilpotent unit $\varpi$ of $A$ which admits a compatible system $(\varpi^{1/p^{n}})_{n}$ of $p$-power roots in $A$ satisfying $\lVert\varpi^{1/p^{n}}\rVert=\lVert\varpi\rVert^{1/p^{n}}$ for all $n\geq1$. We perform almost mathematics relative to the basic setup $(A_{\leq1}, (\varpi^{1/p^{\infty}})_{A_{\leq1}})$. For every $A_{\leq1}$-subalgebra $B_{0}$ of $B$ with $B_{0}[\varpi^{-1}]=B$ the gauge seminorm $\lVert\cdot\rVert_{B,B_{0},\ast}$ is a ring seminorm. The functor \begin{equation*}B\mapsto B_{\leq1}^{a}\end{equation*}from the category of submetric Banach $(A, \lVert\cdot\rVert)$-algebras to the category of $\varpi$-torsion-free and $\varpi$-adically complete $A_{\leq1}^{a}$-algebras is an equivalence of categories, with quasi-inverse given by \begin{equation*}C\mapsto (C_{\ast}[\varpi^{-1}], \lVert\cdot\rVert_{C_{\ast}[\varpi^{-1}],C_{\ast},\ast}).\end{equation*}The equivalence $B\mapsto B_{\leq1}^{a}$ restricts to an equivalence between the category of uniform Banach $(A, \lVert\cdot\rVert)$-algebras and the category of almost root closed and $\varpi$-adically complete $A_{\leq1}^{a}$-algebras.\end{thm}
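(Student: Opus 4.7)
The plan is to bootstrap from Theorem \ref{Main theorem}. Its module-level equivalence already supplies the object bijection, the morphism bijection, and the formula $\lVert\cdot\rVert_{M}=\lVert\cdot\rVert_{M,M_{\leq1},\ast}$; so what remains is to carry the multiplicative structures across. Concretely, I want to check three things: (a) the gauge seminorm of an $A_{\leq1}$-subalgebra is a ring seminorm; (b) both functors send algebras to algebras and algebra maps to algebra maps; (c) the correspondence restricts to uniform Banach algebras on one side and to almost root closed algebras on the other.

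For (a), if $B_{0}\subseteq B$ is an $A_{\leq1}$-subalgebra, then $B_{0}\cdot B_{0}\subseteq B_{0}$, so any two factorizations $x\in A_{\leq r}\cdot B_{0}$ and $y\in A_{\leq s}\cdot B_{0}$ combine to give $xy\in A_{\leq rs}\cdot(B_{0}\cdot B_{0})\subseteq A_{\leq rs}\cdot B_{0}$. Passing to infima yields $\lVert xy\rVert_{B,B_{0},\ast}\leq\lVert x\rVert_{B,B_{0},\ast}\lVert y\rVert_{B,B_{0},\ast}$, and $1\in B_{0}$ gives $\lVert1\rVert_{B,B_{0},\ast}\leq1$.

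For (b), submetricity of a Banach $A$-algebra $B$ says exactly that $B_{\leq1}$ is closed under multiplication and contains $1$, hence is an $A_{\leq1}$-subalgebra of $B$, so $B_{\leq1}^{a}$ inherits an almost $A_{\leq1}^{a}$-algebra structure. Conversely, given an almost $A_{\leq1}^{a}$-algebra $C$, the (lax symmetric monoidal) functor $(-)_{\ast}$ equips $C_{\ast}$ with an $A_{\leq1}$-algebra structure, making it an $A_{\leq1}$-subalgebra of $C_{\ast}[\varpi^{-1}]$; by (a) the gauge seminorm $\lVert\cdot\rVert_{C_{\ast}[\varpi^{-1}],C_{\ast},\ast}$ is then a ring seminorm. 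Since the module-level equivalence of Theorem \ref{Main theorem} identifies morphisms of $\varpi$-adically complete, $\varpi$-torsion-free almost $A_{\leq1}^{a}$-modules with submetric $A$-module maps, a morphism on either side respects the multiplication iff the corresponding morphism on the other side does: both conditions are the same square of module maps, translated through the equivalence. This proves the first (plain algebra) equivalence.

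For (c), uniformity of $B$ is the condition $\lVert x^{n}\rVert_{B}=\lVert x\rVert_{B}^{n}$ for all $x\in B$ and $n\geq1$; combined with the formula $\lVert x\rVert_{B}=\lVert x\rVert_{B,B_{\leq1},\ast}$ from Theorem \ref{Main theorem}, this is equivalent to a root closedness condition for the pair $(B_{\leq1},B)$, namely that if $x\in B$ with $x^{n}\in A_{\leq r^{n}}\cdot B_{\leq1}$ then $x\in A_{\leq r}\cdot B_{\leq1}$. After passing through the almost setup, this becomes almost root closedness of $B_{\leq1}^{a}$ as an $A_{\leq1}^{a}$-algebra. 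The density of $\lVert A^{\times,m}\rVert$ in $\mathbb{R}_{>0}$ (guaranteed by the roots $\varpi^{1/p^{n}}$ with $\lVert\varpi^{1/p^{n}}\rVert=\lVert\varpi\rVert^{1/p^{n}}$) lets one rescale by norm-multiplicative units and reduce the general-$r$ statement to the case $r=1$. The main obstacle of the whole proof lies precisely here: one has to manage the almost error terms tightly so that power-multiplicativity of the norm on $B$ and almost root closedness of $B_{\leq1}^{a}$ really pin each other down, and this is where the density of $\lVert A^{\times,m}\rVert$ (rather than any weaker scaling property) is essential.
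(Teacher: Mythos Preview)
Your overall strategy matches the paper's: reduce to the module-level equivalence (Theorem~\ref{Seminormed modules and almost lattices}) and then check compatibility with ring structures. Parts (a) and (b) are fine; in fact your argument for (a) is more direct than the paper's Proposition~\ref{Gauge seminorms of subrings}, which first passes through the alternative description $\lVert x\rVert_{B,B_0,\ast}=\inf\{\lVert h\rVert : h\in A^{\times,m},\ x\in hA_{\leq1}B_0\}$ (Proposition~\ref{Another definition of the gauge seminorm}). Your observation that $A_{\leq r}A_{\leq s}\subseteq A_{\leq rs}$ and $B_0B_0\subseteq B_0$ gives submultiplicativity immediately, with no density hypothesis on $\lVert A^{\times,m}\rVert$.

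Part (c), however, glosses over the one genuinely non-trivial step. Almost root closedness of $B_{\leq1}^{a}$ only says that \emph{some} representative $B_0$ with $B_0^{a}\cong B_{\leq1}^{a}$ satisfies $f^{k}\in B_0\Rightarrow f\in B_0$; it says nothing directly about $B_{\leq1}$. The paper closes this gap via Lemma~\ref{Power-multiplicative gauges}: root closedness of $B_0$ forces $\lVert\cdot\rVert_{B,B_0,\ast}$ to be power-multiplicative (this is precisely the rescaling-by-$A^{\times,m}$ argument you allude to, and it genuinely uses density of $\lVert A^{\times,m}\rVert$), and then $\lVert\cdot\rVert_{B,B_0,\ast}=\lVert\cdot\rVert_{B,(B_0)_\ast,\ast}=\lVert\cdot\rVert_{B}$ via Lemma~\ref{Gauge seminorms and lattices} and Lemma~\ref{Two modules of almost elements}. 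Your sentence ``this is equivalent to a root closedness condition for the pair $(B_{\leq1},B)$'' asserts the conclusion rather than proving it: the inequality $\lVert f\rVert_{\ast}^{k}\leq\lVert f^{k}\rVert_{\ast}$ is exactly where the work lies, and ``reduce to $r=1$'' is the right idea but not yet a proof. Note also that the paper's definition of ``uniform'' refers to boundedness of $B^{\circ}$, not to power-multiplicativity of the given norm; the two are linked only through Lemmas~\ref{Power-bounded}--\ref{Power-multiplicative gauges}, so your opening identification in (c) already presupposes part of what is to be shown.
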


\subsection{Relation to condensed mathematics} In recent years, it has become most common (and most natural) to view the category $\Ban_{A}$ of Banach modules over a Banach ring $A$, with bounded $A$-module homomorphisms as morphisms, as a full subcategory of the category of condensed modules (or of solid modules) over the (solid) condensed ring $\underline{A}$ associated with $A$, in the sense of the condensed mathematics of Clausen and Scholze \cite{Condensed}, \cite{ScholzeAnalytic}.

On the other hand, the category $\Ban_{A}^{\leq1}$ of (submetric) Banach $A$-modules with submetric $A$-module homomorphisms (i.e., the $A$-module homomorphisms which do not increase the norm of any element) has better categorical properties than $\Ban_{A}$; for example, it is complete and cocomplete. However, it is a priori unclear how the category $\Ban_{A}^{\leq1}$ is related to condensed mathematics. Using Theorem \ref{Main theorem}, we provide an answer to this question in the case when $A$ is a perfectoid Tate ring (or when $A$ is any Banach ring satisfying the hypothesis of Theorem \ref{Main theorem}).
\begin{thm}[Theorem \ref{Relation to condensed mathematics, precise version}]\label{Relation to condensed mathematics}Let $A$ be a Tate Banach ring satisfying the hypothesis of Theorem \ref{Main theorem} (for example, this is the case if $A$ is perfectoid). Then the category $\Ban_{A}^{\leq1}$ has a natural fully faithful embedding \begin{equation*}\Ban_{A}^{\leq1}\hookrightarrow \Cond(A_{\leq1}, (\varpi^{1/p^{\infty}})_{A_{\leq1}})\end{equation*}into the category $\Cond(A_{\leq1}, (\varpi^{1/p^{\infty}})_{A_{\leq1}})$ of (static) condensed almost $(A_{\leq1}, (\varpi^{1/p^{\infty}}))$-modules in the sense of Mann \cite{Mann22}, Definition 2.2.5. The image of this embedding is contained in the subcategory \begin{equation*}\Cond_{\blacksquare}(A_{\leq1}, (\varpi^{1/p^{\infty}})_{A_{\leq1}})\end{equation*}of solid almost $(A_{\leq1}, (\varpi^{1/p^{\infty}})_{A_{\leq1}})$-modules (see Definition \ref{Solid almost modules}). 

Moreover, if $A$ is a perfectoid Tate ring endowed with a power-multiplicative norm defining its topology (so that $A_{\leq1}=A^{\circ}$) and if the affinoid perfectoid space $\Spa(A, A^{\circ})$ is totally disconnected, then the above embedding is symmetric monoidal when $\Ban_{A}^{\leq1}$ is endowed with the complete tensor product $-\widehat{\otimes}_{A}-$ and $\Cond_{\blacksquare}(A^{\circ}, (\varpi^{1/p^{\infty}})_{A^{\circ}})$ is endowed with solid tensor product $-\otimes_{A^{\circ a}}^{\blacksquare}-$ of solid almost modules (see Definition \ref{Solid tensor product}).\end{thm}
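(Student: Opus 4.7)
The plan is to factor the desired embedding $\Ban_{A}^{\leq1}\hookrightarrow\Cond(A_{\leq1}, (\varpi^{1/p^{\infty}})_{A_{\leq1}})$ as the composite of two functors. The first is the equivalence supplied by Theorem \ref{Main theorem}, sending a submetric Banach module $(M, \lVert\cdot\rVert_{M})$ to the $\varpi$-adically complete, $\varpi$-torsion-free almost $A_{\leq1}$-module $M_{\leq1}^{a}$. The second is the condensation functor $M_{0}^{a}\mapsto\lim_{n}\underline{(M_{0}/\varpi^{n}M_{0})}^{a}$ that sends a $\varpi$-adically complete almost module to the inverse limit of its discrete truncations inside Mann's category of condensed almost modules. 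For full faithfulness, by Theorem \ref{Main theorem} it is enough to check that the second functor is fully faithful on $\varpi$-adically complete, $\varpi$-torsion-free objects. This holds because any continuous morphism of condensed almost modules from $\lim_{m}\underline{(M_{\leq1}/\varpi^{m})}^{a}$ to the $\varpi^{n}$-torsion object $\underline{(N_{\leq1}/\varpi^{n})}^{a}$ must factor through $\underline{(M_{\leq1}/\varpi^{n})}^{a}$, so the total $\Hom$ set computes as $\lim_{n}\Hom_{A_{\leq1}^{a}}(M_{\leq1}^{a}/\varpi^{n}, N_{\leq1}^{a}/\varpi^{n})=\Hom_{A_{\leq1}^{a}}(M_{\leq1}^{a}, N_{\leq1}^{a})$, which then matches $\Hom_{\Ban_{A}^{\leq1}}(M, N)$ by Theorem \ref{Main theorem}.

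To see that the image lies in the solid subcategory $\Cond_{\blacksquare}$, I use that each discrete truncation $\underline{(M_{\leq1}/\varpi^{n}M_{\leq1})}^{a}$ is solid (since its underlying condensed $A_{\leq1}$-module is discrete, hence solid in the sense of Clausen--Scholze, and almostification preserves solidity in Mann's setup), together with the fact that $\Cond_{\blacksquare}$ is closed under inverse limits inside $\Cond$. Since the image of the embedding is by construction an inverse limit of such solid objects, it is itself solid.

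For the symmetric monoidal claim under the perfectoid, totally disconnected hypothesis, the strategy is to identify both tensor products with a single common operation, the $\varpi$-adic completion of the almost $A^{\circ}$-module tensor product of closed unit balls. On the Banach side, this identification is essentially built into the proof of Theorem \ref{Main theorem} and its algebra refinement Theorem \ref{Main theorem for algebras}: the closed unit ball of $M\widehat{\otimes}_{A}N$ is the $\varpi$-adic completion of $M_{\leq1}\otimes_{A^{\circ}}N_{\leq1}$, up to almost isomorphism. On the condensed side, the totally disconnected hypothesis on $\Spa(A, A^{\circ})$ puts $A^{\circ a}$ into the class of analytic rings for which Mann's machinery gives an explicit description of the solid almost tensor product; concretely, the derived solid almost tensor product of two $\varpi$-adically complete, $\varpi$-torsion-free objects coming from $\Ban_{A}^{\leq1}$ is concentrated in degree zero and agrees with the classical $\varpi$-adically completed almost tensor product. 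The compatibility with the monoidal unit and with the associator then follows formally from the universal property shared by all three operations.

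The main obstacle is the last identification on the condensed side, i.e., verifying that the derived solid almost tensor product on the image of $\Ban_{A}^{\leq1}$ has vanishing higher cohomology. This is really a flatness statement for the generating profinite almost $A^{\circ a}$-modules, and the totally disconnected assumption is precisely what allows it: it forces the pro-\'etale cohomology of $\Spa(A, A^{\circ})$ with coefficients in the relevant sheaves of continuous functions to be concentrated in degree zero, up to almost isomorphism. I would carry this out by applying Mann's base-change and projection formulas for the analytic ring $(A^{\circ a}, (\varpi^{1/p^{\infty}}))_{\blacksquare}$, reducing the vanishing statement to the standard almost-acyclicity results for pro-\'etale cohomology of totally disconnected affinoid perfectoid spaces, after which the degree-zero identification is straightforward.
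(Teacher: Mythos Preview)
Your construction of the embedding and the argument that it lands in the solid subcategory follow the paper's approach closely: both compose the equivalence of Theorem~\ref{Main theorem} with a $\varpi$-adic condensation functor, and both observe that $\varpi$-adic completions give solid objects. One point you elide is well-definedness: your functor is written as $M_{0}^{a}\mapsto\lim_{n}\underline{(M_{0}/\varpi^{n}M_{0})}^{a}$, but you must check that almost-isomorphic choices of $M_{0}$ yield isomorphic condensed almost modules (the paper does this in Lemma~\ref{Adically complete almost modules}). Your full-faithfulness argument via factoring through torsion quotients is a reasonable alternative to the paper's diagram chase in Proposition~\ref{Adic embedding}.

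The symmetric monoidal part, however, has a genuine gap. You claim that the closed unit ball of $M\widehat{\otimes}_{A}N$ is the $\varpi$-adic completion of $M_{\leq1}\otimes_{A^{\circ}}N_{\leq1}$ ``up to almost isomorphism'' and attribute this to Theorems~\ref{Main theorem} and~\ref{Main theorem for algebras}; but those theorems say nothing about tensor products. The relevant result is Corollary~\ref{Unit balls and tensor products 2}, which gives
\[
(M\widehat{\otimes}_{A}N)_{\leq1}=\bigl((M_{\leq1}\widehat{\otimes}_{A^{\circ}}N_{\leq1})/(\varpi^{\infty}\text{-tor})\bigr)_{\ast}.
\]
Passing to the almost category does \emph{not} kill the $\varpi^{\infty}$-torsion quotient in general: you still need the torsion submodule of $M_{\leq1}\widehat{\otimes}_{A^{\circ}}N_{\leq1}$ to be almost zero. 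This is exactly where the totally disconnected hypothesis does its work, and it is not a formal consequence of anything you have written. The paper handles this by a direct, elementary argument (Lemma~\ref{Flatness over totally disconnecteds}): using the idempotent decomposition of a totally disconnected perfectoid space into connected components of the form $\Spa(K_{x},K_{x}^{+})$, it shows that $M_{\leq1}/\varpi M_{\leq1}$ is genuinely flat over $A^{\circ}/\varpi A^{\circ}$, hence (via Lemma~\ref{Torsion-free 2}) that $M_{\leq1}\widehat{\otimes}_{A^{\circ}}N_{\leq1}$ is $\varpi$-torsion-free on the nose. Your proposed route through Mann's base-change formulas and pro-\'etale almost-acyclicity may well be viable, but as written it does not isolate which statement you need, and in particular it does not explain why the $\varpi$-torsion obstruction vanishes; the paper's flatness lemma is both more elementary and more precise.
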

We note that condensed almost modules were used by Mann \cite{Mann22} and Anschütz, Le Bras and Mann \cite{ALBM24} for the development of theories of quasi-coherent complexes on small v-stacks. The above theorem provides a link between categories of Banach modules with submetric module homomorphisms and these theories, which can facilitate translation between the older, more classical language of Banach modules and submetric module homomorphisms and the modern language of condensed mathematics. This could allow us to transport intuition from classical functional analysis to formulate and prove new results in the condensed setting or to solve problems originally formulated in the classical nonarchimedean-analytic language by using the power of condensed mathematics.   

\subsection{Relation to previously known results}

Theorem \ref{Main theorem} and Theorem \ref{Main theorem for algebras} initiate a dictionary between the analytic world of Banach algebras and Banach modules over the Banach ring $A$ and the algebraic world of $\varpi$-torsion-free, $\varpi$-adically complete almost algebras and almost modules over the closed ball unit $A_{\leq1}$ of $A$. In the case when $A=K$ is a nonarchimedean field, such dictionaries for Banach algebras over $K$ were provided in \cite{Andre18}, Sorite 2.3.1, and \cite{BhattNotes}, \S5.2, but for more general Tate Banach rings $A$ our results appear to be new. Another difference between the results of this paper and the previously known results is that in our setting we prove that the norm of any Banach module is determined by the closed unit ball "on the nose" rather than just up to equivalence, while in the previously known results one had to make additional assumptions on the Banach module (such as the Banach module being a uniform Banach algebra) for this to be the case. This last difference is crucial for the application to Theorem \ref{Relation to condensed mathematics}.   

\subsection{Outline of the paper} 

In Section \ref{sec:boundedness and continuity} we generalize the classical result from functional analysis over a nonarchimedean field $K$ saying that every continuous $K$-linear map between seminormed $K$-vector spaces is bounded. In Section \ref{sec:gauge seminorms}, we introduce and study gauge seminorms on modules over general seminormed rings $A$. We prove the first part of Theorem \ref{Main theorem} saying that, under a mild assumption on the value set of the seminorm of $A$, every submetric $A$-module seminorm on any $A$-module $M$ is equal to the gauge seminorm of its closed unit ball. In Section \ref{sec:almost mathematics}, we prove the rest of Theorem \ref{Main theorem} and Theorem \ref{Main theorem for algebras} by relating our results on gauge seminorms to almost mathematics. In Section \ref{sec:complete tensor products}, we study isometries and strict homomorphisms of Banach modules and give an explicit description of the the closed unit ball of the complete tensor product $M\widehat{\otimes}_{A}N$ of two Banach modules $M$, $N$ in terms of the closed unit balls of $M$ and $N$. Finally, Section \ref{sec:Proof of the theorem on condensed math} is devoted to the proof of Theorem \ref{Relation to condensed mathematics}.

\subsection{Notation and terminology}

In this paper we consider all rings to be commutative and with unit. Recall that a seminorm on an abelian group $G$ is a function $\lVert \cdot \rVert: G \to \mathbb{R}_{\geq 0}$ such that $\lVert 0 \rVert = 0$ and such that the triangle inequality $\lVert f + g \rVert \leq \lVert f \rVert + \lVert g \rVert$ holds for any two elements $f, g \in G$. A seminorm is said to be nonarchimedean if for any $f, g \in G$ the stronger inequality $\lVert f + g \rVert \leq \max(\lVert f \rVert, \lVert g \rVert)$ is satisfied. Unless explicitly stated otherwise we only consider nonarchimedean seminorms in this paper. A seminorm $\lVert \cdot \rVert$ is called a norm if $0$ is the only element of $G$ which is mapped to $0$ under $\lVert \cdot \rVert$. Two seminorms $\lVert \cdot \rVert_{1}$ and $\lVert \cdot \rVert_{2}$ are called bounded-equivalent if there exist constants $C_{1}, C_{2} > 0$ such that \begin{equation*} \lVert f \rVert_{2} C_{2} \leq \lVert f \rVert_{1} \leq C_{1} \lVert f \rVert_{2} \end{equation*}holds for all elements $f \in G$. The term 'bounded-equivalent' is taken from \cite{Johansson-Newton1}, \S2.1.

For a ring $A$, a seminorm on $A$ is a seminorm on the underlying additive abelian group of $A$ such that $\lVert 1\rVert=1$. A seminorm $\lVert \cdot \rVert$ on a ring $A$ is said to be submultiplicative (or a ring seminorm) if it is compatible with multiplication in the sense that \begin{equation*} \lVert fg \rVert \leq \lVert f \rVert \lVert g \rVert \end{equation*}for any two elements $f, g \in A$. By a (semi)normed ring (respectively, a Banach ring) $(A, \lVert \cdot \rVert)$ we will mean a ring $A$ equipped with a submultiplicative (semi)norm (resp., a complete submultiplicative norm) $\lVert \cdot \rVert$. In this paper we tacitly assume all our seminorms on rings to be submultiplicative and we often suppress the norm $\lVert\cdot\rVert$ from the notation for a (semi)normed ring (respectively, Banach ring). For a seminormed abelian group $A$ we denote by $A_{\leq r}$ the closed ball of radius $r>0$ in $A$, i.e.,\begin{equation*}A_{\leq r}=\{\, f\in A\mid \lVert f\rVert\leq r\,\}.\end{equation*}Since we restrict our attention to nonarchimedean seminorms, this is always a subgroup of $A$, open with respect to the topology defined by the given seminorm, and if $A$ is a seminormed ring and $c\leq1$, then $A_{\leq c}$ is an open subring. 

A seminorm on a ring $A$ is said to be power-multiplicative (resp., multiplicative) if $\lVert f^{n} \rVert = \lVert f \rVert^{n}$ for all $f$ and all integers $n \geq 0$ (resp., $\lVert fg \rVert = \lVert f \rVert \lVert g \rVert$ for all $f, g \in A$). A multiplicative norm on a ring $A$ is also called an absolute value. By a nonarchimedean field $K$ we mean a Banach ring $(K, \vert\cdot\vert)$ whose underlying ring is a field and whose norm is an absolute value.

A homomorphism $\varphi: G \to H$ between seminormed abelian groups is said to be bounded if there exists a constant $C>0$ with $\lVert \varphi(g) \rVert \leq C \lVert g \rVert$ for all $g \in G$. It is said to be submetric if the above constant $C$ can be taken to be equal to $1$. If $\varphi: A \to B$ is a bounded homomorphism between seminormed rings and the seminorm on the target $B$ is power-multiplicative, then $\varphi$ is automatically submetric (see \cite{BGR}, Proposition 1.3.1/2). A homomorphism $\varphi: G\to H$ of seminormed abelian groups is called strict if on $\im(\varphi)$ the subspace seminorm inherited from $H$ is bounded-equivalent to the quotient seminorm induced by $\varphi$. In particular, if $\varphi$ is injective and bounded, then it is strict if and only if there exists a constant $C'>0$ such that \begin{equation*}\lVert x\rVert_{G}\leq C'\lVert\varphi(x)\rVert_{H}\end{equation*}for all $x\in G$. This notion of strict homomorphisms is a special case of the categorical notion of a strict morphism in an additive category with kernels and cokernels (see \cite{Schneiders}, Def.~1.1.1): A morphism in such a category is called strict if it induces an isomorphism between the coimage (the cokernel of its kernel) and its image (the kernel of its cokernel). In the case of a bounded homomorphism $\varphi: G\to H$ of seminormed abelian groups, the coimage is given by $G/\ker(\varphi)$ with the quotient seminorm and the image is given by $\varphi(G)$ with the subspace seminorm.  

For a seminormed ring $(A, \lVert\cdot\rVert)$, a seminormed $A$-module is a seminormed abelian group $(M, \lVert\cdot\rVert_{M})$ such that $M$ is an $A$-module and there exists a constant $C>0$ such that \begin{equation*}\lVert fx\rVert_{M}\leq C\lVert f\rVert\lVert x\rVert_{M}\end{equation*}for all $f\in A$ and $x\in M$. The seminormed $A$-module $(M, \lVert\cdot\rVert_{M})$ is called a normed module (respectively, a Banach module) if the seminorm $\lVert\cdot\rVert_{M}$ is a norm (respectively, a complete norm). If for some seminormed module (respectively, Banach module) the above constant $C>0$ can be chosen to be equal to $1$, we call it a \textit{submetric seminormed module} (respectively, a \textit{submetric Banach module}). A seminormed module (respectively, a Banach module) over a nonarchimedean field $K$ is called a seminormed space (respectively, a Banach space) over $K$. Every seminormed space over a nonarchimedean field is submetric, by a simple argument involving the field's absolute value, but the analogous assertion over more general seminormed rings is false. For example, if $A\to B$ is a bounded homomorphism of seminormed rings, then $B$ is a seminormed $A$-module, but it is a submetric seminormed module only if the map $A\to B$ is submetric. On the other hand, whenever $(M, \lVert\cdot\rVert)$ is a seminormed $A$-module, there always exists a bounded-equivalent seminorm $\lVert\cdot\rVert_{M}'$ on $M$ satisfying \begin{equation*}\lVert fx\rVert_{M}'\leq \lVert f\rVert\lVert x\rVert_{M}'\end{equation*}for all $f\in A$ and $x\in M$ (see \cite{Bambozzi-Ben-Bassat16}, Remark 3.7 and Proposition 3.28). For a Banach ring $(A, \lVert\cdot\rVert)$ we use the following notations: \begin{itemize} \item $\Snm_{A}$ for the category of seminormed $A$-modules with bounded $A$-module homomorphisms as morphisms, called the category of seminormed $A$-modules; \item $\Ban_{A}$ for the category of Banach $A$-modules with bounded $A$-module homomorphisms as morphisms, called the category of Banach $A$-modules; \item $\Snm_{A}^{\leq1}$ for the category of submetric seminormed $A$-modules with submetric $A$-module homomorphisms as morphisms, called the category of submetric seminormed $A$-modules; \item $\Ban_{A}^{\leq1}$ for the category of submetric Banach $A$-modules with submetric $A$-module homomorphisms as morphisms, called the category of submetric Banach $A$-modules.\end{itemize}In the case when the seminormed ring $A$ in question is $\mathbb{Z}$ equipped with the trivial norm which sends every non-zero element to $1$, these specialize to the categories of (submetric) seminormed abelian groups and (submetric) Banach abelian groups. Finally, by a seminormed $A$-algebra (respectively, Banach $A$-algebra) we mean a seminormed ring (respectively, Banach ring) $B$ which is also a seminormed (respectively, Banach) $A$-module, i.e., a seminormed ring (respectively, a Banach ring) $B$ with a bounded ring homomorphism $A\to B$. Similarly, a submetric seminormed $A$-algebra (respectively, a submetric Banach $A$-algebra) is a seminormed ring (respectively, Banach ring) $A$ equipped with a submetric ring homomorphism $A\to B$.  

A Huber ring $A$ is a topological ring that contains an open adic subring $A_0$ (the so-called ring of definition) with a finitely generated ideal of definition. A pair $(A_{0}, I)$, where $A_{0}$ is a ring of definition of $A$ and $I$ is a finitely generated ideal of definition of $A_{0}$, is called a pair of definition of the Huber ring $A$. A Huber ring $A$ is called a Tate ring if it contains a topologically nilpotent unit $\varpi$. We sometimes refer to a topologically nilpotent unit in a Tate ring as a pseudo-uniformizer. Every topologically nilpotent unit $\varpi$ in a Tate ring $A$ is contained in a ring of definition $A_{0}$ and then $(A_{0}, (\varpi)_{A_{0}})$ is a pair of definition of the Huber ring $A$ (\cite{Huber0}, Proposition 2.2.6iii)). In the sequel we will use without further comment the following well-known relationship between Tate rings and seminormed rings (see, for example, \cite{Kedlaya-Liu}, Remark 2.4.4). If $A$ is a Tate ring (respectively, a Hausdorff Tate ring) with topologically nilpotent unit $\varpi$ and ring of definition $A_{0}$ containing $\varpi$, then \begin{equation*}\lVert f\rVert_{A_{0},\varpi}:= \inf \{\, 2^{-n} \mid n\in\mathbb{Z}, f\in\varpi^{n}A_{0} \,\},  \\\ f \in A \end{equation*}defines a seminorm (respectively, a norm) which induces the topology of $A$ (conversely, every seminormed ring $A$ with a topologically nilpotent unit $\varpi$ is a Tate ring, where we can take the closed unit ball $A_{\leq1}$ as a ring of definition). For a Tate ring (respectively, a Hausdorff Tate ring) $A$ with pair of definition $(A_{0}, \varpi)$, we call the seminorm $\lVert\cdot\rVert_{A_{0},\varpi}$ the canonical extension of the $\varpi$-adic seminorm (respectively, of the $\varpi$-adic norm) on the subring $A_{0}$. Note that for this seminorm, the topologically nilpotent unit $\varpi$ becomes seminorm-multiplicative in the sense of Definition \ref{Norm-multiplicative elements}. In this situation, we also have an analogously defined $A$-module seminorm $\lVert\cdot\rVert_{M_{0},\varpi}$ on any $A$-module $M$ and any $A_{0}$-submodule $M_{0}$ of $M$ satisfying $M=M_{0}[\varpi^{-1}]$, and we also call this seminorm on $M$ the canonical extension of the $\varpi$-adic seminorm on $M_{0}$. 

We call an element $f$ in a topological ring $A$ power-bounded if the set of all powers of $f$ is a bounded subset of $A$ and we call a Huber ring $A$ uniform if its subring $A^{\circ}$ consisting of power-bounded elements is bounded. If $A$ is a seminormed ring whose underlying topological ring is a Tate ring (i.e., $A$ has a topologically nilpotent unit), then $A$ is uniform as a Tate ring if and only if the topology of $A$ can be defined a bounded power-multiplicative seminorm (see, for example, \cite{Kedlaya17}, Exercise~1.5.13). Note that, by \cite{Kedlaya-Liu}, Remark 2.8.18 (or by Lemma \ref{Uniform Tate rings} below), such a power-multiplicative seminorm can always be chosen so as to make any given topologically nilpotent unit seminorm-multiplicative in the sense of Definition \ref{Norm-multiplicative elements}. 

Finally, in the context of almost mathematics, by a basic setup for almost mathematics, or an almost setup, we mean a pair $(V, \mathfrak{m})$ consisting of a ring $V$ and an ideal $\mathfrak{m}\subseteq V$ such that $\mathfrak{m}^{2}=\mathfrak{m}$ and such that the $V$-module $\widetilde{\mathfrak{m}}=\mathfrak{m}\otimes_{V}\mathfrak{m}$ is flat.  

\subsection{Acknowledgements}

I would like to express my heartfelt gratitude to my advisor, Kiran Kedlaya, for his encouragement and guidance, for his feedback on multiple drafts of this paper and for numerous helpful discussions. I would like to thank Johannes Anschütz and Lucas Mann for some valuable conversations on the notions of quasi-coherent sheaves developed in  \cite{Mann22} and \cite{ALBM24}. I am grateful to my colleagues Jack J Garzella, Shubhankar Sahai and Nathan Wenger for helpful conversations. 

This work was supported by NSF grant DMS-2401536 "Global cohomological approaches to L-functions" (PI: Kiran Kedlaya). 
    
\section{Norm-multiplicative elements, boundedness and continuity}\label{sec:boundedness and continuity}

Let $(A, \lVert\cdot\rVert)$ be a seminormed ring.  
\begin{mydef}[Submetric seminormed module, submetric Banach module]A seminormed $(A, \lVert\cdot\rVert)$-module $(M, \lVert\cdot\rVert_{M})$ is called submetric if the map $A\times M\to M$ defining the $A$-module stucture on $M$ is submetric, i.e., if for all $x\in M$ and $f\in A$ we have \begin{equation*}\lVert fx\rVert_{M}\leq \lVert f\rVert\lVert x\rVert_{M}.\end{equation*}If $(M, \lVert\cdot\rVert_{M})$ is a Banach module, we call it a submetric Banach module.\end{mydef}Recall from the introduction that we denote the category of all seminormed $A$-modules over a seminormed ring $A$ (respectively, the category of all Banach $A$-modules over a Banach ring $A$), with bounded $A$-module homomorphisms as morphisms, by $\Snm_{A}$ (respectively, by $\Ban_{A}$) and we denote the category of submetric seminormed $A$-modules (respectively, submetric Banach $A$-modules), with submetric $A$-module homomorphisms as morphisms, by $\Snm_{A}^{\leq1}$ (respectively, $\Ban_{A}^{\leq1}$). Note that we have fully faithful inclusion functors $\Snm_{A}^{\leq1}\hookrightarrow \Snm_{A}$ and $\Ban_{A}^{\leq1}\hookrightarrow \Ban_{A}$. 

As in complex or real functional analysis, one of the basic facts of nonarchimedean functional analysis is the fact that a $K$-linear map between Banach spaces over a nonarchimedean field $K$ is bounded if and only if it is continuous. To generalize this assertion from nonarchimedean fields to other seminormed rings we need the notion of a seminorm-multiplicative element.

\begin{mydef}[Seminorm-multiplicative elements]\label{Norm-multiplicative elements}An element $\varpi$ in a seminormed ring $(A, \lVert\cdot\rVert_{A})$ is called seminorm-multiplicative (or multiplicative with respect to the seminorm $\lVert\cdot\rVert_{A}$) if $\lVert\varpi f\rVert_{A}=\lVert \varpi\rVert_{A}\lVert f\rVert_{A}$ for all $f\in A$. If $M$ is an $A$-module endowed with an abelian group seminorm $\lVert\cdot\rVert_{M}$, then $\varpi$ is called $\lVert\cdot\rVert_{A}$-multiplicative with respect to the seminorm $\lVert\cdot\rVert_{M}$ on $M$ if $\lVert \varpi x\rVert_{M}=\lVert\varpi\rVert_{A}\lVert x\rVert_{M}$ for all $x\in M$.\end{mydef}
\begin{rmk}\label{Caution 1}Note that if $\lVert\cdot\rVert$ and $\phi$ are two seminorms on the same ring $A$ such that $\phi$ is continuous with respect to the topology defined by $\lVert\cdot\rVert$, then an element $\varpi\in A$ can be multiplicative with respect to both $\lVert\cdot\rVert$ and $\phi$, but still fail to be $\lVert\cdot\rVert_{A}$-multiplicative with respect to $\phi$. In fact, since $\lVert1\rVert=1$, $\phi(1)=1$, an element $\varpi\in A$ is $\lVert\cdot\rVert_{A}$-multiplicative with respect to $\phi$ if and only if $\lVert\varpi\rVert=\phi(\varpi)$. For example, let $K$ be a field endowed with a non-trivial absolute value $\vert\cdot\vert$ and let $s\in(0, 1)$. Then $\phi=\vert\cdot\vert^{s}$ is again an absolute value which defines the same topology on $K$ as $\vert\cdot\vert$, every element of $K$ is multiplicative both with respect to $\vert\cdot\vert$ and $\phi$, but the only non-zero elements $\lambda\in K$ which are $\vert\cdot\vert$-multiplicative with respect to $\phi$ are the elements with absolute value $\vert\lambda\vert=1$.\end{rmk} 
Recall that an invertible element $\varpi$ in a seminormed ring $(A, \lVert\cdot\rVert)$ is seminorm-multiplicative if and only if $\lVert\varpi\rVert\lVert\varpi^{-1}\rVert=1$ (see, for example, \cite{Dine22}, Remark 2.13). The seminorm-multiplicative units in a seminormed ring $(A, \lVert\cdot\rVert)$ form a subgroup of $A^{\times}$ which we denote by $(A, \lVert\cdot\rVert)^{\times,m}$. We simply write $A^{\times,m}$ instead of $(A, \lVert\cdot\rVert)^{\times,m}$ whenever the seminorm $\lVert\cdot\rVert$ is understood from the context.
\begin{lemma}\label{Multiplicative topologically nilpotent units}Let $\varpi$ be a seminorm-multiplicative unit in a seminormed ring $(A, \lVert\cdot\rVert_{A})$ and let $(M, \lVert\cdot\rVert)\in \Snm_{A}^{\leq1}$. Then $\varpi$ is $\lVert\cdot\rVert_{A}$-multiplicative for the seminorm $\lVert\cdot\rVert$ on $M$.\end{lemma}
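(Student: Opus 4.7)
The plan is to use both the submetric property of the $A$-action on $M$ and the fact, recalled just above the lemma, that a seminorm-multiplicative unit $\varpi$ in $(A, \lVert\cdot\rVert_{A})$ satisfies $\lVert\varpi\rVert_{A}\lVert\varpi^{-1}\rVert_{A}=1$. The idea is simply to bracket $\lVert\varpi x\rVert$ from above and below by $\lVert\varpi\rVert_{A}\lVert x\rVert$ using the action of $\varpi$ and of its inverse.

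First I would observe that since $(M,\lVert\cdot\rVert)\in\Snm_{A}^{\leq1}$, the inequality
\begin{equation*}
\lVert\varpi x\rVert\leq \lVert\varpi\rVert_{A}\lVert x\rVert
\end{equation*}
holds for every $x\in M$ directly from the definition of a submetric seminormed module. This gives the upper bound for free; no use of the seminorm-multiplicativity of $\varpi$ or of its invertibility is needed here.

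For the reverse inequality, I would write $x=\varpi^{-1}(\varpi x)$ using that $\varpi\in A^{\times}$, and apply submetricity of the $A$-action once more to obtain
\begin{equation*}
\lVert x\rVert=\lVert\varpi^{-1}(\varpi x)\rVert\leq \lVert\varpi^{-1}\rVert_{A}\lVert\varpi x\rVert.
\end{equation*}
Now I would invoke the recalled identity $\lVert\varpi\rVert_{A}\lVert\varpi^{-1}\rVert_{A}=1$, which is a consequence of $\varpi$ being seminorm-multiplicative. In particular $\lVert\varpi\rVert_{A}>0$ (otherwise submultiplicativity together with $\lVert 1\rVert_{A}=1$ would yield the contradiction $1\leq 0$), so $\lVert\varpi^{-1}\rVert_{A}=\lVert\varpi\rVert_{A}^{-1}$. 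Substituting this into the previous display and clearing denominators gives $\lVert\varpi\rVert_{A}\lVert x\rVert\leq\lVert\varpi x\rVert$.

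Combining the two inequalities yields the desired equality $\lVert\varpi x\rVert=\lVert\varpi\rVert_{A}\lVert x\rVert$ for every $x\in M$. There is no real obstacle here; the only subtlety worth flagging explicitly, as it plays a silent role in deducing $\lVert\varpi^{-1}\rVert_{A}=\lVert\varpi\rVert_{A}^{-1}$ from $\lVert\varpi\rVert_{A}\lVert\varpi^{-1}\rVert_{A}=1$, is the positivity of $\lVert\varpi\rVert_{A}$, which I would justify as above using submultiplicativity and $\lVert1\rVert_{A}=1$.
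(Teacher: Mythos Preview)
Your proof is correct and follows essentially the same approach as the paper: both arguments use submetricity once for $\varpi$ and once for $\varpi^{-1}$, together with the identity $\lVert\varpi\rVert_{A}\lVert\varpi^{-1}\rVert_{A}=1$, to sandwich $\lVert\varpi x\rVert$ between $\lVert\varpi\rVert_{A}\lVert x\rVert$ from above and below. The paper phrases the lower bound in a single displayed chain, and your remark on positivity of $\lVert\varpi\rVert_{A}$ is harmless but not strictly needed, since multiplying both sides of $\lVert x\rVert\leq\lVert\varpi^{-1}\rVert_{A}\lVert\varpi x\rVert$ by $\lVert\varpi\rVert_{A}$ and invoking $\lVert\varpi\rVert_{A}\lVert\varpi^{-1}\rVert_{A}=1$ suffices without dividing.
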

\begin{proof}Since $\varpi$ is invertible in $A$, we have\begin{equation*}\lVert\varpi\rVert_{A}\lVert x\rVert=\lVert\varpi\rVert_{A}\lVert\varpi^{-1}\varpi_{A} x\rVert\leq \lVert\varpi\rVert_{A}\lVert\varpi^{-1}\rVert\lVert\varpi x\rVert=\lVert\varpi x\rVert.\end{equation*}The opposite inequality holds by the assumption that $(M, \lVert\cdot\rVert)\in\Snm_{A}^{\leq1}$.\end{proof}
\begin{lemma}\label{Seminormed algebras and seminorm-multiplicative units}For a submetric homomorphism of seminormed algebras \begin{equation*}\varphi: (A, \lVert\cdot\rVert_{A})\to (B, \lVert\cdot\rVert_{B}),\end{equation*}we have $\lVert\varphi(\varpi)\rVert_{B}=\lVert\varpi\rVert_{A}$ for all $\varpi\in A^{\times,m}$.\end{lemma}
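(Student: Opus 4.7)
The plan is to prove both inequalities $\lVert\varphi(\varpi)\rVert_B \leq \lVert\varpi\rVert_A$ and $\lVert\varphi(\varpi)\rVert_B \geq \lVert\varpi\rVert_A$ separately, using submetricity of $\varphi$ for both (applied to $\varpi$ in one direction and to $\varpi^{-1}$ in the other) together with the characterization of seminorm-multiplicative units recalled right before the lemma.

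First I would note that the upper bound $\lVert\varphi(\varpi)\rVert_B \leq \lVert\varpi\rVert_A$ is immediate from the definition of a submetric ring homomorphism. Next I would obtain the lower bound as follows. Since $\varphi$ is a ring homomorphism, $\varphi(\varpi)\varphi(\varpi^{-1}) = \varphi(1) = 1_B$, and hence by the normalization $\lVert 1\rVert_B = 1$ and submultiplicativity of $\lVert\cdot\rVert_B$ we get
\begin{equation*}
1 = \lVert 1_B\rVert_B = \lVert\varphi(\varpi)\varphi(\varpi^{-1})\rVert_B \leq \lVert\varphi(\varpi)\rVert_B \cdot \lVert\varphi(\varpi^{-1})\rVert_B.
\end{equation*}
Applying submetricity to $\varpi^{-1}$ then yields $\lVert\varphi(\varpi^{-1})\rVert_B \leq \lVert\varpi^{-1}\rVert_A$, so
\begin{equation*}
1 \leq \lVert\varphi(\varpi)\rVert_B \cdot \lVert\varpi^{-1}\rVert_A.
\end{equation*}

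Finally I would invoke the recalled fact that an invertible element $\varpi \in A$ lies in $A^{\times,m}$ if and only if $\lVert\varpi\rVert_A \lVert\varpi^{-1}\rVert_A = 1$; in particular $\lVert\varpi^{-1}\rVert_A = \lVert\varpi\rVert_A^{-1}$ (which is well-defined since $\lVert\varpi^{-1}\rVert_A \neq 0$). Rearranging the previous inequality then gives $\lVert\varphi(\varpi)\rVert_B \geq \lVert\varpi\rVert_A$, completing the proof. There is no real obstacle here: the argument is a two-line manipulation, and the only subtlety worth spelling out is that the multiplicativity of $\varpi$ is used exactly to convert $1/\lVert\varpi^{-1}\rVert_A$ into $\lVert\varpi\rVert_A$.
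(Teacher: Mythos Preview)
Your proof is correct and follows essentially the same route as the paper's: both obtain the nontrivial inequality $\lVert\varphi(\varpi)\rVert_B\geq\lVert\varpi\rVert_A$ by applying submetricity to $\varpi^{-1}$ and using $\lVert\varpi^{-1}\rVert_A=\lVert\varpi\rVert_A^{-1}$. The only cosmetic difference is that the paper first invokes Lemma~\ref{Multiplicative topologically nilpotent units} to get $\varphi(\varpi)\in B^{\times,m}$ and hence the equality $\lVert\varphi(\varpi)\rVert_B^{-1}=\lVert\varphi(\varpi^{-1})\rVert_B$, whereas you bypass that lemma via the submultiplicativity inequality $1\leq\lVert\varphi(\varpi)\rVert_B\lVert\varphi(\varpi^{-1})\rVert_B$, which is slightly more self-contained.
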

\begin{proof}Let $\varpi\in A^{\times,m}$. By Lemma \ref{Multiplicative topologically nilpotent units}, we have $\varphi(A^{\times,m})\subseteq B^{\times,m}$. In particular, $\lVert\varphi(\varpi)^{-1}\rVert_{B}=\lVert\varphi(\varpi)\rVert_{B}^{-1}$. But then \begin{equation*}\lVert\varphi(\varpi)\rVert_{B}^{-1}=\lVert\varphi(\varpi^{-1})\rVert_{B}\leq\lVert\varpi^{-1}\rVert_{A}=\lVert\varpi\rVert_{A}^{-1}\end{equation*}and the assertion follows.\end{proof}
\begin{lemma}\label{Seminormed algebras and seminorm-multiplicative units 2}Suppose that $\varphi: (A, \lVert\cdot\rVert_{A})\to (B, \lVert\cdot\rVert_{B})$ is a submetric homomorphism of seminormed rings. If $\lVert A^{\times,m}\rVert_{A}$ is dense in $\mathbb{R}_{\geq0}$, then so is $\lVert B^{\times,m}\rVert_{B}$.\end{lemma}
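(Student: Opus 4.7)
The plan is to deduce the statement directly from Lemma \ref{Seminormed algebras and seminorm-multiplicative units} with essentially no new work: the key point is that the hypotheses already force the containment $\lVert A^{\times,m}\rVert_{A}\subseteq \lVert B^{\times,m}\rVert_{B}$, after which density is transported for free.

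More concretely, I would argue as follows. First, I would recall that the proof of Lemma \ref{Seminormed algebras and seminorm-multiplicative units} already establishes that $\varphi$ maps $A^{\times,m}$ into $B^{\times,m}$ (this uses Lemma \ref{Multiplicative topologically nilpotent units} applied to $\varphi$ viewed as a submetric $A$-module map $A\to B$, or equivalently the fact that the composition of submetric ring maps with the characterization $\lVert u\rVert\lVert u^{-1}\rVert=1$ of seminorm-multiplicativity is preserved). Then Lemma \ref{Seminormed algebras and seminorm-multiplicative units} itself gives $\lVert\varphi(\varpi)\rVert_{B}=\lVert\varpi\rVert_{A}$ for every $\varpi\in A^{\times,m}$. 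Combining these two facts yields the inclusion of value sets
\begin{equation*}
\lVert A^{\times,m}\rVert_{A}\;\subseteq\;\lVert B^{\times,m}\rVert_{B}
\end{equation*}
as subsets of $\mathbb{R}_{\geq0}$.

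From here the conclusion is immediate: any subset of $\mathbb{R}_{\geq0}$ containing a dense subset is itself dense, so density of $\lVert A^{\times,m}\rVert_{A}$ in $\mathbb{R}_{\geq0}$ forces density of $\lVert B^{\times,m}\rVert_{B}$ in $\mathbb{R}_{\geq0}$.

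There is no real obstacle here; the entire content of the lemma is the value-preservation statement from Lemma \ref{Seminormed algebras and seminorm-multiplicative units}, and the only thing worth emphasizing in writing up the proof is the (trivial) topological observation that a superset of a dense set is dense. In particular, no use of the hypothesis that $\varphi$ be injective, surjective, or strict is needed, and no structure on $\mathbb{R}_{\geq0}$ beyond its topology enters.
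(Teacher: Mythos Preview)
Your proposal is correct and is essentially identical to the paper's proof: combine Lemma~\ref{Multiplicative topologically nilpotent units} (giving $\varphi(A^{\times,m})\subseteq B^{\times,m}$) with Lemma~\ref{Seminormed algebras and seminorm-multiplicative units} (giving $\lVert\varphi(\varpi)\rVert_{B}=\lVert\varpi\rVert_{A}$) to obtain $\lVert A^{\times,m}\rVert_{A}\subseteq\lVert B^{\times,m}\rVert_{B}$, and then transfer density.
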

\begin{proof}Combining Lemma \ref{Multiplicative topologically nilpotent units} and Lemma \ref{Seminormed algebras and seminorm-multiplicative units} we see that \begin{equation*}\lVert A^{\times,m}\rVert_{A}=\lVert\varphi(A^{\times,m})\rVert_{B}\subseteq \lVert B^{\times,m}\rVert_{B}.\end{equation*}\end{proof}
The key to establishing the promised relationship between boundedness and continuity is the following lemma. The idea of its proof goes back to the work of Johansson and Newton (see \cite{Johansson-Newton2}, Lemma 2.1.7).
\begin{lemma}\label{Boundedness vs. continuity}Let $(A, \lVert\cdot\rVert_{A})$ be a seminormed ring with a seminorm-multiplicative topologically nilpotent unit $\varpi$. Let $M$ be a topological $A$-module whose topology is defined by an abelian group seminorm $\lVert\cdot\rVert$ (not necessarily an $A$-module seminorm) and let $\lVert\cdot\rVert'$ be another abelian group seminorm on $M$ which is continuous with respect to $\lVert\cdot\rVert$. Suppose that $\varpi$ is $\lVert\cdot\rVert_{A}$-multiplicative with respect to each of the seminorms $\lVert\cdot\rVert$, $\lVert\cdot\rVert'$ on $M$. Then $\lVert\cdot\rVert'$ is bounded with respect to $\lVert\cdot\rVert$. More precisely, if $m>0$ is an integer such that $\lVert x\rVert\leq\lVert\varpi^{m}\rVert_{A}$ implies $\lVert x\rVert'\leq1$, then\begin{equation*}\lVert x\rVert'\leq \lVert\varpi\rVert_{A}^{-2m}\lVert x\rVert\end{equation*}for all $x\in M$.\end{lemma}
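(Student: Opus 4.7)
The plan is the standard nonarchimedean scaling trick: rescale an arbitrary $x\in M$ by a suitable integer power of $\varpi$ so that the resulting $\lVert\cdot\rVert$-norm lands in the range $(0,\lVert\varpi^{m}\rVert_{A}]$ where the continuity hypothesis forces the $\lVert\cdot\rVert'$-norm to be $\leq 1$, then undo the scaling using multiplicativity of $\varpi$ with respect to $\lVert\cdot\rVert'$. Setting $r:=\lVert\varpi\rVert_{A}$, I would first check that $r\in(0,1)$: seminorm-multiplicativity of $\varpi$ gives $\lVert\varpi^{k}\rVert_{A}=r^{k}$ by induction on $k\geq 1$, so topological nilpotency forces $r<1$, while $\lVert\varpi\rVert_{A}\lVert\varpi^{-1}\rVert_{A}=1$ forces $r>0$. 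The same inductive argument, applied to $\varpi$ and to $\varpi^{-1}$ inside the seminorms $\lVert\cdot\rVert$ and $\lVert\cdot\rVert'$ on $M$, yields $\lVert\varpi^{k}y\rVert=r^{k}\lVert y\rVert$ and $\lVert\varpi^{k}y\rVert'=r^{k}\lVert y\rVert'$ for every $k\in\mathbb{Z}$ and every $y\in M$; this integer-graded multiplicativity is the engine of the argument.

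For the main case $\lVert x\rVert>0$, since $r\in(0,1)$ there is a unique $n\in\mathbb{Z}$ with $r^{n}\leq r^{m}/\lVert x\rVert<r^{n-1}$. The first inequality gives $\lVert\varpi^{n}x\rVert=r^{n}\lVert x\rVert\leq\lVert\varpi^{m}\rVert_{A}$, so the hypothesis on $m$ yields $\lVert\varpi^{n}x\rVert'\leq 1$, which rewrites as $\lVert x\rVert'\leq r^{-n}$ by multiplicativity with respect to $\lVert\cdot\rVert'$. Rearranging the strict inequality $r^{n-1}>r^{m}/\lVert x\rVert$ into $r^{-n}<r^{-m-1}\lVert x\rVert$ and using $r^{-m-1}\leq r^{-2m}$ (valid because $m\geq 1$ and $r\in(0,1)$) then gives the bound claimed in the statement. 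The degenerate case $\lVert x\rVert=0$ is handled separately: for every $k\geq 0$ one has $\lVert\varpi^{-k}x\rVert=r^{-k}\lVert x\rVert=0\leq\lVert\varpi^{m}\rVert_{A}$, whence $\lVert\varpi^{-k}x\rVert'\leq 1$ and $\lVert x\rVert'\leq r^{k}\to 0$, so the desired inequality $\lVert x\rVert'\leq r^{-2m}\cdot 0=0$ also holds.

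The only real obstacle in executing this plan is the bookkeeping required to extend the multiplicativity identity from $\varpi$ to every integer power of $\varpi$ inside both abelian-group seminorms on $M$; once that is in place, the integer $n$ is essentially forced on us by the requirement that $\lVert\varpi^{n}x\rVert$ sit just below the threshold $\lVert\varpi^{m}\rVert_{A}$, and the weaker exponent $-2m$ (instead of the sharper $-m-1$ that the argument actually produces) merely keeps the stated bound symmetric in $m$.
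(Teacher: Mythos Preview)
Your proof is correct and follows essentially the same scaling trick as the paper: rescale $x$ by an integer power of $\varpi$ to land in the region where continuity applies, then undo the scaling using $\lVert\cdot\rVert_{A}$-multiplicativity of $\varpi$ with respect to both seminorms. The only cosmetic differences are that the paper scales by powers of $\varpi^{m}$ rather than $\varpi$, and handles the case $\lVert x\rVert=0$ via a topological closure argument instead of your direct rescaling; your version even yields the slightly sharper constant $\lVert\varpi\rVert_{A}^{-m-1}$ before relaxing to $\lVert\varpi\rVert_{A}^{-2m}$.
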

\begin{proof}We argue as in the proof of \cite{Dine22}, Lemma 2.11 (which is a slight modification of the proof of \cite{Johansson-Newton2}, Lemma 2.1.7); we include the argument here for the sake of completeness. By the continuity of $\lVert\cdot\rVert'$ with respect to $\lVert\cdot\rVert$ we can find $\epsilon\in(0,1)$ such that $\lVert x\rVert\leq\epsilon$ implies $\lVert x\rVert'\leq1$ for any $x\in M$. Choose an integer $m>0$ such that $\lVert\varpi^{m}\rVert_{A}\leq\epsilon$. Fix some $x\in M$. To prove that $\lVert x\rVert'\leq \lVert\varpi\rVert_{A}^{-2m}\lVert x\rVert$, first note that the kernel of any seminorm is equal to the closure of $\{0\}$ with respect to the topology defined by that seminorm, so $\lVert x\rVert=0$ implies $\lVert x\rVert'=0$, by continuity of $\lVert\cdot\rVert'$ with respect to $\lVert\cdot\rVert$. 

Now assume that $\lVert x\rVert\neq 0$. Recall (for example, from \cite{Dine22}, Remark 2.13) that an invertible element $a$ in a seminormed ring $A$ is seminorm-multiplicative if and only if $a$ satisfies $\lVert a\rVert^{-1}=\lVert a^{-1}\rVert$; in particular, $a$ is seminorm-multiplicative if and only if $a^{-1}$ is. Hence $\lVert\varpi\rVert_{A}^{-1}=\lVert\varpi^{-1}\rVert_{A}$ and the same also holds for every integer power of $\varpi$. If for every $n\in\mathbb{Z}$ we had $\lVert \varpi^{mn}\rVert_{A}\lVert x\rVert\leq1$, then we would obtain $\lVert x\rVert\leq\lVert\varpi^{mn}\rVert_{A}^{-1}=\lVert\varpi^{-mn}\rVert_{A}$ for all $n\in\mathbb{Z}$ and thus $\lVert x\rVert=0$. This shows that there exists a lowest integer $n\in\mathbb{Z}$ such that \begin{equation*}\lVert\varpi^{mn}\rVert_{A}\lVert x\rVert\leq1.\end{equation*}Then \begin{equation*}\lVert\varpi^{m(n+1)}x\rVert=\lVert\varpi^{m(n+1)}\rVert_{A}\lVert x\rVert=\lVert\varpi^{m}\rVert_{A}\lVert\varpi^{mn}\rVert_{A}\lVert x\rVert\leq \lVert\varpi^{m}\rVert_{A}\leq\epsilon\end{equation*}and this implies that $\lVert\varpi^{m(n+1)}x\rVert'\leq 1$ by our choice of $\epsilon$, where we used the assumption that $\varpi$ is $\lVert\cdot\rVert_{A}$-multiplicative with respect to $\lVert\cdot\rVert$. Since $\varpi$ is also $\lVert\cdot\rVert_{A}$-multiplicative with respect to $\lVert\cdot\rVert'$, this last inequality is equivalent to $\lVert\varpi^{m(n+1)}\rVert_{A}\lVert x\rVert'\leq1$. But then \begin{equation*}\lVert x\rVert'\leq\lVert\varpi\rVert_{A}^{-m(n+1)}=\lVert\varpi\rVert_{A}^{-2m}\lVert\varpi^{-m(n-1)}\rVert_{A}<\lVert\varpi\rVert_{A}^{-2m}\lVert x\rVert,\end{equation*}by our choice of $n\in\mathbb{Z}$.\end{proof}
The announced relationship between boundedness and continuity can now be formulated as follows.
\begin{cor}\label{Boundedness vs. continuity 1.5}Let $(A, \lVert\cdot\rVert_{A})$ be a seminormed ring with a seminorm-multiplicative topologically nilpotent unit $\varpi$ and let $(M, \lVert\cdot\rVert)$ be a seminormed $A$-module. Let $\lVert\cdot\rVert'$ be another $A$-module seminorm on $M$ which is continuous with respect to $\lVert\cdot\rVert$. Then $\lVert\cdot\rVert'$ is bounded with respect to $\lVert\cdot\rVert$.\end{cor}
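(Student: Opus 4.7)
The plan is to reduce Corollary \ref{Boundedness vs. continuity 1.5} directly to Lemma \ref{Boundedness vs. continuity}. The mismatch is that the lemma requires $\varpi$ to be $\lVert\cdot\rVert_{A}$-multiplicative with respect to both seminorms on $M$, whereas an arbitrary $A$-module seminorm only satisfies $\lVert fx\rVert \leq C\lVert f\rVert_{A}\lVert x\rVert$ for some constant $C$, and this need not force seminorm-multiplicativity even for a seminorm-multiplicative unit of $A$. The way around this is to pass to bounded-equivalent submetric representatives of both seminorms and then transfer the conclusion back.

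Concretely, I would first invoke the result of Bambozzi--Ben-Bassat cited in the notation section to replace $\lVert\cdot\rVert$ and $\lVert\cdot\rVert'$ by bounded-equivalent $A$-module seminorms $\lVert\cdot\rVert_{0}$ and $\lVert\cdot\rVert_{0}'$, respectively, which are submetric in the sense that $\lVert fx\rVert_{0}\leq \lVert f\rVert_{A}\lVert x\rVert_{0}$ and similarly for $\lVert\cdot\rVert_{0}'$. Bounded equivalence preserves both the induced topology and the property of being continuous, so $\lVert\cdot\rVert_{0}$ still defines the topology of $M$ (viewed as a topological $A$-module) and $\lVert\cdot\rVert_{0}'$ is still continuous with respect to $\lVert\cdot\rVert_{0}$. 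Applying Lemma \ref{Multiplicative topologically nilpotent units} to the submetric $A$-modules $(M, \lVert\cdot\rVert_{0})$ and $(M, \lVert\cdot\rVert_{0}')$, the seminorm-multiplicative unit $\varpi\in A$ becomes $\lVert\cdot\rVert_{A}$-multiplicative with respect to both $\lVert\cdot\rVert_{0}$ and $\lVert\cdot\rVert_{0}'$.

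At this point all hypotheses of Lemma \ref{Boundedness vs. continuity} are satisfied for the pair $(\lVert\cdot\rVert_{0}, \lVert\cdot\rVert_{0}')$, yielding a constant $C_{0}>0$ (explicitly $\lVert\varpi\rVert_{A}^{-2m}$ for a suitable integer $m$) with $\lVert x\rVert_{0}'\leq C_{0}\lVert x\rVert_{0}$ for all $x\in M$. Combining this inequality with the two bounded-equivalences $\lVert\cdot\rVert'\sim\lVert\cdot\rVert_{0}'$ and $\lVert\cdot\rVert_{0}\sim\lVert\cdot\rVert$ produces constants $C_{1}, C_{2}>0$ so that $\lVert x\rVert'\leq C_{1}\lVert x\rVert_{0}'\leq C_{1}C_{0}\lVert x\rVert_{0}\leq C_{1}C_{0}C_{2}\lVert x\rVert$, which is exactly the required boundedness statement.

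I do not anticipate a genuine obstacle here; the only non-trivial input is Lemma \ref{Boundedness vs. continuity}, which has already been proven. The step to watch is simply the observation that bounded equivalence respects both topology and continuity, so that the \emph{a priori} rigid hypothesis ``$\varpi$ is $\lVert\cdot\rVert_{A}$-multiplicative for each seminorm'' can be engineered by a harmless change of representative within each bounded-equivalence class — without this reduction one cannot apply Lemma \ref{Boundedness vs. continuity} directly.
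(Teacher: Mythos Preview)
Your proposal is correct and follows essentially the same approach as the paper's own proof: replace both seminorms by bounded-equivalent submetric ones via Bambozzi--Ben-Bassat, apply Lemma \ref{Multiplicative topologically nilpotent units} to make $\varpi$ act $\lVert\cdot\rVert_{A}$-multiplicatively on each, and conclude by Lemma \ref{Boundedness vs. continuity}. The only difference is that you spell out the bookkeeping of transferring the boundedness inequality back through the two bounded-equivalences, whereas the paper leaves this implicit.
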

\begin{proof}Up to replacing $\lVert\cdot\rVert$ and $\lVert\cdot\rVert'$ with bounded-equivalent $A$-module seminorms we may assume that \begin{equation*}(M, \lVert\cdot\rVert), (M, \lVert\cdot\rVert')\in \Snm_{A}^{\leq1}\end{equation*}(see Remark 3.7 and Proposition 3.28 in \cite{Bambozzi-Ben-Bassat16}). By Lemma \ref{Multiplicative topologically nilpotent units}, $\varpi$ is then a $\lVert\cdot\rVert_{A}$-multiplicative element with respect to both $\lVert\cdot\rVert$ and $\lVert\cdot\rVert'$. We conclude by Lemma \ref{Boundedness vs. continuity}.\end{proof}
\begin{cor}\label{Boundedness vs. continuity 2}Let $(A,\lVert\cdot\rVert_{A})$ be a seminormed ring with a seminorm-multiplicative topologically nilpotent unit $\varpi$, let $(M, \lVert\cdot\rVert_{M})$ be a seminormed $(A, \lVert\cdot\rVert_{A})$-module and let\begin{equation*}\varphi: M\to N\end{equation*}be a continuous homomorphism of topological $A$-modules, where the topology on $M$ is defined by $\lVert\cdot\rVert_{M}$ and the topology on $N$ is defined by an abelian group seminorm $\lVert\cdot\rVert_{N}$ satisfying\begin{equation*}\lVert\varpi x\rVert_{N}=\lVert\varpi\rVert_{A}\lVert x\rVert_{N}\end{equation*}for all $x\in N$. Then $\varphi$ is bounded with respect to the seminorms $\lVert\cdot\rVert_{M}$ on $M$ and $\lVert\cdot\rVert_{N}$ on $N$.\end{cor}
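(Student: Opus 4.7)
The plan is to reduce the statement to Lemma \ref{Boundedness vs. continuity} by pulling back the seminorm on $N$ along $\varphi$. Define an abelian group seminorm on $M$ by
\begin{equation*}
\lVert x\rVert':=\lVert\varphi(x)\rVert_{N},\quad x\in M.
\end{equation*}
By the continuity of $\varphi$ with respect to the topologies on $M$ and $N$ (which are defined by $\lVert\cdot\rVert_{M}$ and $\lVert\cdot\rVert_{N}$ respectively), the seminorm $\lVert\cdot\rVert'$ is continuous with respect to $\lVert\cdot\rVert_{M}$. Moreover, the hypothesis on $\lVert\cdot\rVert_{N}$ yields a direct computation
\begin{equation*}
\lVert\varpi x\rVert'=\lVert\varphi(\varpi x)\rVert_{N}=\lVert\varpi\varphi(x)\rVert_{N}=\lVert\varpi\rVert_{A}\lVert\varphi(x)\rVert_{N}=\lVert\varpi\rVert_{A}\lVert x\rVert'
\end{equation*}
for every $x\in M$, showing that $\varpi$ is $\lVert\cdot\rVert_{A}$-multiplicative with respect to $\lVert\cdot\rVert'$.

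Next I would deal with the fact that $\lVert\cdot\rVert_{M}$ is a priori only a seminormed module structure and need not make $\varpi$ be $\lVert\cdot\rVert_{A}$-multiplicative. Exactly as in the proof of Corollary \ref{Boundedness vs. continuity 1.5}, I replace $\lVert\cdot\rVert_{M}$ by a bounded-equivalent $A$-module seminorm $\lVert\cdot\rVert_{M}''$ with respect to which $(M,\lVert\cdot\rVert_{M}'')\in\Snm_{A}^{\leq1}$, invoking Remark 3.7 and Proposition 3.28 of \cite{Bambozzi-Ben-Bassat16}. Lemma \ref{Multiplicative topologically nilpotent units} then ensures that $\varpi$ is $\lVert\cdot\rVert_{A}$-multiplicative with respect to $\lVert\cdot\rVert_{M}''$. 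Passing from $\lVert\cdot\rVert_{M}$ to $\lVert\cdot\rVert_{M}''$ does not affect continuity of $\lVert\cdot\rVert'$ (the topology is unchanged) nor does it affect the boundedness assertion we are after (boundedness is an invariant of bounded-equivalence classes).

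With this set-up, the hypotheses of Lemma \ref{Boundedness vs. continuity} are satisfied for the two abelian group seminorms $\lVert\cdot\rVert_{M}''$ and $\lVert\cdot\rVert'$ on $M$, so the lemma gives an explicit constant $C>0$ (of the form $\lVert\varpi\rVert_{A}^{-2m}$) with
\begin{equation*}
\lVert\varphi(x)\rVert_{N}=\lVert x\rVert'\leq C\lVert x\rVert_{M}''
\end{equation*}
for all $x\in M$. Since $\lVert\cdot\rVert_{M}''$ is bounded-equivalent to $\lVert\cdot\rVert_{M}$, this inequality persists (with a different constant) when $\lVert\cdot\rVert_{M}''$ is replaced by $\lVert\cdot\rVert_{M}$, which is precisely the boundedness of $\varphi$ with respect to the original seminorms. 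There is no serious obstacle here; the only mild subtlety is the renormalization step to bring $\lVert\cdot\rVert_{M}$ into the submetric regime where Lemma \ref{Multiplicative topologically nilpotent units} applies, and this is a completely standard device that has already been used in the proof of Corollary \ref{Boundedness vs. continuity 1.5}.
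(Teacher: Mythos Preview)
Your proof is correct and follows essentially the same approach as the paper's own proof: reduce to Lemma \ref{Boundedness vs. continuity} after a renormalization via \cite{Bambozzi-Ben-Bassat16} to make $\varpi$ multiplicative for the domain seminorm. The only cosmetic difference is that the paper passes to the image $\varphi(M)$ and compares the subspace seminorm with the quotient seminorm there, whereas you pull $\lVert\cdot\rVert_{N}$ back to $M$ and compare directly with $\lVert\cdot\rVert_{M}$; these are equivalent reductions and your version is arguably more direct.
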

\begin{proof}By \cite{Bambozzi-Ben-Bassat16}, Remark 3.7 and Proposition 3.28, we may again assume that $(M, \lVert\cdot\rVert_{M})$ is a submetric seminormed $(A, \lVert\cdot\rVert_{A})$-module. It suffices to prove that the subspace seminorm $\lVert\cdot\rVert'$ on $\varphi(M)$ induced from the seminorm $\lVert\cdot\rVert_{N}$ on $N$ is bounded with respect to the quotient seminorm $\lVert\cdot\rVert$ on $\varphi(M)$ induced from $\lVert\cdot\rVert_{M}$ via $\varphi$. By the continuity of $\varphi$, we know that $\lVert\cdot\rVert'$ is continuous with respect to $\lVert\cdot\rVert$. By definition of the quotient seminorm, the assumption that $(M, \lVert\cdot\rVert_{M})\in\Snm_{A}^{\leq1}$ implies $(\varphi(M), \lVert\cdot\rVert)\in\Snm_{A}^{\leq1}$. By Lemma \ref{Multiplicative topologically nilpotent units}, this entails that $\varpi$ is $\lVert\cdot\rVert_{A}$-multiplicative with respect to the seminorm $\lVert\cdot\rVert$ on $\varphi(M)$. We conclude by applying Lemma \ref{Boundedness vs. continuity}.\end{proof}
\begin{cor}\label{Boundedness vs. continuity 3}Let $(A,\lVert\cdot\rVert_{A})$ be a seminormed ring with a seminorm-multiplicative topologically nilpotent unit $\varpi$. Let $\varphi: M\to N$ be a homomorphism between seminormed $(A, \lVert\cdot\rVert_{A})$-modules which is continuous and open onto its image. Then $\varphi$ is a strict homomorphism of seminormed $A$-modules, i.e., the quotient seminorm on $\varphi(M)\subseteq N$ is bounded-equivalent to the subspace seminorm.\end{cor}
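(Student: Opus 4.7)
The plan is to reduce the statement to a twofold application of Corollary \ref{Boundedness vs. continuity 1.5}. On the image $\varphi(M)\subseteq N$, which inherits a natural $A$-module structure from $N$, I would consider the two $A$-module seminorms: the quotient seminorm $\lVert\cdot\rVert_{q}$ on $\varphi(M)\cong M/\ker(\varphi)$ induced from $\lVert\cdot\rVert_{M}$ via $\varphi$, and the subspace seminorm $\lVert\cdot\rVert_{s}$ inherited from $\lVert\cdot\rVert_{N}$. The assertion of the corollary is precisely that these two seminorms are bounded-equivalent. Both are genuine $A$-module seminorms: for $\lVert\cdot\rVert_s$ this is inherited from $N$, and for $\lVert\cdot\rVert_q$ it follows from the fact that $\ker(\varphi)$ is an $A$-submodule of $M$.

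The key preliminary observation that I would record is that $\lVert\cdot\rVert_{q}$ and $\lVert\cdot\rVert_{s}$ define the same topology on $\varphi(M)$. Indeed, $\lVert\cdot\rVert_{q}$ induces the quotient topology coming from the surjection $M\twoheadrightarrow\varphi(M)$, i.e., the final topology making $\varphi: M\to\varphi(M)$ continuous, while $\lVert\cdot\rVert_{s}$ induces the subspace topology from $N$. The hypothesis that $\varphi$ is continuous and open onto its image is exactly the statement that these two topologies on $\varphi(M)$ coincide (continuity of $\varphi$ into $N$ gives that the quotient topology is finer than the subspace topology, and openness onto the image gives the converse).

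Having established this topological identification, I would apply Corollary \ref{Boundedness vs. continuity 1.5} twice, with the roles of $\lVert\cdot\rVert_{q}$ and $\lVert\cdot\rVert_{s}$ interchanged. Since they define the same topology, each is continuous with respect to the other, and the corollary yields in each case that one is bounded by a constant multiple of the other. This is precisely the definition of bounded-equivalence of the two seminorms, which is the assertion that $\varphi$ is a strict homomorphism of seminormed $A$-modules.

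There is no substantive obstacle in this argument: the full force of the preceding material, in particular the use of the seminorm-multiplicative topologically nilpotent unit $\varpi$ and the proof technique of Lemma \ref{Boundedness vs. continuity}, is already packaged in Corollary \ref{Boundedness vs. continuity 1.5}. The only thing to verify carefully is the topological identification in the second paragraph, which amounts to unraveling the definitions of the quotient topology, the subspace topology, and what it means for a map to be continuous and open onto its image.
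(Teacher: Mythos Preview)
Your proposal is correct and takes essentially the same approach as the paper: both observe that continuity plus openness onto the image forces the quotient and subspace seminorms on $\varphi(M)$ to define the same topology, and then deduce bounded-equivalence from the boundedness-versus-continuity machinery. The only cosmetic difference is that you invoke Corollary~\ref{Boundedness vs. continuity 1.5} twice, whereas the paper inlines that corollary's proof (passing to bounded-equivalent submetric seminorms via \cite{Bambozzi-Ben-Bassat16}, then applying Lemma~\ref{Multiplicative topologically nilpotent units} and Lemma~\ref{Boundedness vs. continuity} directly).
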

\begin{proof}The assumption that $\varphi$ is continuous and open onto its image means that the subspace semiorm $\lVert\cdot\rVert'$ and the quotient seminorm $\lVert\cdot\rVert$ on $\varphi(M)$ define the same topology. By \cite{Bambozzi-Ben-Bassat16}, Remark 3.7 and Proposition 3.28, we may assume (up to replacing both seminorms with bounded-equivalent ones) that $(\varphi(M), \lVert\cdot\rVert')$ and $(\varphi(M), \lVert\cdot\rVert)$ are submetric seminormed $A$-modules, in which case $\varpi$ is $\lVert\cdot\rVert_{A}$-multiplicative with respect to both seminorms on $\varphi(M)$, by Lemma \ref{Multiplicative topologically nilpotent units}. Then the assertion follows from Lemma \ref{Boundedness vs. continuity}. \end{proof}
For comparing two seminorms on a ring $A$, we can also use the following variant of Lemma \ref{Boundedness vs. continuity}.
\begin{lemma}[\cite{Dine25}, Lemma 2.20]\label{Boundedness vs. continuity, variant for rings}Let $(A, \lVert\cdot\rVert)$ be a seminormed ring and let $\phi$ be a seminorm on $A$ which is continuous with respect to the topology on $A$ defined by $\lVert\cdot\rVert$. Suppose that there exists a topologically nilpotent unit $\varpi\in A$ which is multiplicative with respect to both $\lVert\cdot\rVert$ and $\phi$. If \begin{equation*}\phi(\varpi)=\lVert\varpi\rVert,\end{equation*}then the seminorm $\phi$ is bounded with respect to $\lVert\cdot\rVert$. In particular, every continuous multiplicative seminorm $\phi$ on $A$ which satisfies $\phi(\varpi)=\lVert\varpi\rVert$ is bounded.\end{lemma}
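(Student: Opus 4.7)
The plan is to reduce the statement directly to Lemma \ref{Boundedness vs. continuity}, taking $M = A$ viewed as a module over itself equipped with the two seminorms $\lVert\cdot\rVert$ and $\phi$, and letting the topologically nilpotent unit $\varpi$ play the same role in both setups.

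First I would check the hypotheses of Lemma \ref{Boundedness vs. continuity}: the topology on the $A$-module $M = A$ is defined by the abelian group seminorm $\lVert\cdot\rVert$, and $\phi$ is another abelian group seminorm on $M$ which is continuous with respect to $\lVert\cdot\rVert$. The nontrivial verification is that $\varpi$ is $\lVert\cdot\rVert$-multiplicative in the sense of Definition \ref{Norm-multiplicative elements} with respect to each of the two seminorms on $M$. For $\lVert\cdot\rVert$ this is exactly the assumption that $\varpi$ is seminorm-multiplicative in $(A, \lVert\cdot\rVert)$. For $\phi$ I would compute
\[\phi(\varpi f) = \phi(\varpi)\phi(f) = \lVert\varpi\rVert\phi(f)\]
for every $f \in A$, using first the multiplicativity of $\varpi$ with respect to $\phi$ and then the hypothesis $\phi(\varpi) = \lVert\varpi\rVert$. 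As Remark \ref{Caution 1} makes clear, this identification of the values of $\phi$ and $\lVert\cdot\rVert$ at $\varpi$ is exactly what is needed to bridge the two distinct notions of multiplicativity.

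With the hypotheses verified, Lemma \ref{Boundedness vs. continuity} applies and yields the explicit bound $\phi(x) \leq \lVert\varpi\rVert^{-2m}\lVert x\rVert$ for all $x \in A$, where $m$ is any positive integer such that $\lVert x\rVert \leq \lVert\varpi^m\rVert$ implies $\phi(x) \leq 1$. For the final assertion, if $\phi$ is a (fully) multiplicative seminorm on $A$, then every element of $A$ is multiplicative with respect to $\phi$; in particular, so is $\varpi$, and the hypotheses of the main statement are satisfied automatically.

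The main obstacle is essentially conceptual rather than technical: one needs to keep straight the difference between an element being multiplicative with respect to a given seminorm and being $\lVert\cdot\rVert$-multiplicative with respect to a second seminorm, as emphasized in Remark \ref{Caution 1}. Once this is sorted out, the argument reduces to a direct application of Lemma \ref{Boundedness vs. continuity} with no further computation required.
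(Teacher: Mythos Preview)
Your proposal is correct and follows essentially the same approach as the paper: the paper's proof likewise reduces to Lemma \ref{Boundedness vs. continuity} after invoking Remark \ref{Caution 1} to translate the hypotheses ``$\varpi$ multiplicative for $\phi$'' and ``$\phi(\varpi)=\lVert\varpi\rVert$'' into the statement that $\varpi$ is $\lVert\cdot\rVert$-multiplicative with respect to $\phi$. Your write-up is simply more explicit about the verification and about the final clause concerning multiplicative seminorms.
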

\begin{proof}As we observed in Remark \ref{Caution 1}, the assumptions that $\varpi$ is multiplicative with respect to $\lVert\cdot\rVert_{A}$, $\phi$ and that $\phi(\varpi)=\lVert\cdot\rVert$ imply that $\varpi$ is $\lVert\cdot\rVert_{A}$-multiplicative with respect to $\phi$. Therefore, the assertion follows from Lemma \ref{Boundedness vs. continuity}. \end{proof}
Recall that a Tate ring $A$ is called uniform if its subring of power-bounded elements $A^{\circ}$ is bounded. Recall also (for example, from \cite{Berkovich}, \S1.3) that for every seminormed ring $(A, \lVert\cdot\rVert)$ the limit \begin{equation*}\vert f\vert_{\spc}=\lim_{n\to\infty}\lVert f^{n}\rVert^{1/n}=\inf_{n}\lVert f^{n}\rVert^{1/n}, f\in A, \end{equation*}defines a power-multiplicative seminorm which is the largest power-multiplicative seminorm on $A$ bounded above by $\lVert\cdot\rVert$ (i.e., every power-multiplicative seminorm on $A$ bounded above by $\lVert\cdot\rVert$ is also bounded above by $\vert\cdot\vert_{\spc}$) and which is called the spectral seminorm on $A$ derived from the seminorm $\lVert\cdot\rVert$. Then Lemma \ref{Boundedness vs. continuity, variant for rings} gives us the following result. 
\begin{lemma}[\cite{Dine25}, Lemma 2.25]\label{Uniform Tate rings}Let $A$ be a uniform Tate ring and let $\varpi\in A$ be a topologically nilpotent unit. There exists a unique power-multiplicative seminorm $\vert\cdot\vert_{\spc, \varpi}$ defining the topology on $A$ for which $\varpi$ is seminorm-multiplicative and $\vert\varpi\vert_{\spc, \varpi}=\frac{1}{2}$. In particular, for any ring of definition $A_{0}$ of $A$ with $\varpi\in A_{0}$ the spectral seminorm derived from the canonical extension $\lVert\cdot\rVert_{A_{0}, \varpi}$ of the $\varpi$-adic seminorm on $A$ is equal to $\vert\cdot\vert_{\spc, \varpi}$.\end{lemma}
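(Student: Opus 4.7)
The plan is to construct $|\cdot|_{\spc,\varpi}$ explicitly as the spectral seminorm derived from the canonical extension $\lVert\cdot\rVert_{A_{0},\varpi}$ for some ring of definition $A_{0}$ containing $\varpi$, and then to obtain uniqueness by combining Lemma \ref{Boundedness vs. continuity, variant for rings} with the power-multiplicativity hypothesis. This simultaneously establishes the ``in particular'' clause of the lemma.

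For the existence, I would fix a ring of definition $A_{0}\ni\varpi$, set $\lVert\cdot\rVert:=\lVert\cdot\rVert_{A_{0},\varpi}$, and first observe that $\lVert\varpi\rVert=1/2$: the inequality $\lVert\varpi\rVert\leq1/2$ is immediate, and if we had $\varpi\in\varpi^{2}A_{0}$ then $\varpi=\varpi^{2}a$ for some $a\in A_{0}$, forcing $\varpi^{-1}=a\in A_{0}$ and contradicting the boundedness of the ring of definition $A_{0}$ (since $\varpi$ is topologically nilpotent, the powers of $\varpi^{-1}$ are unbounded). Because $\varpi$ is seminorm-multiplicative for $\lVert\cdot\rVert$ (as noted in the preliminaries), this gives $\lVert\varpi^{n}\rVert=2^{-n}$ for all $n\geq 0$. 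Now set $|f|_{\spc}:=\lim_{n\to\infty}\lVert f^{n}\rVert^{1/n}$. By the standard theory recalled above the lemma, $|\cdot|_{\spc}$ is a power-multiplicative seminorm bounded above by $\lVert\cdot\rVert$ and one reads off $|\varpi|_{\spc}=1/2$; moreover the identity $\lVert\varpi^{n}f^{n}\rVert=\lVert\varpi\rVert^{n}\lVert f^{n}\rVert$ yields $|\varpi f|_{\spc}=(1/2)|f|_{\spc}$, so $\varpi$ is seminorm-multiplicative for $|\cdot|_{\spc}$. The final point is that $|\cdot|_{\spc}$ actually defines the topology of $A$; this is precisely the classical characterization of uniform Tate rings recalled in the notational preliminaries, and it is the only step that genuinely uses the uniformity hypothesis.

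For the uniqueness, suppose $\phi_{1}$ and $\phi_{2}$ are two power-multiplicative seminorms defining the topology of $A$ such that $\varpi$ is seminorm-multiplicative for each and $\phi_{1}(\varpi)=\phi_{2}(\varpi)=1/2$. Then $\phi_{2}$ is continuous with respect to $\phi_{1}$, each of the two seminorms is multiplicative with respect to $\varpi$, and $\phi_{2}(\varpi)=\phi_{1}(\varpi)$, so Lemma \ref{Boundedness vs. continuity, variant for rings} produces a constant $C>0$ with $\phi_{2}\leq C\phi_{1}$. Applying this inequality to $f^{n}$ and taking $n$-th roots (permitted by the power-multiplicativity of both $\phi_{1}$ and $\phi_{2}$) yields $\phi_{2}(f)\leq C^{1/n}\phi_{1}(f)$ for every $n\geq 1$; letting $n\to\infty$ gives $\phi_{2}\leq\phi_{1}$, and a symmetric argument gives $\phi_{1}\leq\phi_{2}$. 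Hence $\phi_{1}=\phi_{2}$, completing the proof. The hardest single step is the topology-defining claim invoked in the existence part, but this is entirely standard for uniform Tate rings; everything else reduces to the explicit computation of $\lVert\varpi^{n}\rVert$ together with Lemma \ref{Boundedness vs. continuity, variant for rings}.
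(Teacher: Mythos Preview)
Your proof is correct and follows the route the paper implicitly indicates: the paper does not spell out a proof here (it cites \cite{Dine25}) but signals that Lemma~\ref{Boundedness vs. continuity, variant for rings} is the key input, and that is precisely what you use for uniqueness, while your existence argument via the spectral seminorm of $\lVert\cdot\rVert_{A_{0},\varpi}$ is exactly what the ``in particular'' clause asserts. One minor remark: the passage in the preliminaries you invoke only literally says \emph{some} power-multiplicative seminorm defines the topology, not that the spectral seminorm does; but this gap is easily closed (e.g.\ $|f|_{\spc}<1$ forces $f\in A^{\circ}$, and uniformity makes $A^{\circ}$ bounded, so the $|\cdot|_{\spc}$-balls are cofinal with the $\varpi^{n}A_{0}$), and your identification of this step as the one place uniformity enters is exactly right.
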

\begin{rmk}\label{Caution 2}As a word of caution, let us point out that neither the assumption that $\varpi$ be $\lVert\cdot\rVert_{A}$-multiplicative with respect to both $\lVert\cdot\rVert$ and $\lVert\cdot\rVert'$ in Lemma \ref{Boundedness vs. continuity} nor the analogous assumption $\phi(\varpi)=\lVert\varpi\rVert$ in Lemma \ref{Boundedness vs. continuity, variant for rings} (nor the corresponding assumptions in the corollaries) can be omitted. Indeed, consider, as in Remark \ref{Caution 1}, a field $K$ equipped with a non-trivial absolute value $\vert\cdot\vert$ and define another absolute value by $\phi=\vert\cdot\vert^{s}$ for some $s\in (0, 1)$. The two absolute values define the same topology; however, if $\phi$ was bounded with respect to $\vert\cdot\vert$, then, by \cite{BGR}, Proposition 1.3.1/2, we would actually have $\phi\leq\vert\cdot\vert$. But this is clearly not true: For every $\varpi\in K$ with $\vert\varpi\vert<1$, we have $\phi(\varpi)=\vert\varpi\vert^{s}>\vert\varpi\vert$, as $s\in(0,1)$.\end{rmk}
We also record the following consequence of Corollary \ref{Boundedness vs. continuity 1.5}.
\begin{lemma}\label{Bounded open submodules}Let $(A, \lVert\cdot\rVert_{A})$ be a seminormed ring with a seminorm-multiplicative topologically nilpotent unit $\varpi$ and let $A_{0}$ be a bounded open subring of $A$ with $\varpi\in A_{0}$. Then for every seminormed $A$-module $(M, \lVert\cdot\rVert)$ and every bounded open $A_{0}$-submodule $M_{0}$ of $M$ with $M=M_{0}[\varpi^{-1}]$ the given seminorm $\lVert\cdot\rVert$ on $M$ is bounded-equivalent to the canonical extension $\lVert\cdot\rVert_{M,M_{0}}$ of the $\varpi$-adic seminorm on $M_{0}$.\end{lemma}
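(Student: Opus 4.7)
The plan is to establish the bounded-equivalence by invoking Corollary \ref{Boundedness vs. continuity 1.5} twice, once in each direction. The corollary takes two $A$-module seminorms that are continuous with respect to each other and returns their bounded-equivalence, so the two verifications required are: (a) both $\lVert\cdot\rVert$ and $\lVert\cdot\rVert_{M,M_{0}}$ are $A$-module seminorms on $M$, and (b) they define the same topology on $M$.

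For (b), I would use that $\{\varpi^{n}M_{0}\}_{n\geq 0}$ is a fundamental system of open neighborhoods of $0$ for $\lVert\cdot\rVert_{M,M_{0}}$, while $\{M_{\leq r}\}_{r>0}$ plays the analogous role for $\lVert\cdot\rVert$. The boundedness of $M_{0}\subseteq(M,\lVert\cdot\rVert)$ gives that every ball $M_{\leq r}$ contains some $\varpi^{n}M_{0}$, furnishing one inclusion of topologies. Conversely, since $M_{0}$ is open, there is some $r_{0}>0$ with $M_{\leq r_{0}}\subseteq M_{0}$, and since scalar multiplication by $\varpi^{-n}$ is a self-homeomorphism of the topological $A$-module $M$, its preimage of $M_{0}$ (which is precisely $\varpi^{n}M_{0}$) is open in $(M,\lVert\cdot\rVert)$ and therefore contains some $M_{\leq r}$.

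For (a), $\lVert\cdot\rVert$ is an $A$-module seminorm by hypothesis. For $\lVert\cdot\rVert_{M,M_{0}}$, the construction immediately yields the estimate
\begin{equation*}
\lVert fx\rVert_{M,M_{0}}\leq \lVert f\rVert_{A_{0},\varpi}\lVert x\rVert_{M,M_{0}},
\end{equation*}
so that the question reduces to a comparison of the two ring seminorms $\lVert\cdot\rVert_{A_{0},\varpi}$ and $\lVert\cdot\rVert_{A}$ on $A$. The key point is to use the seminorm-multiplicativity of $\varpi$ together with the boundedness of $A_{0}$ to bound $\lVert\cdot\rVert_{A_{0},\varpi}$ above by a constant multiple of $\lVert\cdot\rVert_{A}$; this is where the hypothesis that $\varpi$ be seminorm-multiplicative in $(A,\lVert\cdot\rVert_{A})$ is essential, as it controls $\lVert\varpi^{n}\rVert_{A}=\lVert\varpi\rVert_{A}^{n}$ and couples the base of the canonical extension to $\lVert\varpi\rVert_{A}$. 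Once this one-sided comparison is in hand, $\lVert\cdot\rVert_{M,M_{0}}$ inherits $A$-module submultiplicativity with respect to the given $\lVert\cdot\rVert_{A}$, completing the verification of (a).

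The main obstacle I expect is exactly the comparison in (a) between $\lVert\cdot\rVert_{A_{0},\varpi}$ and the original $\lVert\cdot\rVert_{A}$: the canonical extension is defined relative to a fixed base independent of $\lVert\varpi\rVert_{A}$, and upgrading topological agreement to a bound with a multiplicative constant relies crucially on the seminorm-multiplicativity of $\varpi$. Once this step is settled, both seminorms on $M$ are $A$-module seminorms inducing the same topology, and a double application of Corollary \ref{Boundedness vs. continuity 1.5} yields the desired bounded-equivalence.
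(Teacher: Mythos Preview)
Your overall strategy—show the two seminorms define the same topology on $M$ and then invoke Corollary~\ref{Boundedness vs. continuity 1.5}—is exactly the paper's. The paper organizes things slightly differently: it first treats the special case $M_{0}=M_{\leq1}$ by proving the explicit equality $\varpi^{n}M_{\leq1}=\{x\in M:\lVert x\rVert\leq\lVert\varpi\rVert_{A}^{n}\}$ (after passing to a bounded-equivalent submetric seminorm and using that $\varpi^{-1}$ is seminorm-multiplicative on $M$), and then reduces the general $M_{0}$ to this one by sandwiching $\varpi^{m}M_{\leq1}\subseteq M_{0}\subseteq\varpi^{n}M_{\leq1}$. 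Your direct argument for (b) via openness and boundedness of $M_{0}$ is an equally valid way to compare the neighborhood bases.

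There is, however, a genuine subtlety in your step (a) that you flag but do not resolve. The bound $\lVert\cdot\rVert_{A_{0},\varpi}\leq C\lVert\cdot\rVert_{A}$ can fail outright if the canonical extension uses a base different from $\lVert\varpi\rVert_{A}$: with $A=\mathbb{Q}_{p}$, $\lvert p\rvert_{p}=1/p$, $\varpi=p$, and base $2$ as in the paper's notation section, one has $\lVert p^{k}\rVert_{A_{0},\varpi}=2^{-k}$ versus $\lVert p^{k}\rVert_{A}=p^{-k}$, and no constant $C$ works. So either the canonical extension must be read with base $\lVert\varpi\rVert_{A}$, or one should bypass Corollary~\ref{Boundedness vs. continuity 1.5} and apply Lemma~\ref{Boundedness vs. continuity} directly after checking that $\varpi$ is $\lVert\cdot\rVert_{A}$-multiplicative for $\lVert\cdot\rVert_{M,M_{0}}$ (which again forces that base). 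Also, the hypothesis that drives this one-sided bound is the \emph{openness} of $A_{0}$ (giving $A_{\leq r_{0}}\subseteq A_{0}$, hence $A_{\leq r_{0}\lVert\varpi\rVert_{A}^{n}}=\varpi^{n}A_{\leq r_{0}}\subseteq\varpi^{n}A_{0}$), not its boundedness, which yields the opposite inequality. The paper's proof glosses over this same point when it invokes Corollary~\ref{Boundedness vs. continuity 1.5}, so you are not missing anything the paper supplies—but your sentence attributing the bound to ``boundedness of $A_{0}$'' should be corrected.
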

\begin{proof}We first prove the lemma in the case $M_{0}=M_{\leq1}$. By Corollary \ref{Boundedness vs. continuity 1.5} it suffices to show that the two seminorms $\lVert\cdot\rVert$ and $\lVert\cdot\rVert_{M,M_{\leq1}}$ define the same topology on $M$. This will follow if we show that \begin{equation}\varpi^{n}M_{\leq1}=\{\, x\in M\mid \lVert x\rVert\leq\lVert\varpi\rVert_{A}^{n}\,\},\end{equation}up to replacing $\lVert\cdot\rVert$ by some bounded-equivalent $A$-module seminorm. Using \cite{Bambozzi-Ben-Bassat16}, Proposition 3.28, we may assume that $\lVert\cdot\rVert$ satisfies $\lVert fx\rVert\leq \lVert f\rVert_{A}\lVert x\rVert$ for all $f\in A$ and $x\in M$. In particular, we see that the left hand side of equation (1) is contained in the right hand side. To prove the opposite inclusion, recall from Lemma \ref{Multiplicative topologically nilpotent units} that under our assumptions on $\lVert\cdot\rVert$ every invertible element of $A$ which is multiplicative with respect to the norm $\lVert\cdot\rVert_{A}$ on $A$ is also a multiplicative element for the seminorm $\lVert\cdot\rVert$ on $M$; in particular, this applies to the inverse $\varpi^{-1}$ of $\varpi$. Hence, if $x\in M$ satisfies $\lVert x\rVert\leq\lVert\varpi\rVert_{A}^{n}$, then \begin{equation*}\lVert\varpi^{-n}x\rVert=\lVert\varpi\rVert_{A}^{-n}\lVert x\rVert\leq1.\end{equation*}In other words, $\varpi^{-n}x\in M_{\leq1}$, proving the equality (1).

It remains to deduce from the above the assertion for general $M_{0}$. Since $M_{0}$ is open and bounded, there exist integers $m, n$ such that \begin{equation*}\varpi^{m}M_{\leq1}\subseteq M_{0}\subseteq \varpi^{n}M_{\leq1}\end{equation*}(we use here that the topology defined by $\lVert\cdot\rVert$ coincides with that defined by $\lVert\cdot\rVert_{M,M_{\leq1}}$, as we have shown in the last paragraph). It follows that the seminorms $\lVert\cdot\rVert_{M,M_{0}}$ and $\lVert\cdot\rVert_{M,M_{\leq1}}$ are bounded-equivalent.\end{proof} 

\section{Gauge seminorms}\label{sec:gauge seminorms}

We now introduce the concept of the gauge seminorm on a module over a seminormed ring $(A, \lVert\cdot\rVert)$ which generalizes the analogous concept from functionional analysis over nonarchimedean fields (see Schneider \cite{Schneider}, \S2) to arbitrary seminormed rings. Under some rather mild assumptions on $(A, \lVert\cdot\rVert)$, this will allow us to describe the seminorm on any seminormed $A$-module $M$ in very explicit terms. We first prove the following elementary lemma. Recall that we denote by $A^{\times,m}$ the group of seminorm-multiplicative units in a seminormed ring $(A, \lVert\cdot\rVert)$. For a seminormed abelian group $A$ and $r>0$ we denote by $A_{\leq r}$ the closed ball of radius $r$ in $A$.
\begin{lemma}\label{Lattices}Let $(A, \lVert\cdot\rVert)$ be a seminormed ring with an open subring $A_{0}$ of $A_{\leq1}$, let $M$ be an $A$-module and let $M_{0}$ be an $A_{0}$-submodule of $M$. For any finite family of elements $a_1,\dots, a_n\in A^{\times,m}$, there exists some $i\in\{1,\dots, n\}$ such that \begin{equation*}a_{1}M_{0}+\dots+a_{n}M_{0}\subseteq a_{i}(A_{\leq1}\cdot M_{0}).\end{equation*}In particular, for every $r>0$, the subgroup $(A^{\times,m})_{\leq r}\cdot M_{0}$ of $M$ is given by\begin{equation*}(A^{\times,m})_{\leq r}\cdot M_{0}=\bigcup_{a\in (A^{\times,m})_{\leq r}}a(A_{\leq1}\cdot M_{0}).\end{equation*}\end{lemma}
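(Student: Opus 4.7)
The plan is to reduce the first assertion to a one-step pigeonhole argument on the norms $\lVert a_j\rVert$, and then deduce the second assertion as a formal consequence.

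For the first part, I pick the index $i\in\{1,\dots,n\}$ for which $\lVert a_i\rVert = \max_{1\le j\le n}\lVert a_j\rVert$. Since each $a_j$ lies in the group $A^{\times,m}$ of seminorm-multiplicative units, the inverse $a_i^{-1}$ satisfies $\lVert a_i^{-1}\rVert = \lVert a_i\rVert^{-1}$ (as recalled in the excerpt just before Definition \ref{Norm-multiplicative elements}), and therefore
\[\lVert a_i^{-1} a_j\rVert \;=\; \lVert a_i^{-1}\rVert\,\lVert a_j\rVert \;=\; \lVert a_j\rVert/\lVert a_i\rVert \;\leq\; 1,\]
which means $a_i^{-1} a_j \in A_{\leq 1}$. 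Consequently $a_j M_0 = a_i\,(a_i^{-1}a_j)\,M_0 \subseteq a_i(A_{\leq 1}\cdot M_0)$ for every $j$. Because the seminorm is nonarchimedean and submultiplicative with $\lVert 1\rVert=1$, the set $A_{\leq 1}$ is an open subring of $A$, so $A_{\leq 1}\cdot M_0$ is an $A_{\leq 1}$-submodule of $M$, in particular an additive subgroup; summing the above inclusion over $j$ delivers $a_1 M_0+\cdots+a_n M_0\subseteq a_i(A_{\leq1}\cdot M_0)$.

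For the ``in particular'' statement, I apply the first part with the $A_{\leq 1}$-submodule $A_{\leq 1}\cdot M_0$ taking the place of $M_0$. Since $A_{\leq 1}\cdot(A_{\leq 1}\cdot M_0)=A_{\leq 1}\cdot M_0$, this shows that for any finite collection $a_1,\dots,a_n\in (A^{\times,m})_{\leq r}$ and any elements $y_1,\dots,y_n\in A_{\leq 1}\cdot M_0$, the sum $a_1 y_1+\cdots+a_n y_n$ lies in $a_i(A_{\leq 1}\cdot M_0)$ for an index $i$ with $\lVert a_i\rVert\leq r$. Hence $\bigcup_{a\in (A^{\times,m})_{\leq r}} a(A_{\leq 1}\cdot M_0)$ is closed under addition, and visibly contains $0$ and is closed under negation, so it is an additive subgroup of $M$ (in fact an $A_{\leq 1}$-submodule). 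This subgroup contains every product $am$ with $a\in (A^{\times,m})_{\leq r}$ and $m\in M_0$ (take $y=m$, using $M_0\subseteq A_{\leq 1}\cdot M_0$), hence contains the subgroup $(A^{\times,m})_{\leq r}\cdot M_0$ they generate; conversely, the identity $A_{\leq 1}\cdot\!\bigl((A^{\times,m})_{\leq r}\cdot M_0\bigr)=\sum_{a}a(A_{\leq 1}\cdot M_0)$ together with the first part (which collapses this sum to the displayed union) shows the opposite inclusion.

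The only real content is the pigeonhole step in the first paragraph, where seminorm-multiplicativity is used twice: once to guarantee $\lVert a_i^{-1}\rVert=\lVert a_i\rVert^{-1}$, and once to pass this through a product. I do not expect any serious obstacle beyond correctly recognizing that picking the index of maximal norm is what makes all other elements absorb into it; the remaining assertions are purely formal from the fact that $A_{\leq 1}$ is a subring.
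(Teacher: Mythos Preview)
Your proof of the first assertion is correct and uses the same key idea as the paper's: choose $i$ with $\lVert a_i\rVert$ maximal and use seminorm-multiplicativity of $a_i^{-1}$ to get $a_i^{-1}a_j\in A_{\leq 1}$ for all $j$. The paper wraps this in an induction on $n$, which is unnecessary; your direct argument is cleaner but otherwise identical in substance.

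For the ``in particular,'' your forward inclusion $(A^{\times,m})_{\leq r}\cdot M_0 \subseteq \bigcup_a a(A_{\leq 1}\cdot M_0)$ is fine. However, your reverse-inclusion argument does not do what you claim: what you actually establish is
\[
\bigcup_{a} a(A_{\leq 1}\cdot M_0) \;=\; A_{\leq 1}\cdot\bigl((A^{\times,m})_{\leq r}\cdot M_0\bigr),
\]
and there is no reason for the right-hand side to coincide with $(A^{\times,m})_{\leq r}\cdot M_0$ when $M_0$ is only an $A_0$-submodule with $A_0\subsetneq A_{\leq 1}$. In fact the reverse inclusion can fail in this generality: take $A=\mathbb{Z}[x]$ with the trivial norm, $A_0=\mathbb{Z}$, $M=A$, $M_0=\mathbb{Z}$, $r=1$; then $A^{\times,m}=\{\pm 1\}$, so $(A^{\times,m})_{\leq 1}\cdot M_0=\mathbb{Z}$ while $\bigcup_a a(A_{\leq 1}\cdot M_0)=\mathbb{Z}[x]$. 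The paper's proof does not address this direction either, and in every application of the lemma (Proposition~\ref{Criterion for being a seminorm}, Proposition~\ref{Another definition of the gauge seminorm}) the module playing the role of $M_0$ is already an $A_{\leq 1}$-submodule, so that $A_{\leq 1}\cdot M_0=M_0$ and the reverse inclusion becomes trivial.
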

\begin{proof}We proceed by induction on $n\geq1$, the case $n=1$ being trivial. Suppose that the assertion holds for some $n\geq1$ and let $a_{1},\dots, a_{n+1}\in A^{\times,m}$. We may assume that $\lVert a_{1}\rVert=\max_{1\leq i\leq n+1}\lVert a_{i}\rVert$. By the induction hypothesis we may also assume that \begin{equation*}a_{2}M_{0}+\dots+a_{n+1}M_{0}\subseteq a_{2}(A_{\leq1}\cdot M_{0}).\end{equation*}Since $\lVert a_{2}\rVert\leq \lVert a_{1}\rVert$ and $a_{1}\in A^{\times,m}$, we have \begin{equation*}\lVert a_{1}^{-1}a_{2}\rVert\leq \lVert a_{1}^{-1}\rVert\lVert a_{2}\rVert=\lVert a_{1}\rVert^{-1}\lVert a_{2}\rVert\leq1,\end{equation*}so $a_{1}^{-1}a_{2}\in A_{\leq1}$. Then \begin{equation*}a_{1}^{-1}(a_{1}M_{0}+a_{2}(A_{\leq1}M_{0}))=M_{0}+a_{1}^{-1}a_{2}(A_{\leq1}\cdot M_{0})\subseteq M_{0}+A_{\leq1}\cdot M_{0}.\end{equation*}Multiplying by $a_{1}$, we conclude that $a_{1}M_{0}+a_{2}(A_{\leq1}\cdot M_{0})\subseteq a_{1}(A_{\leq1}\cdot M_{0})$, as desired.\end{proof}
The following definition generalizes Schneider's definition of the gauge seminorm of a $K^{\circ}$-lattice in a vector space $V$ over a nonarchimedean field $K$ (see \cite{Schneider}, \S2), see also Proposition \ref{Another definition of the gauge seminorm} for the comparison of the two definitions. 
\begin{mydef}[Gauge seminorm]For a seminormed ring $(A, \lVert\cdot\rVert)$, an $A$-module $M$ and a subset $M_{0}$ of $M$ such that \begin{equation*}M=\bigcup_{r>0}A_{\leq r}\cdot M_{0},\end{equation*}we define a function $\lVert\cdot\rVert_{M,M_{0},\ast}: M\to\mathbb{R}_{\geq0}$ on $M$ by \begin{equation*}\lVert x\rVert_{M,M_{0},\ast}=\inf\{\, r>0\mid x\in A_{\leq r}\cdot M_{0}\,\}\end{equation*}and call it the gauge, or gauge seminorm, of $M_{0}$ on $M$.\end{mydef}
\begin{lemma}\label{The gauge seminorm is a seminorm}Let $(A, \lVert\cdot\rVert)$ be a seminormed ring, let $M$ be an $A$-module and let $M_{0}$ be a subset of $M$ such that \begin{equation*}M=\bigcup_{r>0}A_{\leq r}\cdot M_{0}.\end{equation*}The gauge seminorm $\lVert\cdot\rVert_{M,M_{0},\ast}$ is indeed a seminorm and $(M, \lVert\cdot\rVert_{M,M_{0},\ast})$ is a submetric seminormed $(A, \lVert\cdot\rVert)$-module.\end{lemma}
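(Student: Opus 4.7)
The plan is to verify directly from the definition that $\lVert\cdot\rVert_{M,M_{0},\ast}$ satisfies the seminorm axioms ($\lVert 0\rVert=0$ and the nonarchimedean triangle inequality) and then to check the submetric inequality $\lVert fx\rVert_{M,M_{0},\ast}\leq\lVert f\rVert\lVert x\rVert_{M,M_{0},\ast}$ for the $A$-module structure. The hypothesis $M=\bigcup_{r>0}A_{\leq r}\cdot M_{0}$ ensures that the infimum defining the gauge is taken over a nonempty set for every $x\in M$, so $\lVert x\rVert_{M,M_{0},\ast}$ is always finite. The fact that $0=0\cdot m\in A_{\leq r}\cdot M_{0}$ for every $r>0$ (for any $m\in M_{0}$) immediately gives $\lVert 0\rVert_{M,M_{0},\ast}=0$.

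For the nonarchimedean triangle inequality, the key observation is that nonarchimedeanness of $\lVert\cdot\rVert$ makes $A_{\leq r}$ an additive subgroup of $A$, so the set $A_{\leq r}\cdot M_{0}$ of finite sums $\sum_{i}a_{i}m_{i}$ (with $a_{i}\in A_{\leq r}$, $m_{i}\in M_{0}$) is a subgroup of $M$, and $r\leq r'$ implies $A_{\leq r}\cdot M_{0}\subseteq A_{\leq r'}\cdot M_{0}$. Given $x,y\in M$ and $\epsilon>0$, I would pick $r,r'>0$ with $x\in A_{\leq r}\cdot M_{0}$, $y\in A_{\leq r'}\cdot M_{0}$, $r<\lVert x\rVert_{M,M_{0},\ast}+\epsilon$, and $r'<\lVert y\rVert_{M,M_{0},\ast}+\epsilon$; then $x+y\in A_{\leq\max(r,r')}\cdot M_{0}$, hence $\lVert x+y\rVert_{M,M_{0},\ast}\leq\max(r,r')\leq\max(\lVert x\rVert_{M,M_{0},\ast},\lVert y\rVert_{M,M_{0},\ast})+\epsilon$, and letting $\epsilon\to0$ yields the desired inequality.

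For the submetric $A$-action inequality, suppose $x=\sum_{i}a_{i}m_{i}\in A_{\leq r}\cdot M_{0}$ and $f\in A$. Then submultiplicativity of $\lVert\cdot\rVert$ gives $\lVert fa_{i}\rVert\leq\lVert f\rVert r$, so $fx=\sum_{i}(fa_{i})m_{i}\in A_{\leq\lVert f\rVert r}\cdot M_{0}$, whence $\lVert fx\rVert_{M,M_{0},\ast}\leq\lVert f\rVert r$; taking the infimum over all such $r$ gives the submetric inequality in the case $\lVert f\rVert>0$. The degenerate case $\lVert f\rVert=0$ needs one separate word: here the same computation shows $fx\in A_{\leq\epsilon}\cdot M_{0}$ for every $\epsilon>0$, forcing $\lVert fx\rVert_{M,M_{0},\ast}=0=\lVert f\rVert\lVert x\rVert_{M,M_{0},\ast}$. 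No step is a genuine obstacle; the argument is essentially a direct transcription of the submultiplicative and nonarchimedean properties of $\lVert\cdot\rVert$ into the language of infima.
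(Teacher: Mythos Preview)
Your proof is correct and follows essentially the same approach as the paper: both use that each $A_{\leq r}\cdot M_{0}$ is a subgroup of $M$ to get the seminorm axioms, and both verify the submetric inequality by the containment $fA_{\leq r}\cdot M_{0}\subseteq A_{\leq \lVert f\rVert r}\cdot M_{0}$, treating the case $\lVert f\rVert=0$ separately. The paper's version is slightly terser (it calls the seminorm part ``obvious'') and phrases the submetric step via $r\lVert f\rVert^{-1}$ rather than $\lVert f\rVert r$, but the content is the same.
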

\begin{proof}That $\lVert\cdot\rVert_{M,M_{0},\ast}$ is an abelian group seminorm is obvious since $A_{\leq r}\cdot M_{0}$, for any $r>0$, is by definition a subgroup of $M$. Thus we only have to prove that $\lVert fx\rVert_{M,M_{0},\ast}\leq \lVert f\rVert\lVert x\rVert_{M,M_{0},\ast}$ for all $x\in M$ and $f\in A$. Suppose first that $\lVert f\rVert=0$. Choose $r>0$ such that $x\in A_{\leq r}\cdot M_{0}$. Then \begin{equation*}fx\in A_{\leq\epsilon}A_{\leq r}\cdot M_{0}\subseteq A_{\leq\epsilon r}\cdot M_{0}\end{equation*}for all $\epsilon>0$. Consequently, $\lVert fx\rVert_{M,M_{0},\ast}=0$, which proves the claim in the case when $\lVert f\rVert=0$. Now suppose that $\lVert f\rVert\neq 0$. Suppose that $r>0$, $x\in M$, $f\in A$ are such that $\lVert x\rVert_{M,M_{0},\ast}<r\lVert f\rVert^{-1}$. Then there exists $r'\in (0,r)$ such that $x\in A_{\leq r'\lVert f\rVert^{-1}}\cdot M_{0}$. But then \begin{equation*}fx\in fA_{\leq r'\lVert f\rVert^{-1}}\cdot M_{0}\subseteq A_{\leq r'}\cdot M_{0},\end{equation*}so $\lVert fx\rVert_{M,M_{0},\ast}<r$, as desired.\end{proof}
\begin{example}\label{Discrete seminorms and gauges}If the underlying topological ring of $A$ is a Tate ring, $(A_{0}, \varpi)$ is a pair of definition of this Tate ring and the seminorm on $A$ is the canonical extension $\lVert\cdot\rVert_{A,A_{0}}$ of the $\varpi$-adic seminorm on $A_{0}$, then, for every $r>0$, we see that \begin{equation*}A_{\leq r}=A_{\leq \lVert\varpi^{n_{r}}\rVert}=\varpi^{n_{r}}A_{0},\end{equation*}where $n_{r}=\max\{ n\mid r\leq \lVert\varpi^{n}\rVert\,\}$. It follows that, for any $M$ as above and any open $A_{0}$-submodule $M_{0}$ of $M$, the gauge seminorm $\lVert\cdot\rVert_{M,M_{0},\ast}$ is equal to the canonical extension $\lVert\cdot\rVert_{M,M_{0}}$ of the $\varpi$-adic seminorm on $M_{0}$.\end{example}
By contrast, if the seminorm $\lVert\cdot\rVert$ on $A$ is such that the subgroup $\lVert A^{\times,m}\rVert$ of $\mathbb{R}_{\geq0}$ is not discrete (and hence dense in $\mathbb{R}_{\geq0}$), then it turns out that every seminorm on an $A$-module $M$ making $M$ into a submetric seminormed $A$-module is a gauge seminorm. We will deduce this fact from the following lemma. 
\begin{lemma}\label{Description of norms}Let $(A, \lVert\cdot\rVert)$ be a seminormed ring. Let $\mu: M\to \mathbb{R}_{\geq0}$ be a function on an $A$-module $M$ satisfying \begin{equation*}\mu(fx)\leq \lVert f\rVert\mu(x)\end{equation*}for all $f\in A^{\times,m}$, $x\in M$ (respectively, for all $f\in A$, $x\in M$) and set $M_{\mu\leq1}=\mu^{-1}([0,1])$. If for every $x\in M$ and every $s>\mu(x)$ the intersection \begin{equation*}(\mu(x), s)\cap\lVert A^{\times,m}\rVert\end{equation*}is non-empty, then \begin{equation*}\mu(x)=\inf\{\, \lVert f\rVert\mid f\in A^{\times,m}, x\in fM_{\mu\leq1}\,\}\end{equation*}for all $x\in M$ (respectively, $\mu(x)=\inf\{\, \lVert f\rVert\mid f\in A, x\in fM_{\mu\leq1}\,\}$ for all $x\in M$).\end{lemma}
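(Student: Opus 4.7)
The plan is to establish the two inequalities separately. The upper bound $\mu(x) \leq \inf\{\lVert f\rVert : f\in A^{\times,m},\, x\in fM_{\mu\leq 1}\}$ follows immediately from the defining property of $\mu$: whenever $x = fy$ with $f\in A^{\times,m}$ and $y\in M_{\mu\leq 1}$, the hypothesis gives
\[
\mu(x) = \mu(fy) \leq \lVert f\rVert \mu(y) \leq \lVert f\rVert,
\]
and taking the infimum over all such $f$ yields the desired bound. The parenthetical version, with the infimum taken over $f\in A$, works identically.

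For the reverse inequality, my approach is to use the density hypothesis to construct a single element $f\in A^{\times,m}$ whose norm slightly exceeds $\mu(x)$ and which witnesses the membership $x\in fM_{\mu\leq 1}$. Fix $x\in M$ and $s > \mu(x)$. By the density hypothesis, the interval $(\mu(x), s)$ meets $\lVert A^{\times,m}\rVert$, so I can pick $f\in A^{\times,m}$ with $\mu(x) < \lVert f\rVert < s$. Since $A^{\times,m}$ is a subgroup of $A^{\times}$, we have $f^{-1}\in A^{\times,m}$ and $\lVert f^{-1}\rVert = \lVert f\rVert^{-1}$, as recalled in the discussion preceding Lemma \ref{Multiplicative topologically nilpotent units}. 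Applying the defining inequality for $\mu$ to $f^{-1}$ and $x$ gives
\[
\mu(f^{-1}x) \leq \lVert f^{-1}\rVert \mu(x) = \mu(x)/\lVert f\rVert < 1,
\]
so $f^{-1}x\in M_{\mu\leq 1}$, i.e.\ $x\in fM_{\mu\leq 1}$. Hence the infimum on the right-hand side is at most $\lVert f\rVert < s$, and letting $s$ decrease to $\mu(x)$ completes the proof.

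There is no serious obstacle beyond choosing the right approximating unit; the rest is a one-line calculation. The case $\mu(x) = 0$ requires no separate treatment, because the density hypothesis still supplies some $f\in A^{\times,m}$ with $\lVert f\rVert\in (0,s)$ for any $s>0$, and then $\mu(f^{-1}x) \leq \lVert f\rVert^{-1}\cdot 0 = 0 < 1$. What deserves emphasis, however, is that the density of $\lVert A^{\times,m}\rVert$ in $\mathbb{R}_{\geq 0}$ is precisely what enables us to recover $\mu(x)$ on the nose rather than merely up to a bounded-equivalence factor: as Remarks \ref{Caution 1} and \ref{Caution 2} show, dropping it would only yield an equivalence of seminorms, not an equality, which would be insufficient for the application to Theorem \ref{Main theorem}.
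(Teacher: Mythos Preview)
Your proof is correct and follows essentially the same argument as the paper's: both directions are handled exactly as you do, by bounding $\mu(x)\le\lVert f\rVert$ whenever $x\in fM_{\mu\le1}$, and by using the density hypothesis to find $f\in A^{\times,m}$ with $\mu(x)<\lVert f\rVert<s$ and then checking $\mu(f^{-1}x)<1$. The only extraneous part is your closing remark invoking Remarks~\ref{Caution 1} and~\ref{Caution 2}, which concern a slightly different phenomenon; but this does not affect the proof itself.
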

\begin{proof}Let $x\in M$ be an arbitrary element. Let $s>0$ be such that there exists $f\in A^{\times,m}$ (respectively, $f\in A$) with $\lVert f\rVert<s$ and $x\in fM_{\mu\leq1}$. Then $\mu(x)\leq\lVert f\rVert<s$. As $s$ was arbitrary, this shows that \begin{equation*}\mu(x)\leq\inf\{\, \lVert f\rVert\mid f\in A^{\times,m}, x\in fM_{\mu\leq1}\,\}\end{equation*}(respectively, that $\mu(x)\leq \inf\{\, \lVert f\rVert\mid f\in A, x\in fM_{\mu\leq1}\,\}$). 

Conversely, suppose that $\mu(x)<s$. Since $(\mu(x), s)\cap \lVert A^{\times,m}\rVert$ is non-empty, there exists $f\in A^{\times,m}$ such that $\mu(x)<\lVert f\rVert<s$. Then $\mu(f^{-1}x)\leq\lVert f^{-1}\rVert\mu(x)=\lVert f\rVert^{-1}\mu(x)<1$ and, consequently, $x=f(f^{-1}x)\in fM_{\mu\leq1}$. It follows that\begin{equation*}\inf\{\, \lVert f\rVert\mid f\in A, x\in fM_{\mu\leq1}\,\}\leq\inf\{\, \lVert f\rVert \mid f\in A^{\times,m}, x\in fM_{\mu\leq1}\,\}\leq \mu(x),\end{equation*}as desired.\end{proof}
\begin{lemma}\label{Value groups and gauges}Let $(A, \lVert\cdot\rVert)$ be a seminormed ring, let $M$ be an $A$-module and let $M_{0}$ be a subset of $M$ with $M=\bigcup_{r>0}A_{\leq r}\cdot M_{0}$. Then for every $x\in M$ and every $s>\lVert x\rVert_{M,M_{0},\ast}$ the intersection \begin{equation*}(\lVert x\rVert_{M,M_{0},\ast}, s)\cap \lVert A^{\times,m}\rVert\end{equation*}is non-empty.\end{lemma}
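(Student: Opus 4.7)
The plan is to deduce the claim from the density of $\lVert A^{\times,m}\rVert$ in $\mathbb{R}_{\geq 0}$, which is the operative background hypothesis in this subsection. This density is flagged explicitly in the paragraph immediately preceding the lemma, where the case of non-discrete (hence dense, since $\lVert A^{\times,m}\rVert$ is a subgroup of $(\mathbb{R}_{>0},\cdot)$) value set is singled out as the setting of interest, in contrast to the discrete case treated in Example~\ref{Discrete seminorms and gauges}. The lemma as stated does not literally repeat this assumption among its hypotheses, but without it the conclusion evidently fails (for instance, for the trivial seminorm on $A=\mathbb{Z}$ we have $\lVert A^{\times,m}\rVert=\{1\}$), so the intended reading is that density is in force.

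With density in place, the argument is essentially immediate. First, I fix $x \in M$ and $s > \lVert x\rVert_{M,M_0,\ast}$. Then the open interval
\begin{equation*}
(\lVert x\rVert_{M,M_0,\ast},\, s) \subseteq \mathbb{R}_{\geq 0}
\end{equation*}
is non-empty. Since $\lVert A^{\times,m}\rVert$ is a dense subgroup of $\mathbb{R}_{\geq 0}$, this interval contains some value $\lVert a\rVert$ with $a\in A^{\times,m}$, and any such $a$ witnesses the non-emptiness of the intersection $(\lVert x\rVert_{M,M_0,\ast},\,s)\cap\lVert A^{\times,m}\rVert$ asserted in the lemma.

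The main (and only) obstacle is the interpretive one just mentioned: recognising that the lemma tacitly inherits the density assumption from the surrounding discussion. Once this is recognised, no further structural use is made of the gauge seminorm $\lVert\cdot\rVert_{M,M_0,\ast}$, of the subset $M_0\subseteq M$, or of auxiliary results such as Lemma~\ref{Lattices}; the statement reduces to density of $\lVert A^{\times,m}\rVert$ applied in a one-dimensional open interval. This is precisely the form of the hypothesis needed as input to Lemma~\ref{Description of norms} (applied with $\mu=\lVert\cdot\rVert_{M,M_0,\ast}$), which is presumably the downstream use of the present lemma.
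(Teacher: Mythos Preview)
Your diagnosis is correct: the lemma as literally stated is false without an extra hypothesis (your $\mathbb{Z}$ example shows this), and under density of $\lVert A^{\times,m}\rVert$ the open-interval claim is immediate. Your argument is valid under that reading.

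However, the paper's own proof does \emph{not} proceed via density. It unpacks the definition of $A_{\leq r}\cdot M_0$: given $x\in A_{\leq r}\cdot M_0$, write $x=\sum_i f_i x_i$ with $f_i\in A_{\leq r}$, $x_i\in M_0$, and observe that $x\in A_{\leq\lVert f_1\rVert}\cdot M_0$ where $\lVert f_1\rVert=\max_i\lVert f_i\rVert\leq r$. The upshot is that $\{\lVert f\rVert: f\in A,\ x\in A_{\leq\lVert f\rVert}\cdot M_0\}$ is downward cofinal in $\{r>0: x\in A_{\leq r}\cdot M_0\}$, so the gauge of $x$ is an infimum over values already lying in $\lVert A\rVert$. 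This is a genuine structural fact about the gauge seminorm (used later, e.g.\ in Proposition~\ref{Another definition of the gauge seminorm}), and it requires no density assumption. But note that it only concerns $\lVert A\rVert$, not $\lVert A^{\times,m}\rVert$, and even then yields $[\lVert x\rVert_{M,M_0,\ast},s)\cap\lVert A\rVert\neq\emptyset$ rather than the open-interval version. So there is a slip in the paper---either a typo for $\lVert A\rVert$ in the statement, or a tacitly assumed density hypothesis as you propose. Your approach cleanly gives the lemma as worded once density is in force; the paper's approach extracts something different and does not, by itself, reach the $\lVert A^{\times,m}\rVert$ conclusion.
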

\begin{proof}If $x\in A_{\leq r}\cdot M_{0}$ for some $r>0$, we can write $x$ as $x=f_{1}x_{1}+\dots+f_{n}x_{n}$ with $f_1,\dots, f_n\in A_{\leq r}$ and $x_1,\dots, x_n\in M_{0}$. Assuming that $\lVert f_{1}\rVert=\max_{1\leq i\leq n}\lVert f_{i}\rVert$, we see that $x\in A_{\leq\lVert f_{1}\rVert}\cdot M_{0}$. Thus the set $\{\, \lVert f\rVert\mid f\in A, x\in A_{\leq\lVert f\rVert}\cdot M_{0}\,\}$ is cofinal in $\{\, r>0\mid x\in A_{\leq r}\cdot M_{0}\,\}$. The assertion follows. \end{proof}
\begin{lemma}\label{Closed unit balls of gauges}Let $(M, \lVert\cdot\rVert_{M})$ be a submetric seminormed module over a seminormed ring $(A, \lVert\cdot\rVert)$. The closed unit ball $M_{\leq1}=M_{\lVert\cdot\rVert_{M}\leq1}$ of $(M, \lVert\cdot\rVert_{M})$ coincides with the closed unit ball of $(M, \lVert\cdot\rVert_{M,M_{\leq1},\ast})$.\end{lemma}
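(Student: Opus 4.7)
The plan is to prove the two inclusions separately, both of which are short.

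For the inclusion $M_{\leq 1} \subseteq \{x \in M : \lVert x\rVert_{M,M_{\leq1},\ast} \leq 1\}$, I would observe that $1 \in A_{\leq 1}$ (as $\lVert 1\rVert = 1$ by our convention on seminormed rings), so for any $x \in M_{\leq 1}$ we have $x = 1 \cdot x \in A_{\leq 1} \cdot M_{\leq 1}$. Since this set is, a fortiori, contained in $A_{\leq r} \cdot M_{\leq 1}$ for every $r \geq 1$, the definition of the gauge gives $\lVert x\rVert_{M,M_{\leq 1},\ast} \leq 1$.

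For the reverse inclusion, the key input is the submetric hypothesis together with the nonarchimedean triangle inequality. Suppose $\lVert x\rVert_{M,M_{\leq 1},\ast} \leq 1$. Then for every $\epsilon > 0$ there exists $r \in (0, 1+\epsilon)$ with $x \in A_{\leq r}\cdot M_{\leq 1}$, so we may write $x = \sum_{i=1}^{n} f_{i} x_{i}$ with $\lVert f_{i}\rVert \leq r$ and $\lVert x_{i}\rVert_{M} \leq 1$ for each $i$. By the submetric property of $\lVert\cdot\rVert_{M}$, $\lVert f_{i} x_{i}\rVert_{M} \leq \lVert f_{i}\rVert \lVert x_{i}\rVert_{M} \leq r$, and then the nonarchimedean triangle inequality yields
\begin{equation*}
\lVert x\rVert_{M} \leq \max_{1 \leq i \leq n} \lVert f_{i} x_{i}\rVert_{M} \leq r < 1 + \epsilon.
\end{equation*}
Letting $\epsilon \to 0$ gives $\lVert x\rVert_{M} \leq 1$, i.e., $x \in M_{\leq 1}$.

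Since both inclusions are essentially immediate from the definitions combined with the submetric assumption and the fact that $A_{\leq r}\cdot M_{\leq 1}$ is a subgroup of $M$ (used implicitly in being able to pass from a finite sum to the maximum of the norms of the summands via the nonarchimedean inequality), I do not anticipate any real obstacle; the only point requiring care is to remember that we need the nonarchimedean triangle inequality rather than just the ordinary one, which is ensured by our standing convention that all seminorms in this paper are nonarchimedean.
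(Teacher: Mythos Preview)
Your proof is correct and follows essentially the same approach as the paper's proof. The paper argues the non-trivial inclusion in the same way (for every $r>1$ find $r'\in(0,r)$ with $x\in A_{\leq r'}\cdot M_{\leq1}$ and use the submetric property to get $\lVert x\rVert_{M}\leq r'$), only leaving the step via the nonarchimedean triangle inequality implicit where you spell it out.
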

\begin{proof}If $\lVert x\rVert_{M,M_{\leq1},\ast}\leq1$, then for every $r>1$, there exists $r'\in (0,r)$ such that $x\in A_{\leq r'}\cdot M_{\leq1}$. Since $(M, \lVert\cdot\rVert_{M})$ is a submetric seminormed $A$-module, this entails that $\lVert x\rVert_{M}\leq r'<r$. Consequently, $\lVert x\rVert_{M}\leq1$. The converse implication is obvious from the definition of $\lVert\cdot\rVert_{M,M_{\leq1},\ast}$. \end{proof}
\begin{prop}\label{Description of norms 1}Let $(A, \lVert\cdot\rVert)$ be a seminormed ring and let $(M, \lVert\cdot\rVert_{M})$ be a seminormed $(A, \lVert\cdot\rVert)$-module such that for every $x\in M$ and $s>\lVert x\rVert_{M}$, the intersection \begin{equation*}(\lVert x\rVert_{M}, s)\cap \lVert A^{\times,m}\rVert\end{equation*}is non-empty. Then \begin{align*}\lVert x\rVert_{M}=\lVert x\rVert_{M,M_{\leq1},\ast}=\inf\{\, \lVert f\rVert\mid f\in A, x\in fM_{\leq1}\,\} \\=\inf\{\, \lVert f\rVert\mid f\in A^{\times,m}, x\in fM_{\leq1}\,\}.\end{align*}Moreover, for every function $\mu: A\to\mathbb{R}_{\geq0}$ such that \begin{equation*}\mu(fx)\leq\lVert f\rVert\mu(x)\end{equation*}for all $f\in A^{\times,m}$, $x\in M$, such that $(\mu(x), s)\cap \lVert A^{\times,m}\rVert$ is non-empty for all $x\in M$, $s>\mu(x)$, and such that the closed unit ball with respect to $\mu$ coincides with $M_{\leq1}$, we have \begin{equation*}\lVert\cdot\rVert_{M}=\mu.\end{equation*}In particular, if $\lVert A^{\times,m}\rVert$ is dense in $\mathbb{R}_{\geq0}$, then for every submetric seminormed $(A, \lVert\cdot\rVert)$-module $(M, \lVert\cdot\rVert_{M})$ we have $\lVert\cdot\rVert_{M}=\lVert\cdot\rVert_{M,M_{\leq1},\ast}$.\end{prop}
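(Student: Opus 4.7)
The plan is to apply Lemma \ref{Description of norms} with $\mu=\lVert\cdot\rVert_{M}$, and then to compare the resulting infimum formula with the definition of the gauge seminorm. The two hypotheses of Lemma \ref{Description of norms} are the boundedness-type inequality $\mu(fx)\leq\lVert f\rVert\mu(x)$ (for $f\in A^{\times,m}$, respectively for all $f\in A$) and the density-type condition that $(\mu(x), s)\cap\lVert A^{\times,m}\rVert$ be non-empty whenever $s>\mu(x)$. The latter is precisely our assumption, while the former follows from the submetric property of $M$; in fact, by Lemma \ref{Multiplicative topologically nilpotent units} it holds with equality for $f\in A^{\times,m}$. (The proposition tacitly requires $(M,\lVert\cdot\rVert_{M})$ to be submetric: the gauge seminorm is always submetric by Lemma \ref{The gauge seminorm is a seminorm}, so the asserted identity $\lVert\cdot\rVert_{M}=\lVert\cdot\rVert_{M,M_{\leq 1},\ast}$ implicitly forces this.) Applying Lemma \ref{Description of norms} yields
\begin{equation*}
\lVert x\rVert_{M}=\inf\{\,\lVert f\rVert\mid f\in A^{\times,m},\, x\in fM_{\leq 1}\,\}=\inf\{\,\lVert f\rVert\mid f\in A,\, x\in fM_{\leq 1}\,\},
\end{equation*}
where the second equality uses the inclusion $A^{\times,m}\subseteq A$ together with the submetric inequality applied to decompositions $x=fy$ with $y\in M_{\leq 1}$.

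Next I would identify both of these infima with the gauge seminorm $\lVert x\rVert_{M,M_{\leq 1},\ast}$. The inequality $\lVert x\rVert_{M,M_{\leq 1},\ast}\leq\inf\{\lVert f\rVert\mid f\in A,\,x\in fM_{\leq 1}\}$ is immediate from the definition of the gauge, since $x=fy$ with $y\in M_{\leq 1}$ displays $x\in A_{\leq\lVert f\rVert}\cdot M_{\leq 1}$. For the reverse inequality, given $s>\lVert x\rVert_{M,M_{\leq 1},\ast}$, pick $r<s$ with $x\in A_{\leq r}\cdot M_{\leq 1}$, write $x=\sum_{i}f_{i}y_{i}$ with $\lVert f_{i}\rVert\leq r$ and $y_{i}\in M_{\leq 1}$, and use the submetric property to deduce $\lVert x\rVert_{M}\leq\max_{i}\lVert f_{i}\rVert\cdot\lVert y_{i}\rVert_{M}\leq r<s$. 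This gives $\lVert x\rVert_{M}\leq\lVert x\rVert_{M,M_{\leq 1},\ast}$ and completes the chain of four equalities.

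For the ``moreover'' clause about a general $\mu$, Lemma \ref{Description of norms} applied to $\mu$ yields $\mu(x)=\inf\{\lVert f\rVert\mid f\in A^{\times,m},\,x\in fM_{\mu\leq 1}\}$; since by hypothesis $M_{\mu\leq 1}=M_{\leq 1}$, the right-hand side coincides with the expression already identified with $\lVert x\rVert_{M}$, so $\mu=\lVert\cdot\rVert_{M}$. The ``in particular'' assertion is then automatic, as density of $\lVert A^{\times,m}\rVert$ in $\mathbb{R}_{\geq 0}$ trivially supplies the non-emptiness condition for any $x$ and any $s>\lVert x\rVert_{M}$. No serious obstacle appears: Lemma \ref{Description of norms} has already absorbed the technical difficulty, and what remains is careful bookkeeping among the infimum formulas together with consistent use of the submetric property of $M$.
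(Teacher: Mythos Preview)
Your argument is correct and follows essentially the same approach as the paper: apply Lemma \ref{Description of norms} to $\lVert\cdot\rVert_{M}$ (and to a general $\mu$ for the ``moreover'' clause) to obtain the infimum formulas, then compare with the gauge. The only minor difference is that, where the paper invokes Lemmas \ref{Value groups and gauges} and \ref{Closed unit balls of gauges} to apply Lemma \ref{Description of norms} a second time to the gauge seminorm itself, you instead establish the remaining inequality $\lVert x\rVert_{M}\leq\lVert x\rVert_{M,M_{\leq1},\ast}$ directly from the submetric property---this is the same content, just unpacked.
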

\begin{proof}Follows from Lemma \ref{Description of norms}, Lemma \ref{Value groups and gauges} and Lemma \ref{Closed unit balls of gauges}.\end{proof}
\begin{prop}\label{Criterion for being a seminorm}Let $(A, \lVert\cdot\rVert)$ be a seminormed ring with the property that $\lVert A^{\times,m}\rVert$ is dense in $\mathbb{R}_{\geq0}$. If $\mu: M\to\mathbb{R}_{\geq0}$ is a function on an $A$-module $M$ such that \begin{equation*}\mu(fx)\leq\lVert f\rVert\mu(x)\end{equation*}for all $f\in A^{\times,m}$, $x\in M$ (respectively, for all $f\in A$, $x\in M$) and such that the closed unit ball $M_{\mu\leq1}$ with respect to $\mu$ is an $A_{\leq1}$-submodule of $M$, then $(M, \mu)$ is a seminormed abelian group (respectively, a submetric seminormed $(A, \lVert\cdot\rVert)$-module).\end{prop}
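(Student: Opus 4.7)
The plan is to show that $\mu$ coincides with the gauge seminorm $\mu':=\lVert\cdot\rVert_{M,M_{\mu\leq1},\ast}$ of the $A_{\leq1}$-submodule $M_{\mu\leq1}\subseteq M$. Once this identification is established, the conclusion in both cases follows at once from Lemma \ref{The gauge seminorm is a seminorm}, which tells us that the gauge is always a submetric seminormed $(A,\lVert\cdot\rVert)$-module, and in particular a seminormed abelian group.

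First I verify that $\mu'$ is even defined, i.e., that $M=\bigcup_{r>0}A_{\leq r}\cdot M_{\mu\leq1}$. Given $x\in M$, density of $\lVert A^{\times,m}\rVert$ in $\mathbb{R}_{\geq0}$ produces $f\in A^{\times,m}$ with $\lVert f\rVert>\mu(x)$ (and $\lVert f\rVert>0$, even in the case $\mu(x)=0$). Since $f^{-1}\in A^{\times,m}$, the hypothesis on $\mu$ yields $\mu(f^{-1}x)\leq\lVert f\rVert^{-1}\mu(x)\leq1$, so $x\in f\cdot M_{\mu\leq1}\subseteq A_{\leq\lVert f\rVert}\cdot M_{\mu\leq1}$. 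The exact same construction, applied with $\mu(x)<\lVert f\rVert<r$ whenever $\mu(x)<r$, immediately gives the inequality $\mu'\leq\mu$.

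The heart of the proof is the reverse inequality $\mu\leq\mu'$. Suppose $x\in A_{\leq r}\cdot M_{\mu\leq1}$, so I may write $x=\sum_{j=1}^{n}f_{j}y_{j}$ with $\lVert f_{j}\rVert\leq r$ and $y_{j}\in M_{\mu\leq1}$. For any $s>r$, density furnishes $a\in A^{\times,m}$ with $r<\lVert a\rVert<s$. Since $a$ is seminorm-multiplicative, $\lVert a^{-1}f_{j}\rVert=\lVert f_{j}\rVert/\lVert a\rVert\leq 1$, so each $a^{-1}f_{j}\in A_{\leq1}$; as $M_{\mu\leq1}$ is by hypothesis an $A_{\leq1}$-submodule of $M$, it follows that $a^{-1}x=\sum_{j}(a^{-1}f_{j})y_{j}\in M_{\mu\leq1}$, i.e., $\mu(a^{-1}x)\leq1$. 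Applying the hypothesis on $\mu$ to the seminorm-multiplicative element $a$ then gives $\mu(x)\leq\lVert a\rVert\mu(a^{-1}x)\leq\lVert a\rVert<s$. Letting $s\to r^{+}$ and taking the infimum over admissible $r$ yields $\mu(x)\leq\mu'(x)$, completing the identification $\mu=\mu'$.

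The hard part is this last step, where the additive decomposition $x=\sum f_{j}y_{j}$ must be converted into a single rescaling $x=a\cdot(a^{-1}x)$ by a seminorm-multiplicative element. This is precisely where the two hypotheses of the proposition enter together: density of $\lVert A^{\times,m}\rVert$ in $\mathbb{R}_{\geq0}$ lets me pick an $a\in A^{\times,m}$ with norm just above $r$, while the assumption that $M_{\mu\leq1}$ is an $A_{\leq1}$-submodule of $M$ is exactly what guarantees that the rescaled sum $\sum(a^{-1}f_{j})y_{j}$ still lies in $M_{\mu\leq1}$, despite being a sum of several terms. Neither hypothesis can be dropped.
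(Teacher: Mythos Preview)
Your proof is correct. It is close in spirit to the paper's argument but organized slightly differently: the paper compares $\mu$ with the auxiliary function $\mu'(x)=\inf\{\,r>0\mid x\in (A^{\times,m})_{\leq r}\cdot M_{\mu\leq1}\,\}$ and invokes Lemma~\ref{Description of norms} and Lemma~\ref{Lattices} for the two inequalities, whereas you compare $\mu$ directly with the gauge seminorm $\lVert\cdot\rVert_{M,M_{\mu\leq1},\ast}$ and carry out the rescaling by a unit $a\in A^{\times,m}$ with $r<\lVert a\rVert<s$ by hand. Your version has the pleasant side effect of identifying $\mu$ with the gauge seminorm outright, so Lemma~\ref{The gauge seminorm is a seminorm} gives the submetric $A$-module structure in one stroke (in fact even under the weaker hypothesis of the first case, which the paper's statement does not assert). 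The paper's route, by packaging the rescaling step into Lemma~\ref{Lattices}, makes that lemma reusable elsewhere.
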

\begin{proof}By Lemma \ref{Description of norms} and the assumption that $\lVert A^{\times,m}\rVert$ be dense in $\mathbb{R}_{\geq0}$, we have \begin{equation*}\mu(x)=\inf\{\, \lVert f\rVert\mid f\in A^{\times,m}, x\in fM_{\mu\leq1}\,\}.\end{equation*}In particular, $\mu'(x)\leq\mu(x)$, where $\mu': M\to\mathbb{R}_{\geq0}$ is the function defined by\begin{equation*}\mu'(x)=\inf\{\, r>0\mid x\in (A^{\times,m})_{\leq r}\cdot M_{\mu\leq1}\,\}, x\in M.\end{equation*}It is clear that $\mu'$ is an abelian group seminorm since the sets $(A^{\times,m})_{\leq r}\cdot M$ are subgroups of $M$, by definition. Therefore, to prove that $\mu$ is an abelian group seminorm, it suffices to prove that $\mu(x)\leq\mu'(x)$ for all $x\in M$. Let $s>0$ and $x\in M$ satisfy $\mu'(x)<s$. By definition, there exists some $r\in (0, s)$ such that $x\in (A^{\times,m})_{\leq r}\cdot M_{\mu\leq1}$. But then, by Lemma \ref{Lattices} and the hypothesis that $M_{\mu\leq1}$ is an $A_{\leq1}$-submodule of $M$, there exists some $f\in (A^{\times,m})_{\leq r}$ with the property that $x\in fM_{\mu\leq1}$. In particular, $\mu(x)<s$ and, since $s\in (\mu'(x), \infty)$ was arbitrary, we have shown that $\mu(x)\leq\mu'(x)$, as desired. \end{proof}
\begin{rmk}\label{Dense norms and topologically nilpotent units}Note that if for some seminormed ring $(A, \lVert\cdot\rVert)$ the subset $\lVert A^{\times,m}\rVert$ of $\mathbb{R}_{\geq0}$ is dense in $\mathbb{R}_{\geq0}$, then $A$ automatically contains a seminorm-multiplicative topologically nilpotent unit $\varpi$ with $\varpi\in A_{\leq1}$. Indeed, since $\lVert A^{\times,m}\rVert$ is dense in $\mathbb{R}_{\geq0}$, there exists $\varpi\in A^{\times,m}$ such that $\lVert\varpi\rVert<1$.\end{rmk}
If a unit $\varpi\in A^{\times}$ in a ring $A$ admits a compatible system of $p$-power roots $(\varpi^{1/p^{n}})_{n}$ in $A$, then, for all $n\geq1$, we have $\varpi^{1/p^{n}}\in A^{\times}$ with inverse $\varpi^{-1}\varpi^{(p^{n}-1)/p^{n}}$. In this case we denote the inverse of $\varpi^{1/p^{n}}$ by $\varpi^{-1/p^{n}}$. In this way we obtain well-defined elements $\varpi^{s}$ of $A$ for every $s\in\mathbb{Z}[1/p]$.
\begin{lemma}\label{Roots of norm-multiplicative elements}Let $(A, \lVert\cdot\rVert)$ be a seminormed ring and let $\varpi\in A^{\times,m}$. If $\varpi$ admits a compatible system of $p$-power roots $(\varpi^{1/p^{n}})_{n}$ in $A$ such that $\lVert\varpi^{s}\rVert=\lVert\varpi\rVert^{s}$ for all $s\in\mathbb{Z}[1/p]$, then $\varpi^{s}$ is also seminorm-multiplicative for every $s\in \mathbb{Z}[1/p]$.\end{lemma}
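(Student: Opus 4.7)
The plan is to reduce seminorm-multiplicativity of $\varpi^{s}$ to a single numerical identity by invoking the criterion recalled earlier in this section: an invertible element $a$ of a seminormed ring is seminorm-multiplicative if and only if $\lVert a\rVert\cdot\lVert a^{-1}\rVert=1$. Since the paragraph immediately preceding the lemma observes that every $\varpi^{s}$ with $s\in\mathbb{Z}[1/p]$ is a unit of $A$ with inverse $\varpi^{-s}$ (obtained by combining the $p$-power roots with the inverse of $\varpi$), it suffices for me to verify the identity $\lVert\varpi^{s}\rVert\cdot\lVert\varpi^{-s}\rVert=1$ for every such $s$.

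This identity is then immediate from the standing hypothesis $\lVert\varpi^{t}\rVert=\lVert\varpi\rVert^{t}$ for all $t\in\mathbb{Z}[1/p]$: applying it once with $t=s$ and once with $t=-s$ gives
\begin{equation*}
\lVert\varpi^{s}\rVert\cdot\lVert\varpi^{-s}\rVert=\lVert\varpi\rVert^{s}\cdot\lVert\varpi\rVert^{-s}=1.
\end{equation*}
Here the final equality makes sense because $\lVert\varpi\rVert>0$, which itself follows from the assumed seminorm-multiplicativity of $\varpi$ together with $\lVert 1\rVert=1$ via $\lVert\varpi\rVert\cdot\lVert\varpi^{-1}\rVert=1$. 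Invoking the criterion a second time, now for the unit $\varpi^{s}$, then delivers the desired seminorm-multiplicativity.

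Since the lemma is essentially a repackaging of the hypothesis through a general principle about units in a seminormed ring, I do not anticipate any genuine obstacle; the only care needed is to note that $\lVert\varpi\rVert$ is nonzero, so that the symbol $\lVert\varpi\rVert^{-s}$ is literally defined, and to make sure the inverses $\varpi^{-s}$ are interpreted compatibly with the conventions fixed in the paragraph preceding the statement.
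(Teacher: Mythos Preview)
Your proof is correct and follows essentially the same approach as the paper: both reduce to the criterion that a unit $a$ is seminorm-multiplicative if and only if $\lVert a\rVert\lVert a^{-1}\rVert=1$, and then verify this identity for $a=\varpi^{s}$ using the hypothesis $\lVert\varpi^{t}\rVert=\lVert\varpi\rVert^{t}$ applied at $t=s$ and $t=-s$. Your extra remark that $\lVert\varpi\rVert>0$ is a welcome bit of care the paper leaves implicit.
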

\begin{proof}For any $s\in\mathbb{Z}[1/p]$, we check: \begin{equation*}\lVert\varpi^{-s}\rVert=\lVert\varpi\rVert^{-s}=(\lVert\varpi\rVert^{s})^{-1}=\lVert\varpi^{s}\rVert^{-1}.\end{equation*}The assertion follows from this and \cite{Dine22}, Remark 2.13.\end{proof}
\begin{rmk}Note that the assumption on the seminorm of $\varpi^{s}$, $s\in\mathbb{Z}[1/p]$, in the above lemma is not automatic for a seminorm-multiplicative topologically nilpotent unit $\varpi$ admitting a compatible system of $p$-power roots. To see this, let $\varpi$ be a non-unit non-zero-divisor in a ring $A_{0}$ and endow $A=A_{0}[\varpi^{-1}]$ with the canonical extension $\lVert\cdot\rVert_{A,A_{0}}=\lVert\cdot\rVert_{A,A_{0},\varpi}$ of the $\varpi$-adic seminorm on $A_{0}$ and suppose that $\varpi$ admits a compatible system of $p$-power roots $(\varpi^{1/p^{n}})_{n}$ in $A_{0}$. Then $\varpi$ is a seminorm-multiplicative element of $(A, \lVert\cdot\rVert_{A,A_{0},\varpi})$, but $\varpi^{1/p}$ is not: Indeed, $\lVert\varpi^{1/p}\rVert_{A,A_{0},\varpi}=1$, $\lVert\varpi^{(p-1)/p}\rVert_{A,A_{0},\varpi}=1$ but \begin{equation*}\lVert\varpi^{1/p}\varpi^{(p-1)/p}\rVert_{A,A_{0},\varpi}=\lVert\varpi\rVert_{A,A_{0},\varpi}<1.\end{equation*}\end{rmk} 
With Lemma \ref{Roots of norm-multiplicative elements} in hand, we deduce the following consequence of Lemma \ref{Description of norms 1}.   
\begin{cor}\label{Description of norms 2}Let $(A, \lVert\cdot\rVert)$ be a seminormed ring and suppose that there exists a topologically nilpotent unit $\varpi\in A$ which admits a compatible system of $p$-power roots $(\varpi^{1/p^{n}})_{n}$ in $A$ such that $\lVert\varpi^{s}\rVert=\lVert \varpi\rVert^{s}$ for all $s\in \mathbb{Z}[1/p]$. Then, for every $(M, \lVert\cdot\rVert_{M})\in \Snm_{A}^{\leq1}$, we have \begin{equation*}\lVert\cdot\rVert_{M}=\lVert\cdot\rVert_{M,M_{\leq1},\ast}.\end{equation*}\end{cor}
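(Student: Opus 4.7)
The plan is to reduce the corollary to the last sentence of Proposition \ref{Description of norms 1}, which says that if $\lVert A^{\times,m}\rVert$ is dense in $\mathbb{R}_{\geq 0}$, then the seminorm on any submetric seminormed $A$-module $M$ agrees with the gauge of its closed unit ball. So the entire task is to check the density hypothesis from the given data.

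The first step is to observe that $\varpi$ is itself seminorm-multiplicative, even though this is not stated outright: the case $s=-1$ of the hypothesis gives $\lVert\varpi^{-1}\rVert=\lVert\varpi\rVert^{-1}$, so $\lVert\varpi\rVert\lVert\varpi^{-1}\rVert=1$, and by the characterization of seminorm-multiplicative units recalled just before Lemma \ref{Multiplicative topologically nilpotent units} this means $\varpi\in A^{\times,m}$. With $\varpi\in A^{\times,m}$ in hand, Lemma \ref{Roots of norm-multiplicative elements} applies and yields $\varpi^{s}\in A^{\times,m}$ for every $s\in\mathbb{Z}[1/p]$, with $\lVert\varpi^{s}\rVert=\lVert\varpi\rVert^{s}$ by hypothesis.

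Next I set $t:=\lVert\varpi\rVert$. Since $\varpi$ is topologically nilpotent and now known to be seminorm-multiplicative, $t^{n}=\lVert\varpi^{n}\rVert\to 0$, forcing $t\in(0,1)$. The real-valued function $s\mapsto t^{s}$ is continuous and strictly decreasing with image $(0,\infty)$, and $\mathbb{Z}[1/p]$ is dense in $\mathbb{R}$. Hence $\{t^{s}\mid s\in\mathbb{Z}[1/p]\}$ is dense in $(0,\infty)$, and since $t^{s}\to 0$ as $s\to+\infty$, the value $0$ is also a limit point of this set. As this set is contained in $\lVert A^{\times,m}\rVert$, we conclude that $\lVert A^{\times,m}\rVert$ is dense in $\mathbb{R}_{\geq 0}$. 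Applying the last part of Proposition \ref{Description of norms 1} to any $(M,\lVert\cdot\rVert_{M})\in\Snm_{A}^{\leq 1}$ then gives $\lVert\cdot\rVert_{M}=\lVert\cdot\rVert_{M,M_{\leq 1},\ast}$, as required. There is no serious obstacle: the only point that requires a moment's thought is noticing that the hypothesis $\lVert\varpi^{s}\rVert=\lVert\varpi\rVert^{s}$ for all $s\in\mathbb{Z}[1/p]$ already forces $\varpi\in A^{\times,m}$ via its $s=-1$ instance, after which everything else is immediate.
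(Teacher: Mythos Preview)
Your proof is correct and follows essentially the same approach as the paper's: both use the $s=-1$ case to conclude $\varpi\in A^{\times,m}$, invoke Lemma \ref{Roots of norm-multiplicative elements} to get $\varpi^{s}\in A^{\times,m}$ for all $s\in\mathbb{Z}[1/p]$, observe that $\{\lVert\varpi\rVert^{s}\mid s\in\mathbb{Z}[1/p]\}$ is dense in $\mathbb{R}_{\geq 0}$ because $\lVert\varpi\rVert\in(0,1)$, and then apply the last sentence of Proposition \ref{Description of norms 1}. The only difference is the order in which these observations are made.
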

\begin{proof}Since $\varpi$ is topologically nilpotent, there exists $n\geq1$ such that $\lVert\varpi^{n}\rVert<1$. Since $\lVert\varpi^{n}\rVert=\lVert\varpi\rVert^{n}$, this implies that $\lVert\varpi\rVert<1$. Thus the set \begin{equation*}\{\,\lVert \varpi\rVert^{s}\mid s\in \mathbb{Z}[1/p]\,\}\end{equation*}is dense in $\mathbb{R}_{\geq0}$. By the assumption on $\varpi$, this entails that the set \begin{equation*}\{\,\lVert\varpi^{s}\rVert \mid s\in\mathbb{Z}[1/p]\,\}\end{equation*}is dense in $\mathbb{R}_{\geq0}$. But the assumption on $\varpi$ also implies that $\lVert\varpi^{-1}\rVert=\lVert\varpi\rVert^{-1}$, so $\varpi$ is a seminorm-multiplicative element and thus, by Lemma \ref{Roots of norm-multiplicative elements}, \begin{equation*}\{\,\lVert\varpi^{s}\rVert \mid s\in\mathbb{Z}[1/p]\,\}\subseteq \lVert A^{\times,m}\rVert.\end{equation*}We conclude by Proposition \ref{Description of norms 1}. \end{proof}
\begin{cor}\label{Submetric maps}Let $(A, \lVert\cdot\rVert)$ be a seminormed ring and let $(M, \lVert\cdot\rVert_{M})$, $(N, \lVert\cdot\rVert_{N})$ be submetric seminormed $(A, \lVert\cdot\rVert)$-modules such that for every $x\in M$, $y\in N$ and every $s_{1}>\lVert x\rVert_{M}$, $s_{2}>\lVert y\rVert_{N}$ the intersections $(\lVert x\rVert_{M}, s_1)\cap \lVert A^{\times,m}\rVert$ and $(\lVert y\rVert_{N}, s_{2})\cap A^{\times,m}$ are non-empty. Then an $A$-module homomorphism $\varphi: M\to N$ is submetric if and only if $\varphi(M_{\leq1})\subseteq N_{\leq1}$.

In particular, if $\lVert A^{\times,m}\rVert$ is dense in $\mathbb{R}_{\geq0}$, then an $A$-module homomorphism \begin{equation*}\varphi: M\to N\end{equation*}between any two submetric seminormed $(A, \lVert\cdot\rVert)$-modules $(M, \lVert\cdot\rVert_{M})$, $(N, \lVert\cdot\rVert_{N})$ is submetric if and only if $\varphi(M_{\leq1})\subseteq N_{\leq1}$.\end{cor}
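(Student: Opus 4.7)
The plan is to handle the two directions of the biconditional separately, with only the ``if'' direction requiring real work. The ``only if'' direction is immediate: if $\varphi$ is submetric and $x\in M_{\leq1}$, then $\lVert\varphi(x)\rVert_{N}\leq\lVert x\rVert_{M}\leq1$, so $\varphi(x)\in N_{\leq1}$.

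For the ``if'' direction, the key idea is to use Proposition \ref{Description of norms 1} to rewrite the seminorm on $M$ in a form that makes the action of $\varphi$ transparent. Specifically, under the stated density assumption on the value set applied to the submetric module $(M, \lVert\cdot\rVert_{M})$, Proposition \ref{Description of norms 1} gives
\begin{equation*}
\lVert x\rVert_{M}=\inf\{\,\lVert f\rVert\mid f\in A^{\times,m},\ x\in fM_{\leq1}\,\}
\end{equation*}
for every $x\in M$. Fix $x\in M$ and pick any $f\in A^{\times,m}$ with $x\in fM_{\leq1}$, so that $x=fy$ for some $y\in M_{\leq1}$. By hypothesis, $\varphi(y)\in N_{\leq1}$, hence $\varphi(x)=f\varphi(y)$. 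Since $(N,\lVert\cdot\rVert_{N})$ is submetric and $f\in A^{\times,m}$, Lemma \ref{Multiplicative topologically nilpotent units} applies and gives $\lVert f\varphi(y)\rVert_{N}=\lVert f\rVert\lVert\varphi(y)\rVert_{N}\leq\lVert f\rVert$. Taking the infimum over all admissible $f$ then yields $\lVert\varphi(x)\rVert_{N}\leq\lVert x\rVert_{M}$, which is the submetric property for $\varphi$.

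The ``In particular'' statement is a direct specialization: if $\lVert A^{\times,m}\rVert$ is dense in $\mathbb{R}_{\geq0}$, then the intersections $(\lVert x\rVert_{M},s_{1})\cap\lVert A^{\times,m}\rVert$ and $(\lVert y\rVert_{N},s_{2})\cap\lVert A^{\times,m}\rVert$ are automatically non-empty for all $x\in M$, $y\in N$ and $s_{1}>\lVert x\rVert_{M}$, $s_{2}>\lVert y\rVert_{N}$, so the first part applies verbatim.

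There is no genuine obstacle here; the whole content is already packaged in Proposition \ref{Description of norms 1} together with the fact, recorded in Lemma \ref{Multiplicative topologically nilpotent units}, that seminorm-multiplicative units act multiplicatively on every submetric module. The only thing to be slightly careful about is to apply the $\inf$-description of the seminorm on the source $M$ (where the density hypothesis is actually used) and the submetric property on the target $N$, rather than trying to do both on the same side.
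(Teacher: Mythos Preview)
Your proof is correct and follows the same approach as the paper, which simply cites Proposition \ref{Description of norms 1}; you have spelled out the details cleanly. Your observation that the density hypothesis on $N$ is not actually needed (only submetricity of $N$ is used, via $\lVert f\varphi(y)\rVert_{N}\leq\lVert f\rVert$) is a nice bonus that the paper's terse proof does not make explicit.
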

\begin{proof}This follows from the description of the seminorms $\lVert\cdot\rVert_{M}$, $\lVert\cdot\rVert_{N}$ on $M$, $N$ given in Proposition \ref{Description of norms 1}.\end{proof}
\begin{cor}\label{Isometric isomorphisms}Let $(A, \lVert\cdot\rVert)$ be a seminormed ring such that $\lVert A^{\times,m}\rVert$ is dense in $\mathbb{R}_{\geq0}$. Then an $A$-module homomorphism $\varphi: M\to N$ between submetric seminormed $(A, \lVert\cdot\rVert)$-modules $(M, \lVert\cdot\rVert_{M})$, $(N, \lVert\cdot\rVert_{N})$ is an isometric isomorphism if and only if it restricts to a bijection $M_{\leq1}\tilde{\to}N_{\leq1}$.\end{cor}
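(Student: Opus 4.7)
The forward direction is immediate, since an isometric isomorphism preserves norms and hence restricts to a bijection on closed unit balls. For the converse, my plan is to combine Corollary \ref{Submetric maps} with a scaling argument driven by the density hypothesis.

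Assuming $\varphi$ restricts to a bijection $M_{\leq1}\tilde{\to}N_{\leq1}$, the first step is to apply Corollary \ref{Submetric maps} to $\varphi$ itself: the hypothesis $\varphi(M_{\leq1})\subseteq N_{\leq1}$ already gives that $\varphi$ is submetric. The next step is to promote the bijection of unit balls to a bijection $M\to N$. For surjectivity, given $y\in N$, density of $\lVert A^{\times,m}\rVert$ in $\mathbb{R}_{\geq0}$ yields some $f\in A^{\times,m}$ with $\lVert f\rVert>\lVert y\rVert_{N}$; then, using Lemma \ref{Multiplicative topologically nilpotent units}, $\lVert f^{-1}y\rVert_{N}\leq\lVert f\rVert^{-1}\lVert y\rVert_{N}<1$, so $f^{-1}y\in N_{\leq1}$, and lifting to some $x_{0}\in M_{\leq1}$ via the hypothesis produces $\varphi(fx_{0})=y$. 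For injectivity, if $\varphi(x)=0$, I scale $x$ by the inverse of a large enough $f\in A^{\times,m}$ so that $f^{-1}x\in M_{\leq1}$; since $\varphi(f^{-1}x)=0=\varphi(0)$ and both elements lie in $M_{\leq1}$, injectivity of $\varphi|_{M_{\leq1}}$ forces $f^{-1}x=0$, hence $x=0$.

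Once $\varphi$ is known to be bijective, the $A$-module inverse $\varphi^{-1}:N\to M$ satisfies $\varphi^{-1}(N_{\leq1})=M_{\leq1}$ (since the restricted bijection on unit balls necessarily has $\varphi^{-1}|_{N_{\leq 1}}$ as its inverse), so a second application of Corollary \ref{Submetric maps}, this time to $\varphi^{-1}$, shows that $\varphi^{-1}$ is also submetric. Sandwiching the inequalities then gives $\lVert x\rVert_{M}=\lVert\varphi^{-1}(\varphi(x))\rVert_{M}\leq\lVert\varphi(x)\rVert_{N}\leq\lVert x\rVert_{M}$, i.e.\ norm preservation. The only place where the density hypothesis is essentially used is in the scaling step, to produce seminorm-multiplicative units of any prescribed norm; nothing else in the argument looks delicate, so I do not anticipate any significant obstacle.
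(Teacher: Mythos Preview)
Your proof is correct and follows essentially the same approach as the paper. The paper derives the result as an immediate consequence of Proposition~\ref{Description of norms 1} (the explicit formula $\lVert x\rVert_{M}=\inf\{\lVert f\rVert : f\in A^{\times,m},\ x\in fM_{\leq1}\}$ shows directly that the norm is determined by the unit ball, so a bijection on unit balls forces an isometry once bijectivity on all of $M$ is established via scaling by a norm-multiplicative unit); you instead package the same content as two applications of Corollary~\ref{Submetric maps} and a sandwich, which is an equivalent route through the same key proposition.
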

\begin{proof}This is again an immediate consequence of Proposition \ref{Description of norms 1}. \end{proof}
We can also apply Lemma \ref{Description of norms} and Proposition \ref{Description of norms 1} to the seminormed ring $(A, \lVert\cdot\rVert)$ itself, which gives us the following results.
\begin{cor}\label{The seminorm on a seminormed ring}Let $(A, \lVert\cdot\rVert)$ be a seminormed ring and suppose that the subgroup $\lVert A^{\times,m}\rVert$ is dense in $\mathbb{R}_{\geq0}$. Then \begin{align*}\lVert f\rVert=\inf\{\, \lVert g\rVert\mid g\in A^{\times,m}, f\in gA_{\leq1}\,\} \\ =\inf\{\, r>0\mid f\in (A^{\times,m})_{\leq r}\cdot A_{\leq1}\,\}.\end{align*}\end{cor}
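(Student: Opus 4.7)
The plan is to apply Proposition \ref{Description of norms 1} directly to the seminormed module $M = A$, viewed as a module over itself, and then translate the resulting formula into the form stated using Lemma \ref{Lattices}.

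First, I would observe that the submultiplicativity $\lVert fg\rVert \leq \lVert f\rVert \lVert g\rVert$ of the ring seminorm is precisely the submetric condition for the $A$-action on $A$. Hence $(A, \lVert\cdot\rVert)$ is itself an object of $\Snm_{A}^{\leq1}$, whose closed unit ball (as a module) coincides with $A_{\leq1}$. Since by hypothesis $\lVert A^{\times,m}\rVert$ is dense in $\mathbb{R}_{\geq0}$, the last assertion of Proposition \ref{Description of norms 1} applies and yields
\begin{equation*}
\lVert f\rVert = \lVert f\rVert_{A,A_{\leq1},\ast} = \inf\{\, \lVert g\rVert \mid g \in A, f \in gA_{\leq1}\,\} = \inf\{\, \lVert g\rVert \mid g \in A^{\times,m}, f \in gA_{\leq1}\,\}.
\end{equation*}
This immediately gives the first displayed equality of the corollary.

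For the second displayed equality, I would invoke Lemma \ref{Lattices} with $M_{0} = A_{\leq1}$. Since $A_{\leq1}$ is a subring of $A$ (the closed ball of radius $1$ under a submultiplicative seminorm), we have $A_{\leq1} \cdot A_{\leq1} \subseteq A_{\leq1}$, so that lemma gives
\begin{equation*}
(A^{\times,m})_{\leq r} \cdot A_{\leq1} \;=\; \bigcup_{g \in (A^{\times,m})_{\leq r}} g\bigl(A_{\leq1} \cdot A_{\leq1}\bigr) \;=\; \bigcup_{g \in (A^{\times,m})_{\leq r}} g A_{\leq1}
\end{equation*}
for every $r > 0$. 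Consequently, $f \in (A^{\times,m})_{\leq r} \cdot A_{\leq1}$ if and only if there exists $g \in A^{\times,m}$ with $\lVert g\rVert \leq r$ and $f \in gA_{\leq1}$. Taking the infimum over such $r$ produces exactly $\inf\{\,\lVert g\rVert \mid g \in A^{\times,m}, f \in gA_{\leq1}\,\}$, and combining with the first equality finishes the proof.

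I do not anticipate any serious obstacle here: the result is essentially a specialization of Proposition \ref{Description of norms 1} to the module $M = A$, the only substantive point being the use of Lemma \ref{Lattices} to pass from the "union-of-principal-cosets" description to the "subgroup-generated-by-a-ball-times-the-lattice" description. If anything, the only subtlety worth flagging is to make sure that $A_{\leq1}$ is genuinely a subring (so that $A_{\leq1} \cdot A_{\leq1} = A_{\leq1}$), which is immediate from submultiplicativity plus $\lVert 1\rVert = 1$.
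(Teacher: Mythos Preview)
Your proof is correct and follows the same approach as the paper, which simply says to regard $(A,\lVert\cdot\rVert)$ as a submetric seminormed module over itself and apply Proposition~\ref{Description of norms 1}. Your additional step of invoking Lemma~\ref{Lattices} to derive the second displayed equality is a welcome elaboration that the paper's one-line proof leaves implicit.
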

\begin{proof}Regard $(A, \lVert\cdot\rVert)$ as a submetric seminormed module over itself and apply Proposition \ref{Description of norms 1} to it.\end{proof}
\begin{cor}\label{Bounded submultiplicative seminorms}Let $(A, \lVert\cdot\rVert)$ be a seminormed ring such that $\lVert A^{\times,m}\rVert$ is dense in $\mathbb{R}_{\geq0}$. If $\mu: A\to \mathbb{R}_{\geq0}$ is a submultiplicative function such that $\mu\leq\lVert\cdot\rVert$ and such that the closed unit ball $A_{\mu\leq1}$ with respect to $\mu$ is a subring of $A$, then $\mu$ is a ring seminorm on $A$, and any two such functions are equal if and only if their closed unit balls are equal.\end{cor}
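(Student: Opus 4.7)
The plan is to reduce the corollary to Proposition \ref{Criterion for being a seminorm} and Proposition \ref{Description of norms 1}, both applied to $(A,\mu)$ regarded as a submetric module over the seminormed ring $(A,\lVert\cdot\rVert)$ acting on itself. Submultiplicativity of $\mu$ together with the inequality $\mu\leq\lVert\cdot\rVert$ yields, for all $f,x\in A$,
\[\mu(fx)\leq\mu(f)\mu(x)\leq\lVert f\rVert\mu(x),\]
which is precisely the scalar-compatibility hypothesis of Proposition \ref{Criterion for being a seminorm}. Moreover, $\mu\leq\lVert\cdot\rVert$ gives $A_{\leq1}\subseteq A_{\mu\leq1}$, so the subring $A_{\mu\leq1}$ is in particular an $A_{\leq1}$-submodule of $A$. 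Thus Proposition \ref{Criterion for being a seminorm} applies and shows that $(A,\mu)$ is a submetric seminormed $(A,\lVert\cdot\rVert)$-module; in particular, $\mu$ is an abelian group seminorm on $A$.

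Combined with the given submultiplicativity and the elementary fact that $\mu(1)=1$ (which follows from $1\in A_{\mu\leq1}$, whence $\mu(1)\leq1$, while the submultiplicativity inequality $\mu(1)\leq\mu(1)^{2}$ forces $\mu(1)\in\{0\}\cup[1,\infty)$, leaving only $\mu(1)=1$ aside from the degenerate case $\mu\equiv0$), this gives that $\mu$ is a ring seminorm on $A$.

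For the uniqueness assertion, I would apply Proposition \ref{Description of norms 1} to the submetric seminormed module $(A,\mu)$ just produced. The density hypothesis on $\lVert A^{\times,m}\rVert$ supplies the intersection condition appearing in that proposition, so one obtains
\[\mu(f)=\lVert f\rVert_{A,A_{\mu\leq1},\ast}=\inf\{\,r>0\mid f\in A_{\leq r}\cdot A_{\mu\leq1}\,\},\]
an expression which depends only on the subset $A_{\mu\leq1}$ and on the ambient seminorm $\lVert\cdot\rVert$. Hence two functions satisfying the hypotheses of the corollary are equal if and only if their closed unit balls coincide; the reverse implication is trivial.

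The only nontrivial step is the identification of $\mu$ with the gauge of its unit ball, which rests on Proposition \ref{Description of norms 1} and ultimately on the density of $\lVert A^{\times,m}\rVert$ in $\mathbb{R}_{\geq0}$; all other verifications are essentially formal manipulations with the inequality $\mu\leq\lVert\cdot\rVert$ and the subring structure of $A_{\mu\leq1}$.
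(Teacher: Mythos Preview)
Your proof is correct and follows essentially the same route as the paper: derive $\mu(fx)\leq\lVert f\rVert\mu(x)$ from submultiplicativity and $\mu\leq\lVert\cdot\rVert$, invoke Proposition~\ref{Criterion for being a seminorm} with $M=A$ to get that $\mu$ is a seminorm, and then recover $\mu$ from its unit ball via the gauge-seminorm description. The only cosmetic difference is that you cite Proposition~\ref{Description of norms 1} (the gauge formula $\mu=\lVert\cdot\rVert_{A,A_{\mu\leq1},\ast}$) whereas the paper cites the underlying Lemma~\ref{Description of norms} directly; you are also slightly more explicit than the paper in checking that $A_{\mu\leq1}$ is an $A_{\leq1}$-submodule and in handling $\mu(1)=1$.
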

\begin{proof}If $\mu: A\to\mathbb{R}_{\geq0}$ is submultiplicative and bounded above by $\lVert\cdot\rVert$, then it satisfies $\mu(fg)\leq \mu(f)\mu(g)\leq \lVert f\rVert\mu(g)$ for all $f, g\in A$. To see that any function $\mu$ as in the statement of the corollary is a ring seminorm, apply Proposition \ref{Criterion for being a seminorm} with $M=A$. To deduce from this that any two functions with these properties are equal if their closed unit balls are equal, apply Lemma \ref{Description of norms}, again with $M=A$.\end{proof}
\begin{example}[Berkovich spectrum]\label{Berkovich spectrum of a densely normed ring}Let $(A, \lVert\cdot\rVert)$ be a seminormed ring such that $\lVert A^{\times,m}\rVert$ is dense in $\mathbb{R}_{\geq0}$ (note that by Lemma \ref{Seminormed algebras and seminorm-multiplicative units 2} this assumption is satisfied, for example, for every seminormed $K$-algebra over a densely valued nonarchimedean field $K$). Then Corollary \ref{Bounded submultiplicative seminorms} implies that the Berkovich spectrum $\mathcal{M}((A, \lVert\cdot\rVert))$, defined as the space of multiplicative seminorms on $A$ bounded above by $\lVert\cdot\rVert$, with the subspace topology induced from the product topology on $\mathbb{R}_{\geq0}^{A}$, actually coincides with the space of all multiplicative functions $\mu: A\to\mathbb{R}_{\geq0}$ such that $\mu\leq\lVert\cdot\rVert$ and such that the closed unit ball $A_{\mu\leq1}$ is a subring of $A$. The corollary also shows that the Berkovich spectrum $\mathcal{M}((A, \lVert\cdot\rVert))$ is in canonical bijection with a subset of the set of overrings of $A_{\leq1}$ in $A$.\end{example}
In a similar vein as the above example, we obtain the following curious consequence of Corollary \ref{Bounded submultiplicative seminorms} which concerns nonarchimedean fields.
\begin{prop}\label{Submultiplicative functions on nonarchimedean fields}Let $(K, \vert\cdot\vert)$ be a nonarchimedean field such that $\vert K^{\times}\vert$ is dense in $\mathbb{R}_{\geq0}$. If $\mu: K\to\mathbb{R}_{\geq0}$ is a submultiplicative function with $\mu\leq\vert\cdot\vert$ such that the closed unit ball $K_{\mu\leq1}$ is a subring of $K$, then $\mu=\vert\cdot\vert$.\end{prop}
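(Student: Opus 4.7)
The plan is to reduce the proposition to the uniqueness statement in Corollary \ref{Bounded submultiplicative seminorms} by showing that the closed unit balls of $\mu$ and $\vert\cdot\vert$ coincide. First, I would apply Corollary \ref{Bounded submultiplicative seminorms} to $(K, \vert\cdot\vert)$, noting that in a field every non-zero element is multiplicative, so $\vert K^{\times,m}\vert = \vert K^{\times}\vert$, which is dense in $\mathbb{R}_{\geq 0}$ by hypothesis. This already gives that $\mu$ is a ring seminorm on $K$; in particular $\mu(1) = 1$.

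The inclusion $K_{\vert\cdot\vert\leq 1} \subseteq K_{\mu\leq 1}$ is immediate from $\mu \leq \vert\cdot\vert$. For the reverse inclusion, I would argue by contradiction: suppose $f \in K$ satisfies $\mu(f) \leq 1$ but $\vert f\vert > 1$. Then $f \neq 0$, so $f^{-1}$ exists in $K$ with $\vert f^{-1}\vert = \vert f\vert^{-1} < 1$. Hypothesis $\mu \leq \vert\cdot\vert$ gives $\mu(f^{-1}) \leq \vert f^{-1}\vert < 1$, and then submultiplicativity together with $\mu(1) = 1$ yields
\begin{equation*}
1 = \mu(1) = \mu(f \cdot f^{-1}) \leq \mu(f)\,\mu(f^{-1}) < 1,
\end{equation*}
which is absurd. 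Hence $K_{\mu\leq 1} = K_{\vert\cdot\vert\leq 1}$.

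Finally, I would invoke the second half of Corollary \ref{Bounded submultiplicative seminorms} applied to the ambient seminormed ring $(K, \vert\cdot\vert)$: both $\mu$ and $\vert\cdot\vert$ are submultiplicative functions bounded above by $\vert\cdot\vert$ whose closed unit balls are subrings of $K$, and we have just shown these unit balls agree. The corollary then forces $\mu = \vert\cdot\vert$. The only real obstacle is Step 2 (the reverse inclusion of unit balls), but this is painless once one exploits that $K$ is a field, so every element outside the valuation ring has an inverse inside it; nothing else in the argument requires more than formal manipulation of the hypotheses already collected above.
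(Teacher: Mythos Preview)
Your proof is correct and takes a genuinely different, more elementary route than the paper. Both arguments begin by applying Corollary~\ref{Bounded submultiplicative seminorms} to conclude that $\mu$ is a ring seminorm. From there the paper invokes Lemma~\ref{Norms on a nonarchimedean field}, whose proof rests on Berkovich's theorem that the spectrum of a normed ring is nonempty, together with Lemma~\ref{Berkovich spectrum of a nonarchimedean field}. You bypass this entirely: your field-theoretic observation that $\mu(f)\leq 1$ and $\vert f\vert>1$ would force $1=\mu(1)\leq \mu(f)\mu(f^{-1})<1$ pins down the closed unit ball $K_{\mu\leq 1}=K_{\vert\cdot\vert\leq 1}$ directly, after which the uniqueness clause of Corollary~\ref{Bounded submultiplicative seminorms} finishes the job. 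Your argument thus stays internal to the paper's own toolkit and avoids the external input of Berkovich's theorem; the paper's route, on the other hand, packages the conclusion as a consequence of a general structural fact about norms on nonarchimedean fields that may be of independent use elsewhere.
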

We first need the following special case, which is also true for discretely valued nonarchimedean fields and is well-known.
\begin{lemma}\label{Berkovich spectrum of a nonarchimedean field}The Berkovich spectrum $\mathcal{M}(K)$ of a nonarchimedean field $K$ consists of a single point.\end{lemma}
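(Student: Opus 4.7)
The plan is to show that the only multiplicative seminorm on $K$ bounded above by $\vert\cdot\vert$ is $\vert\cdot\vert$ itself, which immediately gives $\mathcal{M}(K)=\{\vert\cdot\vert\}$. Let $\mu\in\mathcal{M}(K)$. My first observation is that since $\mu$ is a seminorm on the ring $K$ we have $\mu(1)=1$ by the paper's conventions, and then multiplicativity of $\mu$ yields, for every $x\in K^{\times}$, the identity
\begin{equation*}
\mu(x)\mu(x^{-1})=\mu(1)=1.
\end{equation*}
In particular, $\mu(x)\neq 0$ for every $x\neq 0$, so $\mu$ is actually a multiplicative norm (an absolute value) on $K$, and $\mu(x^{-1})=\mu(x)^{-1}$ for all $x\in K^{\times}$. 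This step is what genuinely uses that $K$ is a field; on a general ring, a multiplicative seminorm could a priori vanish on a nontrivial prime, so the conclusion $\mathcal{M}=\{\vert\cdot\vert\}$ fails in general.

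The rest is a one-line double-inequality argument. The hypothesis $\mu\leq\vert\cdot\vert$ gives $\mu(x)\leq\vert x\vert$ for all $x\in K$. Applying the same inequality at $x^{-1}$ for $x\in K^{\times}$ and using $\mu(x^{-1})=\mu(x)^{-1}$ gives
\begin{equation*}
\mu(x)^{-1}=\mu(x^{-1})\leq\vert x^{-1}\vert=\vert x\vert^{-1},
\end{equation*}
which rearranges to $\mu(x)\geq\vert x\vert$. Combining the two inequalities yields $\mu(x)=\vert x\vert$ for every $x\in K^{\times}$, and since both sides vanish at $x=0$ we conclude $\mu=\vert\cdot\vert$, as desired.

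I do not anticipate any real obstacle; the argument is completely formal once one notices that multiplicativity plus $\mu(1)=1$ forces $\mu$ to be nonvanishing on $K^{\times}$. Note also that the density hypothesis on $\vert K^{\times}\vert$ from the preceding proposition is not needed here, which matches the lemma's assertion that this special case holds for arbitrary nonarchimedean fields, including the discretely valued ones.
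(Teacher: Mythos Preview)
Your proof is correct and is essentially the same argument as the paper's: both use multiplicativity to get $\mu(x^{-1})=\mu(x)^{-1}$ and then apply the boundedness inequality at $x^{-1}$ to obtain the reverse inequality $\mu(x)\geq\vert x\vert$. The only difference is that you spell out explicitly why $\mu$ is nonvanishing on $K^{\times}$, whereas the paper absorbs this into the phrase ``$\vert\cdot\vert'$ is an absolute value.''
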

\begin{proof}Let $\vert\cdot\vert'\in \mathcal{M}(K)$, i.e. $\vert\cdot\vert'$ is an absolute value on $K$ bounded above by the given absolute value $\vert\cdot\vert$ of $K$. But then, for any $f\in K^{\times}$, we have $\vert f\rvert = \vert f^{-1}\rvert^{-1}\leq \vert f^{-1}\vert'^{-1}=\vert f\vert'$, proving that $\vert\cdot\vert'=\vert\cdot\vert$.\end{proof}
\begin{lemma}\label{Norms on a nonarchimedean field}Let $K$ be a nonarchimedean field, with absolute value $\vert\cdot\vert$. If $\lVert\cdot\rVert$ is a norm on $K$ which is bounded with respect to $\vert\cdot\vert$, then $\lVert\cdot\rVert=\vert\cdot\vert$.\end{lemma}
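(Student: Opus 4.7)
My plan is a short direct argument that adapts the proof of Lemma \ref{Berkovich spectrum of a nonarchimedean field} to the present setting, where $\lVert\cdot\rVert$ is only submultiplicative rather than multiplicative. The key observation is that submultiplicativity together with $\lVert 1\rVert=1$ already yields $\lVert f\rVert\lVert f^{-1}\rVert\geq 1$ for every $f\in K^{\times}$; if I can bound each factor from above by the corresponding value of $\vert\cdot\vert$, the product of the upper bounds equals $1$ and every inequality in the chain is forced to be an equality.

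Accordingly, I read the hypothesis ``$\lVert\cdot\rVert$ bounded with respect to $\vert\cdot\vert$'' as the pointwise domination $\lVert f\rVert\leq\vert f\vert$ for all $f\in K$. This is the form in which this lemma is naturally invoked in the proof of Proposition \ref{Submultiplicative functions on nonarchimedean fields}, whose input is a submultiplicative function $\mu$ satisfying $\mu\leq\vert\cdot\vert$.

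Under this interpretation, for any $f\in K^{\times}$ I apply the bound to both $f$ and $f^{-1}$, obtaining $\lVert f\rVert\leq\vert f\vert$ and $\lVert f^{-1}\rVert\leq\vert f^{-1}\vert=\vert f\vert^{-1}$. Combining these with submultiplicativity of $\lVert\cdot\rVert$ and the identity $\lVert 1\rVert=1$ gives
\[1=\lVert 1\rVert=\lVert f\cdot f^{-1}\rVert\leq\lVert f\rVert\lVert f^{-1}\rVert\leq\vert f\vert\cdot\vert f\vert^{-1}=1,\]
so every inequality in this chain is actually an equality. In particular $\lVert f\rVert=\vert f\vert$, and together with the trivial case $f=0$ this establishes $\lVert\cdot\rVert=\vert\cdot\vert$ on all of $K$.

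The main step requiring care is therefore only the first one, namely extracting the pointwise bound $\lVert f\rVert\leq\vert f\vert$ from the stated hypothesis. Once this is in hand, the rest is the one-line squeeze above, and no further input beyond submultiplicativity, $\lVert 1\rVert=1$, and the multiplicativity of the absolute value $\vert\cdot\vert$ on the field $K$ is needed.
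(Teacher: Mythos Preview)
Your proof is correct. Your reading of the hypothesis as the pointwise inequality $\lVert\cdot\rVert\leq\vert\cdot\vert$ is not only sufficient for the intended application (Proposition~\ref{Submultiplicative functions on nonarchimedean fields}) but in fact necessary for the lemma to hold: if one only assumes $\lVert\cdot\rVert\leq C\vert\cdot\vert$ for some constant $C>1$, the conclusion can fail. For instance, on any nontrivially valued $K$ set $\lVert f\rVert=\vert f\vert$ when $\vert f\vert\leq 1$ and $\lVert f\rVert=2\vert f\vert$ when $\vert f\vert>1$; one checks that this is a nonarchimedean submultiplicative norm with $\lVert 1\rVert=1$ and $\vert\cdot\vert\leq\lVert\cdot\rVert\leq 2\vert\cdot\vert$, yet $\lVert\cdot\rVert\neq\vert\cdot\vert$.

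Your route is also genuinely different from the paper's and more elementary. The paper invokes Berkovich's theorem that $\mathcal{M}((K,\lVert\cdot\rVert))\neq\emptyset$ to produce a multiplicative seminorm $\vert\cdot\vert'\leq\lVert\cdot\rVert$, then uses Lemma~\ref{Berkovich spectrum of a nonarchimedean field} to identify $\vert\cdot\vert'=\vert\cdot\vert$, and concludes by the squeeze $\vert\cdot\vert=\vert\cdot\vert'\leq\lVert\cdot\rVert\leq\vert\cdot\vert$. You instead apply the hypothesis directly to both $f$ and $f^{-1}$ and use only the inequality $\lVert f\rVert\lVert f^{-1}\rVert\geq\lVert 1\rVert=1$, which follows from submultiplicativity alone; this avoids the nontrivial input from spectral theory entirely. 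The paper's approach situates the result within the Berkovich framework, while yours shows that nothing beyond the field structure and submultiplicativity of $\lVert\cdot\rVert$ is actually needed.
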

\begin{proof}By \cite{Berkovich}, Theorem 1.2.1, \begin{equation*}\mathcal{M}((K, \lVert\cdot\rVert))\neq \emptyset,\end{equation*}so there exists an absolute value $\vert\cdot\vert'$ on $K$ bounded by $\lVert\cdot\rVert$. But then $\vert\cdot\vert'$ is bounded with respect to $\vert\cdot\vert$. Applying Lemma \ref{Berkovich spectrum of a nonarchimedean field}, we see that $\vert\cdot\vert'=\vert\cdot\vert$ and thus that $\lVert\cdot\rVert=\vert\cdot\vert$.\end{proof}
\begin{proof}[Proof of Proposition \ref{Submultiplicative functions on nonarchimedean fields}]By Corollary \ref{Bounded submultiplicative seminorms}, we know that $\mu$ must be a ring seminorm on $K$. In this case the assertion is a special case of Lemma \ref{Norms on a nonarchimedean field}.\end{proof}   
For a seminormed ring $(A, \lVert\cdot\rVert)$, the seminorm $\lVert\cdot\rVert$ on $A$ can be described as a gauge seminorm in multiple ways, with no additional assumption on the values of $\lVert\cdot\rVert$.
\begin{lemma}\label{Seminormed rings and gauges}Let $(A, \lVert\cdot\rVert)$ be a seminormed ring. For every open subring $A_{0}$ of $A_{\leq1}$, the seminorm $\lVert\cdot\rVert$ coincides with $\lVert\cdot\rVert_{A,A_{0},\ast}$.\end{lemma}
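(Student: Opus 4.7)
The plan is to prove both inequalities $\lVert f\rVert_{A,A_{0},\ast}\leq \lVert f\rVert$ and $\lVert f\rVert\leq \lVert f\rVert_{A,A_{0},\ast}$ directly from the definitions, using only that $A_{0}$ is a subring of $A_{\leq1}$ (so $1\in A_{0}$ and every element of $A_{0}$ has seminorm $\leq 1$) and that $\lVert\cdot\rVert$ is a submultiplicative nonarchimedean seminorm. No density assumption on $\lVert A^{\times,m}\rVert$ is needed, which is why this lemma holds for arbitrary seminormed rings.

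First I would check that the gauge is well-defined on all of $A$: since $1\in A_{0}$, for any $f\in A$ and any $r>\lVert f\rVert$ we have $f=f\cdot 1\in A_{\leq r}\cdot A_{0}$, so $A=\bigcup_{r>0}A_{\leq r}\cdot A_{0}$. The same observation already yields $\lVert f\rVert_{A,A_{0},\ast}\leq r$ for every $r>\lVert f\rVert$, hence $\lVert f\rVert_{A,A_{0},\ast}\leq \lVert f\rVert$.

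For the reverse inequality, suppose $f\in A_{\leq r}\cdot A_{0}$, so $f=\sum_{i=1}^{n}a_{i}b_{i}$ with $a_{i}\in A_{\leq r}$ and $b_{i}\in A_{0}\subseteq A_{\leq1}$. By submultiplicativity of the seminorm, $\lVert a_{i}b_{i}\rVert\leq \lVert a_{i}\rVert\lVert b_{i}\rVert\leq r$, and since the seminorm is nonarchimedean, $\lVert f\rVert\leq \max_{i}\lVert a_{i}b_{i}\rVert\leq r$. Taking the infimum over all such $r$ gives $\lVert f\rVert\leq \lVert f\rVert_{A,A_{0},\ast}$.

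There is no real obstacle here; the proof is a short direct verification. The only subtlety worth flagging is that the argument uses the nonarchimedean triangle inequality in an essential way (to bound $\lVert\sum a_{i}b_{i}\rVert$ by the max rather than the sum), which is consistent with the paper's standing convention that all seminorms considered are nonarchimedean.
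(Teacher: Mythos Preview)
Your proof is correct and follows essentially the same approach as the paper's own proof: both directions are verified directly from the definitions using $1\in A_{0}\subseteq A_{\leq1}$, submultiplicativity, and the nonarchimedean triangle inequality. The paper phrases the second inequality via a strict $r'<r$ while you bound $\lVert f\rVert\leq r$ directly and take the infimum, but these are the same argument; your observation that openness of $A_{0}$ is not actually used is also consistent with the paper's proof.
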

\begin{proof}If $f\in A$ satisfies $\lVert f\rVert\leq r$ for some $r>0$, then $f\in A_{\leq r}\subseteq A_{\leq r}\cdot A_{0}$ and thus $\lVert f\rVert_{A,A_{0},\ast}\leq r$. It follows that $\lVert f\rVert_{A,A_{0},\ast}\leq\lVert f\rVert$ for all $f\in A$. Conversely, suppose that $\lVert f\rVert_{A,A_{0},\ast}<r$ for some $r>0$. Then there exists $r'\in (0,r)$ such that $f\in A_{\leq r'}\cdot A_{0}$. Since $A_{0}\subseteq A_{\leq1}$, it follows that $\lVert f\rVert\leq r'<r$.\end{proof}

\section{Main results on almost mathematics}\label{sec:almost mathematics}

The notion of the gauge seminorm of a general open $A_{\leq1}$-submodule $M_{0}$ of a submetric seminormed module $M$ is also intimately related to the following definition, which is inspired by almost mathematics.
\begin{mydef}[Module of almost elements]\label{Module of almost elements}Let $(A, \lVert\cdot\rVert)$ be a seminormed ring and let $A_{0}$ be an open and bounded subring of $A$. For an $A_{0}$-module $M_{0}$ such that the canonical map $M_{0}\hookrightarrow M=M_{0}\otimes_{A_{0}}A$ is injective, the $A_{0}$-module of almost elements of $M_{0}$ is the $A_{0}$-submodule of $M$ given by \begin{equation*}(M_{0})_{\ast}:=\bigcap_{r>1}A_{\leq r}\cdot M_{0}.\end{equation*}\end{mydef}
\begin{example}\label{Module of almost elements and roots}The above notation and the name 'module of almost elements' can be justified as follows. Suppose that $\varpi$ is a topologically nilpotent unit in $A$ which admits a compatible system of $p$-power roots $(\varpi^{1/p^{n}})_{n}$ in $A$ and that the norm on $A$ satisfies $\lVert\varpi^{s}\rVert=\lVert\varpi\rVert^{s}$ for all $s\in\mathbb{Z}[1/p]$ (in particular, $\lVert\varpi^{-1}\rVert=\lVert\varpi\rVert^{-1}$, so $\varpi$ is seminorm-multiplicative). Then, using Lemma \ref{Roots of norm-multiplicative elements}, we have \begin{equation*}(M_{0})_{\ast}=\bigcap_{r>1}A_{\leq r}\cdot M_{0}=\bigcap_{n\in\mathbb{Z}_{>0}}A_{\leq\lVert\varpi^{-1/p^{n}}\rVert}\cdot M_{0}=\bigcap_{n\in\mathbb{Z}_{>0}}\varpi^{-1/p^{n}}A_{\leq1}\cdot M_{0}\end{equation*}for every open subring $A_{0}$ of $A_{\leq1}$ and every $\varpi$-torsion-free $A_{0}$-module $M_{0}$. Choosing $A_{0}=A_{\leq1}$, we obtain \begin{equation*}(M_{0})_{\ast}=\bigcap_{n\in\mathbb{Z}_{>0}}\varpi^{-1/p^{n}}M_{0},\end{equation*}which is precisely the module of almost elements of the almost $A_{\leq1}$-module $M_{0}^{a}$ in the sense of almost mathematics (see Lemma \ref{Module of almost elements and functor of almost elements} below). However, we caution the reader that for general open subrings $A_{0}\subsetneq A_{\leq1}$ with $\varpi^{1/p^{n}}\in A_{0}$ for all $n\geq0$ the $A_{0}$-module $(M_{0})_{\ast}$ could be bigger than $(M_{0}^{a})_{\ast}$ (see Example \ref{Example where the module of almost elements is too big}).\end{example}
\begin{lemma}\label{Closed unit ball and module of almost elements}Let $(A, \lVert\cdot\rVert)$ be a seminormed ring and let $A_{0}$ be an open subring $A$ with $A_{0}\subseteq A_{\leq1}$. Let $M$ be an $A$-module and let $M_{0}$ be an $A_{0}$-submodule of $M$ such that \begin{equation*}M=\bigcup_{r>0}A_{\leq r}\cdot M_{0}.\end{equation*}Then $(M_{0})_{\ast}$ is the closed unit ball $M_{\leq1}$ of $(M, \lVert\cdot\rVert_{M,M_{0},\ast})$.\end{lemma}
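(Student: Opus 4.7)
The plan is to unfold both sides of the claimed equality and match them directly, using nothing beyond the definitions of the gauge seminorm and of the module of almost elements, together with the monotonicity of the subgroups $A_{\leq r}\cdot M_{0}$ in the parameter $r$.

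First I would rewrite the condition $x\in M_{\leq1}$, i.e. $\lVert x\rVert_{M,M_{0},\ast}\leq1$, purely in terms of the sets $A_{\leq r}\cdot M_{0}$. By the definition of the gauge seminorm, $\lVert x\rVert_{M,M_{0},\ast}=\inf\{\,r>0\mid x\in A_{\leq r}\cdot M_{0}\,\}$, so $\lVert x\rVert_{M,M_{0},\ast}\leq 1$ holds if and only if for every $\epsilon>0$ there exists $r\in(0,1+\epsilon)$ with $x\in A_{\leq r}\cdot M_{0}$. Because $A_{\leq r}\subseteq A_{\leq r'}$ whenever $r\leq r'$, the subgroups $A_{\leq r}\cdot M_{0}$ form an increasing family in $r$; consequently, the previous condition is equivalent to $x\in A_{\leq r'}\cdot M_{0}$ for every $r'>1$.

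Now I would match this to the definition of $(M_{0})_{\ast}$. By Definition \ref{Module of almost elements}, $(M_{0})_{\ast}=\bigcap_{r>1}A_{\leq r}\cdot M_{0}$, so the condition that $x\in A_{\leq r'}\cdot M_{0}$ for every $r'>1$ is exactly the condition that $x\in (M_{0})_{\ast}$. Stringing the equivalences together yields $M_{\leq1}=(M_{0})_{\ast}$.

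There is essentially no obstacle: the statement is a bookkeeping lemma comparing two different packagings of the same limiting data. The only point that requires a brief sanity check is that the assumption $M=\bigcup_{r>0}A_{\leq r}\cdot M_{0}$ ensures the infimum defining $\lVert x\rVert_{M,M_{0},\ast}$ is taken over a non-empty set (so the gauge is well-defined and the above manipulations are legal), and that $(M_{0})_{\ast}$ makes sense as a subset of $M$; both are immediate from the hypothesis on $M_{0}$.
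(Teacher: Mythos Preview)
Your proof is correct and is exactly what the paper has in mind: the paper's own proof reads ``Immediate from the definitions,'' and your argument is simply a careful unpacking of that immediacy via the monotonicity of $r\mapsto A_{\leq r}\cdot M_{0}$.
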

\begin{proof}Immediate from the definitions.\end{proof}
\begin{lemma}\label{Gauge seminorms and lattices}Let $A_{0}$ be an open subring of a seminormed ring $(A, \lVert\cdot\rVert)$ contained in $A_{\leq1}$. Let $M$ be an $A$-module and let $M_{0}$ be an $A_{0}$-submodule of $M$ such that \begin{equation*}M=\bigcup_{r>0}A_{\leq r}\cdot M_{0}.\end{equation*}Then \begin{equation*}\lVert\cdot\rVert_{M,M_{0},\ast}=\lVert\cdot\rVert_{M,(M_{0})_{\ast},\ast}.\end{equation*}\end{lemma}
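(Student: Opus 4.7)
The plan is to prove the equality of gauge seminorms by showing both inequalities, using essentially the observation that the module of almost elements $(M_{0})_{\ast}$ sandwiches between $M_{0}$ and $A_{\leq r}\cdot M_{0}$ for any $r>1$.

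First I would note that $M_{0}\subseteq (M_{0})_{\ast}$: indeed, since $\lVert 1\rVert=1\leq r$ for every $r>1$, we have $M_{0}\subseteq A_{\leq r}\cdot M_{0}$ for all such $r$, and so $M_{0}\subseteq \bigcap_{r>1}A_{\leq r}\cdot M_{0}=(M_{0})_{\ast}$. In particular, $M=\bigcup_{r>0}A_{\leq r}\cdot (M_{0})_{\ast}$ is automatic (the inclusion $\subseteq$ comes from the hypothesis on $M_{0}$, the reverse inclusion from $(M_{0})_{\ast}\subseteq M$), so the gauge seminorm $\lVert\cdot\rVert_{M,(M_{0})_{\ast},\ast}$ is defined. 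The inclusion $M_{0}\subseteq (M_{0})_{\ast}$ then immediately gives $A_{\leq r}\cdot M_{0}\subseteq A_{\leq r}\cdot (M_{0})_{\ast}$ for all $r>0$ and hence the easy inequality
\begin{equation*}
\lVert x\rVert_{M,(M_{0})_{\ast},\ast}\leq \lVert x\rVert_{M,M_{0},\ast}
\end{equation*}
for every $x\in M$.

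For the reverse inequality, I would argue as follows. Fix $x\in M$ and let $s>\lVert x\rVert_{M,(M_{0})_{\ast},\ast}$ be arbitrary. By the definition of the gauge seminorm, there exists $r\in (0, s)$ with $x\in A_{\leq r}\cdot (M_{0})_{\ast}$, so I can write $x=\sum_{i=1}^{n}f_{i}y_{i}$ with $f_{i}\in A_{\leq r}$ and $y_{i}\in (M_{0})_{\ast}$. Choose $r'>1$ small enough that $rr'<s$. Since each $y_{i}$ belongs to $(M_{0})_{\ast}=\bigcap_{r''>1}A_{\leq r''}\cdot M_{0}$, in particular $y_{i}\in A_{\leq r'}\cdot M_{0}$ for every $i$. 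Then
\begin{equation*}
x=\sum_{i=1}^{n}f_{i}y_{i}\in A_{\leq r}\cdot A_{\leq r'}\cdot M_{0}\subseteq A_{\leq rr'}\cdot M_{0},
\end{equation*}
so $\lVert x\rVert_{M,M_{0},\ast}\leq rr'<s$. As $s>\lVert x\rVert_{M,(M_{0})_{\ast},\ast}$ was arbitrary, this yields $\lVert x\rVert_{M,M_{0},\ast}\leq \lVert x\rVert_{M,(M_{0})_{\ast},\ast}$, completing the proof.

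There is no real obstacle here; the only mild subtlety is to observe that one may absorb the dilation from enlarging $M_{0}$ to $(M_{0})_{\ast}$ into the radius $r$ by picking $r'>1$ close enough to $1$, which works precisely because $(M_{0})_{\ast}$ is defined as an intersection over all $r''>1$. The submultiplicativity of the seminorm on $A$ (in the form $A_{\leq r}\cdot A_{\leq r'}\subseteq A_{\leq rr'}$) is used in the last computation, and no hypothesis beyond the very general setting of the lemma is needed.
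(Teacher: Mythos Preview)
Your proof is correct and is actually more elementary and self-contained than the paper's. The paper argues indirectly: it first identifies $(M_{0})_{\ast}$ as the closed unit ball of $\lVert\cdot\rVert_{M,M_{0},\ast}$ (Lemma~\ref{Closed unit ball and module of almost elements}), and then invokes the general principle from Proposition~\ref{Description of norms 1} (together with Lemma~\ref{Value groups and gauges}) that a submetric seminorm agrees with the gauge of its own closed unit ball. Your argument bypasses this machinery entirely by exploiting directly the sandwich $M_{0}\subseteq (M_{0})_{\ast}\subseteq A_{\leq r'}\cdot M_{0}$ for every $r'>1$, together with submultiplicativity of $\lVert\cdot\rVert$ on $A$. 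This buys you a proof that needs no auxiliary statements about value sets of gauge seminorms or about $\lVert A^{\times,m}\rVert$; the paper's route, on the other hand, places the lemma within the general framework it has already built and highlights the conceptual role of $(M_{0})_{\ast}$ as a closed unit ball.
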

\begin{proof}Follows from Lemma \ref{Value groups and gauges}, Proposition \ref{Description of norms 1} and Lemma \ref{Closed unit ball and module of almost elements}.\end{proof}
\begin{cor}\label{Seminormed rings and rings of almost elements}For any seminormed ring $(A, \lVert\cdot\rVert)$ and any open subring $A_{0}$ of $A_{\leq1}$, we have $(A_{0})_{\ast}=A_{\leq1}$.\end{cor}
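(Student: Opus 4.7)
The plan is to deduce this immediately from the two preceding lemmas, viewing $(A, \lVert\cdot\rVert)$ as a submetric seminormed module over itself. Concretely, I would apply Lemma \ref{Closed unit ball and module of almost elements} with $M = A$ and $M_{0} = A_{0}$, combined with Lemma \ref{Seminormed rings and gauges}, which together pin down $(A_{0})_{\ast}$ as exactly the closed unit ball of the original seminorm on $A$.

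First I would verify the hypothesis of Lemma \ref{Closed unit ball and module of almost elements}, namely that $A = \bigcup_{r>0} A_{\leq r}\cdot A_{0}$. This is immediate: since $A_{0}$ is an open subring of $A_{\leq1}$ it contains the unit $1$, so $A_{\leq r}\cdot A_{0} \supseteq A_{\leq r}$ for every $r>0$, and $A = \bigcup_{r>0}A_{\leq r}$ by definition of the seminorm on $A$. Thus Lemma \ref{Closed unit ball and module of almost elements} applies and gives
\begin{equation*}
(A_{0})_{\ast} = A_{\leq 1}^{\lVert\cdot\rVert_{A,A_{0},\ast}},
\end{equation*}
where the right-hand side denotes the closed unit ball of $A$ with respect to the gauge seminorm $\lVert\cdot\rVert_{A,A_{0},\ast}$.

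Next I would invoke Lemma \ref{Seminormed rings and gauges}, which, for any open subring $A_{0}$ of $A_{\leq1}$, identifies the gauge seminorm $\lVert\cdot\rVert_{A,A_{0},\ast}$ with the original seminorm $\lVert\cdot\rVert$ on $A$. Consequently the closed unit ball of $\lVert\cdot\rVert_{A,A_{0},\ast}$ is literally $A_{\leq1}$, and combining with the previous step yields $(A_{0})_{\ast} = A_{\leq1}$.

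There is really no serious obstacle here: the work has already been done in Lemma \ref{Closed unit ball and module of almost elements} and Lemma \ref{Seminormed rings and gauges}, and this corollary is only a matter of applying them to the case $M = A$, $M_{0} = A_{0}$. The only thing to double-check is that $A_{0}$ being merely an open subring of $A_{\leq1}$ (rather than equal to it, or rather than satisfying any stronger condition) is already enough for both input lemmas, which is indeed the case as stated.
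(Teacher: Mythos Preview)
Your proof is correct and follows essentially the same approach as the paper, which likewise combines Lemma~\ref{Closed unit ball and module of almost elements} and Lemma~\ref{Seminormed rings and gauges} applied to $M=A$, $M_{0}=A_{0}$. The paper's proof also cites Lemma~\ref{Gauge seminorms and lattices}, but as your argument shows, that extra reference is not strictly needed here.
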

\begin{proof}Combine Lemma \ref{Closed unit ball and module of almost elements} and Lemma \ref{Gauge seminorms and lattices} with Lemma \ref{Seminormed rings and gauges}.\end{proof}
\begin{example}\label{Example where the module of almost elements is too big}Let $(A, \lVert\cdot\rVert)$ be a Banach ring with a topologically nilpotent unit $\varpi$ and suppose that $\varpi$ admits a compatible system of $p$-power roots in $A$ satisfying $\lVert\varpi^{s}\rVert=\lVert\varpi\rVert^{s}$ for all $s\in\mathbb{Z}[1/p]$. Then, by Corollary \ref{Seminormed rings and rings of almost elements},\begin{equation*}(\mathbb{Z}+(\varpi)_{A_{\leq1}})_{\ast}=A_{\leq1},\end{equation*}which can also be seen directly from Example \ref{Module of almost elements and roots}. Note that in this example \begin{equation*}A_{\leq1}\supsetneq\bigcap_{n>0}\varpi^{-1/p^{n}}(\mathbb{Z}+(\varpi)_{A_{\leq1}}).\end{equation*}\end{example}
We can now describe the category of submetric seminormed modules over any seminormed ring $A$ with the property that $\lVert A^{\times,m}\rVert$ is dense in $\mathbb{R}_{\geq0}$.
\begin{thm}\label{Seminormed modules and lattices}Let $(A, \lVert\cdot\rVert)$ be a seminormed ring and suppose that the subset $\lVert A^{\times,m}\rVert$ of $\mathbb{R}_{\geq0}$ is dense in $\mathbb{R}_{\geq0}$. Let $A_{0}$ be an open subring of $A$ with $A_{0}\subseteq A_{\leq1}$. The functor \begin{equation*}M\mapsto M_{\leq1}\end{equation*}is an equivalence of categories between the category $\Snm_{A}^{\leq1}$ of submetric seminormed $A$-modules and the full subcategory of the category of $A_{0}$-modules spanned by $A_{0}$-modules $M_{0}$ such that the canonical map \begin{equation*}M_{0}\to M_{0}\otimes_{A_{0}}A\end{equation*}is injective and $M_{0}=(M_{0})_{\ast}$, with a quasi-inverse given by \begin{equation*}M_{0}\mapsto (M_{0}\otimes_{A_{\leq1}}A, \lVert\cdot\rVert_{M_{0}\otimes_{A_{\leq1}}A,M_{0},\ast}).\end{equation*}\end{thm}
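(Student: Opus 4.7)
The plan is to define the functor $F: M \mapsto M_{\leq 1}$ together with the candidate quasi-inverse $G: M_{0} \mapsto (M_{0} \otimes_{A_{\leq 1}} A, \lVert \cdot \rVert_{\cdot, M_{0}, \ast})$ and verify both directions of the equivalence. By Remark \ref{Dense norms and topologically nilpotent units}, the density hypothesis on $\lVert A^{\times,m}\rVert$ produces a seminorm-multiplicative topologically nilpotent unit $\varpi \in A_{\leq 1}$, which will be used to scale arbitrary elements into $M_{\leq 1}$. Note that every $M_{0}$ in the target category is automatically an $A_{\leq 1}$-module, because by Lemma \ref{Closed unit ball and module of almost elements} the subset $(M_{0})_{\ast}$ is the closed unit ball of $(M_{0} \otimes_{A_{0}} A, \lVert \cdot \rVert_{\cdot, M_{0}, \ast})$ and hence an $A_{\leq 1}$-submodule; this makes the tensor product $M_{0} \otimes_{A_{\leq 1}} A$ in the definition of $G$ meaningful. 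Well-definedness of $F$ on objects then reduces to checking that $M_{\leq 1} \to M_{\leq 1} \otimes_{A_{0}} A$ is injective (which follows because its composition with the natural map to $M$ is the given inclusion) and that $(M_{\leq 1})_{\ast} = M_{\leq 1}$, which follows by combining Proposition \ref{Description of norms 1} (giving $\lVert \cdot \rVert_{M} = \lVert \cdot \rVert_{M, M_{\leq 1}, \ast}$ under the density hypothesis) with Lemma \ref{Closed unit ball and module of almost elements}. Well-definedness of $G$ is immediate from Lemma \ref{The gauge seminorm is a seminorm}.

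To establish the unit $G \circ F \cong \id_{\Snm_{A}^{\leq 1}}$, I would analyze the natural $A$-linear map $\phi_{M}: M_{\leq 1} \otimes_{A_{\leq 1}} A \to M$, $m \otimes a \mapsto am$. Since $\varpi$ is a topologically nilpotent unit contained in $A_{\leq 1}$, one has $A = A_{\leq 1}[\varpi^{-1}]$, and the source identifies with the localization $M_{\leq 1}[\varpi^{-1}]$. Surjectivity of $\phi_{M}$ follows from Lemma \ref{Multiplicative topologically nilpotent units}, which makes $\varpi$ multiplicative on $M$, so that any $x \in M$ lies in $\varpi^{-n} M_{\leq 1}$ for $n$ sufficiently large. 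Injectivity is algebraic: if $m \in M_{\leq 1}$ satisfies $\varpi^{-n} m = 0$ in $M$, then $m = 0$ in $M$ (because $\varpi$ is a unit of $A$), hence $m = 0$ in $M_{\leq 1}$, hence $m/\varpi^{n} = 0$ in the localization. By the very definition of the gauge construction, $\phi_{M}$ carries the seminorm on the source to $\lVert \cdot \rVert_{M, M_{\leq 1}, \ast}$ on $M$, and Proposition \ref{Description of norms 1} identifies this with $\lVert \cdot \rVert_{M}$, making $\phi_{M}$ an isometric isomorphism in $\Snm_{A}^{\leq 1}$. The isometric identification is the main technical step, and rests squarely on the density hypothesis via Proposition \ref{Description of norms 1}: without density one would only obtain a bounded-equivalence of seminorms rather than literal equality.

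The counit $F \circ G \cong \id$ is immediate: for $M_{0}$ in the target, the closed unit ball of $(M_{0} \otimes_{A_{\leq 1}} A, \lVert \cdot \rVert_{\cdot, M_{0}, \ast})$ equals $(M_{0})_{\ast} = M_{0}$ by Lemma \ref{Closed unit ball and module of almost elements} together with the defining hypothesis on $M_{0}$. Naturality of both $\phi_{M}$ and the counit is clear from the construction. For morphisms, a submetric $A$-module map $M \to N$ restricts to an $A_{\leq 1}$-linear map $M_{\leq 1} \to N_{\leq 1}$, while conversely an $A_{\leq 1}$-linear map $\alpha: M_{0} \to N_{0}$ between objects of the target category extends uniquely by localization to an $A$-linear map $M_{0} \otimes_{A_{\leq 1}} A \to N_{0} \otimes_{A_{\leq 1}} A$, which is submetric by Corollary \ref{Submetric maps} because it carries the unit ball $M_{0} = (M_{0})_{\ast}$ into $N_{0} = (N_{0})_{\ast}$. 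This gives the bijection on $\Hom$-sets needed for a categorical equivalence.
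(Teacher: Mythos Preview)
Your proof is correct and follows essentially the same route as the paper's: both directions hinge on Lemma~\ref{Closed unit ball and module of almost elements} for the counit and on Proposition~\ref{Description of norms 1} (via the density hypothesis and Remark~\ref{Dense norms and topologically nilpotent units}) for the unit, with the identification $M=M_{\leq1}[\varpi^{-1}]$. You supply more detail than the paper---in particular the observation that $(M_{0})_{\ast}=M_{0}$ forces $M_{0}$ to be an $A_{\leq1}$-module (resolving the apparent mismatch between $A_{0}$-modules in the hypothesis and $\otimes_{A_{\leq1}}$ in the quasi-inverse), and the explicit treatment of morphisms via Corollary~\ref{Submetric maps}---but the underlying argument is the same.
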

\begin{proof}For any $A_{0}$-module $M_{0}$ such that $M_{0}\to M_{0}\otimes_{A_{0}}A$ is injective, the closed unit ball for the gauge seminorm $\lVert\cdot\rVert_{M,M_{0},\ast}$ on $M=M_{0}\otimes_{A_{0}}A$ is equal to $(M_{0})_{\ast}$, by Lemma \ref{Closed unit ball and module of almost elements}. Conversely, if $(M, \lVert\cdot\rVert)$ is a submetric seminormed $A$-module, then, since the seminormed ring $A$ admits a topologically nilpotent unit $\varpi$ with $\varpi\in A_{\leq1}$ (Remark \ref{Dense norms and topologically nilpotent units}), we have $M=M_{\leq1}[\varpi^{-1}]=M_{\leq1}\otimes_{A_{0}}A$. By the assumption that $\lVert A^{\times,m}\rVert$ is dense in $\mathbb{R}_{\geq0}$ and Lemma \ref{Description of norms}, the seminorm $\lVert\cdot\rVert_{M}$ on $M$ coincides with the gauge seminorm $\lVert\cdot\rVert_{M,M_{\leq1},\ast}$. This shows that the two functors are indeed quasi-inverse to each other.\end{proof}
We also have the following analog of Theorem \ref{Seminormed modules and lattices} for Banach modules.
\begin{thm}\label{Banach modules and lattices}Let $(A, \lVert\cdot\rVert)$ be a Banach ring with an open subring $A_{0}$ of $A_{\leq1}$ and a topologically nilpotent unit $\varpi$ contained in $A_{0}$, and suppose that $\lVert A^{\times,m}\rVert$ is dense in $\mathbb{R}_{\geq0}$. The functor \begin{equation*}M\mapsto M_{\leq1}\end{equation*}is an equivalence of categories between the category of submetric Banach $A$-modules and the category of $\varpi$-torsion-free $\varpi$-adically complete $A_{0}$-modules $M_{0}$ with $M_{0}=(M_{0})_{\ast}$, with a quasi-inverse functor given by \begin{equation*}M_{0}\mapsto (M_{0}[\varpi^{-1}], \lVert\cdot\rVert_{M_{0}[\varpi^{-1}],M_{0},\ast}).\end{equation*}\end{thm}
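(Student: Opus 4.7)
The plan is to bootstrap from Theorem~\ref{Seminormed modules and lattices}, which already identifies $\Snm_A^{\leq 1}$ with the full subcategory of $A_0$-modules $M_0$ for which the canonical map $M_0 \to M_0 \otimes_{A_0} A$ is injective and $M_0 = (M_0)_\ast$. Since $\Ban_A^{\leq 1}$ sits inside $\Snm_A^{\leq 1}$ as a full subcategory, and the class of $\varpi$-torsion-free, $\varpi$-adically complete modules likewise forms a full subcategory of all $A_0$-modules, I only need to verify on objects that the Banach condition on $M$ translates precisely to $\varpi$-torsion-freeness and $\varpi$-adic completeness of $M_0 = M_{\leq 1}$. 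Using Remark~\ref{Dense norms and topologically nilpotent units}, I may and will assume $\varpi \in A^{\times,m}$, enlarging $A_0$ to contain a seminorm-multiplicative topologically nilpotent unit if necessary.

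For the forward direction, let $M \in \Ban_A^{\leq 1}$. The submodule $M_{\leq 1}$ is $\varpi$-torsion-free because $\varpi$ acts as a unit on $M$. Moreover $M_{\leq 1}$ is closed in $M$ as the preimage of $[0,1]$ under the continuous norm, so it is complete in the subspace topology; by Lemma~\ref{Bounded open submodules} this subspace topology coincides with the $\varpi$-adic topology on $M_{\leq 1}$, yielding $\varpi$-adic completeness.

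For the reverse direction, let $M_0$ be $\varpi$-torsion-free and $\varpi$-adically complete with $M_0 = (M_0)_\ast$, and set $M := M_0[\varpi^{-1}]$ equipped with the gauge seminorm $\lVert\cdot\rVert_{M,M_0,\ast}$. By Theorem~\ref{Seminormed modules and lattices}, $(M, \lVert\cdot\rVert_{M,M_0,\ast}) \in \Snm_A^{\leq 1}$ with closed unit ball $M_0$; it remains to check that the seminorm is a norm and that $M$ is complete. If $\lVert x \rVert_{M,M_0,\ast} = 0$, then Proposition~\ref{Description of norms 1} together with the density of $\lVert A^{\times,m}\rVert$ places $x \in f M_0$ for arbitrarily small $\lVert f \rVert$ with $f \in A^{\times,m}$; choosing $\lVert f \rVert \leq \lVert\varpi\rVert^n$ forces $x \in \varpi^n M_0$ by norm-multiplicativity of $\varpi$ and $f$, so $\varpi$-adic separatedness of $M_0$ yields $x = 0$. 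For completeness, Lemma~\ref{Bounded open submodules} shows that the gauge norm and the $\varpi$-adic seminorm are bounded-equivalent on $M_0$, so $M_0$ is complete in the gauge norm; any Cauchy sequence in $M$ is bounded and hence eventually contained in some $\varpi^{-N} M_0$, which multiplication by $\varpi^N$ identifies homeomorphically with $M_0$ (again using norm-multiplicativity), so the sequence converges.

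The principal technical obstacle is aligning the gauge norm topology on $M$ with the $\varpi$-adic topology on $M_0$; this alignment, furnished by Lemma~\ref{Bounded open submodules} and crucially dependent on norm-multiplicativity of $\varpi$, is what makes the completeness assertions transfer in both directions. All remaining verifications—in particular the behavior on morphisms—are inherited from, or are formal consequences of, Theorem~\ref{Seminormed modules and lattices}.
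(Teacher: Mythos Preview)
Your proof is correct and follows the paper's own approach: reduce to Theorem~\ref{Seminormed modules and lattices} and use Lemma~\ref{Bounded open submodules} to match completeness of $M$ with $\varpi$-adic completeness of $M_{\leq 1}$. One minor imprecision: you should not ``enlarge $A_0$'' (this changes the target category); instead, note that some power of a seminorm-multiplicative topologically nilpotent unit already lies in the open subring $A_0$, and that $\varpi$-torsion-freeness and $\varpi$-adic completeness do not depend on which topologically nilpotent unit of $A$ in $A_0$ you use.
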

\begin{proof}In view of Theorem \ref{Seminormed modules and lattices}, we only have to verify that a submetric seminormed $A$-module $M$ is complete if and only if its closed unit ball $M_{\leq1}$ is $\varpi$-adically complete. However, this follows from Lemma \ref{Bounded open submodules}.\end{proof}
If some topologically nilpotent unit $\varpi$ of $A$ admits a compatible system of $p$-power roots $(\varpi^{1/p^{n}})_{n}$, we can reformulate the above two theorems using the language of almost mathematics for the almost setup $(A_{0}, (\varpi^{1/p^{\infty}})_{A_{0}})$, where $(\varpi^{1/p^{\infty}})_{A_{0}}$ is the (flat) ideal of $A_{0}$ generated by the elements $\varpi^{1/p^{n}}$ for $n\geq1$. For this purpose, we need a notion of $\varpi$-torsion-free $A_{0}^{a}$-module which generalizes the notion of a flat $K^{\circ a}$-module in the classical case when $A=K$ is a nonarchimedean field (\cite{Gabber-Ramero}, Definition 2.4.4(i); cf.~also \cite{Scholze}, Lemma 5.3(i)). 
\begin{mydef}[$\varpi$-torsion-free almost modules]\label{Torsion-free almost modules}Let $(V, \mathfrak{m})$ be a basic setup for almost mathematics and let $\varpi\in\mathfrak{m}$ be a non-zero-divisor. Then a $V^{a}$-module $N$ is called $\varpi$-torsion-free if there exists a $\varpi$-torsion-free $V$-module $M_{0}$ such that $N\cong M_{0}^{a}$.\end{mydef}
Recall from \cite{Gabber-Ramero}, Proposition 2.2.14, that, in the setting of the above definition, the functor of almost elements \begin{equation*}N\mapsto \Hom_{V^{a}}(V^{a}, N)\end{equation*}is a right adjoint to the localization functor $M\mapsto M^{a}$ from the category  of $V$-modules to the category of $V^{a}$-modules and that the counit of this adjunction is an isomorphism. We are mostly interested in the case when \begin{equation*}(V, \mathfrak{m})=(A_{0}, (\varpi^{1/p^{\infty}})_{A_{0}}),\end{equation*}with $A_{0}\subseteq A_{\leq1}$ an open subring in a seminormed ring $A$ and with $\varpi\in A_{0}$ a topologically nilpotent unit of $A$ having a compatible system of $p$-power roots $\varpi^{1/p^{n}}$ in $A_{0}$. In this case, the same argument as in the proof of \cite{Scholze}, Lemma 5.3(ii), relates the functor $N\mapsto N_{\ast}$ to Definition \ref{Module of almost elements}.
\begin{lemma}\label{Module of almost elements and functor of almost elements}Let $(A, \lVert\cdot\rVert)$ be a seminormed ring, let $A_{0}$ be an open subring contained in $A_{\leq1}$ and let $\varpi\in A_{0}$ be a topologically nilpotent unit of $A$ which admits a compatible system of $p$-power roots $(\varpi^{1/p^{n}})_{n}$ in $A_{0}$. We perform almost mathematics with respect to the basic setup $(A_{0}, (\varpi^{1/p^{\infty}})_{A_{0}})$, where $(\varpi^{1/p^{\infty}})_{A_{0}}$ is the flat ideal \begin{equation*}(\varpi^{1/p^{\infty}})_{A_{0}}=\bigcup_{n>0}(\varpi^{1/p^{n}})_{A_{0}}.\end{equation*}Let $M_{0}$ be a $\varpi$-torsion-free $A_{0}$-module. Then \begin{equation*}\Hom_{A_{0}^{a}}(A_{0}^{a}, M_{0}^{a})=\bigcap_{n>0}\varpi^{-1/p^{n}}M_{0}.\end{equation*}\end{lemma}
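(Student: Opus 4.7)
The plan is to mimic the proof of \cite{Scholze}, Lemma 5.3(ii), reducing first to a computation of ordinary $A_{0}$-homs out of $\mathfrak{m}$ and then unpacking this explicitly in terms of the compatible system of $p$-power roots.

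First, I would invoke the standard description of the functor of almost elements (\cite{Gabber-Ramero}, Proposition 2.2.14) in the form
\begin{equation*}
\Hom_{A_{0}^{a}}(A_{0}^{a}, M_{0}^{a}) \;=\; \Hom_{A_{0}}(\widetilde{\mathfrak{m}}, M_{0}), \qquad \widetilde{\mathfrak{m}} := \mathfrak{m}\otimes_{A_{0}}\mathfrak{m},
\end{equation*}
and then reduce this further to $\Hom_{A_{0}}(\mathfrak{m}, M_{0})$ by showing that the multiplication map $\widetilde{\mathfrak{m}}\to \mathfrak{m}^{2} = \mathfrak{m}$ is an isomorphism in our setting. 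The latter follows from the flatness of $\mathfrak{m}$ itself: since $\varpi^{1/p^{n}}\in A$ is a $p^{n}$-th root of the unit $\varpi\in A^{\times}$, it is itself a unit in $A$, hence a non-zero-divisor in the subring $A_{0}\subseteq A$, and so the principal ideal $(\varpi^{1/p^{n}})_{A_{0}}$ is a free $A_{0}$-module of rank one. Writing $\mathfrak{m} = \bigcup_{n>0}(\varpi^{1/p^{n}})_{A_{0}}$ then exhibits $\mathfrak{m}$ as a filtered colimit of flat modules, hence flat. Tensoring the inclusion $\mathfrak{m}\hookrightarrow A_{0}$ with $\mathfrak{m}$ gives an injection $\widetilde{\mathfrak{m}}\hookrightarrow \mathfrak{m}$, whose image is $\mathfrak{m}^{2} = \mathfrak{m}$ by idempotence of $\mathfrak{m}$.

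Second, I would compute $\Hom_{A_{0}}(\mathfrak{m}, M_{0})$ by hand. Using the description $\mathfrak{m} = \bigcup_{n>0}(\varpi^{1/p^{n}})_{A_{0}}$ together with the fact that each summand is free of rank one, an $A_{0}$-linear map $\phi:\mathfrak{m}\to M_{0}$ is the same datum as a sequence of elements $m_{n} := \phi(\varpi^{1/p^{n}})\in M_{0}$ satisfying the compatibility
\begin{equation*}
m_{n} \;=\; \varpi^{(p-1)/p^{n+1}}\, m_{n+1},
\end{equation*}
which is forced by the factorization $\varpi^{1/p^{n}} = \varpi^{(p-1)/p^{n+1}}\varpi^{1/p^{n+1}}$. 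Since $M_{0}$ is $\varpi$-torsion-free, the quotient $x := \varpi^{-1/p^{n}} m_{n}\in M_{0}[\varpi^{-1}]$ is well defined and, by a short computation using the compatibility relation, independent of $n$; the condition $m_{n}\in M_{0}$ then translates exactly into $\varpi^{1/p^{n}}x\in M_{0}$, i.e.\ $x\in \varpi^{-1/p^{n}}M_{0}$ for every $n$. Conversely, any element $x\in \bigcap_{n>0}\varpi^{-1/p^{n}}M_{0}$ produces a compatible system via $m_{n} := \varpi^{1/p^{n}}x$. This yields the desired $A_{0}$-linear bijection
\begin{equation*}
\Hom_{A_{0}}(\mathfrak{m}, M_{0}) \;\cong\; \bigcap_{n>0}\varpi^{-1/p^{n}}M_{0},
\end{equation*}
and composing with the identification from the first step finishes the proof.

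The main (and really only) obstacle is the identification $\widetilde{\mathfrak{m}}\cong \mathfrak{m}$ in the first step, which requires flatness of $\mathfrak{m}$ itself and not merely of $\widetilde{\mathfrak{m}}$ (the standing hypothesis for a basic setup). In the present situation this is easy thanks to the explicit presentation of $\mathfrak{m}$ as an increasing union of principal ideals generated by non-zero-divisors; beyond this, the argument is bookkeeping with the compatible system of $p$-power roots $\varpi^{1/p^{n}}$.
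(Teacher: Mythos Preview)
Your proof is correct and follows essentially the same approach as the paper: both reduce $\Hom_{A_{0}^{a}}(A_{0}^{a}, M_{0}^{a})$ to $\Hom_{A_{0}}(\mathfrak{m}, M_{0})$ via flatness of $\mathfrak{m}$ and then identify the latter with $\bigcap_{n>0}\varpi^{-1/p^{n}}M_{0}$ by sending $\varphi$ to the well-defined element $\varpi^{-1/p^{n}}\varphi(\varpi^{1/p^{n}})$ of $M_{0}[\varpi^{-1}]$. The only difference is cosmetic: the paper parametrizes the map by $\varphi\mapsto \varpi^{-1}\varphi(\varpi)$ rather than your $\varphi\mapsto \varpi^{-1/p^{n}}\varphi(\varpi^{1/p^{n}})$, and it leaves the identification $\widetilde{\mathfrak{m}}\cong\mathfrak{m}$ implicit where you spell it out.
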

\begin{proof}Since the ideal $(\varpi^{1/p^{\infty}})_{A_{0}}$ of $A_{0}$ is flat, the left hand side of the equation is equal to $\Hom_{A_{0}}((\varpi^{1/p^{\infty}})_{A_{0}}, M_{0})$. We prove that the $A_{0}$-linear map \begin{equation*}\Hom_{A_{0}}((\varpi^{1/p^{\infty}})_{A_{0}}, M_{0})\to M_{0}[\varpi^{-1}], \varphi\mapsto \varpi^{-1}\varphi(\varpi),\end{equation*}defines an isomorphism \begin{equation*}\Hom_{A_{0}}((\varpi^{1/p^{\infty}})_{A_{0}}, M_{0})\tilde{\to}\bigcap_{n>0}\varpi^{-1/p^{n}}M_{0}.\end{equation*}First of all, note that, for every $A_{0}$-linear map $\varphi: (\varpi^{1/p^{\infty}})_{A_{0}}\to M_{0}$ we have \begin{equation*}\varpi^{1/p^{n}}(\varpi^{-1}\varphi(\varpi))=\varpi^{-1}\varphi(\varpi^{1/p^{n}}\varpi)=\varpi^{-1}\varpi\varphi(\varpi^{1/p^{n}})=\varphi(\varpi^{1/p^{n}})\in M_{0},\end{equation*}so the map $\varphi\mapsto \varpi^{-1}\varphi(\varpi)$ indeed takes values in $\bigcap_{n>0}\varpi^{-1/p^{n}}M_{0}$. To show that this map is injective, assume that $\varphi$ is such that $\varpi^{-1}\varphi(\varpi)=0$ in $M_{0}[\varpi^{-1}]$. Then, since $M_{0}$ is $\varpi$-torsion-free, $\varphi(\varpi)=0$. By the $A_{0}$-linearity of $\varphi$, this entails that \begin{equation*}\varpi^{\frac{p^{n}-1}{p^{n}}}\varphi(\varpi^{1/p^{n}})=0\end{equation*}for all $n\geq 1$, so $\varphi=0$, using again that $M_{0}$ is $\varpi$-torsion-free. This shows that the map $\varphi\mapsto \varpi^{-1}\varphi(\varpi)$ is injective. On the other hand, for every $x\in\bigcap_{n>0}\varpi^{-1/p^{n}}M_{0}$ we can define an $A_{0}$-linear map $\varphi_{x}: (\varpi^{1/p^{\infty}})_{A_{0}}\to M_{0}$ by $\varpi^{1/p^{n}}\mapsto \varpi^{1/p^{n}}x$ and we see that $\varpi^{-1}\varphi_{x}(\varpi)=x$.\end{proof}
\begin{lemma}\label{Torsion-free almost modules and almost elements}Let $(V, \mathfrak{m})$ be a basic setup for almost mathematics and let $\varpi\in\mathfrak{m}$ be a non-zero-divisor. A $V^{a}$-module $N$ is $\varpi$-torsion-free if and only if $N_{\ast}$ is $\varpi$-torsion-free.\end{lemma}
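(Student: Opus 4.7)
The plan is to prove the two directions of the equivalence separately, with the backward implication being essentially tautological via the Gabber--Ramero adjunction and the forward implication reducing to a short bookkeeping argument with a $\Hom$-module.

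For the backward direction, suppose $N_{\ast}$ is $\varpi$-torsion-free. The excerpt recalls that $(-)^{a}\dashv (-)_{\ast}$ with the counit $(N_{\ast})^{a}\to N$ an isomorphism. Setting $M_{0}:=N_{\ast}$ therefore exhibits $N$ as $M_{0}^{a}$ for a $\varpi$-torsion-free $V$-module $M_{0}$, which is precisely what Definition \ref{Torsion-free almost modules} demands. So this direction reduces to invoking the definition and the counit isomorphism.

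For the forward direction, suppose $N\cong M_{0}^{a}$ with $M_{0}$ a $\varpi$-torsion-free $V$-module. I want to show that $N_{\ast}$ has no nonzero $\varpi$-torsion. Using the flatness of $\widetilde{\mathfrak{m}}$ (built into the notion of basic setup), I have the standard identification
\begin{equation*}
N_{\ast}=\Hom_{V^{a}}(V^{a},M_{0}^{a})\cong \Hom_{V}(\widetilde{\mathfrak{m}},M_{0}),
\end{equation*}
exactly as was used in the proof of Lemma \ref{Module of almost elements and functor of almost elements}. Now if $\varphi\in\Hom_{V}(\widetilde{\mathfrak{m}},M_{0})$ satisfies $\varpi\cdot\varphi=0$, then for every $x\in\widetilde{\mathfrak{m}}$ we have $\varpi\,\varphi(x)=0$ in $M_{0}$; since $\varpi$ is a non-zero-divisor on $M_{0}$, this forces $\varphi(x)=0$, and hence $\varphi=0$. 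Thus $N_{\ast}$ is $\varpi$-torsion-free.

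There is no real obstacle here: once one has written down the two candidate arguments, the key inputs are (i) the counit isomorphism $(N_{\ast})^{a}\cong N$ and (ii) the $\Hom$-description of $N_{\ast}$, both of which are already recorded or used in the immediately preceding material. The only mild point to keep in mind is that one should \emph{not} try to prove the backward direction by exhibiting some other $M_{0}$: taking $M_{0}=N_{\ast}$ is both natural and forced, since the counit isomorphism makes this choice work tautologically.
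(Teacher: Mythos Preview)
Your proof is correct and follows essentially the same approach as the paper: both directions use the counit isomorphism $(N_{\ast})^{a}\cong N$ for the backward implication and the identification $N_{\ast}=\Hom_{V}(\widetilde{\mathfrak{m}},M_{0})$ for the forward implication, with the only cosmetic difference being that the paper phrases the forward direction contrapositively (showing $\varphi\neq 0\Rightarrow \varpi^{k}\varphi\neq 0$ for all $k$) rather than directly.
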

\begin{proof}If $N_{\ast}$ is $\varpi$-torsion-free, then $N$ is $\varpi$-torsion-free since the adjunction morphism ${N_{\ast}}^{a}\to N$ is an isomorphism (\cite{Gabber-Ramero}, Proposition 2.2.14(iii)). Conversely, assume that $M_{0}$ is a $\varpi$-torsion-free $A_{0}$-module such that $M_{0}^{a}=N$. Let \begin{equation*}\varphi\in N_{\ast}=\Hom_{V}(\widetilde{\mathfrak{m}}, M_{0})\end{equation*}be any non-zero element. Then there exists an element $x\in\widetilde{m}=\mathfrak{m}\otimes_{V}\mathfrak{m}$ whose image $\varphi(x)$ in $M_{0}$ is a non-zero element of $M_{0}$. Since $M_{0}$ is $\varpi$-torsion-free, $\varpi^{k}\varphi(x)\neq0$ and thus $\varpi^{k}\varphi\neq0$, for all $k$.\end{proof}
\begin{lemma}\label{Two modules of almost elements}Let $(A, \lVert\cdot\rVert)$, $A_{0}$ and $\varpi$ be as in Lemma \ref{Module of almost elements and functor of almost elements}. Let $M_{0}$ be a $\varpi$-torsion-free $A_{0}$-module and let $N=M_{0}^{a}$. If $A_{0}=A_{\leq1}$, then $N_{\ast}=(M_{0})_{\ast}$, where the right hand side is the module of almost elements as defined in Definition \ref{Module of almost elements}. In general, $(N_{\ast})_{\ast}=(M_{0})_{\ast}$.\end{lemma}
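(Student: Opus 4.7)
The plan is to reduce both statements to the concrete description $N_{\ast} = \bigcap_{n>0} \varpi^{-1/p^n} M_{0}$ furnished by Lemma~\ref{Module of almost elements and functor of almost elements}, compared against the analogous formula $(M_{0})_{\ast} = \bigcap_{n>0} \varpi^{-1/p^n} A_{\leq 1} \cdot M_{0}$ provided by Example~\ref{Module of almost elements and roots}. When $A_{0} = A_{\leq 1}$ the factor $A_{\leq 1}$ is absorbed into $M_{0}$, the two intersections coincide, and the first claim follows at once.

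For the general claim, I would first verify that Example~\ref{Module of almost elements and roots} is legitimately applicable to the $A_{0}$-module $N_{\ast}$: it is $\varpi$-torsion-free (as a submodule of the $\varpi$-torsion-free module $M_{0}[\varpi^{-1}]$), it contains $M_{0}$, and since $A = A_{0}[\varpi^{-1}]$ (because $A_{0}$ is a bounded open subring containing the topologically nilpotent unit $\varpi$), the canonical map $N_{\ast} \to N_{\ast} \otimes_{A_{0}} A$ is injective with target identified with $M_{0}[\varpi^{-1}]$. The example then yields
\begin{equation*}
(M_{0})_{\ast} = \bigcap_{n>0} \varpi^{-1/p^n} A_{\leq 1} \cdot M_{0}, \qquad (N_{\ast})_{\ast} = \bigcap_{n>0} \varpi^{-1/p^n} A_{\leq 1} \cdot N_{\ast},
\end{equation*}
and the inclusion $(M_{0})_{\ast} \subseteq (N_{\ast})_{\ast}$ is immediate from $M_{0} \subseteq N_{\ast}$.

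The reverse inclusion carries the real content of the argument. Given $x \in (N_{\ast})_{\ast} = \bigcap_{r>1} A_{\leq r} \cdot N_{\ast}$ and a fixed $r > 1$, I would pick $r_{1}, r_{2} > 1$ with $r_{1} r_{2} \leq r$ and write $x = \sum_{i=1}^{k} a_{i} n_{i}$ with $a_{i} \in A_{\leq r_{1}}$ and $n_{i} \in N_{\ast}$. The key lever is the estimate $\lVert \varpi^{-1/p^m}\rVert = \lVert\varpi\rVert^{-1/p^m} \to 1^{+}$ as $m \to \infty$, which holds because $\lVert\varpi\rVert < 1$ and each $\varpi^{1/p^m}$ is seminorm-multiplicative by Lemma~\ref{Roots of norm-multiplicative elements}. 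Choosing a single $m$ large enough to achieve both $\lVert\varpi^{-1/p^m}\rVert \leq r_{2}$ and $\varpi^{1/p^m} n_{i} \in M_{0}$ for all $i = 1, \dots, k$, I can rewrite each $n_{i} = \varpi^{-1/p^m}(\varpi^{1/p^m} n_{i}) \in A_{\leq r_{2}} \cdot M_{0}$, whence $x \in A_{\leq r_{1} r_{2}} \cdot M_{0} \subseteq A_{\leq r} \cdot M_{0}$. Since $r > 1$ was arbitrary, $x \in (M_{0})_{\ast}$.

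The main obstacle is essentially organizational rather than conceptual: checking the applicability of Example~\ref{Module of almost elements and roots} to $N_{\ast}$ and coordinating a uniform choice of $m$ for the finitely many $n_{i}$ in the decomposition of $x$. No deeper idea seems to be required beyond the inequality $\lVert\varpi^{-1/p^m}\rVert \to 1^{+}$ exploited above.
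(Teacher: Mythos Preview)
Your proof is correct. For the first claim ($A_{0}=A_{\leq1}$) you take exactly the same route as the paper. For the general claim, however, the paper argues more structurally: instead of verifying $(N_{\ast})_{\ast}\subseteq (M_{0})_{\ast}$ by decomposing an element and absorbing each $N_{\ast}$-factor into $M_{0}$ via a suitable $\varpi^{-1/p^{m}}$, the paper first records the sandwich $M_{0}\subseteq N_{\ast}\subseteq (M_{0})_{\ast}$ (using the same ingredients you cite) and then invokes the idempotency $((M_{0})_{\ast})_{\ast}=(M_{0})_{\ast}$, which is immediate from Lemma~\ref{Closed unit ball and module of almost elements} since $(M_{0})_{\ast}$ is already the closed unit ball of the gauge seminorm $\lVert\cdot\rVert_{M,M_{0},\ast}$. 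Monotonicity of $(-)_{\ast}$ then gives $(M_{0})_{\ast}\subseteq (N_{\ast})_{\ast}\subseteq ((M_{0})_{\ast})_{\ast}=(M_{0})_{\ast}$ in one stroke. Your explicit $r_{1}r_{2}$-splitting is perfectly valid and somewhat more self-contained; the paper's version trades that computation for the reusable structural fact that $(-)_{\ast}$ is idempotent on closed unit balls.
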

\begin{proof}The assertion in the case $A_{0}=A_{\leq1}$ follows from Example \ref{Module of almost elements and roots} and Lemma \ref{Module of almost elements and functor of almost elements}. In general, Example \ref{Module of almost elements and roots} and Lemma \ref{Module of almost elements and functor of almost elements} imply that \begin{equation*}M_{0}\subseteq N_{\ast}\subseteq (M_{0})_{\ast}.\end{equation*}Since, by Lemma \ref{Closed unit ball and module of almost elements}, $(M_{0})_{\ast}$ is the closed unit ball of $M=M_{0}[\varpi^{-1}]$ with respect to the gauge seminorm $\lVert\cdot\rVert_{M,M_{0},\ast}$, we have \begin{equation*}((M_{0})_{\ast})_{\ast}=(M_{0})_{\ast}.\end{equation*}Hence the inclusions $M_{0}\subset N_{\ast}\subseteq (M_{0})_{\ast}$ imply that $(N_{\ast})_{\ast}=(M_{0})_{\ast}$.  \end{proof}
\begin{mydef}[$\varpi$-adically complete almost modules]\label{Complete almost modules}Let $(V, \mathfrak{m})$ be a basic setup for almost mathematics and let $\varpi\in\mathfrak{m}\setminus\{0\}$. A $V^{a}$-module $M$ is called $\varpi$-adically complete if the map \begin{equation*}M\to \widehat{M}=\varprojlim_{n}M/\varpi^{n}M\end{equation*}is an isomorphism of almost modules.\end{mydef}
\begin{lemma}\label{Completeness does not depend on choices}Let $(V, \mathfrak{m})$ be a basic setup for almost mathematics, let $\varpi\in\mathfrak{m}\setminus\{0\}$ and let $M$ be a $V^{a}$-module. The following are equivalent: \begin{enumerate}[(1)]\item $M$ is $\varpi$-adically complete. \item There exists a $V$-module $M_{0}$ with $M_{0}^{a}=M$ such that the completion map $M_{0}\to \widehat{M_{0}}$ is an almost isomorphism. \item For every $A_{0}$-module $M_{0}$ with $M_{0}^{a}=M$, the completion map $M_{0}\to\widehat{M_{0}}$ is an almost isomorphism.\end{enumerate}\end{lemma}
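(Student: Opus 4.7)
The plan is to reduce the entire equivalence to the observation that the localization functor $(-)^{a}: V\text{-Mod}\to V^{a}\text{-Mod}$ is exact and preserves all limits (in addition to all colimits). Indeed, by \cite{Gabber-Ramero}, $(-)^{a}$ has $(-)_{*}$ as a right adjoint and $M\mapsto M_{!}:=\widetilde{\mathfrak{m}}\otimes_{V}M_{*}$ as a left adjoint, so it is both continuous and cocontinuous. In particular, for any $V$-module $M_{0}$ and any $n\geq1$, we have canonical isomorphisms
\begin{equation*}
(M_{0}/\varpi^{n}M_{0})^{a}\cong M_{0}^{a}/\varpi^{n}M_{0}^{a}, \qquad \Bigl(\varprojlim_{n}M_{0}/\varpi^{n}M_{0}\Bigr)^{a}\cong \varprojlim_{n}\bigl(M_{0}/\varpi^{n}M_{0}\bigr)^{a}.
\end{equation*}
Combining these, the almost localization of the classical completion map $M_{0}\to \widehat{M_{0}}=\varprojlim_{n}M_{0}/\varpi^{n}M_{0}$ is naturally identified with the completion map $M\to \widehat{M}$ in the sense of Definition~\ref{Complete almost modules}, whenever $M_{0}^{a}=M$.

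The implication (3)$\Rightarrow$(2) is immediate (take, for instance, $M_{0}=M_{*}$, which satisfies $M_{0}^{a}=M$ by \cite{Gabber-Ramero}, Proposition 2.2.14(iii)). For (2)$\Rightarrow$(1), given a $V$-module $M_{0}$ with $M_{0}^{a}=M$ such that $M_{0}\to \widehat{M_{0}}$ is an almost isomorphism, applying the exact functor $(-)^{a}$ yields an isomorphism of $V^{a}$-modules; by the natural identification above, this isomorphism is precisely $M\to \widehat{M}$, so $M$ is $\varpi$-adically complete. For (1)$\Rightarrow$(3), suppose $M$ is $\varpi$-adically complete and let $M_{0}$ be any $V$-module with $M_{0}^{a}=M$; applying $(-)^{a}$ to $M_{0}\to \widehat{M_{0}}$ again gives $M\to \widehat{M}$, which is an isomorphism by hypothesis. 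Since $(-)^{a}$ is exact, the kernel and cokernel of $M_{0}\to \widehat{M_{0}}$ both have trivial almost localization, which means they are almost zero, i.e., $M_{0}\to\widehat{M_{0}}$ is an almost isomorphism.

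The only nontrivial step is the commutation of $(-)^{a}$ with the inverse limit $\varprojlim_{n}M_{0}/\varpi^{n}M_{0}$; this is the key observation and follows from the existence of the left adjoint $(-)_{!}$ to $(-)^{a}$. Once this is granted, both implications reduce to a one-line diagram chase using exactness of $(-)^{a}$ and the characterization of almost isomorphisms as those $V$-linear maps whose kernel and cokernel are almost zero. I expect no further obstacles; the lemma is essentially a formal consequence of the adjoint functor properties of $(-)^{a}$ together with its exactness.
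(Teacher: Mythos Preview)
Your proof is correct and follows essentially the same approach as the paper's own proof: both rely on the observation that the localization functor $(-)^{a}$ admits both a left adjoint $(-)_{!}$ and a right adjoint $(-)_{\ast}$ (Gabber--Ramero, Propositions 2.2.14 and 2.2.23), hence commutes with all limits and colimits, so that applying $(-)^{a}$ to $M_{0}\to\widehat{M_{0}}$ yields exactly $M\to\widehat{M}$. Your write-up is simply more explicit about the individual implications than the paper's one-sentence argument.
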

\begin{proof}Suppose that $M_{0}$ is an $A_{0}$-module and let $M=M_{0}^{a}$. By \cite{Gabber-Ramero}, Proposition 2.2.14 and Proposition 2.2.23, the localization functor $N_{0}\mapsto N_{0}^{a}$ has both a left adjoint and a right adjoint, so it preserves all limits and colimits. Hence $M_{0}\to \varprojlim_{n}M_{0}/\varpi^{n}M_{0}$ being an almost isomorphism is equivalent to $M\to \varprojlim_{n}M/\varpi^{n}M$ being an isomorphism of almost modules. \end{proof}
\begin{lemma}\label{Torsion-free complete almost modules}Let $(V, \mathfrak{m})$ be a basic setup and let $\varpi\in\mathfrak{m}$ be a non-zero-divisor. If a $V^{a}$-module $M$ is $\varpi$-torsion-free, then $M$ is $\varpi$-adically complete if and only if some (equivalently, every) $\varpi$-torsion-free $V$-module $M_{0}$ with $M_{0}^{a}=M$ is $\varpi$-adically complete.\end{lemma}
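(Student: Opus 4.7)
The plan is to reduce the lemma to Lemma \ref{Completeness does not depend on choices} by proving the following key claim: for any $\varpi$-torsion-free $V$-module $M_{0}$, the completion map $M_{0}\to\widehat{M_{0}}$ is an almost isomorphism if and only if it is an honest isomorphism. Since Definition \ref{Torsion-free almost modules} guarantees the existence of at least one $\varpi$-torsion-free $M_{0}$ with $M_{0}^{a}=M$, combining the key claim with Lemma \ref{Completeness does not depend on choices} immediately yields both the ``some'' and the ``every'' versions of the lemma: if $M$ is $\varpi$-adically complete and $M_{0}$ is any $\varpi$-torsion-free representative, then $M_{0}\to\widehat{M_{0}}$ is an almost isomorphism by Lemma \ref{Completeness does not depend on choices} and hence an isomorphism by the key claim; the converse direction is immediate from Lemma \ref{Completeness does not depend on choices}.

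To prove the key claim, write $K$ and $C$ for the kernel and cokernel of $M_{0}\to\widehat{M_{0}}$. If the map is an almost isomorphism, then $\mathfrak{m}K=\mathfrak{m}C=0$; in particular $\varpi K=\varpi C=0$. The subgroup $K\subseteq M_{0}$ is killed by $\varpi$ and sits inside a $\varpi$-torsion-free module, so $K=0$, giving injectivity.

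For surjectivity, I first show that $\widehat{M_{0}}$ itself is $\varpi$-torsion-free. If $\hat{y}=(y_{n})_{n}\in\widehat{M_{0}}$ satisfies $\varpi\hat{y}=0$, then lifting each $y_{n}\in M_{0}/\varpi^{n}M_{0}$ to some $\tilde{y}_{n}\in M_{0}$ gives $\varpi\tilde{y}_{n}\in\varpi^{n}M_{0}$. By $\varpi$-torsion-freeness of $M_{0}$ this forces $\tilde{y}_{n}\in\varpi^{n-1}M_{0}$, so $y_{n-1}=0$ for all $n\geq1$, hence $\hat{y}=0$. Now fix $\hat{x}=(x_{n})_{n}\in\widehat{M_{0}}$ and a lift $\tilde{x}_{1}\in M_{0}$ of $x_{1}$. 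Since $\varpi C=0$ and $M_{0}\hookrightarrow\widehat{M_{0}}$, there exists a (unique) $z\in M_{0}$ whose image in $\widehat{M_{0}}$ equals $\varpi\hat{x}$. Comparing first components modulo $\varpi M_{0}$ yields $z\equiv\varpi\tilde{x}_{1}\pmod{\varpi M_{0}}$, so $z\in\varpi M_{0}$; write $z=\varpi w$ with $w\in M_{0}$. Then $\varpi(\hat{x}-w)=0$ in $\widehat{M_{0}}$, and the $\varpi$-torsion-freeness of $\widehat{M_{0}}$ just established forces $\hat{x}=w\in M_{0}$.

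The main obstacle is the surjectivity step: a naive iterative lifting in the $\varpi$-adic filtration would presuppose the very $\varpi$-adic completeness of $M_{0}$ that is to be proved. The argument above circumvents this by extracting a single witness $z\in M_{0}$ from $\mathfrak{m}C=0$, using the compatibility of the tower defining $\hat{x}$ modulo $\varpi$ to place $z$ inside $\varpi M_{0}$, and only then cancelling $\varpi$ via the $\varpi$-torsion-freeness of $\widehat{M_{0}}$.
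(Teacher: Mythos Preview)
Your proof is correct and follows essentially the same approach as the paper's: both invoke Lemma~\ref{Completeness does not depend on choices} to obtain that the completion map is an almost isomorphism, kill the kernel using $\varpi$-torsion-freeness of $M_{0}$, and then argue that the cokernel (which is $\varpi$-torsion) must vanish. The paper compresses your surjectivity step into the single observation that $M_{0}/\varpi M_{0}=\widehat{M_{0}}/\varpi\widehat{M_{0}}$ (so $\widehat{M_{0}}=M_{0}+\varpi\widehat{M_{0}}\subseteq M_{0}$ once $\varpi\widehat{M_{0}}\subseteq M_{0}$), whereas you unpack this by explicitly proving $\widehat{M_{0}}$ is $\varpi$-torsion-free and cancelling $\varpi$; the auxiliary lift $\tilde{x}_{1}$ is harmless but unnecessary, since the first component of $\varpi\hat{x}$ is already zero in $M_{0}/\varpi M_{0}$.
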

\begin{proof}One implication is trivial. For the other, assume that $M$ is $\varpi$-torsion-free and $\varpi$-adically complete and let $M_{0}$ be a $\varpi$-torsion-free $V$-module with $M_{0}^{a}=M$. We know from Lemma \ref{Completeness does not depend on choices} that the completion map $M_{0}\to \widehat{M_{0}}$ is an almost isomorphism, i.e., its kernel and cokernel are annihilated by every $p$-power root of $\varpi$. In particular, the kernel and cokernel are $\varpi$-torsion. Since $M_{0}$ is $\varpi$-torsion-free, the kernel must be zero. Since $M_{0}$ is $\varpi$-torsion-free and $M_{0}/\varpi M_{0}=\widehat{M_{0}}/\varpi\widehat{M_{0}}$, the cokernel is zero as well, so $M_{0}$ is $\varpi$-adically complete, as claimed.  \end{proof} 
We can now prove our first main result, which is a variant of Theorem \ref{Seminormed modules and lattices} (respectively, of Theorem \ref{Banach modules and lattices}) in the world of almost mathematics.
\begin{thm}\label{Seminormed modules and almost lattices}Let $(A, \lVert\cdot\rVert)$ be a seminormed ring (respectively, a Banach ring) and suppose that there exists a topologically nilpotent unit $\varpi\in A_{\leq1}$ of $A$ which admits a compatible system of $p$-power roots $(\varpi^{1/p^{n}})_{n}$ in $A_{\leq1}$ such that \begin{equation*}\lVert\varpi^{s}\rVert=\lVert\varpi\rVert^{s}\end{equation*}for all $s\in\mathbb{Z}[1/p]$. We perform almost mathematics with respect to the basic setup \begin{equation*}(A_{\leq1}, (\varpi^{1/p^{\infty}})_{A_{\leq1}})\end{equation*}given by the flat ideal \begin{equation*}(\varpi^{1/p^{\infty}})_{A_{\leq1}}=\bigcup_{n>0}(\varpi^{1/p^{n}})_{A_{\leq1}}\end{equation*}of $A_{\leq1}$. Then the functors \begin{equation*}N\mapsto (N_{\ast}[\varpi^{-1}], \lVert\cdot\rVert_{N_{\ast}[\varpi^{-1}],N_{\ast},\ast})\end{equation*}and \begin{equation*}M\mapsto M_{\leq1}^{a}\end{equation*}are quasi-inverse equivalences between the category of submetric seminormed $(A, \lVert\cdot\rVert)$-modules (respectively, submetric Banach $(A, \lVert\cdot\rVert)$-modules) and the category of $\varpi$-torsion-free $A_{\leq1}^{a}$-modules (respectively, $\varpi$-adically complete $\varpi$-torsion-free $A_{\leq1}^{a}$-modules).\end{thm}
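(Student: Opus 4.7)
My plan is to deduce this theorem by combining Theorem \ref{Seminormed modules and lattices} (respectively Theorem \ref{Banach modules and lattices}), applied with $A_{0}=A_{\leq 1}$, with the standard almostification equivalence that relates $A_{\leq 1}$-modules to $A_{\leq 1}^{a}$-modules. The assumption on $\varpi$ implies, by Corollary \ref{Description of norms 2}, that $\lVert A^{\times,m}\rVert$ is dense in $\mathbb{R}_{\geq 0}$, which is exactly the density hypothesis needed to apply those theorems. With that hypothesis verified, one obtains an equivalence between $\Snm_{A}^{\leq 1}$ (respectively $\Ban_{A}^{\leq 1}$) and the category $\mathcal{C}$ of $\varpi$-torsion-free $A_{\leq 1}$-modules $M_{0}$ satisfying $M_{0}=(M_{0})_{\ast}$ (respectively, additionally $\varpi$-adically complete), via $(M,\lVert\cdot\rVert_{M})\mapsto M_{\leq 1}$, with the gauge-seminorm construction as quasi-inverse.

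The heart of the argument is then to show that the almostification $M_{0}\mapsto M_{0}^{a}$ identifies $\mathcal{C}$ with the category of $\varpi$-torsion-free $A_{\leq 1}^{a}$-modules, with quasi-inverse $N\mapsto N_{\ast}$. For $M_{0}\in\mathcal{C}$, $M_{0}^{a}$ is $\varpi$-torsion-free by Definition \ref{Torsion-free almost modules}, and Lemma \ref{Two modules of almost elements} in the special case $A_{0}=A_{\leq 1}$ gives $(M_{0}^{a})_{\ast}=(M_{0})_{\ast}=M_{0}$. Conversely, given a $\varpi$-torsion-free $A_{\leq 1}^{a}$-module $N$, the $A_{\leq 1}$-module $N_{\ast}$ is $\varpi$-torsion-free by Lemma \ref{Torsion-free almost modules and almost elements}, and the counit $(N_{\ast})^{a}\to N$ is an isomorphism (\cite{Gabber-Ramero}, Proposition 2.2.14). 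Applying Lemma \ref{Two modules of almost elements} once more, this time to the $\varpi$-torsion-free module $M_{0}:=N_{\ast}$, yields $(N_{\ast})_{\ast}=((N_{\ast})^{a})_{\ast}=N_{\ast}$, so $N_{\ast}\in\mathcal{C}$. Composing this identification with the equivalence of Theorem \ref{Seminormed modules and lattices} produces the seminormed version of the theorem, and the composite quasi-inverse is precisely $N\mapsto (N_{\ast}[\varpi^{-1}],\lVert\cdot\rVert_{N_{\ast}[\varpi^{-1}],N_{\ast},\ast})$.

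For the Banach part I would additionally invoke Lemma \ref{Torsion-free complete almost modules}: among $\varpi$-torsion-free objects, $N$ is $\varpi$-adically complete as an almost module if and only if $N_{\ast}$ is $\varpi$-adically complete as an $A_{\leq 1}$-module. This exactly matches the completeness condition in Theorem \ref{Banach modules and lattices} with the corresponding condition on the almost-module side, so the full equivalence restricts appropriately. Since every step is built directly from already-proven lemmas, there is no deep obstacle; the single piece of bookkeeping that requires genuine care is the idempotency $(N_{\ast})_{\ast}=N_{\ast}$, which is the hinge allowing $N_{\ast}$ to lie in the category $\mathcal{C}$, but as indicated above it follows by a tautological application of Lemma \ref{Two modues of almost elements} to $M_{0}=N_{\ast}$ in the case $A_{0}=A_{\leq 1}$.
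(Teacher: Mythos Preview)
Your proposal is correct and follows essentially the same approach as the paper: both arguments hinge on Lemma \ref{Two modules of almost elements} (with $A_{0}=A_{\leq1}$) to identify $N_{\ast}$ with the Definition \ref{Module of almost elements} module of almost elements, on the counit isomorphism $(N_{\ast})^{a}\cong N$ from \cite{Gabber-Ramero}, Proposition 2.2.14(iii), and on Corollary \ref{Description of norms 2} to recover the seminorm from the closed unit ball; you simply package the first half by explicitly invoking Theorems \ref{Seminormed modules and lattices} and \ref{Banach modules and lattices} as a black box, whereas the paper unfolds the same verification directly. One small correction: the density of $\lVert A^{\times,m}\rVert$ is not the statement of Corollary \ref{Description of norms 2} but is established in its proof via Lemma \ref{Roots of norm-multiplicative elements}, so you should cite that lemma (or the proof) instead.
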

\begin{proof}Let $N$ be a $\varpi$-torsion-free (respectively, $\varpi$-torsion-free and $\varpi$-adically complete) $A_{\leq1}^{a}$-module. By Definition \ref{Torsion-free almost modules} (respectively, by Lemma \ref{Torsion-free complete almost modules}), there exists a $\varpi$-torsion-free (respectively, $\varpi$-torsion-free and $\varpi$-adically complete) $A_{\leq1}$-module $M_{0}$ with $M_{0}^{a}=N$. By Lemma \ref{Two modules of almost elements}, $N_{\ast}=(M_{0})_{\ast}$, where the right hand side is the module of almost elements as defined in Definition \ref{Module of almost elements}. By Lemma \ref{Closed unit ball and module of almost elements}, $(M_{0})_{\ast}$ is equal to the closed unit ball $M_{\leq1}$ of $M=N_{\ast}[\varpi^{-1}]=M_{0}[\varpi^{-1}]$ with respect to the gauge seminorm $\lVert\cdot\rVert_{M,N_{\ast},\ast}$. Since the adjunction morphism ${N_{\ast}}^{a}\to N$ is an isomorphism of $A_{\leq1}^{a}$-modules (\cite{Gabber-Ramero}, Proposition 2.2.14(iii)), we conclude that there is a canonical isomorphism $N\cong M_{\leq1}^{a}$. Conversely, let $(M, \lVert\cdot\rVert_{M})$ be a seminormed (respectively, Banach) $A$-module. Then \begin{equation*}(M_{\leq1}^{a})_{\ast}=(M_{\leq1})_{\ast}=M_{\leq1},\end{equation*}so the equality $\lVert\cdot\rVert_{M}=\lVert\cdot\rVert_{M,(M_{\leq1}^{a})_{\ast},\ast}$ follows from Corollary \ref{Description of norms 2}.\end{proof}
\begin{cor}\label{Almost isometries}Let $(A, \lVert\cdot\rVert)$ be a seminormed ring (respectively, a Banach ring) and suppose that there exists a topologically nilpotent unit $\varpi\in A_{\leq1}$ of $A$ which admits a compatible system of $p$-power roots $(\varpi^{1/p^{n}})_{n}$ in $A_{\leq1}$ satisfying \begin{equation*}\lVert\varpi^{s}\rVert=\lVert\varpi\rVert^{s}\end{equation*}for all $s\in\mathbb{Z}[1/p]$. Then a submetric $A$-module homomorphism $\varphi: M\to N$ between submetric seminormed $A$-modules $M$, $N$ is an isometry if and only if the $\varpi^{\infty}$-torsion submodule of the cokernel of $M_{\leq1}\to N_{\leq1}$ is almost zero.\end{cor}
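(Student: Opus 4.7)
The plan is to exploit the explicit gauge-seminorm formula of Corollary \ref{Description of norms 2}, namely $\lVert\cdot\rVert_{M}=\lVert\cdot\rVert_{M,M_{\leq1},\ast}$ and similarly for $N$, together with the fact that $\varpi^{s}$ is seminorm-multiplicative for every $s\in\mathbb{Z}[1/p]$ (Lemma \ref{Roots of norm-multiplicative elements}) and the density of $\{\lVert\varpi\rVert^{s}:s\in\mathbb{Z}[1/p]\}$ in $\mathbb{R}_{>0}$. This will translate the isometry condition into a lifting property along $\varphi$ for elements of the closed unit balls after scaling by $p$-power roots of $\varpi$.

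For the forward implication, I would suppose $\varphi$ is an isometry and let $y\in N_{\leq1}$ represent a $\varpi$-power torsion class in the cokernel, with $\varpi^{k}y=\varphi(x)$ for some integer $k\geq 1$ and some $x\in M_{\leq1}$. The isometry assumption forces $\lVert x\rVert_{M}=\lVert\varpi^{k}y\rVert_{N}\leq\lVert\varpi\rVert^{k}$, so for every $n\geq 1$ the element $\varpi^{-k+1/p^{n}}x$ has seminorm at most $\lVert\varpi\rVert^{1/p^{n}}<1$, lies in $M_{\leq1}$, and is mapped by $\varphi$ to exactly $\varpi^{1/p^{n}}y$. Thus $\varpi^{1/p^{n}}$ kills the class of $y$ in $N_{\leq1}/\varphi(M_{\leq1})$ for every $n\geq 1$, exhibiting the torsion as almost zero.

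Conversely, assuming the $\varpi^{\infty}$-torsion of the cokernel is almost zero, only $\lVert x\rVert_{M}\leq\lVert\varphi(x)\rVert_{N}$ requires proof, since submetricity gives the opposite inequality. Given $x\in M$ and $t>\lVert\varphi(x)\rVert_{N}$, pick $s\in\mathbb{Z}[1/p]$ with $\lVert\varphi(x)\rVert_{N}<\lVert\varpi\rVert^{s}<t$, so that $\varpi^{-s}\varphi(x)\in N_{\leq1}$. For any integer $m$ large enough that $\varpi^{m-s}x\in M_{\leq1}$, the identity $\varphi(\varpi^{m-s}x)=\varpi^{m}\cdot\varpi^{-s}\varphi(x)$ shows that the class of $\varpi^{-s}\varphi(x)$ in the cokernel is $\varpi$-power torsion, and hence by hypothesis killed by $\varpi^{1/p^{n}}$ for every $n\geq 1$. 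This produces $x_{n}'\in M_{\leq1}$ with $\varphi(x_{n}')=\varpi^{1/p^{n}-s}\varphi(x)$, equivalently $\varphi(x)=\varphi(\varpi^{s-1/p^{n}}x_{n}')$ with $\lVert\varpi^{s-1/p^{n}}x_{n}'\rVert_{M}\leq\lVert\varpi\rVert^{s-1/p^{n}}$.

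The main obstacle will be the last step of deducing $\lVert x\rVert_{M}\leq\lVert\varpi\rVert^{s-1/p^{n}}$ from $\varphi(x)=\varphi(\varpi^{s-1/p^{n}}x_{n}')$, since a priori $x$ and $\varpi^{s-1/p^{n}}x_{n}'$ differ only up to an element of $\ker(\varphi)$. In the relevant setting an isometry is automatically injective --- if $\varphi(y)=0$ then $\lVert y\rVert_{M}=\lVert\varphi(y)\rVert_{N}=0$, which for a (normed) Banach module forces $y=0$ --- so in fact $x=\varpi^{s-1/p^{n}}x_{n}'$ and the bound is immediate; letting $n\to\infty$ yields $\lVert x\rVert_{M}\leq\lVert\varpi\rVert^{s}<t$, and since $t>\lVert\varphi(x)\rVert_{N}$ was arbitrary, $\lVert x\rVert_{M}\leq\lVert\varphi(x)\rVert_{N}$ as required.
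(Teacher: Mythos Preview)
Your forward direction is correct. The converse, however, contains a circular step: having produced $x_{n}'\in M_{\leq1}$ with $\varphi(\varpi^{s-1/p^{n}}x_{n}')=\varphi(x)$, you want to deduce $x=\varpi^{s-1/p^{n}}x_{n}'$, which requires $\varphi$ to be injective. Your justification (``an isometry is automatically injective'') presupposes the very conclusion you are proving; at this stage you only know that $\varphi$ is submetric and that the $\varpi^{\infty}$-torsion of $N_{\leq1}/\varphi(M_{\leq1})$ is almost zero, not that $\varphi$ is an isometry. Without injectivity the ``if'' direction is in fact false: for $M=N=A$ and $\varphi=0$ the cokernel $A_{\leq1}$ is $\varpi$-torsion-free, so its $\varpi^{\infty}$-torsion submodule is zero (hence almost zero), yet $\varphi$ is not an isometry.

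The paper's own proof tacitly restricts to injective $\varphi$ at the sentence ``Let $\varphi: M\to N$ be an injective submetric $A$-module homomorphism\ldots''. Once that hypothesis is added, your direct lifting argument with $p$-power roots of $\varpi$ goes through and gives a pleasant elementary alternative to the paper's route, which instead invokes the equivalence of Theorem~\ref{Seminormed modules and almost lattices} to identify isometries with regular monomorphisms in the category of $\varpi$-torsion-free $A_{\leq1}^{a}$-modules and then characterizes those categorically. Your approach avoids the categorical translation entirely; the paper's approach explains conceptually why the cokernel condition is the right one (it says precisely that $(N_{\leq1}/\varphi(M_{\leq1}))^{a}$ is $\varpi$-torsion-free).
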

\begin{proof}Isometries are precisely the strict monomorphisms (or, equivalently, the regular monomorphisms) in $\Snm_{A}^{\leq1}$. Thus, by Theorem \ref{Seminormed modules and almost lattices}, $\varphi: M\to N$ is an isometry if and only if the corresponding morphism of $A_{\leq^1}^{a}$-modules $M_{\leq1}^{a}\to N_{\leq1}^{a}$ is a regular monomorphism in the category of $\varpi$-torsion-free $A_{\leq1}^{a}$-modules. In other words, $\varphi$ is an isometry if and only if $M_{\leq1}^{a}\to N_{\leq1}^{a}$ is the kernel of its cokernel in the category of $\varpi$-torsion-free $A_{\leq1}^{a}$-modules. Let $\varphi: M\to N$ be an injective submetric $A$-module homomorphism and let $P_{0}$ be the cokernel of $\varphi\vert_{M_{\leq1}}: M_{\leq1}\to N_{\leq1}$ in the category of all $A_{\leq1}$-modules. Since the localization functor $\Mod_{A_{\leq1}}\to \Mod_{A_{\leq1}^{a}}$ preserves limits and colimits, \begin{center}\begin{tikzcd}0\arrow{r} & M_{\leq1}^{a}\arrow{r} & N_{\leq1}^{a}\arrow{r} & P_{0}^{a}\arrow{r} & 0 \end{tikzcd}\end{center}is a short exact sequence in the abelian category of $A_{\leq1}^{a}$-modules. Therefore, the monomorphism $M_{\leq1}^{a}\to N_{\leq1}^{a}$ is a regular monomorphism in the full subcategory of $\Mod_{A_{\leq1}^{a}}$ consisting of $\varpi$-torsion-free $A_{\leq1}^{a}$-modules if and only if $P_{0}^{a}$ is a $\varpi$-torsion-free $A_{\leq1}^{a}$-module. But this is the case if and only if the quotient map $P_{0}\to P_{0}/(\varpi^{\infty}-\tor)$ is an almost isomorphism.\end{proof}
We continue our study of gauge seminorms, turning our attention to more general open $A_{\leq1}$-submodules of a submetric seminormed module $M$, not neccessarily equal to the closed unit ball of $M$.
\begin{lemma}\label{Module of almost elements and value groups}Let $(A, \lVert\cdot\rVert)$ be a seminormed ring such that $\lVert A^{\times,m}\rVert$ is dense in $\mathbb{R}_{\geq0}$, let $A_{0}$ be an open subring of $A_{\leq1}$ and let $M_{0}$ be an $A_{0}$-submodule of an $A$-module $M$. Then \begin{equation*}(M_{0})_{\ast}=\bigcap_{r>1}(A^{\times,m})_{\leq r}\cdot A_{\leq1}\cdot M_{0}.\end{equation*}\end{lemma}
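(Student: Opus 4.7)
The claim is a purely algebraic refinement of the definition $(M_0)_\ast = \bigcap_{r>1} A_{\leq r}\cdot M_0$, so I would attack it by proving the two inclusions separately.

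The inclusion $\bigcap_{r>1}(A^{\times,m})_{\leq r}\cdot A_{\leq 1}\cdot M_0 \subseteq (M_0)_\ast$ is immediate: for any $r>1$, submultiplicativity of $\lVert\cdot\rVert$ gives $(A^{\times,m})_{\leq r}\cdot A_{\leq 1}\subseteq A_{\leq r}$, hence $(A^{\times,m})_{\leq r}\cdot A_{\leq 1}\cdot M_0\subseteq A_{\leq r}\cdot M_0$, and intersecting over $r>1$ yields the claim.

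For the reverse inclusion, I would fix $x\in (M_0)_\ast$ and $r>1$ and construct an explicit presentation of $x$ in $(A^{\times,m})_{\leq r}\cdot A_{\leq 1}\cdot M_0$. The key input is the density of $\lVert A^{\times,m}\rVert$ in $\mathbb{R}_{\geq 0}$: it provides an element $a\in A^{\times,m}$ with $1<\lVert a\rVert<r$. Since $x\in (M_0)_\ast\subseteq A_{\leq\lVert a\rVert}\cdot M_0$, we can write
\begin{equation*}
x=f_1 x_1+\cdots+f_n x_n,\quad f_i\in A,\ \lVert f_i\rVert\leq\lVert a\rVert,\ x_i\in M_0.
\end{equation*}
The point is that $a\in A^{\times,m}$ forces $\lVert a^{-1}\rVert=\lVert a\rVert^{-1}$, so each $a^{-1}f_i$ satisfies $\lVert a^{-1}f_i\rVert\leq\lVert a\rVert^{-1}\lVert f_i\rVert\leq 1$, i.e.\ $a^{-1}f_i\in A_{\leq 1}$. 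Rewriting $x=a\cdot\sum_i (a^{-1}f_i)x_i$ exhibits $x$ as an element of $a\cdot A_{\leq 1}\cdot M_0\subseteq (A^{\times,m})_{\leq r}\cdot A_{\leq 1}\cdot M_0$, and intersecting over $r>1$ gives $x\in\bigcap_{r>1}(A^{\times,m})_{\leq r}\cdot A_{\leq 1}\cdot M_0$.

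The main (and essentially only) conceptual step is the trick of factoring out a seminorm-multiplicative unit $a$ of norm slightly larger than $1$ in order to convert a generic product $A_{\leq\lVert a\rVert}\cdot M_0$ into an $A^{\times,m}$-scaling of $A_{\leq 1}\cdot M_0$; this is the same device that underlies Lemma \ref{Lattices} and Proposition \ref{Description of norms 1}, and once it is set up the rest is a direct calculation. No completeness, no $p$-power roots, and no almost-mathematics machinery beyond Definition \ref{Module of almost elements} is required.
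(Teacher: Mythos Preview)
Your proof is correct and follows essentially the same strategy as the paper. The only cosmetic difference is that the paper, after writing $x=\sum f_i' x_i$ with $f_i'\in A_{\leq r'}$ for some $r'\in(1,r)$, invokes Corollary~\ref{The seminorm on a seminormed ring} to find a separate $f_i\in(A^{\times,m})_{\leq r}$ with $f_i'\in f_i A_{\leq 1}$ for each $i$, whereas you pick a single $a\in A^{\times,m}$ with $1<\lVert a\rVert<r$ and factor all the $f_i$ through $a$ at once; your version is slightly more self-contained.
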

\begin{proof}Let $x\in (M_{0})_{\ast}$. Let $r>1$. We want to prove that $x\in (A^{\times,m})_{\leq r}\cdot A_{\leq1}M_{0}$. Let $r'\in (0,r)$ and write $x=f_{1}'x_{1}+\dots+f_{n}'x_{n}$ for some $x_{1},\dots, x_{n}\in M_{0}$, $f_{1}',\dots, f_{n}'\in A_{\leq r'}$. By Corollary \ref{The seminorm on a seminormed ring}, there exist $f_{1},\dots,f_{n}\in (A^{\times,m})_{\leq r}$ such that $f_{i}'\in f_{i}A_{\leq1}$ for all $i=1,\dots, n$. But then $x\in f_{1}A_{\leq1}M_{0}+\dots+f_{n}A_{\leq1}M_{0}\subseteq (A^{\times,m})_{\leq r}\cdot A_{\leq1}\cdot M_{0}$, as desired. \end{proof}
We deduce from the above lemma, Lemma \ref{Lattices} and Lemma \ref{Description of norms} that our definition of the gauge seminorm generalizes the definition of the gauge seminorm of a $K^{\circ}$-lattice in a vector space over a nonarchimedean field $K$ given by Schneider in \cite{Schneider}, \S2.
\begin{prop}\label{Another definition of the gauge seminorm}Let $(A, \lVert\cdot\rVert)$ be a seminormed ring such that $\lVert A^{\times,m}\rVert$ is dense in $\mathbb{R}_{\geq0}$, let $M$ be an $A$-module and let $M_{0}$ be an $A_{0}$-submodule of $M$ with $M=\bigcup_{r>0}A_{\leq r}\cdot M_{0}$, for some open subring $A_{0}\subseteq A_{\leq1}$. Then \begin{equation*}\lVert x\rVert_{M,M_{0},\ast}=\inf\{\, \lVert f\rVert \mid f\in A^{\times,m}, x\in fA_{\leq1}M_{0}\,\}\end{equation*}for all $x\in M$.\end{prop}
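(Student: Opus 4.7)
The plan is to reduce the proposition to Lemma \ref{Description of norms} applied to the gauge seminorm $\mu = \lVert\cdot\rVert_{M,M_{0},\ast}$ itself, and then to use Lemma \ref{Module of almost elements and value groups} together with Lemma \ref{Lattices} to replace the module of almost elements $(M_{0})_{\ast}$ by $A_{\leq1}\cdot M_{0}$ in the resulting formula.

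First I would verify that all hypotheses of Lemma \ref{Description of norms} are met for $\mu = \lVert\cdot\rVert_{M,M_{0},\ast}$. By Lemma \ref{The gauge seminorm is a seminorm}, $(M, \mu)$ is a submetric seminormed $A$-module, so $\mu(fx)\leq\lVert f\rVert\mu(x)$ for all $f\in A$. By Lemma \ref{Closed unit ball and module of almost elements}, the closed unit ball $M_{\mu\leq1}$ equals $(M_{0})_{\ast}$. Finally, the density condition $(\mu(x), s)\cap\lVert A^{\times,m}\rVert\neq\emptyset$ is exactly Lemma \ref{Value groups and gauges}. Hence Lemma \ref{Description of norms} yields
\begin{equation*}
\lVert x\rVert_{M,M_{0},\ast}=\inf\{\,\lVert f\rVert\mid f\in A^{\times,m},\ x\in f(M_{0})_{\ast}\,\}.
\end{equation*}

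The inequality $\lVert x\rVert_{M,M_{0},\ast}\leq\inf\{\,\lVert f\rVert\mid f\in A^{\times,m},\ x\in fA_{\leq1}M_{0}\,\}$ is then immediate because $A_{\leq1}\cdot M_{0}\subseteq A_{\leq r}\cdot M_{0}$ for every $r>1$, so $A_{\leq1}\cdot M_{0}\subseteq (M_{0})_{\ast}$ and consequently $fA_{\leq1}M_{0}\subseteq f(M_{0})_{\ast}$ for every $f\in A^{\times,m}$. For the converse direction, suppose $f\in A^{\times,m}$ and $x\in f(M_{0})_{\ast}$, i.e.\ $f^{-1}x\in(M_{0})_{\ast}$. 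By Lemma \ref{Module of almost elements and value groups}, for every $r>1$ we have $f^{-1}x\in (A^{\times,m})_{\leq r}\cdot A_{\leq1}\cdot M_{0}$. Writing $f^{-1}x$ as a finite sum of elements of the form $a_{i}y_{i}$ with $a_{i}\in (A^{\times,m})_{\leq r}$, $y_{i}\in A_{\leq1}\cdot M_{0}$, and applying Lemma \ref{Lattices} to the $A_{\leq1}$-submodule $A_{\leq1}\cdot M_{0}$ of $M$, we obtain a single $a\in(A^{\times,m})_{\leq r}$ with $f^{-1}x\in aA_{\leq1}\cdot M_{0}$. Then $g:=fa\in A^{\times,m}$ satisfies $\lVert g\rVert\leq \lVert f\rVert r$ and $x\in gA_{\leq1}M_{0}$, so $\inf\{\lVert g\rVert\mid g\in A^{\times,m},\ x\in gA_{\leq1}M_{0}\}\leq \lVert f\rVert r$. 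Letting $r\to 1^{+}$ and then taking the infimum over admissible $f$ gives the desired inequality.

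The proof is essentially mechanical once the three cited lemmas are in hand; the only step requiring a small observation is the reduction from a finite linear combination with coefficients in $(A^{\times,m})_{\leq r}$ to a single scalar, which is exactly the content of Lemma \ref{Lattices}. Thus no serious obstacle is expected.
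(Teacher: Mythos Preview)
Your proof is correct and uses the same key lemmas as the paper (Lemma~\ref{Description of norms}, Lemma~\ref{Closed unit ball and module of almost elements}, Lemma~\ref{Module of almost elements and value groups}, and Lemma~\ref{Lattices}). The only difference is packaging: the paper shows that the target function $\mu(x)=\inf\{\lVert f\rVert\mid f\in A^{\times,m},\ x\in fA_{\leq1}M_{0}\}$ has closed unit ball $(M_{0})_{\ast}$ and then invokes the uniqueness implicit in Lemma~\ref{Description of norms}, whereas you first express the gauge seminorm as $\inf\{\lVert f\rVert\mid x\in f(M_{0})_{\ast}\}$ and then prove the two inequalities directly---but the substance is the same.
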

\begin{proof}By Lemma \ref{Gauge seminorms and lattices} and Lemma \ref{Closed unit ball and module of almost elements}, $\lVert\cdot\rVert_{M,M_{0},\ast}=\lVert\cdot\rVert_{M,(M_{0})_{\ast},\ast}$ and $(M_{0})_{\ast}$ is the closed unit ball of $(M, \lVert\cdot\rVert_{M,M_{0},\ast})$. By Lemma \ref{Value groups and gauges}, $\lVert M\rVert_{M,M_{0},\ast}$ is contained in the closure of $\lVert A^{\times,m}\rVert$. Hence, by Lemma \ref{Description of norms}, it suffices to prove that the functions $\lVert\cdot\rVert_{M,M_{0},\ast}$ and \begin{equation*}\mu: M\to\mathbb{R}_{\geq0}, x\mapsto \inf\{\, \lVert f\rVert\mid f\in A^{\times,m}, x\in fA_{\leq1}M_{0}\,\},\end{equation*}have the same closed unit ball. By inspection, the closed unit ball $\mu^{-1}([0,1])$ of $\mu$ is given by \begin{equation*}\bigcap_{r>1}\bigcup_{f\in (A^{\times,m})_{\leq1}}fA_{\leq1}M_{0}.\end{equation*}By Lemma \ref{Lattices}, this is equal to \begin{equation*}\bigcap_{r>1}(A^{\times,m})_{\leq r}\cdot A_{\leq1}\cdot M_{0}.\end{equation*}But by Lemma \ref{Module of almost elements and value groups} the latter subset of $M$ is equal to the closed unit ball $(M_{0})_{\ast}$ of $(M, \lVert\cdot\rVert_{M,M_{0},\ast})$.\end{proof}
\begin{cor}\label{Gauge seminorm via roots}Let $(A, \lVert\cdot\rVert)$ be a seminormed ring, let $A_{0}\subseteq A_{\leq1}$ be an open subring and let $\varpi\in A_{0}$ be a seminorm-multiplicative topologically nilpotent in $A$ admitting a compatible system of $p$-power roots $(\varpi^{1/p^{n}})_{n}$ in $A_{0}$ such that $\lVert\varpi^{s}\rVert=\lVert\varpi\rVert^{s}$ for all $s\in\mathbb{Z}[1/p]$. Let $(M, \lVert\cdot\rVert_{M})$ be a submetric seminormed $(A, \lVert\cdot\rVert)$-module and let $M_{0}$ be an open $A_{0}$-submodule such that \begin{equation*}\bigcap_{n>0}\varpi^{-1/p^{n}}M_{0}=M_{\leq1}.\end{equation*}Then \begin{equation*}\lVert x\rVert_{M}=\inf\{\, \lVert\varpi\rVert^{s}\mid s\in\mathbb{Z}[1/p], x\in\varpi^{s}M_{0}\,\}\end{equation*}for all $x\in M$.\end{cor}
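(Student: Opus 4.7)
The plan is to prove the two inequalities $\lVert x \rVert_M \leq \nu(x)$ and $\nu(x) \leq \lVert x \rVert_M$ directly, where $\nu(x) := \inf\{\lVert \varpi \rVert^{s} \mid s \in \mathbb{Z}[1/p],\ x \in \varpi^{s} M_{0}\}$ denotes the right-hand side. Two general observations will be used throughout. First, since $\varpi$ is topologically nilpotent and $\lVert \varpi^{n} \rVert = \lVert \varpi \rVert^{n}$, one has $\lVert \varpi \rVert < 1$, so the set $\{\lVert \varpi \rVert^{s} \mid s \in \mathbb{Z}[1/p]\}$ is dense in $(0,\infty)$. Second, the assumption $\bigcap_{n} \varpi^{-1/p^{n}} M_{0} = M_{\leq 1}$ immediately gives $M_{0} \subseteq M_{\leq 1}$ (as $\varpi^{1/p^{n}} \in A_{0}$ forces $M_{0} \subseteq \varpi^{-1/p^{n}} M_{0}$ for every $n$). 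Combining Lemma \ref{Roots of norm-multiplicative elements} with Lemma \ref{Multiplicative topologically nilpotent units} applied to the submetric module $M$, each $\varpi^{s}$ for $s \in \mathbb{Z}[1/p]$ is seminorm-multiplicative and satisfies $\lVert \varpi^{s} x \rVert_{M} = \lVert \varpi \rVert^{s} \lVert x \rVert_{M}$ for all $x \in M$.

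For the easy direction, if $x \in \varpi^{s} M_{0}$, then $\varpi^{-s} x \in M_{0} \subseteq M_{\leq 1}$, so the identity just recalled yields $\lVert x \rVert_{M} = \lVert \varpi \rVert^{s} \lVert \varpi^{-s} x \rVert_{M} \leq \lVert \varpi \rVert^{s}$. Taking the infimum over all such $s$ gives $\lVert x \rVert_{M} \leq \nu(x)$.

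For the reverse direction, fix $r > \lVert x \rVert_{M}$. If $\lVert x \rVert_{M} > 0$, choose $s \in \mathbb{Z}[1/p]$ with $\lVert x \rVert_{M} < \lVert \varpi \rVert^{s} < r$ using density; if $\lVert x \rVert_{M} = 0$, simply pick any $s \in \mathbb{Z}[1/p]$ with $\lVert \varpi \rVert^{s} < r$. In either case $\lVert \varpi^{-s} x \rVert_{M} = \lVert \varpi \rVert^{-s} \lVert x \rVert_{M} < 1$, hence $\varpi^{-s} x \in M_{\leq 1}$. Invoking the hypothesis, $\varpi^{-s} x \in \varpi^{-1/p^{n}} M_{0}$ for every $n \geq 1$, which rewrites as $x \in \varpi^{s - 1/p^{n}} M_{0}$. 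Since $\lVert \varpi \rVert^{s - 1/p^{n}} = \lVert \varpi \rVert^{s} \cdot \lVert \varpi \rVert^{-1/p^{n}}$ and $\lVert \varpi \rVert^{-1/p^{n}} \searrow 1$, for sufficiently large $n$ we have $\lVert \varpi \rVert^{s - 1/p^{n}} < r$, producing an element of the defining set of $\nu(x)$ strictly below $r$. Thus $\nu(x) < r$ for every $r > \lVert x \rVert_{M}$, and the inequality $\nu(x) \leq \lVert x \rVert_{M}$ follows.

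The main obstacle is the mismatch between $M_{0}$ and $M_{\leq 1}$: the computation $\lVert \varpi^{-s} x \rVert_{M} < 1$ only places $\varpi^{-s} x$ in $M_{\leq 1}$, not in $M_{0}$ itself, and in general $M_{0} \subsetneq M_{\leq 1}$. Overcoming this is exactly what the hypothesis $\bigcap_{n} \varpi^{-1/p^{n}} M_{0} = M_{\leq 1}$ is designed to permit: one enters $M_{0}$ at the cost of shrinking the exponent by $1/p^{n}$, and this infinitesimal perturbation is harmless because $\lVert \varpi \rVert^{-1/p^{n}} \to 1$.
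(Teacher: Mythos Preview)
Your proof is correct, and it takes a genuinely different route from the paper's. The paper argues by passing to an auxiliary seminormed ring: it views $(A,\lVert\cdot\rVert)$ as a submetric seminormed algebra over $\mathbb{Z}(T^{1/p^{\infty}})=\mathbb{Z}[T^{1/p^{\infty}}][T^{-1}]$ (with the obvious Gauss-type norm and $T\mapsto\varpi$), so that $M$ becomes a submetric seminormed $\mathbb{Z}(T^{1/p^{\infty}})$-module. It then invokes Proposition~\ref{Description of norms 1}, Lemma~\ref{Gauge seminorms and lattices} and Proposition~\ref{Another definition of the gauge seminorm} applied over this auxiliary ring, together with the observation that every $h\in\mathbb{Z}(\varpi^{1/p^{\infty}})$ lies in $\varpi^{s}\mathbb{Z}[\varpi^{1/p^{\infty}}]$ for the minimal exponent $s$ appearing in $h$, to reduce the general gauge formula to one involving only powers $\varpi^{s}$.

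Your argument bypasses all of that machinery and proves both inequalities directly from the multiplicativity of the $\varpi^{s}$ on $M$ (Lemmas~\ref{Roots of norm-multiplicative elements} and~\ref{Multiplicative topologically nilpotent units}) and the density of $\{\lVert\varpi\rVert^{s}:s\in\mathbb{Z}[1/p]\}$. The key step---getting from $\varpi^{-s}x\in M_{\leq1}$ back into $M_{0}$ via the hypothesis $\bigcap_{n}\varpi^{-1/p^{n}}M_{0}=M_{\leq1}$, at the cost of an exponent shift $1/p^{n}$ that vanishes in the limit---is exactly the right idea and is cleanly executed. Your approach is more elementary and self-contained; the paper's approach has the advantage of showing the result as an instance of the general gauge-seminorm formalism, but at the price of introducing an auxiliary ring and invoking three earlier results.
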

\begin{proof}Consider $(A, \lVert\cdot\rVert)$ as a submetric seminormed $\mathbb{Z}(T^{1/p^{\infty}})$-algebra, where \begin{equation*}\mathbb{Z}(T^{1/p^{\infty}})=\mathbb{Z}[T^{1/p^{\infty}}][T^{-1}]\end{equation*}is endowed with the seminorm \begin{equation*}\lVert \sum_{i}n_{i}T^{s_{i}}\rVert=\max\{\, \lVert\varpi\rVert^{s_{i}}\mid n_{i}\neq 0\,\}\end{equation*}and where the map $\mathbb{Z}(T^{1/p^{\infty}})\to A$ sends $T$ to $\varpi$. Then $(M, \lVert\cdot\rVert_{M})$ is also a submetric seminormed $\mathbb{Z}(T^{1/p^{\infty}})$-module. Note that any $h=\sum_{i}n_{i}\varpi^{s_{i}}\in \mathbb{Z}(\varpi^{1/p^{\infty}})$ satisfies \begin{equation*}h\in \varpi^{s_{j}}\mathbb{Z}[\varpi^{1/p^{\infty}}],\end{equation*}where $j$ is such that $s_{j}$ is minimal among all $s_{i}$ (equivalently, such that $\lVert\varpi\rVert^{s_{j}}$ is maximal among $\lVert\varpi\rVert^{s_{i}}$). Thus, using Proposition \ref{Description of norms 1}, Lemma \ref{Gauge seminorms and lattices} and Proposition \ref{Another definition of the gauge seminorm} we obtain: \begin{align*}\lVert x\rVert=\inf\{\, \lVert h\rVert\mid h\in \mathbb{Z}(\varpi^{1/p^{\infty}}), x\in h\mathbb{Z}[\varpi^{1/p^{\infty}}]M_{0}\,\} \\ =\inf\{\, \lVert\varpi\rVert^{s}\mid s\in\mathbb{Z}[1/p], x\in \varpi^{s}M_{0}\,\},\end{align*}as desired. \end{proof}
\begin{cor}\label{Ring seminorm via roots}Let $(A, \lVert\cdot\rVert)$ be a seminormed ring, let $A_{0}$ be an open subring of $A_{\leq1}$ and let $\varpi\in A_{0}$ be a seminorm-multiplicative topologically nilpotent unit in $A$ admitting a compatible system of $p$-power roots $(\varpi^{1/p^{n}})_{n}$ in $A$ such that $\lVert\varpi^{s}\rVert=\lVert\varpi\rVert^{s}$ for all $s\in\mathbb{Z}[1/p]$. If $\bigcap_{n>0}\varpi^{-1/p^{n}}A_{0}=A_{\leq1}$, then \begin{equation*}\lVert f\rVert=\inf\{\, \lVert\varpi\rVert^{s}\mid s\in\mathbb{Z}[1/p], f\in \varpi^{s}A_{0}\,\}\end{equation*}for all $f\in A$.\end{cor}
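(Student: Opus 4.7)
The plan is to obtain this as an immediate specialization of Corollary~\ref{Gauge seminorm via roots} by viewing the seminormed ring $(A,\lVert\cdot\rVert)$ as a submetric seminormed module over itself, and then taking $M = A$ and $M_0 = A_0$. Since the seminorm on $A$ is submultiplicative, $(A,\lVert\cdot\rVert)\in \Snm_A^{\leq 1}$, and its closed unit ball as a module over itself is literally $A_{\leq 1}$. Moreover, $A_0$ is an open subring of $A_{\leq 1}$, so in particular an open $A_0$-submodule of $A$.

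The substantive thing to check before invoking Corollary~\ref{Gauge seminorm via roots} is that the system of $p$-power roots $(\varpi^{1/p^n})_n$ actually lies inside $A_0$ (the statement we are proving only assumes roots in $A$). For this I will use the given identity $\bigcap_{n>0}\varpi^{-1/p^n}A_0 = A_{\leq 1}$: taking the intersection on the left forces $A_{\leq 1}\subseteq \varpi^{-1/p^n}A_0$ for every $n$, and multiplying by $\varpi^{1/p^n}$ gives $\varpi^{1/p^n}A_{\leq 1}\subseteq A_0$; specializing at $1\in A_{\leq 1}$ yields $\varpi^{1/p^n}\in A_0$ as required. This is the only real observation needed, and I expect it to be the main (and minor) obstacle.

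Once this is in place, the hypothesis $\bigcap_{n>0}\varpi^{-1/p^n}M_0 = M_{\leq 1}$ of Corollary~\ref{Gauge seminorm via roots} is precisely the assumption $\bigcap_{n>0}\varpi^{-1/p^n}A_0 = A_{\leq 1}$, so that corollary applies and gives
\begin{equation*}
\lVert f\rVert = \inf\{\, \lVert\varpi\rVert^{s}\mid s\in\mathbb{Z}[1/p],\, f\in\varpi^{s}A_0\,\}
\end{equation*}
for all $f\in A$, which is the claimed formula.
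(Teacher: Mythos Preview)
Your proof is correct and follows exactly the paper's approach, which simply says ``Apply the previous corollary to $(M,\lVert\cdot\rVert_M)=(A,\lVert\cdot\rVert)$.'' You are in fact more careful than the paper: you explicitly verify that the roots $\varpi^{1/p^n}$ lie in $A_0$ (a hypothesis of Corollary~\ref{Gauge seminorm via roots} that the present statement only assumes in $A$), and your argument using $A_{\leq1}\subseteq\varpi^{-1/p^n}A_0$ is the right way to close that small gap.
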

\begin{proof}Apply the previous corollary to $(M, \lVert\cdot\rVert_{M})=(A, \lVert\cdot\rVert)$. \end{proof}
\begin{cor}\label{Infimum of discrete norms}Let $(A, \lVert\cdot\rVert)$ be a seminormed ring, let $A_{0}\subseteq A_{\leq1}$ be an open subring and let $\varpi\in A_{0}$ be a seminorm-multiplicative topologically nilpotent in $A$ admitting a compatible system of $p$-power roots $(\varpi^{1/p^{n}})_{n}$ in $A_{0}$ such that $\lVert\varpi^{s}\rVert=\lVert\varpi\rVert^{s}$ for all $s\in\mathbb{Z}[1/p]$. Let $(M, \lVert\cdot\rVert_{M})$ be a submetric seminormed $(A, \lVert\cdot\rVert)$-module and let $M_{0}$ be an open $A_{0}$-submodule such that $\bigcap_{n>0}\varpi^{-1/p^{n}}M_{0}=M_{\leq1}$. Then \begin{equation*}\lVert\cdot\rVert_{M}=\inf\lVert\cdot\rVert_{M,M_{0},\varpi^{1/p^{n}}}.\end{equation*}\end{cor}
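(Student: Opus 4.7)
The plan is to deduce this corollary directly from Corollary \ref{Gauge seminorm via roots}, which under the same hypotheses already provides the explicit formula
\begin{equation*}
\lVert x\rVert_M = \inf\{\,\lVert\varpi\rVert^s \mid s\in\mathbb{Z}[1/p],\ x\in \varpi^s M_0\,\}.
\end{equation*}
The task is therefore to reorganize the right-hand side as the infimum of the discrete canonical-extension seminorms $\lVert\cdot\rVert_{M,M_0,\varpi^{1/p^n}}$.

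First I would write $\mathbb{Z}[1/p]=\bigcup_{n\geq 0}\tfrac{1}{p^n}\mathbb{Z}$ as an ascending union. Since infima distribute over ascending unions, the formula above becomes
\begin{equation*}
\lVert x\rVert_M = \inf_{n\geq 0}\inf\{\,\lVert\varpi\rVert^{k/p^n}\mid k\in\mathbb{Z},\ x\in\varpi^{k/p^n}M_0\,\}.
\end{equation*}
Next, I would identify the inner infimum with $\lVert x\rVert_{M,M_0,\varpi^{1/p^n}}$. For each $n$, the element $\varpi^{1/p^n}$ is itself a topologically nilpotent unit of $A$ contained in $A_0$---its powers have seminorm $\lVert\varpi\rVert^{k/p^n}\to 0$---so $(A_0,\varpi^{1/p^n})$ is a pair of definition for the underlying Tate ring structure on $A$. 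The canonical extension of the $\varpi^{1/p^n}$-adic seminorm on $M_0$ reads
\begin{equation*}
\lVert x\rVert_{M,M_0,\varpi^{1/p^n}} = \inf\{\,\lVert\varpi^{1/p^n}\rVert^k \mid k\in\mathbb{Z},\ x\in(\varpi^{1/p^n})^k M_0\,\},
\end{equation*}
and the standing hypothesis $\lVert\varpi^{1/p^n}\rVert=\lVert\varpi\rVert^{1/p^n}$ rewrites this as $\inf\{\,\lVert\varpi\rVert^{k/p^n}\mid k\in\mathbb{Z},\ x\in\varpi^{k/p^n}M_0\,\}$, matching the inner infimum.

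Substituting yields $\lVert\cdot\rVert_M=\inf_n\lVert\cdot\rVert_{M,M_0,\varpi^{1/p^n}}$, which is the desired equality. There is essentially no obstacle once Corollary \ref{Gauge seminorm via roots} is invoked; the remaining work is a purely formal rearrangement of the indexing set, with the normalization hypothesis $\lVert\varpi^s\rVert=\lVert\varpi\rVert^s$ ensuring that the values of the discrete seminorms line up exactly with the values $\lVert\varpi\rVert^{k/p^n}$ appearing in the continuous formula.
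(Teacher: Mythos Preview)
Your proof is correct and follows exactly the approach of the paper, which simply records that the statement ``follows immediately from Corollary \ref{Gauge seminorm via roots}.'' You have merely made the immediate rearrangement explicit: decomposing $\mathbb{Z}[1/p]=\bigcup_n \tfrac{1}{p^n}\mathbb{Z}$ and identifying each inner infimum with the canonical extension seminorm $\lVert\cdot\rVert_{M,M_0,\varpi^{1/p^n}}$ via the normalization $\lVert\varpi^{1/p^n}\rVert=\lVert\varpi\rVert^{1/p^n}$.
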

\begin{proof}Follows immediately from Corollary \ref{Gauge seminorm via roots}.\end{proof}
We now want to establish analogs of Theorem \ref{Seminormed modules and lattices}, Theorem \ref{Banach modules and lattices} and Theorem \ref{Seminormed modules and almost lattices} for categories of seminormed algebras and uniform seminormed algebras. The key observation in this respect is captured in the following proposition.
\begin{prop}\label{Gauge seminorms of subrings}Let $(A, \lVert\cdot\rVert)$ be a seminormed ring such that $\lVert A^{\times,m}\rVert$ is dense in $\lVert A\rVert$. Let $B$ be an $A$-algebra and let $B_{0}$ be a subring of $B$ such that $B=\bigcup_{r>0}A_{\leq r}\cdot B_{0}$. Then the gauge $\lVert\cdot\rVert_{B,B_{0},\ast}$ is a ring seminorm on $B$.\end{prop}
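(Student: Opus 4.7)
The plan is to verify the properties of a ring seminorm in turn, noting that only submultiplicativity is non-trivial and that the key hypothesis is simply that $B_{0}$ is closed under multiplication. The abelian-group-seminorm part is already supplied by Lemma \ref{The gauge seminorm is a seminorm}, so the real task is to establish the inequality $\lVert xy\rVert_{B,B_{0},\ast}\leq \lVert x\rVert_{B,B_{0},\ast}\lVert y\rVert_{B,B_{0},\ast}$ for all $x,y\in B$, and then deduce $\lVert 1\rVert_{B,B_{0},\ast}=1$ formally from it.

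For submultiplicativity I would argue directly from the definition of the gauge. Fix $x,y\in B$ and choose arbitrary $r_{1}>\lVert x\rVert_{B,B_{0},\ast}$ and $r_{2}>\lVert y\rVert_{B,B_{0},\ast}$. By the very definition of the infimum there exist $r_{1}'\in(\lVert x\rVert_{B,B_{0},\ast},r_{1})$ and $r_{2}'\in(\lVert y\rVert_{B,B_{0},\ast},r_{2})$ with $x\in A_{\leq r_{1}'}\cdot B_{0}$ and $y\in A_{\leq r_{2}'}\cdot B_{0}$; concretely one can write $x=\sum_{i}f_{i}b_{i}$ and $y=\sum_{j}g_{j}c_{j}$ with $f_{i}\in A_{\leq r_{1}'}$, $g_{j}\in A_{\leq r_{2}'}$, and $b_{i},c_{j}\in B_{0}$. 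The product then expands as $xy=\sum_{i,j}(f_{i}g_{j})(b_{i}c_{j})$, where $\lVert f_{i}g_{j}\rVert\leq\lVert f_{i}\rVert\lVert g_{j}\rVert\leq r_{1}'r_{2}'$ by the submultiplicativity of $\lVert\cdot\rVert$ on $A$, and---this is the crucial point---$b_{i}c_{j}\in B_{0}$ because $B_{0}$ is a subring of $B$. Hence $xy\in A_{\leq r_{1}'r_{2}'}\cdot B_{0}$, giving $\lVert xy\rVert_{B,B_{0},\ast}\leq r_{1}'r_{2}'<r_{1}r_{2}$. Taking the infimum over $r_{1},r_{2}$ yields $\lVert xy\rVert_{B,B_{0},\ast}\leq\lVert x\rVert_{B,B_{0},\ast}\lVert y\rVert_{B,B_{0},\ast}$.

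For the normalization, the containment $1\in B_{0}\subseteq A_{\leq1}\cdot B_{0}$ immediately gives $\lVert 1\rVert_{B,B_{0},\ast}\leq 1$; combined with the submultiplicativity just established, this forces $\lVert 1\rVert_{B,B_{0},\ast}\leq\lVert 1\rVert_{B,B_{0},\ast}^{2}$, so the value lies in $\{0\}\cup[1,\infty)$ and hence in $\{0,1\}$. The degenerate value $0$ can only occur when the whole gauge is identically zero, since $\lVert f\rVert_{B,B_{0},\ast}=\lVert 1\cdot f\rVert_{B,B_{0},\ast}\leq\lVert 1\rVert_{B,B_{0},\ast}\lVert f\rVert_{B,B_{0},\ast}$; in all other cases $\lVert 1\rVert_{B,B_{0},\ast}=1$. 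There is no real obstacle in this proof: the only substantive step is recognizing that the subring hypothesis on $B_{0}$ is exactly what makes the expansion of a product stay inside $A_{\leq r_{1}r_{2}}\cdot B_{0}$, and the density hypothesis on $\lVert A^{\times,m}\rVert$ is not actually used here---it is inherited from the surrounding framework in which this proposition is applied.
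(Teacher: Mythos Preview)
Your proof is correct and is in fact more direct than the paper's. The paper appeals to Proposition~\ref{Another definition of the gauge seminorm} to rewrite the gauge as $\lVert x\rVert_{B,B_{0},\ast}=\inf\{\,\lVert h\rVert\mid h\in A^{\times,m},\ x\in hB_{0}\,\}$ and then argues that $f\in hB_{0}$, $g\in h'B_{0}$ implies $fg\in hh'B_{0}$ with $hh'\in A^{\times,m}$; this is where the density hypothesis on $\lVert A^{\times,m}\rVert$ enters, since that alternative description of the gauge is only available under that hypothesis. You instead work directly from the definition and use only $A_{\leq r_{1}'}\cdot A_{\leq r_{2}'}\subseteq A_{\leq r_{1}'r_{2}'}$ together with $B_{0}\cdot B_{0}\subseteq B_{0}$, which is simpler and, as you correctly observe, does not use the density hypothesis at all. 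The paper's route has the mild advantage of exercising the alternative gauge formula that is needed elsewhere (e.g.\ for Lemma~\ref{Power-multiplicative gauges}), but your argument proves a strictly stronger statement with less machinery.
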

\begin{proof}We have to prove that \begin{equation*}\lVert fg\rVert_{B,B_{0},\ast}\leq \lVert f\rVert_{B,B_{0},\ast}\lVert g\rVert_{B,B_{0},\ast}\end{equation*}for all $f, g\in B\setminus\{0\}$. Using Proposition \ref{Another definition of the gauge seminorm}, we compute: \begin{align*}\lVert f\rVert_{B,B_{0},\ast}\lVert g\rVert_{B,B_{0},\ast}=\inf\{\, \lVert h\rVert\mid h\in A^{\times,m}, f\in hB_{0}\,\}\cdot \inf\{\, \lVert h'\rVert\mid h'\in A^{\times,m}, g\in h'B_{0}\,\} \\ =\inf\{\, \lVert h\rVert\lVert h'\rVert \mid h, h'\in A^{\times,m}, f\in hB_{0}, g\in h'B_{0}\,\} \\ \geq\inf\{\, \lVert hh'\rVert\mid h, h'\in A^{\times,m}, f\in hB_{0}, g\in h'B_{0}\,\} \\ \geq\inf\{\, \lVert hh'\rVert\mid h, h'\in A^{\times,m}, fg\in hh'B_{0}\,\} \\ \geq\inf\{\, \lVert h\rVert\mid h\in A^{\times,m}, fg\in hB_{0}\,\}=\lVert fg\rVert_{B,B_{0},\ast}.\end{align*}\end{proof}
\begin{cor}\label{Module of almost elements is a ring}Let $(A, \lVert\cdot\rVert)$ be a seminormed ring such that $\lVert A^{\times,m}\rVert$ is dense in $\lVert A\rVert$. Let $B$ be an $A$-algebra and let $B_{0}$ be a subring of $B$ such that $B=\bigcup_{r>0}A_{\leq r}\cdot B_{0}$. Then the module of almost elements $(B_{0})_{\ast}$ of $B_{0}$ is a subring of $B$.\end{cor}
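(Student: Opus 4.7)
The plan is to deduce this corollary directly from Proposition \ref{Gauge seminorms of subrings} together with Lemma \ref{Closed unit ball and module of almost elements}. The key observation is that the module of almost elements $(B_{0})_{\ast}$ has already been identified, in Lemma \ref{Closed unit ball and module of almost elements}, as the closed unit ball of $(B, \lVert\cdot\rVert_{B,B_{0},\ast})$, and under the density hypothesis on $\lVert A^{\times,m}\rVert$ the preceding proposition guarantees that this gauge seminorm is actually submultiplicative.

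Concretely, I would proceed as follows. First, invoke Proposition \ref{Gauge seminorms of subrings} to conclude that $\lVert\cdot\rVert_{B,B_{0},\ast}$ is a (submultiplicative) ring seminorm on $B$. Second, use Lemma \ref{Closed unit ball and module of almost elements} applied to $M = B$ and $M_{0}=B_{0}$ to identify
\begin{equation*}
(B_{0})_{\ast}=B_{\leq 1}=\{\, f\in B\mid \lVert f\rVert_{B,B_{0},\ast}\leq 1\,\}.
\end{equation*}
Third, observe that the closed unit ball of any nonarchimedean ring seminorm is automatically a subring: it contains $1$ because $\lVert 1\rVert_{B,B_{0},\ast}\leq 1$ (which itself follows from $1\in B_{0}$, hence $1\in A_{\leq r}\cdot B_{0}$ for every $r>0$); it is an additive subgroup by the nonarchimedean triangle inequality; and it is closed under multiplication by submultiplicativity, since $\lVert fg\rVert_{B,B_{0},\ast}\leq \lVert f\rVert_{B,B_{0},\ast}\lVert g\rVert_{B,B_{0},\ast}\leq 1$ whenever both factors lie in the closed unit ball.

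There is essentially no obstacle here: all the work has been done in Proposition \ref{Gauge seminorms of subrings}. The only thing to double-check is that one genuinely has $\lVert 1\rVert_{B,B_{0},\ast}\leq 1$, i.e.\ that $1\in B_{0}$, which is part of the hypothesis that $B_{0}$ is a subring of $B$. This makes the corollary a one-line formal consequence of the previous two results.
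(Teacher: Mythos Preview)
Your proposal is correct and follows exactly the same route as the paper's own proof, which simply cites Lemma \ref{Closed unit ball and module of almost elements} and Proposition \ref{Gauge seminorms of subrings}. You have merely spelled out the one routine verification (that the closed unit ball of a nonarchimedean ring seminorm is a subring) that the paper leaves implicit.
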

\begin{proof}Follows from Lemma \ref{Closed unit ball and module of almost elements} and Proposition \ref{Gauge seminorms of subrings}.\end{proof}
\begin{prop}\label{Gauges of valuation rings}Let $(A, \lVert\cdot\rVert)$ be a seminormed ring such that $\lVert A^{\times,m}\rVert$ is dense in $\mathbb{R}_{\geq0}$ and let $\varpi$ be a seminorm-multiplicative topologically nilpotent unit of $A$. Let $B$ be an $A$-algebra and let $B_{0}$ be a $\varpi$-adically separated subring of $B$ with $B=B_{0}[\varpi^{-1}]$. If $B_{0}$ is a valuation ring, then $B$ is a field and $\lVert\cdot\rVert_{B,B_{0},\ast}$ is an absolute value on $B$. In particular, if $B_{0}$ is a $\varpi$-adically complete valuation ring, then $(B, \lVert\cdot\rVert_{B,B_{0},\ast})$ is a nonarchimedean field.\end{prop}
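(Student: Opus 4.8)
The plan is to prove the two assertions separately, the first by pure valuation theory and the second by combining the density of $\lVert A^{\times,m}\rVert$ with the fact that the closed unit ball of the gauge seminorm is a valuation ring. Throughout, note first that $B=\bigcup_{r>0}A_{\leq r}\cdot B_{0}$: indeed $\varpi^{-n}B_{0}\subseteq A_{\leq\lVert\varpi\rVert^{-n}}\cdot B_{0}$ since $\varpi$ is seminorm-multiplicative (so $\lVert\varpi^{-1}\rVert=\lVert\varpi\rVert^{-1}$), hence $B=B_{0}[\varpi^{-1}]=\bigcup_{n}\varpi^{-n}B_{0}$ is covered. Thus Proposition \ref{Gauge seminorms of subrings} applies and $\lVert\cdot\rVert_{B,B_{0},\ast}$ is a ring seminorm on $B$, the structure map $A\to B$ is submetric for it (because $\bar a=a\cdot 1\in A_{\leq\lVert a\rVert}\cdot B_{0}$), and Corollary \ref{Module of almost elements is a ring} shows the closed unit ball $(B_{0})_{\ast}$ of $\lVert\cdot\rVert_{B,B_{0},\ast}$ (Lemma \ref{Closed unit ball and module of almost elements}) is a subring of $B$.

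For the first assertion: since $B_{0}$ is a valuation ring it is a domain with fraction field $K:=\Frac(B_{0})$, and $B=B_{0}[\varpi^{-1}]$ is an overring of $B_{0}$ inside $K$, hence itself a valuation ring of $K$, corresponding to the smallest convex subgroup $H$ of the value group $\Gamma_{0}$ of $B_{0}$ that contains $v_{0}(\varpi)$ (with $v_{0}$ the valuation). The hypothesis that $B_{0}$ is $\varpi$-adically separated says precisely that $\bigcap_{n}\varpi^{n}B_{0}=0$, which translates to the cyclic subgroup $\langle v_{0}(\varpi)\rangle$ being cofinal in $\Gamma_{0}$; this forces $H=\Gamma_{0}$, and therefore $B=K$ is a field.

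For the second assertion, I would first reduce to the case $B_{0}=B_{\leq1}$: by Lemma \ref{Gauge seminorms and lattices} one has $\lVert\cdot\rVert_{B,B_{0},\ast}=\lVert\cdot\rVert_{B,(B_{0})_{\ast},\ast}$, and $(B_{0})_{\ast}$, being a subring of $K$ containing the valuation ring $B_{0}$, is again a valuation ring of $K$; so replace $B_{0}$ by $(B_{0})_{\ast}$ and write $v\colon K^{\times}\to\Gamma$ for its valuation. Since $A\to B$ is submetric, Lemma \ref{Seminormed algebras and seminorm-multiplicative units} gives $\lVert 1\rVert_{B,B_{0},\ast}=1$ and, together with Lemma \ref{Multiplicative topologically nilpotent units}, shows $\overline{A^{\times,m}}\subseteq K^{\times,m}$ with $\lVert\overline{A^{\times,m}}\rVert_{B,B_{0},\ast}=\lVert A^{\times,m}\rVert$; in particular $\lVert K^{\times,m}\rVert_{B,B_{0},\ast}$ is dense in $\mathbb{R}_{\geq0}$. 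As $K$ is a field and $\lVert 1\rVert_{B,B_{0},\ast}=1$, the kernel of the ring seminorm $\lVert\cdot\rVert_{B,B_{0},\ast}$ is a proper ideal, hence zero, so $\lVert\cdot\rVert_{B,B_{0},\ast}$ is a norm. It remains to prove multiplicativity, for which it suffices (a standard manipulation for ring seminorms on fields) to show $\lVert f\rVert_{B,B_{0},\ast}\,\lVert f^{-1}\rVert_{B,B_{0},\ast}=1$ for every $f\in K^{\times}$. Applying Corollary \ref{The seminorm on a seminormed ring} to the densely normed ring $(K,\lVert\cdot\rVert_{B,B_{0},\ast})$ and using that $f\in gK_{\leq1}$ iff $v(g)\leq v(f)$, I get $\lVert f\rVert_{B,B_{0},\ast}=M_{1}:=\inf\{\lVert g\rVert : g\in K^{\times,m},\ v(g)\leq v(f)\}$, and substituting $g\mapsto g^{-1}$ gives $\lVert f^{-1}\rVert_{B,B_{0},\ast}=M_{2}^{-1}$ with $M_{2}:=\sup\{\lVert g\rVert : g\in K^{\times,m},\ v(g)\geq v(f)\}$. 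Whenever $v(g_{1})\leq v(f)\leq v(g_{2})$ with $g_{1},g_{2}\in K^{\times,m}$, the element $g_{2}/g_{1}$ lies in the valuation ring $K_{\leq1}$, so $\lVert g_{2}\rVert\leq\lVert g_{1}\rVert$ (using that $g_{1}$ is seminorm-multiplicative); hence $M_{2}\leq M_{1}$. If $M_{2}<M_{1}$, then since $\Gamma$ is totally ordered every $g\in K^{\times,m}$ has $v(g)\leq v(f)$ or $v(g)\geq v(f)$, so $\lVert g\rVert\geq M_{1}$ or $\lVert g\rVert\leq M_{2}$, i.e.\ $\lVert K^{\times,m}\rVert_{B,B_{0},\ast}$ would miss the nonempty open interval $(M_{2},M_{1})$ — contradicting density. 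Hence $M_{1}=M_{2}$ and $\lVert f\rVert_{B,B_{0},\ast}\lVert f^{-1}\rVert_{B,B_{0},\ast}=1$, so $\lVert\cdot\rVert_{B,B_{0},\ast}$ is an absolute value. For the final sentence, if $B_{0}$ is moreover $\varpi$-adically complete, then $B=B_{0}[\varpi^{-1}]$ is complete by Lemma \ref{Bounded open submodules} (exactly as in the proof of Theorem \ref{Banach modules and lattices}), so $(B,\lVert\cdot\rVert_{B,B_{0},\ast})$ is a Banach ring that is a field carrying an absolute value, i.e.\ a nonarchimedean field.

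The hard part will be the multiplicativity of $\lVert\cdot\rVert_{B,B_{0},\ast}$: the formal properties of gauge seminorms only yield submultiplicativity, and the genuinely new input is the "gap" argument above, which uses simultaneously that $B_{\leq1}$ is a valuation ring (so that any two elements have comparable valuations) and that $\lVert K^{\times,m}\rVert$ is dense (so that no interval of values can be omitted). Everything else — reducing to $B_{\leq1}=(B_{0})_{\ast}$, the valuation-theoretic translation of $\varpi$-adic separatedness, and the completeness statement — is routine bookkeeping with the lemmas already in place.
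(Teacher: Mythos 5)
Your argument is correct (to the same standard of rigor as the paper's own proof) but it takes a partly different route, so let me compare. For the field assertion the paper simply cites \cite{FK}, Ch.~0, Proposition 6.7.2, whereas you reprove it by hand: $\varpi$-adic separatedness says the multiples of $v_{0}(\varpi)$ are cofinal in the value group, so the convex subgroup cut out by $B_{0}[\varpi^{-1}]$ is everything; that is a correct and self-contained substitute. For multiplicativity the paper stays with the original ring $B_{0}$ and proves the single inequality $\lVert x^{-1}\rVert_{B,B_{0},\ast}\leq\lVert x\rVert_{B,B_{0},\ast}^{-1}$ directly: for $r\in\lVert A^{\times,m}\rVert$ with $r>\lVert x\rVert^{-1}$ one has $f^{-1}x\notin B_{0}$ for all $f\in A^{\times,m}$ with $\lVert f\rVert\leq r^{-1}$, hence $fx^{-1}\in B_{0}$ by the valuation-ring dichotomy, hence $\lVert x^{-1}\rVert\leq r$, and one lets $r$ decrease to $\lVert x\rVert^{-1}$ using density. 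You instead pass to the unit ball $(B_{0})_{\ast}$ (an overring of $B_{0}$ in $K$, hence again a valuation ring) and run an ``$\inf=\sup$'' gap argument on $(K,\lVert\cdot\rVert_{B,B_{0},\ast})$ itself: comparability of valuations forces $\lVert K^{\times,m}\rVert$ to avoid the interval $(M_{2},M_{1})$, contradicting density unless $M_{1}=M_{2}$. Both proofs rest on exactly the same two ingredients (comparability in a valuation ring plus density of the multiplicative value set); the paper's version is a bit more economical and avoids the reduction to $(B_{0})_{\ast}$, while yours isolates the clean identity $\lVert f\rVert=M_{1}=M_{2}$ and, unlike the paper, explicitly addresses why the gauge is a norm (kernel is a proper ideal of a field). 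One should also record that your two auxiliary sets are nonempty, which follows from cofinality of the powers of $\varpi$ in the value group.

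Two caveats, neither of which is a gap relative to the paper. First, your derivation of $\lVert 1\rVert_{B,B_{0},\ast}=1$ from Lemma \ref{Seminormed algebras and seminorm-multiplicative units} is mildly circular, since that lemma presupposes the target carries a ring seminorm with $\lVert 1\rVert=1$; the paper's proof makes the same tacit nondegeneracy assumption when it treats $\lVert x\rVert^{-1}$ and invokes Lemma \ref{Gauge seminorms and lattices}, so you are on equal footing. Second, for the final sentence you invoke Lemma \ref{Bounded open submodules} with $M_{0}=B_{0}$, but that lemma needs $B_{0}$ to be open and a module over an open subring for the gauge topology, which is not among the hypotheses; the fix is short: once the gauge is known to be an absolute value, convexity of the subgroup corresponding to $(B_{0})_{\ast}$ gives $B_{\leq1}=(B_{0})_{\ast}\subseteq\varpi^{-1}B_{0}$, so $B_{0}$ is open and bounded and the gauge topology agrees with the canonical extension of the $\varpi$-adic topology, after which completeness follows as you say. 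The paper's proof does not treat the completeness statement at all, so here your sketch actually supplies more than the original, modulo that one missing line.
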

\begin{proof}By \cite{FK}, Ch.~0, Proposition 6.7.2, $B=B_{0}[\varpi^{-1}]$ is a field. To prove that the gauge of $B_{0}$ is an absolute value (i.e., is multiplicative), we have to show that for every $x\in B$ we have $\lVert x^{-1}\rVert_{B,B_{0},\ast}\leq \lVert x\rVert_{B,B_{0},\ast}^{-1}$. By Lemma \ref{Gauge seminorms and lattices}, $\lVert\cdot\rVert_{B,B_{0},\ast}=\lVert\cdot\rVert_{B,(B_{0})_{\ast},\ast}$. Suppose that $\lVert x\rVert_{B,B_{0},\ast}^{-1}<r$ for some $r\in \lVert A^{\times,m}\rVert$. This means that $x\not\in fB_{0}$ for every $f\in A$ with $\lVert f\rVert\leq r^{-1}$. In particular, $f^{-1}x\not\in B_{0}$ for every $f\in A^{\times,m}$ with $\lVert f\rVert\leq r^{-1}$. Since $B_{0}$ is a valuation ring with fraction field $B$, this implies that $fx^{-1}\in B_{0}$ for all $f\in A^{\times,m}$ with $\lVert f\rVert\leq r^{-1}$. But for $f\in A^{\times,m}$ the condition $\lVert f\rVert\leq r^{-1}$ is equivalent to $\lVert f^{-1}\rVert\geq r$. Thus we can interpret the above property of $x$ as saying that $x^{-1}\in fB_{0}$ for all $f\in A^{\times,m}$ with $\lVert f\rVert\geq r$. Since we assumed that $r\in\lVert A^{\times,m}\rVert$, this means that, in particular, there exists $f\in A^{\times,m}$ with $\lVert f\rVert=r$ and $x^{-1}\in fB_{0}$. Therefore, $\lVert x^{-1}\rVert_{B,B_{0},\ast}\leq r$. Since $\lVert A^{\times,m}\rVert$ is dense in $\mathbb{R}_{\geq0}$ and $r\in\lVert A^{\times,m}\rVert$ was arbitrary, we conclude that $\lVert x^{-1}\rVert_{B,B_{0},\ast}\leq \lVert x\rVert_{B,B_{0},\ast}^{-1}$, as desired.\end{proof}
\begin{lemma}[\cite{Dine22}, Lemma 2.24]\label{Power-bounded}Let $(A, \lVert\cdot\rVert)$ be a seminormed ring with power-multiplicative seminorm. Then $A^{\circ}=A_{\leq1}$ and $A^{\circ\circ}=A_{<1}$.\end{lemma}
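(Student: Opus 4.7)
The plan is to exploit the defining identity $\lVert f^{n}\rVert=\lVert f\rVert^{n}$ directly, together with the standard fact that in a seminormed ring a subset is bounded (in the topological-ring sense) if and only if it is norm-bounded.

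For the equality $A^{\circ}=A_{\leq1}$, I would argue by two inclusions. If $f\in A_{\leq1}$, then power-multiplicativity gives $\lVert f^{n}\rVert=\lVert f\rVert^{n}\leq 1$ for every $n\geq0$, so the set $\{f^{n}\}_{n\geq0}$ is contained in $A_{\leq1}$ and is in particular norm-bounded, hence bounded in the topological-ring sense; this shows $f\in A^{\circ}$. For the reverse inclusion, suppose $f\notin A_{\leq1}$, i.e.\ $\lVert f\rVert>1$. Then $\lVert f^{n}\rVert=\lVert f\rVert^{n}\to\infty$, so $\{f^{n}\}$ fails to be norm-bounded, and therefore fails to be bounded in the sense of topological rings, giving $f\notin A^{\circ}$.

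For the equality $A^{\circ\circ}=A_{<1}$, I would run the same kind of argument with $\to 0$ in place of bounded. If $\lVert f\rVert<1$, then $\lVert f^{n}\rVert=\lVert f\rVert^{n}\to 0$, so $f^{n}\to 0$ in the topology defined by the seminorm, whence $f$ is topologically nilpotent. Conversely, if $\lVert f\rVert\geq 1$, then $\lVert f^{n}\rVert=\lVert f\rVert^{n}\geq 1$ for all $n$, which prevents $f^{n}\to 0$ in the seminorm topology.

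There is no real obstacle here: once one accepts the equivalence between topological boundedness and norm-boundedness for subsets of a seminormed ring (and the analogous equivalence for convergence to $0$), the result is an immediate consequence of the identity $\lVert f^{n}\rVert=\lVert f\rVert^{n}$. Accordingly, the proof is essentially a two-line verification, which is why the statement is simply cited from \cite{Dine22}, Lemma 2.24.
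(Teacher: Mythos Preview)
Your argument is the standard one and is correct in substance; the paper does not supply its own proof but simply cites \cite{Dine22}, Lemma~2.24, so there is nothing further to compare.

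One caveat: the ``standard fact'' that topological boundedness coincides with norm-boundedness in a seminormed ring is not literally true without a hypothesis on $A$. If $\lVert\cdot\rVert$ takes no value in $(0,1)$ the induced topology is discrete, every subset is topologically bounded, and $A^{\circ}=A$ even when $A_{\leq1}\subsetneq A$ (for instance $A=\mathbb{Z}[x]$ with $\lVert f\rVert=2^{\deg f}$, which is power-multiplicative). The equivalence does hold once $A$ has a topologically nilpotent unit $\varpi$: if $S$ is topologically bounded, choose $k$ with $\varpi^{k}S\subseteq A_{\leq1}$ and conclude $\lVert s\rVert\leq\lVert\varpi^{-k}\rVert$ for all $s\in S$. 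Every use of this lemma in the paper is for Tate rings, so the point is harmless in context, but your ``standard fact'' needs that extra input. By contrast, the half $A^{\circ\circ}=A_{<1}$ is fine as written: $f^{n}\to 0$ in the seminorm topology is literally the statement $\lVert f^{n}\rVert\to 0$, with no extra hypothesis required.
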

\begin{rmk}As a word of caution, let us note that for a general seminormed ring $(A, \lVert\cdot\rVert)$ the subring of power-bounded elements $A^{\circ}$ need not coincide with the closed unit ball $A_{\vert\cdot\vert_{\spc}\leq1}$ of $(A, \vert\cdot\vert_{\spc})$. See \cite{Andre18}, Remarque 2.3.3(2), for an example.\end{rmk}
\begin{lemma} \label{Completely integrally closed} Let $A$ be a Tate ring (for example, a seminormed ring which contains a topologically nilpotent unit). If $A_{0}$ is a ring of definition of $A$ which is completely integrally closed in $A$, then $A_{0}$ coincides with the ring of power-bounded elements $A^{\circ}$ of $A$. Conversely, if the Tate ring $A$ is uniform (that is, $A^{\circ}$ is bounded), then $A^{\circ}$ is completely integrally closed in $A$.\end{lemma}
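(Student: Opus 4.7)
The plan is to unwind the definition of ``completely integrally closed'' (an element $x \in A$ lies in $A_{0}$ whenever the $A_{0}$-subalgebra $A_{0}[x]$ is contained in a finitely generated $A_{0}$-submodule of $A$) and connect it with the definition of power-boundedness via the ring of definition. In both directions the main ingredient is the standard observation that a finitely generated submodule of $A$ over a bounded subring is itself bounded, and conversely that $\{x^{n}\}$ being bounded means that some power of a topologically nilpotent unit multiplies all the $x^{n}$ into $A_{0}$ simultaneously.

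For the first statement, let $A_{0}$ be a ring of definition which is completely integrally closed in $A$. The inclusion $A_{0}\subseteq A^{\circ}$ is immediate because $A_{0}$ is a bounded subring of $A$. For the reverse inclusion, I would pick a topologically nilpotent unit $\varpi\in A_{0}$ (which exists since $A$ is Tate and $A_{0}$ can be chosen to contain any prescribed topologically nilpotent unit, \cite{Huber0}, Proposition 2.2.6iii)). Given $x\in A^{\circ}$, the set $\{x^{n}\}_{n\geq 0}$ is bounded, so applying the definition of boundedness to the neighborhood $A_{0}$ of $0$ produces an integer $m\geq 0$ such that $\varpi^{m}x^{n}\in A_{0}$ for all $n\geq 0$. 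Therefore $\{x^{n}\}_{n\geq 0}$ is contained in the finitely generated $A_{0}$-submodule $\varpi^{-m}A_{0}$ of $A$, which means that $x$ is almost integral over $A_{0}$. The assumption that $A_{0}$ is completely integrally closed in $A$ then gives $x\in A_{0}$, as required.

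For the converse, assume that $A$ is uniform, so that $A^{\circ}$ is an open and bounded subring of $A$, hence a ring of definition. I would proceed directly from the definition: let $x\in A$ be almost integral over $A^{\circ}$, so that there exist $e_{1},\ldots,e_{k}\in A$ with $x^{n}\in A^{\circ}e_{1}+\cdots+A^{\circ}e_{k}$ for all $n\geq 0$. Each $A^{\circ}e_{i}$ is the image of the bounded set $A^{\circ}$ under multiplication by the fixed element $e_{i}$, hence bounded, and a finite sum of bounded subsets of the topological ring $A$ is bounded. Thus $\{x^{n}\}_{n\geq 0}$ is bounded, which is exactly the statement $x\in A^{\circ}$.

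The argument is mostly bookkeeping: there is no real obstacle, but one must be careful to invoke the correct formulation of boundedness (namely, that for every neighborhood $U$ of $0$ there is a neighborhood $V$ of $0$ with $V\cdot\{x^{n}\}\subseteq U$) and to exploit the specific feature of the Tate setting that the neighborhoods $\varpi^{m}A_{0}$ form a neighborhood basis of $0$, so that boundedness of $\{x^{n}\}$ translates into the single condition $\varpi^{m}x^{n}\in A_{0}$ for all $n$.
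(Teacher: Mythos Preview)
Your proof is correct and follows essentially the same approach as the paper's: in both directions you translate between ``$\{x^n\}$ is bounded'' and ``all $x^n$ lie in $\varpi^{-m}A_0$ (respectively, in a finitely generated $A^\circ$-submodule)'', exactly as the paper does. The only cosmetic difference is that for the converse you invoke general stability properties of bounded sets (images under multiplication by a fixed element, finite sums) where the paper instead explicitly absorbs the generators into $\varpi^{-k}A^\circ$; these are the same argument in different words.
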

\begin{proof}Let $\varpi\in A$ be a topologically nilpotent unit and let $f \in A^{\circ}$ be a power-bounded element. Then there exists some integer $m \geq 0$ such that $f^{n} \in \varpi^{-m}A_{0}$ for all $n$. But this means that $f$ is contained in the complete integral closure of $A_{0}$ inside $A$. Conversely, suppose that $A$ is uniform and that $f\in A$ is almost integral over $A^{\circ}$, i.e., $f^{n}$, $n\geq1$, lies in a finitely generated $A^{\circ}$-submodule $M$ of $A$. Let $g_1,\dots, g_{m}$ be generators of $M$. Choose $k\geq0$ such that $\varpi^{k}g_{1},\dots, \varpi^{k}g_{m}\in A^{\circ}$. Then $f^{n}\in M\subseteq \varpi^{-k}A^{\circ}$ for all $n$. Since $A^{\circ}$ is a ring of definition of $A$ (being open and bounded), this means that $f$ is a power-bounded element.\end{proof}
\begin{lemma}\label{Power-multiplicative}Let $(A, \lVert\cdot\rVert)$ be a seminormed ring. The seminorm $\lVert\cdot\rVert$ is power-multiplicative if and only if there exists an integer $k\geq2$ such that $\lVert f^{k}\rVert=\lVert f\rVert^{k}$ for all $f\in A$.\end{lemma}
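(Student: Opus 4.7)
The forward direction is immediate: power-multiplicativity means $\lVert f^n\rVert=\lVert f\rVert^n$ for every $n\geq0$, so in particular this holds for any fixed $k\geq2$. The content of the lemma is the reverse direction, and the plan is to use the submultiplicativity of $\lVert\cdot\rVert$ (which is built into our convention for ring seminorms) together with an iterated application of the given identity at the single exponent $k$.

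First I would bootstrap the hypothesis along powers of $k$: by an easy induction on $m$, applying the identity $\lVert g^k\rVert=\lVert g\rVert^k$ to $g=f^{k^{m-1}}$, one obtains
\begin{equation*}
\lVert f^{k^m}\rVert=\lVert f\rVert^{k^m} \quad \text{for all } m\geq0, \ f\in A.
\end{equation*}
Since the exponents $k^m$ grow without bound, this already determines the spectral seminorm: $\vert f\vert_{\spc}=\lim_m\lVert f^{k^m}\rVert^{1/k^m}=\lVert f\rVert$, so $\lVert\cdot\rVert$ agrees with its own spectral seminorm and is therefore power-multiplicative. Alternatively, and more directly, one can argue by a pigeonhole-style splitting as follows.

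Fix $n\geq1$ and $f\in A$. Choose $m$ large enough that $k^m\geq n$, and write $k^m=n+j$ with $j\geq0$. Submultiplicativity gives
\begin{equation*}
\lVert f\rVert^{k^m}=\lVert f^{k^m}\rVert=\lVert f^n\cdot f^j\rVert\leq \lVert f^n\rVert\,\lVert f\rVert^j.
\end{equation*}
If $\lVert f\rVert>0$, dividing by $\lVert f\rVert^j$ gives $\lVert f\rVert^n\leq\lVert f^n\rVert$, and combining with the reverse inequality $\lVert f^n\rVert\leq\lVert f\rVert^n$ (again from submultiplicativity) yields equality. If $\lVert f\rVert=0$, then submultiplicativity already forces $\lVert f^n\rVert\leq\lVert f\rVert^n=0$, so both sides vanish.

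There is no real obstacle here; the only point that requires a moment of thought is that submultiplicativity must be used in both directions of the estimate, once to split $f^{k^m}=f^n\cdot f^j$ and once to bound $\lVert f^n\rVert\leq \lVert f\rVert^n$, and the case $\lVert f\rVert=0$ must be handled separately since division is then unavailable.
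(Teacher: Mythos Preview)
Your argument is correct and your first route---iterating to obtain $\lVert f^{k^m}\rVert=\lVert f\rVert^{k^m}$ and then computing the spectral seminorm along the subsequence $(k^m)_m$---is exactly the paper's proof. Your alternative direct argument via the splitting $k^m=n+j$ is also fine and has the minor advantage of not invoking the spectral seminorm at all.
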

\begin{proof}If there exists an integer $k\geq2$ as above, then $\lVert f^{k^{n}}\rVert=\lVert f\rVert^{k^{n}}$ for all $n\geq1$. But $(k^{n})_{n}$ is a cofinal subset of $\mathbb{Z}_{>1}$, so $\vert f\vert_{\spc}=\inf_{n}\lVert f^{n}\rVert^{1/n}=\inf_{n}\lVert f^{k^{n}}\rVert^{1/k^{n}}=\lVert f\rVert$.\end{proof}
\begin{lemma}\label{Power-multiplicative gauges}Let $(A, \lVert\cdot\rVert)$ be a seminormed ring such that $\lVert A^{\times,m}\rVert$ is dense in $\mathbb{R}_{\geq0}$, let $B$ be an $A$-algebra and let $B_{0}$ be a subring of $B$ with $B=\bigcup_{r>0}A_{\leq r}\cdot B_{0}$. Suppose that there exists an integer $k\geq2$ such that $f^{k}\in B_{0}$ implies $f\in B_{0}$ for every element $f\in B$. Then the seminorm $\lVert\cdot\rVert_{B,B_{0},\ast}$ is power-multiplicative. In particular, if $B_{0}$ is integrally closed in $B$, then $\lVert\cdot\rVert_{B,B_{0},\ast}$ is power-multiplicative. \end{lemma}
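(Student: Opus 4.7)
My plan is to invoke Lemma \ref{Power-multiplicative}, reducing the problem to verifying $\mu(f^k)=\mu(f)^k$ for every $f\in B$, where $\mu:=\lVert\cdot\rVert_{B,B_0,\ast}$. The inequality $\mu(f^k)\leq \mu(f)^k$ is the submultiplicativity of $\mu$ already guaranteed by Proposition \ref{Gauge seminorms of subrings}, so the substantive task is the reverse inequality $\mu(f)^k \leq \mu(f^k)$, which I would establish by a Schneider-style density argument.

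Before the main computation, I would pass from $B_0$ to $(B_0)_\ast$: by Lemma \ref{Gauge seminorms and lattices} this preserves $\mu$, by Corollary \ref{Module of almost elements is a ring} the replacement remains a subring of $B$, and by construction $A_{\leq 1}\cdot(B_0)_\ast=(B_0)_\ast$, which is what makes the main argument close. A side task is to verify that the $k$-th root closure property transfers from $B_0$ to $(B_0)_\ast$: given $g\in B$ with $g^k\in(B_0)_\ast$, for each $r'>1$ I would pick $a\in A^{\times,m}$ with $1<\lVert a\rVert<r'$ via density, apply the maximum-norm principle of Lemma \ref{Value groups and gauges} to the decomposition of $g^k\in A_{\leq\lVert a\rVert^k}\cdot B_0$, and after a further density adjustment bring $(a^{-1}g)^k$ into $B_0$ on the nose, at which point the hypothesis on $B_0$ gives $a^{-1}g\in B_0$ and hence $g\in aB_0\subseteq A_{\leq r'}\cdot B_0$; as this holds for every $r'>1$, we get $g\in(B_0)_\ast$.

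Assuming this transfer, the main argument runs as follows. Fix $s>\mu(f^k)$, with the goal $\mu(f)\leq s^{1/k}$. Since $x\mapsto x^k$ is a self-homeomorphism of $\mathbb{R}_{>0}$, the set $\{\lVert b\rVert^k:b\in A^{\times,m}\}$ is dense in $\mathbb{R}_{>0}$, so I can pick $b\in A^{\times,m}$ with $\mu(f^k)<\lVert b\rVert^k<s$. From $\lVert b\rVert^k>\mu(f^k)$ and the definition of the gauge, $f^k\in A_{\leq\lVert b\rVert^k}\cdot B_0\subseteq A_{\leq\lVert b\rVert^k}\cdot(B_0)_\ast=b^k\cdot(B_0)_\ast$, where I use $A_{\leq\lVert b\rVert^k}=b^k A_{\leq 1}$ for $b\in A^{\times,m}$ together with the $A_{\leq 1}$-stability of $(B_0)_\ast$. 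Hence $(b^{-1}f)^k\in(B_0)_\ast$; by the transferred $k$-th root closure, $b^{-1}f\in(B_0)_\ast$, so Lemma \ref{Closed unit ball and module of almost elements} yields $\mu(b^{-1}f)\leq 1$, whence $\mu(f)\leq\lVert b\rVert<s^{1/k}$. Letting $s\searrow\mu(f^k)$ gives $\mu(f)^k\leq\mu(f^k)$.

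I expect the main technical obstacle to be the transfer step, which hinges on a delicate interplay between the density of $\lVert A^{\times,m}\rVert$ in $\mathbb{R}_{\geq 0}$ and the $A_{\leq 1}$-structure: since $(B_0)_\ast$ is strictly larger than $B_0$ in general, one cannot apply the hypothesis directly upon obtaining $(b^{-1}f)^k \in (B_0)_\ast$, and must argue via a sequence of approximations by norm-multiplicative elements to reach an element that lies in $B_0$ on the nose. The final sentence of the lemma is then immediate: if $B_0$ is integrally closed in $B$ and $f^k \in B_0$, then $f$ is a root of the monic polynomial $X^k - f^k \in B_0[X]$, hence integral over $B_0$, hence in $B_0$.
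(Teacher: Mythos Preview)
Your strategy matches the paper's: reduce via Lemma~\ref{Power-multiplicative} and Proposition~\ref{Gauge seminorms of subrings} to the inequality $\mu(f)^k \leq \mu(f^k)$, choose $g \in A^{\times,m}$ with $\lVert g\rVert$ slightly above $\mu(f^k)^{1/k}$, use Lemma~\ref{Multiplicative topologically nilpotent units} to get $\mu((g^{-1}f)^k) \leq 1$, apply $k$-th-root closure, and conclude $\mu(f) \leq \lVert g\rVert$. The paper simply writes ``so $(g^{-1}f)^k \in B_0$'' and invokes the hypothesis on $B_0$ directly; you are more careful in observing that $\mu \leq 1$ only yields membership in the closed unit ball $(B_0)_*$ (Lemma~\ref{Closed unit ball and module of almost elements}), and you compensate by inserting an explicit transfer of the root-closure property from $B_0$ to $(B_0)_*$.

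That transfer, however, does not close as sketched. From $g^k \in (B_0)_*$ and $a \in A^{\times,m}$ with $1 < \lVert a\rVert < r'$ you obtain $\mu((a^{-1}g)^k) < 1$, hence $(a^{-1}g)^k \in A_{\leq t}\cdot B_0$ for some $t<1$; but any ``further density adjustment'' only places this in $A_{\leq 1}\cdot B_0$, not in $B_0$ on the nose, because nothing in the hypotheses forces $A_{\leq 1}\cdot B_0 \subseteq B_0$ (recall $B_0$ is merely a subring of $B$). This is precisely the point the paper elides. In every subsequent use of the lemma one has either $B_0 = (B_0)_*$ already or $B_0$ is an $A_{\leq 1}$-subalgebra, and then both your argument and the paper's go through cleanly; in the bare generality of the statement, your sketch and the paper's proof share the same small lacuna.
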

\begin{proof}We proceed as in the proof of \cite{Mihara}, Lemma 1.16. It suffices to prove the inequality $\lVert f\rVert_{B,B_{0},\ast}^{k}\leq\lVert f^{k}\rVert_{B,B_{0},\ast}$ for all $f\in B$, the opposite inequality being a consequence of Proposition \ref{Gauge seminorms of subrings}. The case $f\neq0$ is trivial, so we have to prove the above inequality for $f\neq0$. Suppose that $\lVert f^{k}\rVert_{B,B_{0},\ast}<r$ for some $r>0$. Since $\lVert A^{\times,m}\rVert$ is dense in $\mathbb{R}_{\geq0}$, we can choose $g\in A^{\times,m}$ with \begin{equation*}\lVert g\rVert\in [\lVert f^{k}\rVert_{B,B_{0},\ast}^{1/k}, r^{1/k}).\end{equation*}Since $(B, \lVert\cdot\rVert_{B,B_{0},\ast})$ is a submetric seminormed $(A, \lVert\cdot\rVert)$-module, $g^{-1}\in A^{\times,m}$ is multiplicative with respect to the seminorm $\lVert\cdot\rVert_{B,B_{0},\ast}$ on $B$, by Lemma \ref{Multiplicative topologically nilpotent units}. Thus \begin{equation*}\lVert (g^{-1}f)^{k}\rVert_{B,B_{0},\ast}=\lVert g\rVert^{-k}\lVert f^{k}\rVert_{B,B_{0},\ast}\leq1,\end{equation*}so $(g^{-1}f)^{k}\in B_{0}$. By the assumption on $B_{0}$, this means that $g^{-1}f\in B_{0}$ and, a fortiori, $\lVert g^{-1}f\rVert_{B,B_{0},\ast}\leq1$. Since $g$ is multiplicative with respect to $\lVert\cdot\rVert_{B,B_{0},\ast}$, this means that \begin{equation*}\lVert f\rVert_{B,B_{0},\ast}\leq\lVert g\rVert_{B,B_{0},\ast}<r^{1/k}.\end{equation*}Since $r>\lVert f^{k}\rVert_{B,B_{0},\ast}$ was arbitrary, this proves that $\lVert f\rVert_{B,B_{0},\ast}^{k}\leq\lVert f^{k}\rVert_{B,B_{0},\ast}$. \end{proof}
\begin{cor}\label{Module of almost elements of a root closed subring}Let $(A, \lVert\cdot\rVert)$ be a seminormed ring such that $\lVert A^{\times,m}\rVert$ is dense in $\mathbb{R}_{\geq0}$, let $B$ be an $A$-algebra and let $B_{0}$ be a subring of $B$. If there exists some integer $k\geq2$ such that for $f\in B$ the condition $f^{k}\in B_{0}$ implies $f\in B_{0}$, then $(B_{0})_{\ast}$ is a completely integrally closed subring of $B$. In particular, if $B_{0}$ is an integrally closed subring of $B$, then $(B_{0})_{\ast}$ is completely integrally closed in $B$.\end{cor}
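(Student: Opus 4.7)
The plan is to reduce the claim to the elementary fact that the closed unit ball of a power-multiplicative seminormed ring is completely integrally closed. First, Proposition \ref{Gauge seminorms of subrings} and Lemma \ref{Power-multiplicative gauges} together show that $\lVert\cdot\rVert_{B,B_{0},\ast}$ is a power-multiplicative ring seminorm on $B$, using precisely the hypothesis that some $k\geq 2$ with $f^{k}\in B_{0}$ implies $f\in B_{0}$. Lemma \ref{Closed unit ball and module of almost elements} then identifies $(B_{0})_{\ast}$ with the closed unit ball of this seminorm.

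Given these identifications, I would argue directly. Suppose $f\in B$ is almost integral over $(B_{0})_{\ast}$, meaning that every power $f^{n}$, $n\geq 1$, lies in a finitely generated $(B_{0})_{\ast}$-submodule of $B$ with generators $g_{1},\dots,g_{m}\in B$. Writing $f^{n}=\sum_{i}c_{n,i}g_{i}$ with $\lVert c_{n,i}\rVert_{B,B_{0},\ast}\leq 1$ and invoking submultiplicativity together with the nonarchimedean triangle inequality, one obtains $\lVert f^{n}\rVert_{B,B_{0},\ast}\leq C$ for $C:=\max_{i}\lVert g_{i}\rVert_{B,B_{0},\ast}$ and every $n\geq 1$. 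Power-multiplicativity then yields $\lVert f\rVert_{B,B_{0},\ast}\leq C^{1/n}$ for all $n$, and letting $n\to\infty$ forces $\lVert f\rVert_{B,B_{0},\ast}\leq 1$, so $f\in (B_{0})_{\ast}$. Alternatively, one could invoke Lemma \ref{Completely integrally closed} in combination with Lemma \ref{Power-bounded}, after observing that $(B,\lVert\cdot\rVert_{B,B_{0},\ast})$ is a uniform Tate ring whose ring of power-bounded elements equals $(B_{0})_{\ast}$.

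For the \emph{in particular} clause, if $B_{0}$ is already integrally closed in $B$ and $f^{2}\in B_{0}$, then $f$ is a root of the monic polynomial $X^{2}-f^{2}\in B_{0}[X]$ and therefore lies in $B_{0}$; the hypothesis of the first part of the corollary is thus met with $k=2$, and the conclusion is immediate. I do not anticipate any serious obstacle: the substantive content has already been absorbed into Lemma \ref{Power-multiplicative gauges}, and what remains is the one-step norm estimate above, whose only subtle point is keeping track of the identification of $(B_{0})_{\ast}$ as the closed unit ball of the gauge seminorm.
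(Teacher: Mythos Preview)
Your proposal is correct and essentially matches the paper. The alternative you mention at the end---invoking Lemma \ref{Power-bounded} and Lemma \ref{Completely integrally closed} after identifying $(B_{0})_{\ast}$ as the closed unit ball of the power-multiplicative gauge seminorm via Lemma \ref{Closed unit ball and module of almost elements} and Lemma \ref{Power-multiplicative gauges}---is precisely the paper's own proof; your direct norm estimate simply unpacks the content of those two lemmas in the special case at hand.
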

\begin{proof}By Lemma \ref{Closed unit ball and module of almost elements} and Lemma \ref{Power-multiplicative gauges}, $(B_{0})_{\ast}$ is the closed unit ball of a power-multiplicative seminorm on $B$. By Lemma \ref{Power-bounded}, $(B_{0})_{\ast}$ is the subring of power-bounded elements of $B$ when $B$ is endowed with this seminorm. By Lemma \ref{Completely integrally closed} this means that $(B_{0})_{\ast}$ is completely integrally closed in $B$. \end{proof}
\begin{prop}\label{Seminormed algebras and lattices}Let $(A, \lVert\cdot\rVert)$ be a seminormed ring (respectively, a Banach ring) such that $\lVert A^{\times,m}\rVert$ is dense in $\mathbb{R}_{\geq0}$ and let $\varpi\in A^{\times,m}$ with $\lVert\varpi\rVert<1$. Let $A_{0}$ be an open subring of $A_{\leq1}$ containing $\varpi$. The functor \begin{equation*}B\mapsto B_{\leq1}\end{equation*}is an equivalence between the category of submetric seminormed $(A, \lVert\cdot\rVert)$-algebras $B$ (respectively, of submetric Banach $(A, \lVert\cdot\rVert)$-algebras $B$) and the category of $\varpi$-torsion-free (respectively, $\varpi$-torsion-free and $\varpi$-adically complete) $A_{0}$-algebras $B_{0}$ satisfying $(B_{0})_{\ast}=B_{0}$, and a quasi-inverse to $B\mapsto B_{\leq1}$ is given by the functor \begin{equation*}B_{0}\mapsto (B_{0}[\varpi^{-1}], \lVert\cdot\rVert_{B,B_{0},\ast}).\end{equation*}Moreover, $B\mapsto B_{\leq1}$ restricts to an equivalence between the category of uniform submetric seminormed $(A, \lVert\cdot\rVert)$-algebras $B$ (respectively, uniform submetric seminormed $(A, \lVert\cdot\rVert)$-algebras $B$) and the category of $\varpi$-torsion-free (respectively, $\varpi$-torsion-free and $\varpi$-adically complete) $A_{0}$-algebras $B_{0}$ with the property that $B_{0}$ is completely integrally closed in $B_{0}[\varpi^{-1}]$.\end{prop}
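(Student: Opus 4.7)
The plan is to bootstrap from the module-level equivalences already established in Theorem \ref{Seminormed modules and lattices} and Theorem \ref{Banach modules and lattices}, verifying that the algebra structure is compatibly transported under $B\mapsto B_{\leq1}$ and its candidate quasi-inverse. The two key algebraic inputs are Proposition \ref{Gauge seminorms of subrings}, which says that the gauge seminorm $\lVert\cdot\rVert_{B,B_{0},\ast}$ of any subring $B_{0}$ with $B=B_{0}[\varpi^{-1}]$ is automatically a ring seminorm, together with Corollary \ref{Module of almost elements is a ring}, which ensures $(B_{0})_{\ast}$ is a subring whenever $B_{0}$ is.

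First I would check that $B\mapsto B_{\leq1}$ lands in the target category: for a submetric seminormed $A$-algebra $B$, the closed unit ball $B_{\leq1}$ is a subring (from submultiplicativity and $\lVert 1\rVert=1$), is $\varpi$-torsion-free (since $\varpi$ is a unit in $A$), contains $A_{0}$ via the composite $A_{0}\subseteq A_{\leq1}\to B_{\leq1}$ using the submetric structure map, and satisfies $(B_{\leq1})_{\ast}=B_{\leq1}$ by the module equivalence. For the candidate quasi-inverse $B_{0}\mapsto (B_{0}[\varpi^{-1}], \lVert\cdot\rVert_{B,B_{0},\ast})$, the module equivalence already supplies a submetric seminormed $A$-module whose closed unit ball is $(B_{0})_{\ast}=B_{0}$, and Proposition \ref{Gauge seminorms of subrings} upgrades the gauge to a ring seminorm. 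The two round trips reduce to $B_{\leq1}[\varpi^{-1}]=B$ with $\lVert\cdot\rVert_{B}=\lVert\cdot\rVert_{B,B_{\leq1},\ast}$ by Proposition \ref{Description of norms 1} on the one hand, and $(B_{0})_{\ast}=B_{0}$ by hypothesis on the other.

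For the bijection on morphisms, a submetric $A$-algebra homomorphism visibly restricts to an $A_{0}$-algebra homomorphism on closed unit balls, while conversely any $A_{0}$-algebra homomorphism $B_{0}\to C_{0}$ extends uniquely (using $\varpi$-torsion-freeness of $C_{0}$) to an $A$-algebra homomorphism $B_{0}[\varpi^{-1}]\to C_{0}[\varpi^{-1}]$, which is automatically submetric by the module equivalence. The Banach case then follows by invoking Theorem \ref{Banach modules and lattices} in place of Theorem \ref{Seminormed modules and lattices}, since that equivalence already matches completeness of $B$ with $\varpi$-adic completeness of $B_{0}$.

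For the uniform case I would read \emph{uniform submetric seminormed $A$-algebra} as one whose given seminorm is power-multiplicative, so that $B_{\leq1}=B^{\circ}$ by Lemma \ref{Power-bounded}. Starting from $B_{0}$ completely integrally closed in $B=B_{0}[\varpi^{-1}]$, I would apply Lemma \ref{Power-multiplicative gauges} (integral closure furnishes the required $f^{k}\in B_{0}\Rightarrow f\in B_{0}$) to conclude that $\lVert\cdot\rVert_{B,B_{0},\ast}$ is power-multiplicative. The identity $(B_{0})_{\ast}=B_{0}$ needed to apply the main equivalence follows by noting that $(B_{0})_{\ast}$ is a bounded subring of $B$ and hence contained in $B^{\circ}$, and that $B_{0}$ being a completely integrally closed ring of definition forces $B^{\circ}=B_{0}$ by Lemma \ref{Completely integrally closed}. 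Conversely, if $B$ is uniform in the above sense, Lemma \ref{Completely integrally closed} immediately gives that $B^{\circ}=B_{\leq1}$ is completely integrally closed in $B$. I do not anticipate any single hard step: the analytic work has been done in the preceding module-level results and in the lemmas on gauges of subrings, and the main task is the bookkeeping required to transfer multiplicative structure correctly and to pin down the precise meaning of \emph{uniform} that makes the restricted functor an equivalence rather than merely an adjunction.
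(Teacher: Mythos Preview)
Your proposal is correct and follows essentially the same approach as the paper: the paper's proof simply cites Theorem \ref{Seminormed modules and lattices} (respectively, Theorem \ref{Banach modules and lattices}) together with Proposition \ref{Gauge seminorms of subrings} for the first assertion, and Lemma \ref{Power-bounded}, Lemma \ref{Completely integrally closed}, and Lemma \ref{Power-multiplicative gauges} for the uniform case, exactly the ingredients you assemble. Your write-up is considerably more detailed than the paper's one-line proof, and you correctly flag the reading of ``uniform'' as ``power-multiplicative seminorm'' that is implicit in the paper's citation of Lemma \ref{Power-bounded}; your verification that completely integrally closed $B_{0}$ automatically satisfies $(B_{0})_{\ast}=B_{0}$ (via $(B_{0})_{\ast}$ being a bounded subring, hence contained in $B^{\circ}$, combined with Lemma \ref{Completely integrally closed}) is a detail the paper leaves to the reader.
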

\begin{proof}The first assertion follows from Theorem \ref{Seminormed modules and lattices} (respectively, Theorem \ref{Banach modules and lattices}) and Proposition \ref{Gauge seminorms of subrings}. The last assertion then follows from Lemma \ref{Power-bounded}, Lemma \ref{Completely integrally closed} and Lemma \ref{Power-multiplicative gauges}. \end{proof}
\begin{rmk}In the case when $A$ is a Banach algebra over some densely valued nonarchimedean field $K$, a variant of Proposition \ref{Seminormed algebras and lattices} was observed by André in \cite{Andre18}, Cor.~2.3.2.\end{rmk}
\begin{mydef}Let $(A, \lVert\cdot\rVert)$ be a seminormed ring, let $A_{0}$ be an open subring of $A_{\leq1}$ and let $\varpi\in A_{0}$ be a topologically nilpotent unit of $A$ which admits a compatible system of $p$-power roots $(\varpi^{1/p^{n}})_{n}$ in $A_{0}$. We perform almost mathematics relative to the basic setting $(A_{0}, (\varpi^{1/p^{\infty}})_{A_{0}})$. An $A_{0}^{a}$-algebra $C$ is called almost root closed if there exists a $\varpi$-torsion-free $A_{0}$-algebra $B_{0}$ with $B_{0}^{a}\cong C$ and an integer $k\geq2$ such that for every $f\in B_{0}[\varpi^{-1}]$ with $f^{k}\in B_{0}$ we have $f\in B_{0}$.\end{mydef}
\begin{lemma}\label{Almost root closed and almost integrally closed}Let $(A, \lVert\cdot\rVert)$ be a seminormed ring, let $\varpi\in A^{\times,m}$ with $\lVert\varpi\rVert<1$ admitting a compatible system of $p$-power roots $(\varpi^{1/p^{n}})_{n}$ in $A^{\times,m}$. We perform almost mathematics relative to the basic setting $(A_{\leq1}, (\varpi^{1/p^{\infty}})_{A_{\leq1}})$. An $A_{\leq1}^{a}$-algebra $C$ is almost root closed if and only if there exists a $\varpi$-torsion-free $A_{\leq1}$-algebra $B_{0}$ with $B_{0}^{a}\cong C$ such that $B_{0}$ is completely integrally closed in $B_{0}[\varpi^{-1}]$.\end{lemma}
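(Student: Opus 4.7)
The plan is to prove both directions directly, using the machinery already developed for the functor of almost elements and gauge seminorms. The backward direction is essentially automatic: if a $\varpi$-torsion-free representative $B_{0}$ with $B_{0}^{a}\cong C$ is completely integrally closed in $B_{0}[\varpi^{-1}]$, then I would observe that $B_{0}$ is already root closed in the sense of the definition of ``almost root closed'' for every integer $k\geq2$. Indeed, given $f\in B_{0}[\varpi^{-1}]$ with $f^{k}\in B_{0}$, Euclidean division shows that every power $f^{n}$ lies in the finitely generated $B_{0}$-submodule $B_{0}+B_{0}f+\dots+B_{0}f^{k-1}$ of $B_{0}[\varpi^{-1}]$, so $f$ is almost integral over $B_{0}$ and must lie in $B_{0}$. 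Taking the same $B_{0}$ as the witness shows that $C$ is almost root closed.

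The forward direction is the substantive one. Assume $B_{0}$ is a $\varpi$-torsion-free $A_{\leq1}$-algebra with $B_{0}^{a}\cong C$ and that $B_{0}$ is $k$-root closed in $B_{0}[\varpi^{-1}]$ for some $k\geq2$. I would replace $B_{0}$ by the larger ring $B_{0}':=(B_{0})_{\ast}\subseteq B_{0}[\varpi^{-1}]$. Note first that the hypotheses on $\varpi$ guarantee $\lVert A^{\times,m}\rVert$ is dense in $\mathbb{R}_{\geq0}$: since every $\varpi^{1/p^{n}}$ is seminorm-multiplicative, a short computation as in the proof of Corollary \ref{Description of norms 2} and in Lemma \ref{Roots of norm-multiplicative elements} yields $\lVert\varpi^{s}\rVert=\lVert\varpi\rVert^{s}$ for $s\in\mathbb{Z}[1/p]$ and hence $\{\lVert\varpi\rVert^{s}\mid s\in\mathbb{Z}[1/p]\}\subseteq\lVert A^{\times,m}\rVert$, a set which is dense in $\mathbb{R}_{\geq0}$ because $\lVert\varpi\rVert<1$. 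Thus the density hypotheses of Corollaries \ref{Module of almost elements is a ring} and \ref{Module of almost elements of a root closed subring} are satisfied. The former makes $B_{0}'$ a subring of $B_{0}[\varpi^{-1}]$, and the latter upgrades $k$-root closedness of $B_{0}$ into complete integral closedness of $B_{0}'$ inside $B_{0}'[\varpi^{-1}]=B_{0}[\varpi^{-1}]$. Torsion-freeness is automatic, since $B_{0}'$ is a subring of the $\varpi$-torsion-free ring $B_{0}[\varpi^{-1}]$.

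What remains is to verify that the new representative still represents $C$, i.e.\ that $(B_{0}')^{a}\cong C$. Here I would exploit that we are working with $A_{0}=A_{\leq1}$, so that Lemma \ref{Two modules of almost elements} identifies the almost-elements module $(B_{0}^{a})_{\ast}$ of the almost module $B_{0}^{a}$ with the subring $(B_{0})_{\ast}=B_{0}'$ of $B_{0}[\varpi^{-1}]$. The counit of the adjunction between localization and the functor of almost elements being an isomorphism (\cite{Gabber-Ramero}, Proposition 2.2.14(iii)) then gives $(B_{0}')^{a}=((B_{0}^{a})_{\ast})^{a}\cong B_{0}^{a}\cong C$, producing the desired witness. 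I expect this final identification to be the only subtle bookkeeping step; the actual work of turning a single instance of $k$-root closedness into complete integral closedness has already been carried out in Corollary \ref{Module of almost elements of a root closed subring} via the fact that the gauge seminorm $\lVert\cdot\rVert_{B_{0}[\varpi^{-1}],B_{0},\ast}$ is power-multiplicative.
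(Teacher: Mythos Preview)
Your proposal is correct and follows essentially the same route as the paper. For the forward direction you replace $B_{0}$ by $(B_{0})_{\ast}$ and invoke Corollary~\ref{Module of almost elements of a root closed subring} directly, whereas the paper unpacks that corollary into its ingredients (Lemma~\ref{Power-multiplicative gauges}, Lemma~\ref{Power-bounded}, Lemma~\ref{Completely integrally closed}); the identification $(B_{0})_{\ast}=C_{\ast}$ via Lemma~\ref{Two modules of almost elements} and the counit isomorphism is exactly what the paper does as well.
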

\begin{proof}Note that the assumptions on $\varpi$ imply that $\lVert A^{\times,m}\rVert$ is dense in $\mathbb{R}_{\geq0}$. If $B_{0}$ is a $\varpi$-torsion-free $A_{\leq1}$-algebra with $B_{0}^{a}\cong C$ as in the definition of an almost root closed $A_{\leq1}^{a}$-algebra, then, by Lemma \ref{Power-multiplicative gauges}, the gauge seminorm $\lVert\cdot\rVert_{B,B_{0},\ast}$ on $B=B_{0}[\varpi^{-1}]$ is power-multiplicative. By Lemma \ref{Power-bounded} and Lemma \ref{Completely integrally closed}, the closed unit ball $B_{\leq1}$ of $(B, \lVert\cdot\rVert_{B,B_{0},\ast})$ is completely integrally closed in $B=B_{\leq1}[\varpi^{-1}]$. By Lemma \ref{Two modules of almost elements}, $(B_{0})_{\ast}=C_{\ast}$, so $B_{\leq1}^{a}\cong B_{0}^{a}$. \end{proof}
\begin{lemma}\label{Almost root closed and almost integrally closed 2}Let $(A, \lVert\cdot\rVert)$ be a seminormed ring and let $\varpi\in A^{\times,m}\cap A_{<1}$ admitting a compatible system of $p$-power roots $(\varpi^{1/p^{n}})_{n}$ in $A^{\times,m}$. An $A_{\leq1}^{a}$-algebra $C$ is almost root closed if and only if $C_{\ast}$ is completely integrally closed in $C_{\ast}[\varpi^{-1}]$.\end{lemma}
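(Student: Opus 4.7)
The plan is to deduce this lemma from the preceding Lemma~\ref{Almost root closed and almost integrally closed}, combined with Corollary~\ref{Module of almost elements of a root closed subring} and the identification $C_{\ast} = (B_{0})_{\ast}$ supplied by Lemma~\ref{Two modules of almost elements} (which applies in the form $N_{\ast} = (M_{0})_{\ast}$ because we are performing almost mathematics relative to $(A_{\leq1}, (\varpi^{1/p^{\infty}})_{A_{\leq1}})$, i.e.\ with $A_{0} = A_{\leq1}$). The two ingredients fit together cleanly, so the proof is essentially bookkeeping.

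For the forward direction, I would start by invoking Lemma~\ref{Almost root closed and almost integrally closed} to obtain a $\varpi$-torsion-free $A_{\leq1}$-algebra $B_{0}$ with $B_{0}^{a}\cong C$ such that $B_{0}$ is completely integrally closed in $B := B_{0}[\varpi^{-1}]$. A completely integrally closed subring is, a fortiori, root closed for every $k\geq2$ (if $f^{k}\in B_{0}$, then $1, f,\dots, f^{k-1}$ generate $B_{0}[f]$ as a $B_{0}$-module, so $f$ is almost integral over $B_{0}$). Hence Corollary~\ref{Module of almost elements of a root closed subring} upgrades this to the statement that $(B_{0})_{\ast}$ is completely integrally closed in $B$. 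Since $B_{0} \subseteq (B_{0})_{\ast} \subseteq B$, inverting $\varpi$ yields $(B_{0})_{\ast}[\varpi^{-1}] = B$, and Lemma~\ref{Two modules of almost elements} identifies $(B_{0})_{\ast}$ with $C_{\ast}$. Therefore $C_{\ast}$ is completely integrally closed in $C_{\ast}[\varpi^{-1}]$.

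For the converse, assume $C_{\ast}$ is completely integrally closed in $C_{\ast}[\varpi^{-1}]$; the hypothesis tacitly forces $C_{\ast}$ to be $\varpi$-torsion-free. Take $B_{0} := C_{\ast}$. Then $B_{0}$ is a $\varpi$-torsion-free $A_{\leq1}$-algebra with $B_{0}^{a} = (C_{\ast})^{a} \cong C$ by the counit isomorphism (\cite{Gabber-Ramero}, Proposition 2.2.14(iii)), and $B_{0}$ is completely integrally closed in $B_{0}[\varpi^{-1}]$ by assumption. Applying Lemma~\ref{Almost root closed and almost integrally closed} in the reverse direction, $C$ is almost root closed.

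No step here poses a genuine obstacle; the only points that require care are the identification $C_{\ast} = (B_{0})_{\ast}$ (which uses that $A_{0} = A_{\leq1}$ in Lemma~\ref{Two modules of almost elements}), the passage from ``root closed'' to ``completely integrally closed'' on the level of $(B_{0})_{\ast}$ (which is precisely the content of Corollary~\ref{Module of almost elements of a root closed subring}), and the observation that $(B_{0})_{\ast}[\varpi^{-1}] = B_{0}[\varpi^{-1}]$, which follows from the sandwich $B_{0} \subseteq (B_{0})_{\ast} \subseteq B_{0}[\varpi^{-1}]$.
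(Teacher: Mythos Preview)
Your proof is correct and follows essentially the same route as the paper. The paper's one-line proof (``Follows from the proof of Lemma~\ref{Almost root closed and almost integrally closed}'') points to the fact that the argument there already produces the identification $C_{\ast}=(B_{0})_{\ast}=B_{\leq1}$ together with the complete integral closedness of $B_{\leq1}$; you reach the same conclusion by instead invoking the \emph{statement} of that lemma and then applying Corollary~\ref{Module of almost elements of a root closed subring} to pass from $B_{0}$ to $(B_{0})_{\ast}$. This is a minor repackaging rather than a different idea (and in fact once $B_{0}$ is completely integrally closed one has $(B_{0})_{\ast}=B_{0}$ directly, so the corollary is not strictly needed), but your argument is sound.
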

\begin{proof}Follows from the proof of Lemma \ref{Almost root closed and almost integrally closed}.\end{proof}
We also have the following analog of Theorem \ref{Seminormed modules and almost lattices}.
\begin{thm}\label{Seminormed algebras and almost lattices}Let $(A, \lVert\cdot\rVert)$ be a seminormed ring and suppose that there exists a seminorm-multiplicative topologically nilpotent unit $\varpi$ of $A$ which admits a compatible system of $p$-power roots in $A^{\times,m}$. We perform almost mathematics relative to the basic setting $(A_{\leq1}, (\varpi^{1/p^{\infty}})_{A_{\leq1}})$. The functor \begin{equation*}B\mapsto B_{\leq1}^{a}\end{equation*}from the category of submetric seminormed $(A, \lVert\cdot\rVert)$-algebras (respectively, submetric Banach $(A, \lVert\cdot\rVert)$-algebras) to the category of $\varpi$-torsion-free (respectively, $\varpi$-torsion-free and $\varpi$-adically complete) $A_{\leq1}^{a}$-algebras is an equivalence of categories, with quasi-inverse given by \begin{equation*}C\mapsto (C_{\ast}[\varpi^{-1}], \lVert\cdot\rVert_{C_{\ast}[\varpi^{-1}],C_{\ast},\ast}).\end{equation*}The equivalence $B\mapsto B_{\leq1}^{a}$ restricts to an equivalence between the category of uniform submetric seminormed $(A, \lVert\cdot\rVert)$-algebras (respectively, of uniform submetric Banach $(A, \lVert\cdot\rVert)$-algebras) and the category of almost root closed (respectively, almost root closed and $\varpi$-adically complete) $A_{\leq1}^{a}$-algebras.\end{thm}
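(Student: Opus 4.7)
The plan is to deduce this theorem from the module-level equivalence of Theorem \ref{Seminormed modules and almost lattices} by adding compatibility with the multiplicative structure, using Proposition \ref{Gauge seminorms of subrings} and Corollary \ref{Module of almost elements is a ring}, and then refine to the uniform case via Lemma \ref{Almost root closed and almost integrally closed 2} combined with Lemmas \ref{Power-bounded}, \ref{Completely integrally closed}, and \ref{Power-multiplicative gauges}.

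First, the hypotheses on $\varpi$ (seminorm-multiplicative with a compatible system of $p$-power roots in $A^{\times,m}$) force $\lVert A^{\times,m}\rVert$ to be dense in $\mathbb{R}_{\geq 0}$ via Lemma \ref{Roots of norm-multiplicative elements}, which is precisely the hypothesis required by Proposition \ref{Gauge seminorms of subrings} and Corollary \ref{Module of almost elements is a ring}. Next, I would upgrade the module-level equivalence to algebras: on the one hand, if $B$ is a submetric (Banach) $A$-algebra, then submultiplicativity of $\lVert\cdot\rVert_B$ makes $B_{\leq1}$ a subring of $B$, and the localization functor $(\cdot)^a$ transfers this to an $A_{\leq1}^a$-algebra structure on $B_{\leq1}^a$; conversely, given a $\varpi$-torsion-free (resp.\ also $\varpi$-adically complete) $A_{\leq1}^a$-algebra $C$, Corollary \ref{Module of almost elements is a ring} supplies that $C_*$ is a subring of $C_*[\varpi^{-1}]$, and Proposition \ref{Gauge seminorms of subrings} supplies that the gauge seminorm on $C_*[\varpi^{-1}]$ is a ring seminorm. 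The bijective correspondence on morphisms, now algebra morphisms on both sides, is inherited from the underlying module equivalence.

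For the restricted equivalence to uniform (almost root closed) algebras, Lemma \ref{Almost root closed and almost integrally closed 2} reduces the problem to showing that under the main equivalence, $B$ is uniform if and only if $B_{\leq1}$ is completely integrally closed in $B$. If $B_{\leq1}$ is completely integrally closed, then in particular it is root closed for every $k \geq 2$, so by Lemma \ref{Power-multiplicative gauges} the gauge seminorm of $B_{\leq1}$ --- which equals $\lVert\cdot\rVert_B$ by Corollary \ref{Description of norms 2} --- is power-multiplicative; then Lemma \ref{Power-bounded} gives $B^\circ = B_{\leq1}$, which is bounded, so $B$ is uniform. Conversely, if $B$ is uniform with its given norm equal to the gauge of $B_{\leq1}$, then the spectral seminorm derived from $\lVert\cdot\rVert_B$ is a power-multiplicative seminorm defining the same topology for which $\varpi$ is seminorm-multiplicative, so by the uniqueness statement in Lemma \ref{Uniform Tate rings} it must coincide, up to normalization, with $\lVert\cdot\rVert_B$ itself; hence $\lVert\cdot\rVert_B$ is power-multiplicative, forcing $B_{\leq1} = B^\circ$, which is completely integrally closed by Lemma \ref{Completely integrally closed}.

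The main obstacle is this converse direction in the uniform case: one must carefully deduce from uniformity of $B$ (a topological property of the underlying Tate ring) combined with the gauge description of $\lVert\cdot\rVert_B$ that the given norm is forced to be power-multiplicative. The seminorm-multiplicativity of $\varpi$, together with the equality between $\lVert\cdot\rVert_B$ and the gauge of $B_{\leq1}$ coming from Corollary \ref{Description of norms 2}, is the key input that makes the uniqueness half of Lemma \ref{Uniform Tate rings} applicable and yields $B_{\leq1} = B^\circ$.
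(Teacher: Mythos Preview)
Your overall strategy coincides with the paper's: the paper derives the theorem from Proposition~\ref{Seminormed algebras and lattices} together with Lemma~\ref{Almost root closed and almost integrally closed 2}, and Proposition~\ref{Seminormed algebras and lattices} is itself obtained from the module-level equivalence plus Proposition~\ref{Gauge seminorms of subrings} and Lemmas~\ref{Power-bounded}, \ref{Completely integrally closed}, \ref{Power-multiplicative gauges}. You invoke the same ingredients, only skipping the intermediate non-almost proposition and passing directly from Theorem~\ref{Seminormed modules and almost lattices} to the algebra statement. The main equivalence and the direction ``almost root closed $\Rightarrow$ uniform'' are handled correctly.

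There is, however, a genuine gap in your converse direction for the uniform case. You try to show that $\lVert\cdot\rVert_B$ is power-multiplicative by appealing to the uniqueness clause of Lemma~\ref{Uniform Tate rings}, but that lemma only asserts uniqueness \emph{within the class of power-multiplicative seminorms} (for which $\varpi$ is multiplicative with a prescribed value); it gives no information about a seminorm not already known to be power-multiplicative. Thus from ``the spectral seminorm is the unique power-multiplicative one with this normalization'' you cannot conclude $\lVert\cdot\rVert_B=\lvert\cdot\rvert_{\spc}$. The phrase ``up to normalization'' does not rescue the argument either: since $\varpi$ is $\lVert\cdot\rVert_B$-multiplicative one has $\lvert\varpi\rvert_{\spc}=\lVert\varpi\rVert_B$, so the two seminorms already share the same normalization and still need not coincide unless $\lVert\cdot\rVert_B$ is power-multiplicative to begin with. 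You have correctly identified the obstacle---passing from the purely topological property ``$B^{\circ}$ is bounded'' to ``$B_{\leq1}$ is completely integrally closed''---but Lemma~\ref{Uniform Tate rings} is not the tool that closes it; the paper does not invoke that lemma here at all and rests the uniform restriction solely on Lemmas~\ref{Power-bounded}, \ref{Completely integrally closed}, \ref{Power-multiplicative gauges}.
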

\begin{proof}Follows from Proposition \ref{Seminormed algebras and lattices} and Lemma \ref{Almost root closed and almost integrally closed 2}. \end{proof}

\section{Strict homomorphisms, isometries and complete tensor products}\label{sec:complete tensor products}

For many seminormed rings $A$ (such as a perfectoid Tate ring endowed with a power-multiplicative norm defining its topology) we can use the notion of a seminorm-multiplicative topologically nilpotent unit to give an algebraic description of strict morphisms in the categories of seminormed modules and of submetric seminormed modules. In particular, we want to describe isometries of submetric seminormed modules (that is, strict monomorphisms in the category of submetric seminormed modules), obtaining a stronger version of Corollary \ref{Almost isometries}. We begin with an elementary lemma which holds true for any seminormed ring $A$ with a (not necessarily seminorm-multiplicative) topologically nilpotent unit.
\begin{lemma}\label{Isometries}Let $A$ be a seminormed ring with a topologically nilpotent unit $\varpi$ and let $M\hookrightarrow N$ be an isometry between seminormed $A$-modules. Then the restriction \begin{equation*}M_{\leq1}\hookrightarrow N_{\leq1}\end{equation*}of $M\hookrightarrow N$ to the closed unit ball of $M$ is a $\varpi$-adic isometry, i.e., the induced map $M_{\leq1}/\varpi^{k}M_{\leq1}\to N_{\leq1}/\varpi^{k}N_{\leq1}$ is injective for every $k\geq1$.\end{lemma}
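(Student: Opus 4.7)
The plan is to exploit the fact that $\varpi$ is a \emph{unit} in $A$, so that multiplication by $\varpi$ acts bijectively on any $A$-module. Concretely, to show injectivity of $M_{\leq1}/\varpi^{k}M_{\leq1}\to N_{\leq1}/\varpi^{k}N_{\leq1}$, I would fix $x\in M_{\leq1}$ whose image in $N_{\leq1}/\varpi^{k}N_{\leq1}$ vanishes and aim to produce $y'\in M_{\leq1}$ with $x=\varpi^{k}y'$.

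First I would pick $y\in N_{\leq1}$ with $x=\varpi^{k}y$ in $N$, and set $y':=\varpi^{-k}x\in M$, which makes sense because $\varpi\in A^{\times}$ and $M$ is an $A$-module. Under the given injection $M\hookrightarrow N$, the image of $y'$ equals $y$ (both satisfy the equation $\varpi^{k}z=x$ in $N$, and multiplication by $\varpi^{k}$ is injective on $N$ since $\varpi$ is a unit). It remains to verify that $y'\in M_{\leq1}$. Since $M\hookrightarrow N$ is an isometry,
\begin{equation*}
\lVert y'\rVert_{M}=\lVert y'\rVert_{N}=\lVert y\rVert_{N}\leq1,
\end{equation*}
and the relation $x=\varpi^{k}y'$ in $M$ then shows $x\in\varpi^{k}M_{\leq1}$, as desired.

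There is essentially no obstacle here: the only subtlety is recognizing that one does not need to construct a preimage inside $M_{\leq1}$ by hand, since the unique preimage in $M$ obtained from invertibility of $\varpi$ automatically lies in the closed unit ball by the isometry hypothesis. Note also that no assumption on $\varpi$ beyond being a topologically nilpotent unit (in particular, no seminorm-multiplicativity) is used, in agreement with the statement of the lemma.
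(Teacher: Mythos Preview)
Your proof is correct and follows essentially the same approach as the paper: both arguments take $x\in M_{\leq1}\cap\varpi^{k}N_{\leq1}$, use that $\varpi$ is a unit to write the unique preimage $\varpi^{-k}x$ as an element of $M$, and then invoke the isometry (equivalently, $N_{\leq1}\cap M=M_{\leq1}$) to place this element in $M_{\leq1}$. Your presentation is slightly more explicit about why the preimage lies in $M$, but the substance is identical.
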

\begin{proof}Since $M\hookrightarrow N$ is an isometry, $N_{\leq1}\cap M=M_{\leq1}$. In particular, \begin{equation*}N_{\leq1}\cap\varpi^{-k}M_{\leq1}\subseteq M_{\leq1}\end{equation*}for every $k\geq1$. Fix some integer $k\geq1$ and let $x\in M_{\leq1}\cap\varpi^{k}N_{\leq1}$. Write $x=\varpi^{k}y$ for some $y\in N_{\leq1}$. Then $y\in N_{\leq1}\cap\varpi^{-k}M_{\leq1}\subseteq M_{\leq1}$, so $x\in\varpi^{k}M_{\leq1}$.\end{proof}
The following simple lemma is presumably well-known but we include it here for the reader's convenience.
\begin{lemma}\label{Isometries 1.5}Let $A_{0}$ be a ring with a non-zero-divisor $\varpi$ and let $\varphi_{0}: M_{0}\to N_{0}$ is an $A_{0}$-module homomorphism, where $N_{0}$ is $\varpi$-torsion-free. The following are equivalent: \begin{enumerate}[(1)]\item The induced $A_{0}/\varpi A_{0}$-module homomorphism $M_{0}/\varpi M_{0}\to N_{0}/\varpi N_{0}$ is injective. \item $\varphi_{0}$ is a $\varpi$-adic isometry, i.e., for every $k\geq1$, the induced $A_{0}/\varpi^{k}A_{0}$-module homomorphism $M_{0}/\varpi^{k}M_{0}\to N_{0}/\varpi^{k}N_{0}$ is injective.\end{enumerate}\end{lemma}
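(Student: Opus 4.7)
The implication (2) $\Rightarrow$ (1) is immediate by specializing to $k=1$, so the content of the lemma is the converse direction. The plan is to prove (1) $\Rightarrow$ (2) by induction on $k \geq 1$, with the base case $k=1$ being precisely the hypothesis (1).

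For the inductive step, I would assume the statement for some $k \geq 1$ and take an element $x \in M_0$ such that $\varphi_0(x) \in \varpi^{k+1}N_0$. The idea is to peel off one factor of $\varpi$ at a time, using the induction hypothesis together with the $\varpi$-torsion-freeness of $N_0$. Concretely, since $\varphi_0(x) \in \varpi^{k+1}N_0 \subseteq \varpi^{k}N_0$, the induction hypothesis gives $x \in \varpi^{k}M_0$, so write $x = \varpi^{k}x'$ for some $x' \in M_0$. Then $\varpi^{k}\varphi_0(x') = \varphi_0(x) = \varpi^{k+1}y$ for some $y \in N_0$, and since $N_0$ has no $\varpi$-torsion we may cancel $\varpi^{k}$ to deduce $\varphi_0(x') = \varpi y \in \varpi N_0$. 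Applying the base case (1) to $x'$ yields $x' \in \varpi M_0$, and hence $x = \varpi^{k}x' \in \varpi^{k+1}M_0$, which is exactly injectivity of $M_0/\varpi^{k+1}M_0 \to N_0/\varpi^{k+1}N_0$.

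There is no real obstacle here: the proof is a routine two-line induction, and the only subtle point is that $\varpi$-torsion-freeness is needed precisely to cancel the $\varpi^{k}$ in the equation $\varpi^{k}\varphi_0(x') = \varpi^{k+1}y$. Without this hypothesis on $N_0$ the cancellation step would fail, which explains why the lemma is stated under that assumption.
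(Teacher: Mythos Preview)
Your proof is correct and follows essentially the same approach as the paper's: both argue by induction on $k$, use the induction hypothesis to write $x=\varpi^{k}x'$, cancel $\varpi^{k}$ using the $\varpi$-torsion-freeness of $N_{0}$, and then apply the base case to $x'$. The arguments are virtually identical up to notation.
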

\begin{proof}Only one implication is non-trivial. Assume (1). We prove by induction on $k\geq1$ that $M_{0}/\varpi^{k}M_{0}\to N_{0}/\varpi^{k}N_{0}$ is injective for all $k\geq1$. So, assume that $M_{0}/\varpi^{k}M_{0}\to N_{0}/\varpi^{k}N_{0}$ is injective for some $k\geq1$ and let $x\in M_{0}\cap \varphi_{0}^{-1}(\varpi^{k+1}N_{0})$. We want to prove that $x\in\varpi^{k+1}M_{0}$. Write $\varphi_{0}(x)=\varpi^{k+1}y$ for some $y\in N_{0}$. Since $M_{0}/\varpi^{k}M_{0}\to N_{0}/\varpi^{k}N_{0}$ is injective, $x=\varpi^{k}z$ for some $z\in M_{0}$. Then $\varpi^{k}\varphi_{0}(z)=\varphi_{0}(x)=\varpi^{k+1}y$. Since $N_{0}$ is $\varpi$-torsion-free, this entails that $\varphi_{0}(z)=\varpi y$. By the injectivity of $M_{0}/\varpi M_{0}\to N_{0}/\varpi N_{0}$, this means that $z\in\varpi M_{0}$ and thus $x\in \varpi^{k+1}M_{0}$.\end{proof}
\begin{lemma}\label{Strictness can be checked on open submodules}Let $(A, \lVert\cdot\rVert)$ be a seminormed ring with a seminorm-multiplicative topologically nilpotent unit $\varpi$. Let $A_{0}$ be an open subring of $A$, let $M$ be an $A$-module and let $M_{0}$ be an $A_{0}$-submodule of $M$ with $M_{0}[\varpi^{-1}]=M$. If $\lVert\cdot\rVert$, $\lVert\cdot\rVert'$ are two seminorms on $M$ such that $(M, \lVert\cdot\rVert)$, $(M, \lVert\cdot\rVert')$ are submetric seminormed $(A, \lVert\cdot\rVert)$-modules and if for some $C>0$ we have\begin{equation*}\lVert x\rVert'C\leq \lVert x\rVert\end{equation*}for all $x\in M_{0}$, then the same inequality also holds for all $x\in M$.\end{lemma}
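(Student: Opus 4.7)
The plan is very direct: we use that $M = M_0[\varpi^{-1}]$ together with the fact that the submetric hypothesis on the two module seminorms forces $\varpi$ to act multiplicatively on $M$ with respect to both of them. Concretely, first I would invoke Lemma \ref{Multiplicative topologically nilpotent units}: since $\varpi$ is seminorm-multiplicative in $(A, \lVert\cdot\rVert)$ and both $(M, \lVert\cdot\rVert)$ and $(M, \lVert\cdot\rVert')$ are submetric seminormed $(A, \lVert\cdot\rVert)$-modules, we obtain
\begin{equation*}
\lVert \varpi z\rVert = \lVert\varpi\rVert\lVert z\rVert \quad\text{and}\quad \lVert \varpi z\rVert' = \lVert\varpi\rVert\lVert z\rVert'
\end{equation*}
for every $z \in M$. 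Since $\varpi$ is a unit in $A$, the same equalities hold with $\varpi$ replaced by $\varpi^{-1}$ (and hence by $\varpi^{-k}$ for every $k \geq 0$).

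Next, I would take an arbitrary $x \in M$ and use the assumption $M = M_0[\varpi^{-1}]$ to write $x = \varpi^{-k} y$ for some $k \geq 0$ and some $y \in M_0$. By the multiplicativity just established,
\begin{equation*}
\lVert x\rVert = \lVert\varpi\rVert^{-k}\lVert y\rVert, \qquad \lVert x\rVert' = \lVert\varpi\rVert^{-k}\lVert y\rVert'.
\end{equation*}
Applying the hypothesis $\lVert y\rVert' C \leq \lVert y\rVert$, which is valid because $y \in M_0$, and multiplying through by $\lVert\varpi\rVert^{-k} > 0$, we immediately get
\begin{equation*}
\lVert x\rVert' C = \lVert\varpi\rVert^{-k}\lVert y\rVert' C \leq \lVert\varpi\rVert^{-k}\lVert y\rVert = \lVert x\rVert,
\end{equation*}
which is the required inequality on all of $M$.

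There is no real obstacle here: the only point worth emphasizing is that the submetric hypothesis together with $\varpi \in A^{\times,m}$ is exactly what is needed to turn a one-sided submetric inequality $\lVert \varpi z\rVert \leq \lVert\varpi\rVert\lVert z\rVert$ into an equality, so that the scaling factors $\lVert\varpi\rVert^{-k}$ cancel identically on both sides when we pass from $M_0$ to $M_0[\varpi^{-1}]$. Without seminorm-multiplicativity of $\varpi$ one would at best get an inequality with a worse constant depending on $k$, and the statement could fail.
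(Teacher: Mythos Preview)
Your proof is correct and follows exactly the same approach as the paper: invoke Lemma~\ref{Multiplicative topologically nilpotent units} to obtain that $\varpi$ (and hence $\varpi^{-1}$) is $\lVert\cdot\rVert_A$-multiplicative for both seminorms on $M$, then use $M=M_{0}[\varpi^{-1}]$ to scale the inequality from $M_{0}$ to all of $M$. You have simply spelled out the details that the paper leaves implicit.
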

\begin{proof}By Lemma \ref{Multiplicative topologically nilpotent units}, $\varpi$ and $\varpi^{-1}$ are multiplicative elements for the seminorms $\lVert\cdot\rVert$ and $\lVert\cdot\rVert'$ on $M$. The assertion follows from this and the assumption that $M_{0}[\varpi^{-1}]=M$.\end{proof}
\begin{prop} \label{Bounded torsion and norm estimates} Let $A_{0}$ be a ring, let $\varpi \in A_{0}$ be a non-zero-divisor and let $\varphi: M_{0} \to N_{0}$ be a map of $A_{0}$-modules. Endow $M_{0}$ and $N_{0}$ with their respective $\varpi$-adic seminorms. Let $m\geq 1$ be an integer. Consider the following assertions:
\begin{enumerate}[(1)]
\item For all $y \in M_{0}$ we have \begin{equation*} \inf_{z \in M_{0}, \\\ \varphi(z) = \varphi(y)} \lVert z \rVert \lVert \varpi^{m}\rVert \leq \lVert \varphi(y) \rVert. \end{equation*}
\item There exists a constant $C \in (\lVert {\varpi}^{m+1}\rVert, \lVert {\varpi}^{m}\rVert]$ such that 
\begin{equation*} \inf_{z \in M_{0}, \\\ \varphi(z)=\varphi(y)} \lVert z \rVert C \leq \lVert \varphi(y) \rVert \end{equation*} for all $y \in M_{0}$.
\item The $\varpi^{\infty}$-torsion submodule \begin{equation*} P_{\varpi-tor} = \{\, x \in P \mid  \text{there exists $n$ such that ${\varpi}^{n}x = 0$} \,\} \end{equation*} of the cokernel $P := N_{0}/\im(\varphi)$ is annihilated by ${\varpi}^{m}$. 
\end{enumerate} Then (3) implies (1) and if the module $N_{0}$ is assumed to be $\varpi$-torsion-free, then the three assertions are equivalent.\end{prop}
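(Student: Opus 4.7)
The plan is to prove the cyclic chain $(3)\Rightarrow(1)\Rightarrow(2)\Rightarrow(3)$, where the last implication is the one that will require $N_0$ to be $\varpi$-torsion-free. The implication $(1)\Rightarrow(2)$ is immediate on taking $C=\lVert\varpi^m\rVert$, which lies in the prescribed half-open interval. Throughout I will use that for the $\varpi$-adic seminorm on $N_0$, the condition $\varphi(y)\in\varpi^k N_0$ is equivalent to $\lVert\varphi(y)\rVert\leq 2^{-k}$, and that $\lVert\varphi(y)\rVert$ equals the infimum of $2^{-k}$ over all admissible $k$.

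For $(3)\Rightarrow(1)$, I fix $y\in M_0$ and, for each integer $k\geq 0$ with $\varphi(y)\in\varpi^k N_0$, write $\varphi(y)=\varpi^k w$ with $w\in N_0$. The class $\bar w\in P$ is then $\varpi^k$-torsion, hence by hypothesis $(3)$ annihilated by $\varpi^m$, so $\varpi^m w=\varphi(u)$ for some $u\in M_0$. When $k\geq m$ I set $z:=\varpi^{k-m}u$, which satisfies $\varphi(z)=\varphi(y)$ and $\lVert z\rVert\cdot\lVert\varpi^m\rVert\leq\lVert\varpi^{k-m}\rVert\cdot\lVert\varpi^m\rVert\leq 2^{-k}$. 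When $k<m$ I instead take $z:=y$ and use $\lVert y\rVert\leq 1$ to obtain $\lVert z\rVert\cdot\lVert\varpi^m\rVert\leq 2^{-m}\leq 2^{-k}$. Taking the infimum over all such $k$ produces $(1)$.

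For $(2)\Rightarrow(3)$, let $\bar w\in P$ be $\varpi^\infty$-torsion, with a lift $w\in N_0$ satisfying $\varpi^n w=\varphi(y)$ for some integer $n\geq 1$ and some $y\in M_0$. If $n\leq m$, then already $\varpi^m w=\varpi^{m-n}\varphi(y)\in\im(\varphi)$. If $n>m$, then $\lVert\varphi(y)\rVert\leq 2^{-n}$, and combining $(2)$ with the strict inequality $C>\lVert\varpi^{m+1}\rVert=2^{-(m+1)}$ produces $z\in M_0$ with $\varphi(z)=\varphi(y)$ and $\lVert z\rVert<2^{-(n-m-1)}$. This strict inequality forces $z\in\varpi^{n-m}M_0$, so I write $z=\varpi^{n-m}u$ with $u\in M_0$. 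Then $\varpi^{n-m}\varphi(u)=\varphi(z)=\varpi^n w$, and since $N_0$ is $\varpi$-torsion-free, multiplication by $\varpi^{n-m}$ on $N_0$ is injective, yielding $\varphi(u)=\varpi^m w$, i.e., $\varpi^m\bar w=0$ in $P$.

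The main obstacle is the bookkeeping in $(2)\Rightarrow(3)$: the strict inequality $C>\lVert\varpi^{m+1}\rVert$ in the hypothesis is consumed precisely to pass from the analytic estimate $\lVert z\rVert<2^{-(n-m-1)}$ to the algebraic membership $z\in\varpi^{n-m}M_0$, which is one power of $\varpi$ finer than the naive estimate. The subsequent cancellation of $\varpi^{n-m}$ in the identity $\varpi^{n-m}\varphi(u)=\varpi^n w$ is exactly where the hypothesis that $N_0$ is $\varpi$-torsion-free is essential, explaining why this hypothesis is needed for $(2)\Rightarrow(3)$ but not for $(3)\Rightarrow(1)$.
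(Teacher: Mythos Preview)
Your proof is correct. The overall architecture ($(3)\Rightarrow(1)\Rightarrow(2)\Rightarrow(3)$) matches the paper's, but your execution differs in two places worth noting.

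For $(3)\Rightarrow(1)$, the paper first reduces to the injective case and then invokes an external result (\cite{FGK}, Theorem 4.2.2(2)) asserting $M_{0}\cap\varpi^{k+m}N_{0}=\varpi^{k}(M_{0}\cap\varpi^{m}N_{0})$, which immediately gives the norm bound. Your argument is more direct and fully self-contained: from $\varphi(y)=\varpi^{k}w$ you use the torsion bound to produce $u$ with $\varphi(u)=\varpi^{m}w$, and then $z=\varpi^{k-m}u$ gives the desired representative. This avoids the citation at the cost of a short case split $k\geq m$ versus $k<m$.

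For $(2)\Rightarrow(3)$, the paper runs an inductive descent: from $\varpi^{n}x=\varphi(y)$ with $n>m$ it uses the estimate once to peel off a single factor of $\varpi$, obtaining $\varpi^{n-1}x\in\im(\varphi)$, and repeats until $n=m$. You instead use the estimate once to jump directly to $z\in\varpi^{n-m}M_{0}$ and then cancel $\varpi^{n-m}$ in one go. Your one-shot argument exploits the strict lower bound $C>\lVert\varpi^{m+1}\rVert$ more efficiently; the paper's iteration is perhaps more transparent about where exactly the $\varpi$-torsion-freeness of $N_{0}$ enters (once per step rather than once at the end), but the content is the same.
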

\begin{proof} (3) $\Rightarrow$ (1):  Let $m \geq 0$ be such that ${\varpi}^{m}$ annihilates the $\varpi$-torsion-submodule of $N_{0}/\im(\varphi)$. The quotient seminorm on $\im(\varphi)$ coincides with the $\varpi$-adic seminorm. Hence (by replacing $\varphi$ with the inclusion map $\im(\varphi) \hookrightarrow N_{0}$) we may assume that $\varphi$ is injective and identify $\im(\varphi)$ with $M_{0}$. Denote by $\lVert \cdot \rVert_{M_{0}}, \lVert \cdot \rVert_{N_{0}}$ the respective $\varpi$-adic seminorms on $M_{0}$ and $N_{0}$ and denote by $\lVert \cdot \rVert$ the $\varpi$-adic seminorm on $A_{0}$. We want to prove that $\lVert \cdot \rVert_{M_{0}}$ is bounded above by the restriction of $\lVert \cdot \rVert_{N_{0}}$ multiplied by the constant $\lVert \varpi^{m} \rVert$. By the proof of \cite{FGK}, Theorem 4.2.2 (2), we know that the equality \begin{equation*} M_{0} \cap {\varpi}^{k+m}N_{0}=\varpi^{k}(M_{0} \cap \varpi^{m}N_{0}) \end{equation*}holds for every $k\geq0$. Suppose that $x\in M_{0}\subseteq N_{0}$ satisfies $\lVert \varpi \rVert^{-m}\lVert x \rVert_{N_{0}}=\lVert {\varpi}^{-m}x \rVert_{N_{0}}< r$ for some $r>0$. Then there exists $k\geq0$ such that $\varpi^{-m}x \in \varpi^{k}N_{0}$ and $\lVert\varpi^{k}\rVert<r$. Consequently, $x\in M_{0}\cap\varpi^{k+m}N_{0}=\varpi^{k}(M_{0}\cap {\varpi}^{m}N_{0}) \subseteq {\varpi}^{k}M_{0}$. It follows that $\lVert x\rVert_{M_{0}}\leq \lVert\varpi^{k}\rVert<r$ whence we see that $\lVert x\rVert_{M_{0}}\leq\lVert\varpi\rVert^{-m}\lVert x \rVert_{N_{0}}$, as desired.

The implication (1) $\Rightarrow$ (2) is trivial, so it remains to prove the implication (2) $\Rightarrow$ (3) when $N_{0}$ is $\varpi$-torsion-free. Thus let $N_{0}$ be a $\varpi$-torsion-free $A_{0}$-module and assume that assertion (2) holds for the map $\varphi: M_{0} \to N_{0}$. By assumption we have $\lVert\varpi^{m+1}\rVert<C$. This means that \begin{equation*}\varpi^{m+1}N_{0} \subseteq \{\, x \in N_{0} \mid \lVert x \rVert_{N_{0}}<C \,\} \end{equation*}. Let $x \in N_{0}$ and suppose there exists $n \geq 0$ such that $\varpi^{n}x \in \im(\varphi)$. Then there also exists $n\geq m+1$ such that $\varpi^{n}x \in \im(\varphi)$. Write $\varpi^{n}x=\varphi(y)$ for some $y \in M_{0}$. Then $\lVert \varphi(y) \rVert_{N_{0}}<C$ and hence \begin{equation*} \inf_{z \in M_{0}, \varphi(z) = \varphi(y)}\lVert z\rVert_{M_{0}}\leq C^{-1}\lVert \varphi(y)\rVert_{N_{0}}<1.\end{equation*}This shows that for some $z \in M_{0}$ with $\varphi(z) =\varphi(y)$ we have $\lVert z\rVert_{M_{0}}<1$. But by the definition of the $\varpi$-adic seminorm this implies $z=\varpi y_{1}$ for some $y_{1} \in M_{0}$ so that $\varpi^{n}x=\varphi(y)=\varphi(\varpi y_{1})=\varpi\varphi(y_{1})$. Since $N_{0}$ is $\varpi$-torsion-free we deduce: ${\varpi}^{n-1}x=\varphi(y_{1})$. If $n-1>m$, then applying the same procedure as above to $y_{1}$ we obtain an element $y_{2}$ with ${\varpi}^{n-2}x=\varphi(y_{2})$ and so forth. After $n-m$ steps we see that $\varpi^{m}x \in \im(\varphi)$. Since $m$ did not depend on the particular element $x$ we conclude that \begin{equation*}\varpi^{m}(N_{0}/\im(\varphi))_{\varpi-\tor}=0,\end{equation*}as we claimed.\end{proof}
\begin{prop}\label{Bounded torsion and norm estimates 2}Let $(A, \lVert\cdot\rVert)$ be a seminormed ring, let $A_{0}$ be an open subring of $A_{\leq1}$. Suppose that $\varpi\in A_{0}$ is a topologically nilpotent unit of $A$ which admits a compatible system of $p$-power roots $(\varpi^{1/p^{n}})_{n}$ in $A_{0}$ with the property that $\lVert\varpi^{s}\rVert=\lVert\varpi\rVert^{s}$ for all $s\in\mathbb{Z}[1/p]$. Let $\varphi: (M, \lVert\cdot\rVert_{M})\to (N, \lVert\cdot\rVert_{N})$ be a morphism of submetric seminormed $(A, \lVert\cdot\rVert)$-modules and let $M_{0}\subseteq M$, $N_{0}\subseteq N$ be open $A_{0}$-submodules with $(M_{0}^{a})_{\ast}=M_{\leq1}$ and $(N_{0}^{a})_{\ast}=N_{\leq1}$ such that $\varphi(M_{0})\subseteq N_{0}$. \begin{enumerate}[(1)]\item If the $\varpi^{\infty}$-torsion submodule of $N_{0}/\varphi(M_{0})$ is annihilated by $\varpi^{m}$ for some integer $m\geq0$, then \begin{equation*}\inf_{z\in M, \\\ \varphi(z)=\varphi(y)}\lVert z\rVert_{M}\lVert\varpi\rVert^{m}\leq\lVert\varphi(y)\rVert_{N}\end{equation*}for all $y\in M$. \item Conversely, if for some $m\in\mathbb{Z}_{>0}$, we have \begin{equation*}\inf_{z\in M, \\\ \varphi(z)=\varphi(y)}\lVert z\rVert_{M}\lVert\varpi\rVert^{m}\leq\lVert\varphi(y)\rVert_{N}\end{equation*}for all $y\in M$, then the $\varpi^{\infty}$-torsion submodule of $N_{0}/\varphi(M_{0})$ is annihilated by $\varpi^{m+\frac{2}{p^{n}}}$ for every $n\in\mathbb{Z}_{>0}$.\end{enumerate}\end{prop}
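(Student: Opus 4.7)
The plan is to deduce part (1) from Proposition \ref{Bounded torsion and norm estimates} by passing to the $\varpi^{1/p^n}$-adic seminorms for every $n$ and then taking an infimum, and to prove part (2) by combining the assumed norm estimate with the gauge description of $\lVert\cdot\rVert_M$ supplied by Corollary \ref{Gauge seminorm via roots}. Throughout, the hypotheses $(M_0^a)_\ast=M_{\leq1}$ and $(N_0^a)_\ast=N_{\leq1}$ translate via Lemma \ref{Module of almost elements and functor of almost elements} into $\bigcap_{n>0}\varpi^{-1/p^n}M_0=M_{\leq1}$ and $\bigcap_{n>0}\varpi^{-1/p^n}N_0=N_{\leq1}$, so Corollary \ref{Infimum of discrete norms} and Corollary \ref{Gauge seminorm via roots} are available for both $M$ and $N$.

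For part (1), I would fix $n\geq1$ and observe that, since the $\varpi^\infty$-torsion and the $(\varpi^{1/p^n})^\infty$-torsion of any $A_0$-module coincide and $(\varpi^{1/p^n})^{mp^n}=\varpi^m$, the hypothesis is exactly condition (3) of Proposition \ref{Bounded torsion and norm estimates} applied to $\varphi\vert_{M_0}\colon M_0\to N_0$ with $\varpi$ replaced by $\varpi^{1/p^n}$ and $m$ by $mp^n$. The implication (3) $\Rightarrow$ (1) then yields, for every $y\in M_0$,
\[
\inf_{z\in M_0,\, \varphi(z)=\varphi(y)}\lVert z\rVert_{M,M_0,\varpi^{1/p^n}}\cdot\lVert\varpi\rVert^m\leq\lVert\varphi(y)\rVert_{N,N_0,\varpi^{1/p^n}},
\]
where the $\varpi^{1/p^n}$-adic seminorms are normalized so that $\lVert\varpi^{1/p^n}\rVert=\lVert\varpi\rVert^{1/p^n}$ (a normalization that is legitimate because conditions (1) and (3) of Proposition \ref{Bounded torsion and norm estimates} are invariant under replacing all seminorms by a common positive power). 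Since $\lVert z\rVert_M\leq\lVert z\rVert_{M,M_0,\varpi^{1/p^n}}$ for every $n$ and $\inf_n\lVert\varphi(y)\rVert_{N,N_0,\varpi^{1/p^n}}=\lVert\varphi(y)\rVert_N$ by Corollary \ref{Infimum of discrete norms}, enlarging the infimum on the left to $z\in M$ and taking the infimum over $n$ on the right yields the claimed inequality for $y\in M_0$. To extend to arbitrary $y\in M$, I would pick $k\geq0$ with $\varpi^k y\in M_0$ (possible since $M_0$ is open and $\varpi$ is topologically nilpotent, so $M_0[\varpi^{-1}]=M$), apply the inequality to $\varpi^k y$, and cancel the common factor $\lVert\varpi\rVert^k$ that appears on both sides via the $\varpi$-multiplicativity of $\lVert\cdot\rVert_M$ and $\lVert\cdot\rVert_N$ together with the bijection $\{z\in M:\varphi(z)=\varpi^k\varphi(y)\}=\varpi^k\{z'\in M:\varphi(z')=\varphi(y)\}$.

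For part (2), I would let $y\in N_0$ represent a $\varpi^\infty$-torsion class in $N_0/\varphi(M_0)$, so that $\varpi^k y=\varphi(y_1)$ for some $y_1\in M_0$ and $k\geq0$. Fixing $n\geq1$, I set $y':=\varpi^{m+2/p^n-k}y_1\in M$, which is well defined because $\varpi^s\in A$ for every $s\in\mathbb{Z}[1/p]$. Then $\varphi(y')=\varpi^{m+2/p^n}y$ in $N$, and the assumed estimate gives
\[
\inf_{z\in M,\, \varphi(z)=\varpi^{m+2/p^n}y}\lVert z\rVert_M\leq\lVert\varpi\rVert^{-m}\lVert\varpi^{m+2/p^n}y\rVert_N\leq\lVert\varpi\rVert^{2/p^n},
\]
where I used that $y\in N_0\subseteq N_{\leq1}$ and that $\varpi$ is multiplicative for $\lVert\cdot\rVert_N$. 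Since $\lVert\varpi\rVert^{2/p^n}<\lVert\varpi\rVert^{1/p^n}$, there exists $z\in M$ with $\varphi(z)=\varpi^{m+2/p^n}y$ and $\lVert z\rVert_M<\lVert\varpi\rVert^{1/p^n}$. By Corollary \ref{Gauge seminorm via roots}, this forces $z\in\varpi^s M_0$ for some $s\in\mathbb{Z}[1/p]$ with $s>1/p^n>0$, so in fact $z\in M_0$ (as $\varpi^s\in A_0$). Therefore $\varpi^{m+2/p^n}y=\varphi(z)\in\varphi(M_0)$, and this establishes that $\varpi^{m+2/p^n}$ annihilates the $\varpi^\infty$-torsion of $N_0/\varphi(M_0)$ for every $n>0$.

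The main delicate point is the seemingly odd exponent $m+2/p^n$ in part (2): the factor $2/p^n$ rather than $1/p^n$ is exactly what converts the non-strict bound $\inf\lVert z\rVert_M\leq\lVert\varpi\rVert^{2/p^n}$ into the strict inequality $\lVert z\rVert_M<\lVert\varpi\rVert^{1/p^n}$, which in turn is what deposits the representative $z$ inside the open submodule $M_0$ via the gauge formula. The remaining potential pitfall is the rescaling of the $\varpi^{1/p^n}$-adic seminorms in part (1), but this is harmless because of the invariance of the relevant conditions of Proposition \ref{Bounded torsion and norm estimates} under raising all seminorms to a common power.
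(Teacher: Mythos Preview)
Your argument is correct. Part (1) is essentially the paper's proof: the paper also applies Proposition \ref{Bounded torsion and norm estimates} with $\varpi$ replaced by $\varpi^{1/p^n}$, invokes Lemma \ref{Strictness can be checked on open submodules} to pass from $M_0$ to $M$ (where you do this extension explicitly via $\varpi$-multiplicativity), and finishes with Corollary \ref{Infimum of discrete norms}.

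Part (2), however, takes a genuinely different route. The paper proceeds by sandwiching: it uses $\lVert\cdot\rVert_N\leq\lVert\cdot\rVert_{N,N_0,\varpi^{1/p^n}}$ from Corollary \ref{Infimum of discrete norms} together with the quantitative bound $\lVert\cdot\rVert_{M,M_0,\varpi^{1/p^n}}\cdot\lVert\varpi\rVert^{2/p^n}\leq\lVert\cdot\rVert_M$ coming from the explicit constant in Lemma \ref{Boundedness vs. continuity} (applied with $m=1$ to the pseudo-uniformizer $\varpi^{1/p^n}$), thereby converting the hypothesis into an inequality of $\varpi^{1/p^n}$-adic seminorms and then invoking the implication (2)$\Rightarrow$(3) of Proposition \ref{Bounded torsion and norm estimates}. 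Your argument instead works directly with a single torsion element: you apply the hypothesis to produce a preimage $z$ of $\varpi^{m+2/p^n}y$ with $\lVert z\rVert_M<\lVert\varpi\rVert^{1/p^n}$ and then use the gauge description of $\lVert\cdot\rVert_M$ from Corollary \ref{Gauge seminorm via roots} to place $z$ inside $M_0$. This is more elementary in that it bypasses Lemma \ref{Boundedness vs. continuity} entirely and never needs the (2)$\Rightarrow$(3) direction of Proposition \ref{Bounded torsion and norm estimates}; the paper's approach, on the other hand, makes more transparent \emph{why} the exponent $2/p^n$ arises, namely as the constant $\lVert\varpi^{1/p^n}\rVert^{-2}$ in Lemma \ref{Boundedness vs. continuity}.
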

\begin{proof}(1) By Proposition \ref{Bounded torsion and norm estimates} and Lemma \ref{Strictness can be checked on open submodules}, the hypothesis in (1) entails the inequalities \begin{equation*}\inf_{z\in M, \varphi(z)=\varphi(y)}\lVert z\rVert_{M,M_{0},\varpi^{1/p^{n}}}\lVert\varpi\rVert^{m}\leq \lVert\varphi(y)\rVert_{N,N_{0},\varpi^{1/p^{n}}}\end{equation*}for all $y\in M$ and all $n\geq1$. The assertion (1) follows from this and Corollary \ref{Infimum of discrete norms}.

(2) Suppose that \begin{equation*}\inf_{z\in M, \\\ \varphi(z)=\varphi(y)}\lVert z\rVert_{M}\lVert\varpi\rVert^{m}\leq\lVert\varphi(y)\rVert_{N}\end{equation*}for some $m\in\mathbb{Z}_{\geq0}$. By Corollary \ref{Infimum of discrete norms}, we have $\lVert\cdot\rVert_{N}\leq\lVert\cdot\rVert_{N,N_{0},\varpi^{1/p^{n}}}$ for all $n\geq0$, and, by Lemma \ref{Boundedness vs. continuity}, \begin{equation*}\lVert\cdot\rVert_{M,M_{0},\varpi^{1/p^{n}}}\lVert\varpi\rVert^{2/p^{n}}\leq\lVert\cdot\rVert_{M}.\end{equation*}It follows that \begin{equation*}\inf_{z\in M, \\\ \varphi(z)=\varphi(y)}\lVert z\rVert_{M,M_{0},\varpi^{1/p^{n}}}\lVert\varpi\rVert^{m+\frac{2}{p^{n}}}\leq\lVert\varphi(y)\rVert_{N,N_{0},\varpi^{1/p^{n}}}\end{equation*}for all $y\in M$ and all $n\geq1$. By Proposition \ref{Bounded torsion and norm estimates}, this means that the $\varpi^{\infty}$-torsion submodule of $N_{0}/\varphi(M_{0})$ is annihilated by $(\varpi^{1/p^{n}})^{p^{n}m+2}=\varpi^{m+\frac{2}{p^{n}}}$ for all $n\geq1$.\end{proof} 
\begin{cor}\label{Isometries 2}Let $(A, \lVert\cdot\rVert)$ be a seminormed ring, let $A_{0}$ be an open subring of $A_{\leq1}$. Suppose that $\varpi\in A_{0}$ is a topologically nilpotent unit of $A$ which admits a compatible system of $p$-power roots $(\varpi^{1/p^{n}})_{n}$ in $A_{0}$ with the property that $\lVert\varpi^{s}\rVert=\lVert\varpi\rVert^{s}$ for all $s\in\mathbb{Z}[1/p]$. Then an injective submetric homomorphism of submetric seminormed $(A, \lVert\cdot\rVert)$-modules $\varphi: (M, \lVert\cdot\rVert_{M})\to (N, \lVert\cdot\rVert_{N})$ is an isometry if and only if there exist open $A_{0}$-submodules $M_{0}$ of $M$ and $N_{0}$ of $N$ with $(M_{0}^{a})_{\ast}=M_{\leq1}$, $(N_{0}^{a})_{\ast}=N_{\leq1}$ such that $\varphi(M_{0})\subseteq N_{0}$ and the $A_{0}/\varpi A_{0}$-module homomorphism $M_{0}/\varpi M_{0}\to N_{0}/\varpi N_{0}$ induced by $\varphi$ is injective.\end{cor}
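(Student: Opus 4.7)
The plan is to prove both implications of Corollary \ref{Isometries 2} by combining Lemma \ref{Isometries}, Lemma \ref{Isometries 1.5} and Proposition \ref{Bounded torsion and norm estimates 2}(1).

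For the \textit{only if} direction, I will take $M_{0} := M_{\leq 1}$ and $N_{0} := N_{\leq 1}$. The inclusion $\varphi(M_{\leq 1}) \subseteq N_{\leq 1}$ is immediate from the submetric hypothesis on $\varphi$. The identities $(M_{\leq 1}^{a})_{\ast} = M_{\leq 1}$ and $(N_{\leq 1}^{a})_{\ast} = N_{\leq 1}$ will follow by combining Corollary \ref{Description of norms 2}, which identifies the norm on $M$ (respectively $N$) with the gauge of its closed unit ball, together with Lemma \ref{Closed unit ball and module of almost elements} and Lemma \ref{Two modules of almost elements}. The required injectivity $M_{\leq 1}/\varpi M_{\leq 1} \hookrightarrow N_{\leq 1}/\varpi N_{\leq 1}$ is then a special case of Lemma \ref{Isometries}.

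For the \textit{if} direction, the main step will be to show that the cokernel $N_{0}/\varphi(M_{0})$ is $\varpi$-torsion-free. Since $\varpi \in A^{\times}$, both $M$ and $N$ are $\varpi$-torsion-free, and hence so are the submodules $M_{0}, N_{0}$. By Lemma \ref{Isometries 1.5}, the hypothesis that $M_{0}/\varpi M_{0} \to N_{0}/\varpi N_{0}$ is injective upgrades to the injectivity of $M_{0}/\varpi^{k} M_{0} \to N_{0}/\varpi^{k} N_{0}$ for every $k \geq 1$, equivalently to the equality $\varpi^{k} N_{0} \cap \varphi(M_{0}) = \varpi^{k}\varphi(M_{0})$. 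Given $y \in N_{0}$ with $\varpi^{k} y = \varphi(x)$ for some $x \in M_{0}$, this equality combined with the injectivity of $\varphi$ on $M$ will force $x \in \varpi^{k} M_{0}$, and then $y \in \varphi(M_{0})$, which yields the claimed torsion-freeness.

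With this in hand, Proposition \ref{Bounded torsion and norm estimates 2}(1) will apply with $m = 0$ and give
\begin{equation*}
\inf_{z \in M,\, \varphi(z) = \varphi(y)} \lVert z \rVert_{M} \leq \lVert \varphi(y) \rVert_{N}
\end{equation*}
for every $y \in M$. Injectivity of $\varphi$ will then reduce the infimum to $\lVert y \rVert_{M}$; the reverse inequality is the submetric hypothesis, so $\varphi$ is an isometry. I do not anticipate a genuine obstacle, since the substantive work is already packaged in Proposition \ref{Bounded torsion and norm estimates 2}; the only points that will require a little care are the identifications $(M_{\leq 1}^{a})_{\ast} = M_{\leq 1}$ in the forward direction and the verification of $\varpi$-torsion-freeness of the cokernel in the backward direction.
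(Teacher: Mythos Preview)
Your proposal is correct and follows essentially the same route as the paper's proof: for the forward direction you take $M_0=M_{\leq1}$, $N_0=N_{\leq1}$ and invoke Lemma \ref{Isometries}, and for the converse you show $N_0/\varphi(M_0)$ is $\varpi$-torsion-free and then apply Proposition \ref{Bounded torsion and norm estimates 2}(1) with $m=0$. The only cosmetic difference is that the paper packages the torsion-freeness step as a citation of Proposition \ref{Bounded torsion and norm estimates} (the equivalence $(1)\Leftrightarrow(3)$ there with $m=0$), whereas you spell it out via Lemma \ref{Isometries 1.5} and a direct argument; both amount to the same computation.
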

\begin{proof}If $\varphi$ is an isometry, $M_{\leq1}/\varpi M_{\leq1}\to N_{\leq1}/\varpi N_{\leq1}$ is injective, by Lemma \ref{Isometries}. Conversely, suppose that $M_{0}\subseteq M$, $N_{0}\subseteq N$ are open $A_{0}$-submodules as in the statement of the corollary. By Proposition \ref{Bounded torsion and norm estimates}, the cokernel $N_{0}/\varphi(M_{0})$ is $\varpi$-torsion-free. We conclude by Proposition \ref{Bounded torsion and norm estimates 2} that $\varphi$ is an isometry.\end{proof} 
For any seminormed ring $A$ there is a canonical closed symmetric monoidal structure on each of the categories $\Snm_{A}$ and $\Snm_{A}^{\leq1}$, where the tensor product of two seminormed $A$-modules (respectively, submetric seminormed $A$-modules) $(M, \lVert\cdot\rVert_{M})$ and $(N, \lVert\cdot\rVert_{N})$ is the usual algebraic tensor product $M\otimes_{A}N$ equipped with the tensor product seminorm \begin{equation*}\lVert\cdot\rVert_{M}\otimes\lVert\cdot\rVert_{N}: M\otimes_{A}N\to\mathbb{R}_{\geq0}\end{equation*}given by \begin{equation*}(\lVert\cdot\rVert_{M}\otimes\lVert\cdot\rVert_{N})(x)=\inf\{\, \lVert x_{i}\rVert_{M}\lVert y_{i}\rVert_{N}\mid x=\sum_{i}x_{i}\otimes y_{i}, x_{i}\in M, y_{i}\in N\,\}.\end{equation*}If $A$ is a Banach ring, the categories $\Ban_{A}$ and $\Ban_{A}^{\leq1}$ also carry a canonical closed symmetric monoidal structure given by the complete tensor product $M\widehat{\otimes}_{A}N$ of two Banach modules (respectively, submetric Banach modules) $(M, \lVert\cdot\rVert_{M})$ and $(N, \lVert\cdot\rVert_{N})$, that is, the completion of the algebraic tensor product $M\otimes_{A}N$ with respect to the tensor product seminorm $\lVert\cdot\rVert_{M}\otimes\lVert\cdot\rVert_{N}$. In the case of two submetric seminormed $A$-modules $M$, $N$ whose seminorms take values in the closure of $\lVert A^{\times,m}\rVert$ we can describe the tensor product seminorm in very explicit terms.
\begin{prop}\label{Tensor products and gauges}Let $(A, \lVert\cdot\rVert)$ be a seminormed ring and let $(M, \lVert\cdot\rVert_{M})$, $(N, \lVert\cdot\rVert_{N})$ be submetric seminormed $A$-modules and suppose that $\lVert A\rVert$, $\lVert M\rVert_{M}$ and $\lVert N\rVert_{N}$ are contained in the closure of $\lVert A^{\times,m}\rVert$ in $\mathbb{R}_{\geq0}$. Let $A_{0}$ be an open subring of $A_{\leq1}$ and let $M_{0}\subseteq M$, $N_{0}\subseteq N$ be open $A_{0}$-submodules with $(M_{0})_{\ast}=M_{\leq1}$ and $(N_{0})_{\ast}=N_{\leq1}$. Then the tensor product seminorm $\lVert\cdot\rVert_{M}\otimes\lVert\cdot\rVert_{N}$ on $M\otimes_{A}N$ is equal to the gauge of \begin{equation*}((A_{\leq1}\cdot M_{0})\otimes_{A_{0}}(A_{\leq1}\cdot N_{0}))/(\varpi^{\infty}-\tor).\end{equation*}\end{prop}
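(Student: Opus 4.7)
The plan is to prove the two inequalities $\lVert\cdot\rVert_{M}\otimes\lVert\cdot\rVert_{N}\leq\lVert\cdot\rVert_{M\otimes_{A}N,P,\ast}$ and $\lVert\cdot\rVert_{M\otimes_{A}N,P,\ast}\leq\lVert\cdot\rVert_{M}\otimes\lVert\cdot\rVert_{N}$, where $P\subseteq M\otimes_{A}N$ denotes the image of the canonical $A_{0}$-module map $(A_{\leq 1}\cdot M_{0})\otimes_{A_{0}}(A_{\leq 1}\cdot N_{0})\to M\otimes_{A}N$. Since $(A_{\leq 1}\cdot M_{0})[\varpi^{-1}]=M$ and $(A_{\leq 1}\cdot N_{0})[\varpi^{-1}]=N$, flat base change identifies $M\otimes_{A}N$ with $((A_{\leq 1}\cdot M_{0})\otimes_{A_{0}}(A_{\leq 1}\cdot N_{0}))[\varpi^{-1}]$; in particular the kernel of the above map is exactly the $\varpi^{\infty}$-torsion, so the quotient appearing in the statement is canonically the subgroup $P$ and the two candidate gauge seminorms agree.

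For the easy direction, I would observe that $M_{0}\subseteq (M_{0})_{\ast}=M_{\leq 1}$ and hence, by submetricness of $\lVert\cdot\rVert_{M}$, $A_{\leq 1}\cdot M_{0}\subseteq M_{\leq 1}$; similarly $A_{\leq 1}\cdot N_{0}\subseteq N_{\leq 1}$. Any $p\in P$ therefore admits a presentation as a finite sum of simple tensors $m_{j}\otimes n_{j}$ with $m_{j}\in M_{\leq 1}$, $n_{j}\in N_{\leq 1}$, whose tensor product seminorm is at most $1$. Since the tensor product seminorm makes $M\otimes_{A}N$ into a submetric $A$-module, elements of $A_{\leq r}\cdot P$ have tensor product seminorm bounded by $r$, and taking infima yields $\lVert\cdot\rVert_{M}\otimes\lVert\cdot\rVert_{N}\leq\lVert\cdot\rVert_{M\otimes_{A}N,P,\ast}$.

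The reverse inequality is the main content. Given $x\in M\otimes_{A}N$ and $s>(\lVert\cdot\rVert_{M}\otimes\lVert\cdot\rVert_{N})(x)$, I would fix $\delta,\gamma>1$ and choose a representation $x=\sum_{i}x_{i}\otimes y_{i}$ with $\lVert x_{i}\rVert_{M}\lVert y_{i}\rVert_{N}<s$ for every $i$. The hypothesis that the value sets of all three seminorms lie in the closure of $\lVert A^{\times,m}\rVert$ lets me invoke Proposition \ref{Description of norms 1} for $M$ and $N$, factoring $x_{i}=a_{i}x_{i}'$, $y_{i}=b_{i}y_{i}'$ with $a_{i},b_{i}\in A^{\times,m}$, $\lVert a_{i}\rVert\leq\delta\lVert x_{i}\rVert_{M}$, $\lVert b_{i}\rVert\leq\delta\lVert y_{i}\rVert_{N}$ (with the obvious modification when $\lVert x_{i}\rVert_{M}$ or $\lVert y_{i}\rVert_{N}$ vanishes), and $x_{i}'\in M_{\leq 1}=(M_{0})_{\ast}$, $y_{i}'\in N_{\leq 1}=(N_{0})_{\ast}$. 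The description $(M_{0})_{\ast}=\bigcap_{r>1}A_{\leq r}\cdot M_{0}$ then gives expansions $x_{i}'=\sum_{k}\alpha_{ik}m_{ik}$ and $y_{i}'=\sum_{\ell}\beta_{i\ell}n_{i\ell}$ with $\alpha_{ik},\beta_{i\ell}\in A_{\leq\gamma}$, $m_{ik}\in M_{0}$, $n_{i\ell}\in N_{0}$. Moving scalars to the front in $M\otimes_{A}N$ presents $x$ as
\[
x=\sum_{i,k,\ell}(a_{i}b_{i}\alpha_{ik}\beta_{i\ell})(m_{ik}\otimes n_{i\ell}),
\]
where each coefficient has norm at most $\delta^{2}\gamma^{2}s$ and each $m_{ik}\otimes n_{i\ell}$ lies in the image of $M_{0}\otimes_{A_{0}}N_{0}\subseteq(A_{\leq 1}\cdot M_{0})\otimes_{A_{0}}(A_{\leq 1}\cdot N_{0})$, that is, in $P$. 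Hence $\lVert x\rVert_{M\otimes_{A}N,P,\ast}\leq\delta^{2}\gamma^{2}s$, and letting $\delta,\gamma\to 1^{+}$ and $s\to(\lVert\cdot\rVert_{M}\otimes\lVert\cdot\rVert_{N})(x)^{+}$ closes the argument. The main obstacle is organizing the four multiplicative error factors $\delta,\delta,\gamma,\gamma$ so that the final bound degenerates to the precise value of the tensor product seminorm; a minor but necessary subsidiary check is that the tensor product seminorm itself takes values in the closure of $\lVert A^{\times,m}\rVert$, which follows from that closure being stable under multiplication together with the fact that the tensor product seminorm is built as an infimum of max-products of values of $\lVert\cdot\rVert_{M}$ and $\lVert\cdot\rVert_{N}$.
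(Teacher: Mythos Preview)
Your argument is correct and follows essentially the same strategy as the paper. Both proofs ultimately rest on the same computation: factor each tensor factor $x_i$ (resp.\ $y_i$) as a norm-multiplicative unit times an element of $M_{\leq1}=(M_0)_\ast$ (resp.\ $N_{\leq1}=(N_0)_\ast$), then push the latter down into $A_{\leq\gamma}\cdot M_0$ (resp.\ $A_{\leq\gamma}\cdot N_0$). The paper packages this by first invoking Proposition~\ref{Another definition of the gauge seminorm} to rewrite $\lVert\cdot\rVert_M$ and $\lVert\cdot\rVert_N$ in the form $\inf\{\lVert f\rVert : f\in A^{\times,m},\ x\in fA_{\leq1}M_0\}$, then citing Schneider's Lemma~17.2 for the tensor computation, and finally applying Proposition~\ref{Another definition of the gauge seminorm} once more to the tensor product to identify the resulting formula with the gauge. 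You instead carry out the two inequalities directly with explicit error factors $\delta,\gamma$, which has the virtue of being self-contained and not appealing to Schneider.

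One small remark: the ``subsidiary check'' you mention at the end---that the tensor product seminorm takes values in the closure of $\lVert A^{\times,m}\rVert$---is in fact unnecessary for \emph{your} argument, since you establish the inequality $\lVert\cdot\rVert_{M\otimes_A N,P,\ast}\leq\delta^2\gamma^2 s$ directly without ever needing to apply Proposition~\ref{Description of norms 1} or \ref{Another definition of the gauge seminorm} to $M\otimes_A N$. That check is exactly what the paper needs for its final appeal to Proposition~\ref{Another definition of the gauge seminorm}, so you may have imported it by analogy; you can safely drop it.
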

\begin{proof}By Proposition \ref{Description of norms 1} and Lemma \ref{Gauge seminorms and lattices}, the seminorm $\lVert\cdot\rVert_{M}$ (respectively, $\lVert\cdot\rVert_{N}$) is equal to the gauge of $M_{0}$ (respectively, of $N_{0}$). By Proposition \ref{Another definition of the gauge seminorm}, this means that \begin{equation*}\lVert x\rVert_{M}=\inf\{\, \lVert f\rVert \mid f\in A^{\times,m}, x\in fA_{\leq1}M_{0}\,\}\end{equation*}for all $x\in M$ and similarly for $\lVert\cdot\rVert_{N}$. Then the same argument as in the proof of \cite{Schneider}, Lemma 17.2, shows that the tensor product seminorm $\lVert\cdot\rVert_{M}\otimes\lVert\cdot\rVert_{N}$ is given by \begin{equation*}x\mapsto \inf\{\, \lVert f\rVert\mid f\in A^{\times,m}, x\in f((A_{\leq1}\cdot M_{0})\otimes_{A_{0}}(A_{\leq1}\cdot N_{0}))/(\varpi^{\infty}-\tor)\,\}.\end{equation*}Since the images of $\lVert\cdot\rVert_{M}$, $\lVert\cdot\rVert_{N}$ are contained in the closure of $\lVert A^{\times,m}\rVert$ inside $\mathbb{R}_{\geq0}$, the image of $\lVert\cdot\rVert_{M}\otimes\lVert\cdot\rVert_{N}$ is also contained in the closure of $\lVert A^{\times,m}\rVert$, by definition of the tensor product seminorm. We conclude by Proposition \ref{Another definition of the gauge seminorm}.\end{proof}
We note the following two special cases of the above lemma.
\begin{cor}\label{Tensor products and gauges 2}Let $A_{0}$ be a ring with a non-zero-divisor $\varpi$ and let $M_{0}$, $N_{0}$ be $\varpi$-torsion-free $A_{0}$-modules. Equip $A=A_{0}[\varpi^{-1}]$, $M=M_{0}[\varpi^{-1}]$ and $N=N_{0}[\varpi^{-1}]$ with the canonical extensions of the respective $\varpi$-adic seminorms. Then the tensor product seminorm on $M\otimes_{A}N$ is equal to the canonical extension of the $\varpi$-adic seminorm on $M_{0}\otimes_{A_{0}}N_{0}$.\end{cor}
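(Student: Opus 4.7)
The plan is to apply Proposition \ref{Tensor products and gauges} with $A_{0} = A_{\leq 1}$ and the given open submodules $M_{0}\subseteq M$, $N_{0}\subseteq N$, and then identify the resulting gauge with the canonical extension of the $\varpi$-adic seminorm via Example \ref{Discrete seminorms and gauges}.

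The first step is to verify the hypotheses of Proposition \ref{Tensor products and gauges}. Because $\varpi$ is a non-zero-divisor in $A_{0}$, one computes directly that $\lVert\varpi f\rVert = \lVert\varpi\rVert\lVert f\rVert$ for the canonical extension on $A$, so $\varpi \in A^{\times, m}$. Consequently
\begin{equation*}
\lVert A\rVert = \{\lVert\varpi\rVert^{n} \mid n \in \mathbb{Z}\}\cup\{0\}
\end{equation*}
is contained in the closure of $\lVert A^{\times, m}\rVert$, and the analogous containment holds for $\lVert M\rVert_{M}$ and $\lVert N\rVert_{N}$. The $\varpi$-torsion-freeness of $M_{0}$ and $N_{0}$ ensures that $M_{0}\hookrightarrow M$ and $N_{0}\hookrightarrow N$ are genuine inclusions, and the discreteness of $\lVert A\rVert$ gives $A_{\leq r} = A_{0}$ for every $r \in [1, \lVert\varpi\rVert^{-1})$; hence $(M_{0})_{\ast} = \bigcap_{r > 1}A_{\leq r}\cdot M_{0} = M_{0} = M_{\leq 1}$, and similarly $(N_{0})_{\ast} = N_{\leq 1}$.

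Writing $P_{0} := M_{0}\otimes_{A_{0}}N_{0}$ and $P_{0}' := P_{0}/(\varpi^{\infty}-\tor)$, Proposition \ref{Tensor products and gauges} then yields that the tensor product seminorm on $M\otimes_{A}N \cong P_{0}[\varpi^{-1}]$ coincides with the gauge seminorm of $P_{0}'$ in the localization. Since $P_{0}'$ is a $\varpi$-torsion-free $A_{0}$-submodule of $P_{0}[\varpi^{-1}]$, Example \ref{Discrete seminorms and gauges} identifies this gauge with the canonical extension of the $\varpi$-adic seminorm on $P_{0}'$. To finish, one observes that the canonical extensions of the $\varpi$-adic seminorms on $P_{0}$ and on $P_{0}'$ agree on $P_{0}[\varpi^{-1}]$: for any $f \in P_{0}[\varpi^{-1}]$ and $n \in \mathbb{Z}$, $f$ lies in the image of $\varpi^{n}P_{0}$ under the localization map if and only if $f \in \varpi^{n}P_{0}'$, because the image of $P_{0}\to P_{0}[\varpi^{-1}]$ is precisely $P_{0}'$. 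The only real subtlety throughout is this bookkeeping about $P_{0}$, which need not itself be $\varpi$-torsion-free, versus its image $P_{0}'$ in the localization.
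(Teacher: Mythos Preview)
Your proof is correct and follows essentially the same approach as the paper, which simply says to combine Proposition \ref{Tensor products and gauges} with Example \ref{Discrete seminorms and gauges}. You have written out the details the paper elides, including the hypothesis verification and the bookkeeping for the possible $\varpi$-torsion in $M_{0}\otimes_{A_{0}}N_{0}$.
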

\begin{proof}Combine the previous proposition with Example \ref{Discrete seminorms and gauges}.\end{proof}
\begin{cor}\label{Tensor products and gauges 3}Let $(A, \lVert\cdot\rVert)$ be a seminormed ring and suppose that $\lVert A^{\times,m}\rVert$ is dense in $\mathbb{R}_{\geq0}$. Let $\varpi\in A^{\times,m}$ with $\lVert\varpi\rVert<1$ and let $(M, \lVert\cdot\rVert_{M}), (N, \lVert\cdot\rVert_{N})\in \Snm_{A}^{\leq1}$. Then, for any open subring $A_{0}$ of $A_{\leq1}$ with $\varpi\in A_{0}$ and any two open $A_{0}$-submodules $M_{0}\subseteq M$, $N_{0}\subseteq N$ with $(M_{0})_{\ast}=M_{\leq1}$, $(N_{0})_{\ast}=N_{\leq1}$, the tensor product seminorm $\lVert\cdot\rVert_{M}\otimes\lVert\cdot\rVert_{N}$ on $M\otimes_{A}N$ is equal to the gauge of $(M_{0}\otimes_{A_{0}}N_{0})/(\varpi^{\infty}-\tor)$.\end{cor}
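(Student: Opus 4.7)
The plan is to reduce this statement to Proposition \ref{Tensor products and gauges} and then match the two candidate gauges by passing to modules of almost elements. First, since $\lVert A^{\times,m}\rVert$ is dense in $\mathbb{R}_{\geq0}$, its closure in $\mathbb{R}_{\geq0}$ equals $\mathbb{R}_{\geq0}$, so the value-group hypothesis of Proposition \ref{Tensor products and gauges} is automatically satisfied. Applying that proposition, the tensor product seminorm $\lVert\cdot\rVert_{M}\otimes\lVert\cdot\rVert_{N}$ coincides with the gauge of the image $P'\subseteq M\otimes_{A}N$ of $(A_{\leq1}\cdot M_{0})\otimes_{A_{0}}(A_{\leq1}\cdot N_{0})$; since $\varpi$ is a unit in $A$, the $A$-module $M\otimes_{A}N$ is $\varpi$-torsion-free, and so this image is canonically $((A_{\leq1}\cdot M_{0})\otimes_{A_{0}}(A_{\leq1}\cdot N_{0}))/(\varpi^{\infty}-\tor)$.

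The remaining task is to compare the gauge of $P'$ with the gauge of the image $P$ of $M_{0}\otimes_{A_{0}}N_{0}$ in $M\otimes_{A}N$. By Lemma \ref{Gauge seminorms and lattices}, both gauges are equal to the gauges of the associated modules of almost elements $P_{\ast}$ and $P'_{\ast}$, so it suffices to verify that $P_{\ast}=P'_{\ast}$. The inclusion $P\subseteq P'$ immediately yields $P_{\ast}\subseteq P'_{\ast}$. For the reverse inclusion, the key observation is that inside $M\otimes_{A}N$ one has $(am)\otimes(bn)=(ab)(m\otimes n)$ for $a,b\in A_{\leq1}$, $m\in M_{0}$, $n\in N_{0}$, with $ab\in A_{\leq1}$. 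This gives $P'\subseteq A_{\leq1}\cdot P$, and combining with the trivial estimate $A_{\leq r}\cdot A_{\leq1}\subseteq A_{\leq r}$ yields $A_{\leq r}\cdot P'\subseteq A_{\leq r}\cdot P$ for every $r>1$, whence $P'_{\ast}\subseteq P_{\ast}$, as needed.

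I do not expect any real obstacle here, as everything is controlled by the already-established infrastructure (Proposition \ref{Tensor products and gauges} and Lemma \ref{Gauge seminorms and lattices}). The only point requiring any care is the bookkeeping identification of the images of the two $A_{0}$-tensor products in $M\otimes_{A}N$ with the respective quotients modulo $\varpi^{\infty}$-torsion, which is free of charge since $A=A_{0}[\varpi^{-1}]$ makes $M\otimes_{A}N$ automatically $\varpi$-torsion-free.
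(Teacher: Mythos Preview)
Your proposal is correct and follows essentially the same route as the paper: apply Proposition \ref{Tensor products and gauges}, then show that the gauges of $P=(M_{0}\otimes_{A_{0}}N_{0})/(\varpi^{\infty}\text{-}\tor)$ and $P'=((A_{\leq1}M_{0})\otimes_{A_{0}}(A_{\leq1}N_{0}))/(\varpi^{\infty}\text{-}\tor)$ agree by checking $P_{\ast}=P'_{\ast}$ and invoking Lemma \ref{Gauge seminorms and lattices}. The only difference is cosmetic: the paper phrases the key inclusion as $P'\subseteq P_{\ast}$ and cites Corollary \ref{Seminormed rings and rings of almost elements}, whereas your direct observation $P'\subseteq A_{\leq1}\cdot P$ (via $(am)\otimes(bn)=(ab)(m\otimes n)$ in $M\otimes_{A}N$) together with the trivial inclusion $A_{\leq1}\subseteq A_{\leq r}$ for $r>1$ is slightly more elementary and makes that citation unnecessary.
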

\begin{proof}Using Lemma \ref{Seminormed rings and rings of almost elements}, we see that \begin{equation*}((A_{\leq1}\cdot M_{0})\otimes_{A_{0}}(A_{\leq1}\cdot N_{0}))/(\varpi^{\infty}-\tor)\subseteq ((M_{0}\otimes_{A_{0}}N_{0})/(\varpi^{\infty}-\tor))_{\ast}\end{equation*}and thus that \begin{equation*}(((A_{\leq1}\cdot M_{0})\otimes_{A_{0}}(A_{\leq1}\cdot N_{0}))/(\varpi^{\infty}-\tor))_{\ast}=((M_{0}\otimes_{A_{0}}N_{0})/(\varpi^{\infty}-\tor))_{\ast}.\end{equation*}By Lemma \ref{Gauge seminorms and lattices}, this means that the gauges of $((A_{\leq1}\cdot M_{0})\otimes_{A_{0}}(A_{\leq1}\cdot N_{0}))/(\varpi^{\infty}-\tor)$ and of $(M_{0}\otimes_{A_{0}}N_{0})/(\varpi^{\infty}-\tor)$ are equal. We conclude by applying Proposition \ref{Tensor products and gauges}.\end{proof}
\begin{cor}\label{Unit balls and tensor products}Let $(A, \lVert\cdot\rVert)$ be a seminormed ring and suppose that $\lVert A^{\times,m}\rVert$ is dense in $\mathbb{R}_{\geq0}$. Let $\varpi\in A^{\times,m}$ with $\lVert\varpi\rVert<1$ and let $(M, \lVert\cdot\rVert_{M}), (N, \lVert\cdot\rVert_{N})\in \Snm_{A}^{\leq1}$. Then, for any open subring $A_{0}$ of $A_{\leq1}$ with $\varpi\in A_{0}$ and any two open $A_{0}$-submodules $M_{0}\subseteq M$, $N_{0}\subseteq N$ with $(M_{0})_{\ast}=M_{\leq1}$, $(N_{0})_{\ast}=N_{\leq1}$, the closed unit ball of $M\otimes_{A}N$ with respect to the tensor product seminorm $\lVert\cdot\rVert_{M}\otimes\lVert\cdot\rVert_{N}$ is given by \begin{equation*}((M_{0}\otimes_{A_{0}}N_{0})/(\varpi^{\infty}-\tor))_{\ast}.\end{equation*}\end{cor}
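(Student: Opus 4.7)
The plan is to combine the two preceding results, Corollary \ref{Tensor products and gauges 3} and Lemma \ref{Closed unit ball and module of almost elements}, essentially without any new computation. First I would recall from Corollary \ref{Tensor products and gauges 3} that, under the assumptions of the present corollary (namely $\lVert A^{\times,m}\rVert$ dense in $\mathbb{R}_{\geq 0}$, $\varpi \in A^{\times,m}$ with $\lVert \varpi \rVert < 1$, and $M_0$, $N_0$ open $A_0$-submodules with $(M_0)_\ast = M_{\leq 1}$, $(N_0)_\ast = N_{\leq 1}$), the tensor product seminorm $\lVert \cdot \rVert_M \otimes \lVert \cdot \rVert_N$ on $M \otimes_A N$ is literally equal to the gauge seminorm of the $A_0$-submodule
\begin{equation*}
L_0 := (M_0 \otimes_{A_0} N_0)/(\varpi^\infty\text{-}\tor)
\end{equation*}
of $(M \otimes_A N)$, where the quotient by $\varpi^\infty$-torsion is formed to ensure that $L_0$ injects into $M \otimes_A N = L_0[\varpi^{-1}]$.

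Next I would apply Lemma \ref{Closed unit ball and module of almost elements}, which identifies the closed unit ball of any $A$-module equipped with a gauge seminorm $\lVert \cdot \rVert_{P, P_0, \ast}$ as the module of almost elements $(P_0)_\ast$. Applied to $P = M \otimes_A N$ and $P_0 = L_0$, this immediately yields that the closed unit ball of $(M \otimes_A N, \lVert \cdot \rVert_M \otimes \lVert \cdot \rVert_N)$ equals $(L_0)_\ast = ((M_0 \otimes_{A_0} N_0)/(\varpi^\infty\text{-}\tor))_\ast$, which is the desired description.

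Since both ingredients are already established, there is no real obstacle here: the argument is simply a two-line composition. The only point worth checking is that the hypothesis $M = \bigcup_{r > 0} A_{\leq r} \cdot L_0$ of Lemma \ref{Closed unit ball and module of almost elements} is satisfied, but this follows automatically from $M \otimes_A N = L_0[\varpi^{-1}]$ together with the fact that $\varpi \in A^{\times,m}$ has $\lVert \varpi \rVert < 1$, so the powers $\varpi^{-n}$ exhaust all radii of the form $\lVert \varpi \rVert^{-n}$ and hence (by $\lVert A^{\times,m} \rVert$ being dense) all positive radii in the relevant cofinal sense.
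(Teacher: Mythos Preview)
Your proposal is correct and matches the paper's own proof, which simply cites Corollary~\ref{Tensor products and gauges 3} and Lemma~\ref{Closed unit ball and module of almost elements}. The extra verification you include about the hypothesis $M\otimes_A N = \bigcup_{r>0} A_{\leq r}\cdot L_0$ is a harmless sanity check that the paper leaves implicit.
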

\begin{proof}Follows from Corollary \ref{Tensor products and gauges 3} and Lemma \ref{Closed unit ball and module of almost elements}.\end{proof}
\begin{cor}\label{Unit balls and tensor products 2}Let $(A, \lVert\cdot\rVert)$ be a Banach ring and suppose that $\lVert A^{\times,m}\rVert$ is dense in $\mathbb{R}_{\geq0}$. Let $\varpi\in A^{\times,m}$ with $\lVert\varpi\rVert<1$ and let $(M, \lVert\cdot\rVert_{M}), (N, \lVert\cdot\rVert_{N})\in \Ban_{A}^{\leq1}$. Then, for any open subring $A_{0}$ of $A_{\leq1}$ with $\varpi\in A_{0}$ and any two open $A_{0}$-submodules $M_{0}\subseteq M$, $N_{0}\subseteq N$ with $(M_{0})_{\ast}=M_{\leq1}$, $(N_{0})_{\ast}=N_{\leq1}$, the closed unit ball of $M\widehat{\otimes}_{A}N$ for the tensor product norm is given by \begin{equation*}((M_{0}\widehat{\otimes}_{A_{0}}N_{0})/(\varpi^{\infty}-\tor))_{\ast},\end{equation*}where $M_{0}\widehat{\otimes}_{A_{0}}N_{0}$ is the $\varpi$-adic completion of $M_{0}\otimes_{A_{0}}N_{0}$. \end{cor}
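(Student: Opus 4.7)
The plan is to bootstrap from the algebraic version already proved in Corollary \ref{Unit balls and tensor products}, then to interpret the Banach completion as a $\varpi$-adic completion at the level of closed unit balls, and finally to match the result with the module of almost elements on the right-hand side via the equivalence of categories of Theorem \ref{Banach modules and lattices}.

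First I would apply Corollary \ref{Unit balls and tensor products} to obtain that the closed unit ball of the submetric seminormed $A$-module $(M\otimes_{A}N, \lVert\cdot\rVert_{M}\otimes\lVert\cdot\rVert_{N})$ equals $Q_{0}:=((M_{0}\otimes_{A_{0}}N_{0})/(\varpi^{\infty}-\tor))_{\ast}$. Because $\varpi$ is seminorm-multiplicative with $\lVert\varpi\rVert<1$ and the seminorm is nonarchimedean, every element $\widehat{x}\in M\widehat\otimes_{A}N$ with $\lVert\widehat{x}\rVert\leq1$ can be approximated by an element $y\in M\otimes_{A}N$ with $\lVert\widehat{x}-y\rVert\leq\lVert\varpi\rVert^{k}$ for arbitrarily large $k$, which forces $y\in Q_{0}$ by the ultrametric inequality. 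Iterating and taking successive differences yields a $\varpi$-adically Cauchy sequence in $Q_{0}$ converging to $\widehat{x}$. Hence $(M\widehat\otimes_{A}N)_{\leq1}$ is the $\varpi$-adic completion $\widehat{Q_{0}}$ of $Q_{0}$ regarded as an $A_{0}$-submodule of $M\widehat\otimes_{A}N$.

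Next I would identify $\widehat{Q_{0}}$ with $R_{0}:=((M_{0}\widehat\otimes_{A_{0}}N_{0})/(\varpi^{\infty}-\tor))_{\ast}$ by applying the quasi-inverse direction of Theorem \ref{Banach modules and lattices} to $R_{0}$. This amounts to verifying that $R_{0}$ is $\varpi$-torsion-free, $\varpi$-adically complete, satisfies $(R_{0})_{\ast}=R_{0}$, and that its localization $R_{0}[\varpi^{-1}]$ endowed with the gauge seminorm of $R_{0}$ coincides isometrically with $M\widehat\otimes_{A}N$. Torsion-freeness and $(R_{0})_{\ast}=R_{0}$ follow from the construction and from idempotence of the almost-elements operation (Lemma \ref{Gauge seminorms and lattices} together with Lemma \ref{Closed unit ball and module of almost elements}). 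The equality of seminorms I would check by restriction to the dense $A$-submodule $M\otimes_{A}N$: on it, both the gauge seminorm of $R_{0}$ and the complete tensor product norm restrict to the gauge seminorm of $R_{0}\cap(M\otimes_{A}N)$, which by Lemma \ref{Gauge seminorms and lattices} and step one equals the gauge of $Q_{0}$. Continuity then extends the equality to all of $M\widehat\otimes_{A}N$, and Corollary \ref{Description of norms 2} recovers the closed unit ball from the norm.

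The step I expect to be the main obstacle is the $\varpi$-adic completeness of $R_{0}$. I would address this using Lemma \ref{Torsion-free complete almost modules}: it suffices to show that the almost module $R_{0}^{a}=((M_{0}\widehat\otimes_{A_{0}}N_{0})/(\varpi^{\infty}-\tor))^{a}$ is $\varpi$-adically complete. By Lemma \ref{Completeness does not depend on choices}, this reduces to checking that the completion map on $(M_{0}\widehat\otimes_{A_{0}}N_{0})/(\varpi^{\infty}-\tor)$ is an almost isomorphism. The $\varpi$-adic completeness of $M_{0}\widehat\otimes_{A_{0}}N_{0}$ itself is built into its construction as a completed tensor product, and the discrepancy introduced by quotienting out the (possibly unbounded) $\varpi^{\infty}$-torsion submodule is absorbed at the level of almost modules thanks to the compatible system of $p$-power roots of $\varpi$ in $A_{0}$, which makes the $\varpi$-divisible part of that torsion submodule almost zero.
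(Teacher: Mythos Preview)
Your first two steps are essentially the paper's argument: identify $(M\otimes_A N)_{\leq 1}$ with $Q_0=((M_0\otimes_{A_0}N_0)/(\varpi^{\infty}-\tor))_*$ via Corollary \ref{Unit balls and tensor products}, then recognize $(M\widehat\otimes_A N)_{\leq 1}$ as the $\varpi$-adic completion $\widehat{Q_0}$ (the paper does this through Lemma \ref{Bounded open submodules} rather than an explicit Cauchy-sequence argument, but the content is the same).

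The gap is in how you finish. To identify $\widehat{Q_0}$ with $R_0$ you invoke Lemma \ref{Torsion-free complete almost modules}, Lemma \ref{Completeness does not depend on choices}, Corollary \ref{Description of norms 2}, and an explicit compatible system of $p$-power roots of $\varpi$ in $A_0$. None of these are available under the stated hypotheses: the corollary only assumes that $\lVert A^{\times,m}\rVert$ is dense in $\mathbb{R}_{\geq 0}$, so there is no ideal $(\varpi^{1/p^\infty})$ and hence no Gabber--Ramero almost setup to which those lemmas apply. (Proposition \ref{Description of norms 1} would be the correct substitute for Corollary \ref{Description of norms 2}, but the almost-completeness lemmas have no analogue here.) Even granting $p$-power roots, your step 5 does not actually show that $(M_0\widehat\otimes_{A_0}N_0)/(\varpi^{\infty}-\tor)$ is almost $\varpi$-adically complete: the $\varpi^\infty$-torsion of a $\varpi$-adically complete module can be unbounded, quotienting by it can genuinely destroy completeness, and the phrase ``the $\varpi$-divisible part \dots\ is almost zero'' does not address this.

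The paper's route is shorter and stays entirely within Definition \ref{Module of almost elements}. It invokes directly the commutation $\widehat{(P_0)_*}=(\widehat{P_0})_*$ for $\varpi$-torsion-free $P_0$, applied to $P_0=(M_0\otimes_{A_0}N_0)/(\varpi^{\infty}-\tor)$. This uses only the bare intersection $(P_0)_*=\bigcap_{r>1}A_{\leq r}\cdot P_0$ and the density of $\lVert A^{\times,m}\rVert$; no idempotent ideal or almost setup is needed.
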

\begin{proof}The closed unit ball of the completion $M\widehat{\otimes}_{A}N$ is the completion of the closed unit ball $(M\otimes_{A}N)_{\leq1}$ with respect to the subspace seminorm induced from the tensor product seminorm $\lVert\cdot\rVert_{M}\otimes\lVert\cdot\rVert_{N}$. By Lemma \ref{Bounded open submodules}, this subspace norm is bounded-equivalent to the $\varpi$-adic seminorm on $(M\otimes_{A}N)_{\leq1}$. It follows that $(M\widehat{\otimes}_{A}N)_{\leq1}$ is equal to the $\varpi$-adic completion of $(M\otimes_{A}N)_{\leq1}$. Hence the assertion follows from Corollary \ref{Tensor products and gauges 3} and the fact that the $\varpi$-adic completion of the module of almost elements of a $\varpi$-torsion-free $A_{0}$-module is equal to the module of almost elements of the $\varpi$-adic completion.\end{proof}

\section{Proof of Theorem \ref{Relation to condensed mathematics}}\label{sec:Proof of the theorem on condensed math}

We now relate our previous results to condensed almost mathematics in the sense of Mann \cite{Mann22}. Recall that, for any almost setup $(V, \mathfrak{m})$ and any strong limit cardinal $\kappa$, Mann defines the category of static ($\kappa$-condensed) almost $(V, \mathfrak{m})$-modules $\Cond(V, \mathfrak{m})_{\kappa}$ as the Serre quotient of the category $\Cond(V)_{\kappa}$ of $\kappa$-condensed $V$-modules by the thick subcategory consisting of almost zero $\kappa$-condensed $V$-modules (\cite{Mann22}, Definition 2.2.5(c)). In this definition, $V$ is viewed as a condensed ring and $\mathfrak{m}$ as a condensed $V$-module by endowing both with the discrete topology. In the following, we always take $\kappa=\aleph_{0}$ and set \begin{equation*}\Cond(V, \mathfrak{m}):=\Cond(V, \mathfrak{m})_{\aleph_{0}},\end{equation*}the category of static (light) condensed almost $(V, \mathfrak{m})$-modules. Similarly, all profinite sets, condensed sets and condensed modules mentioned in this section are going to be light profinite sets, light condensed sets and light condensed modules, so we omit the adjective "light" from our exposition. Also, no derived categories will appear in this paper, so we also omit the adjective "static" when speaking about condensed almost $(V, \mathfrak{m})$-modules. We refer to \cite{Condensed}, \cite{ScholzeAnalytic} and \cite{Kedlaya24} for basic terminology on condensed mathematics.     
\begin{lemma}\label{Discrete almost modules}Let $(V, \mathfrak{m})$ be an almost setup. If $N_{0}$, $N_1$ are two $V$-modules such that $N_{0}^{a}\cong N_{1}^{a}$, then the condensed $V$-modules obtained by equipping $N_{0}$ and $N_{1}$ with the discrete topology are almost isomorphic.\end{lemma}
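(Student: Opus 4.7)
The plan is to exhibit the discrete-condensation functor
\[
(-)^{\mathrm{disc}}:\ \Mod_{V}\longrightarrow \Cond(V),\quad N\mapsto \underline{N},
\]
as a functor sending almost zero $V$-modules to almost zero condensed $V$-modules; combined with the universal property of the Serre quotient defining $\Cond(V, \mathfrak{m})$, this will produce a factorization $\bar{\iota}:\Mod_{V^{a}}\to \Cond(V, \mathfrak{m})$ of the composite $\Mod_{V}\xrightarrow{(-)^{\mathrm{disc}}}\Cond(V)\to \Cond(V, \mathfrak{m})$ through $\Mod_{V^{a}}$, which sends $N^{a}$ to the class of $\underline{N}$. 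Applying $\bar{\iota}$ to the hypothesized isomorphism $N_{0}^{a}\cong N_{1}^{a}$ in $\Mod_{V^{a}}$ will then yield the asserted isomorphism of the condensed almost $V$-modules corresponding to $\underline{N_{0}}$ and $\underline{N_{1}}$.

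The key step will be verifying that $(-)^{\mathrm{disc}}$ kills almost zero objects. By \cite{Mann22}, Definition 2.2.5, a condensed $V$-module $M$ is almost zero precisely when $\underline{\mathfrak{m}}\cdot M=0$, where $\underline{\mathfrak{m}}$ is viewed as the discrete condensification of the ideal $\mathfrak{m}\subseteq V$. If $N$ is an almost zero $V$-module, i.e.\ $\mathfrak{m}N=0$ in the classical sense, then for any profinite set $S$ one identifies $\underline{N}(S)$ with the locally constant functions $S\to N$ and $\underline{\mathfrak{m}}(S)$ with the locally constant functions $S\to \mathfrak{m}$; the module action of $\underline{\mathfrak{m}}(S)$ on $\underline{N}(S)$ is computed pointwise, so every product $(fg)(s)=f(s)g(s)$ lies in $\mathfrak{m}N=0$. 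Hence $\underline{\mathfrak{m}}\cdot \underline{N}=0$ and $\underline{N}$ is almost zero in $\Cond(V)$. The universal property of the Serre quotient $\Mod_{V}\to \Mod_{V^{a}}$ then provides the desired functor $\bar{\iota}$.

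The only potential obstacle is a purely formal one: making sure that "annihilated by $\underline{\mathfrak{m}}$ in the condensed sense" specializes, for discrete condensed modules, to the classical notion of being annihilated by $\mathfrak{m}$. This compatibility follows because the discrete-condensation functor is fully faithful onto the subcategory of discrete condensed modules and preserves tensor products and action maps among such objects, so all module-theoretic operations for discrete condensed $V$-modules reduce to the underlying classical operations on $V$-modules. Once this bookkeeping is in place the rest of the argument is automatic.
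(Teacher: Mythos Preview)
Your approach is correct and takes a more categorical route than the paper's direct computation. The paper chooses an explicit almost isomorphism $\varphi\colon N_{0}\to (N_{1}^{a})_{\ast}$ (coming from the unit of the adjunction composed with the given isomorphism) and then verifies, profinite set by profinite set, that the induced map on sections is again an almost isomorphism, using exactness of products in $\Mod_{V}$ and the fact that the localization functor commutes with limits. Your argument instead packages everything into a single factorization $\bar{\iota}\colon \Mod_{V^{a}}\to \Cond(V,\mathfrak{m})$ via the universal property of the Serre quotient, which is cleaner and immediately reusable.

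There is, however, one point you pass over that should be made explicit. The universal property of a Serre quotient $\mathcal{A}\to\mathcal{A}/\mathcal{C}$ applies to \emph{exact} functors out of $\mathcal{A}$ that annihilate $\mathcal{C}$; merely killing $\mathcal{C}$ is not enough to get a well-defined functor on the quotient. You verify carefully that discrete condensation sends almost zero $V$-modules to almost zero condensed $V$-modules, but you never state that the discrete-condensation functor $\Mod_{V}\to\Cond(V)$ is exact. It is---for a profinite $S=\varprojlim_{i}S_{i}$ and discrete $N$ one has $\underline{N}(S)=\varinjlim_{i}N^{S_{i}}$, a filtered colimit of finite products, hence exact in $N$---and once you add this sentence your invocation of the universal property is fully justified. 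Your closing paragraph flags a different piece of bookkeeping (compatibility of the condensed and classical notions of ``almost zero'' on discrete objects), not this one.
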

\begin{proof}We have to show that for every profinite set $S$ the $V$-modules of continuous functions $C(S, N_{0})$ and $C(S, N_1)$, where $N_{0}$, $N_1$ are endowed with the discrete topology, are almost isomorphic. Since $N_{0}$, $N_{1}$ are endowed with the discrete topology, the two $V$-modules of continuous functions are the same as the $V$-modules of all functions $N_{0}^{S}$, $N_{1}^{S}$. Let $\varphi: N_{0}\to (N_{1}^{a})_{\ast}$ be an almost isomorphism, i.e., the kernel and cokernel of $\varphi$ are annihilated by $\mathfrak{m}$. Since arbitrary direct products in the category of $V$-modules are exact, the kernel and cokernel of the induced map $N_{0}^{S}\to (N_{1}^{a})_{\ast}^{S}$ are also annihilated by $\mathfrak{m}$. Since the localization functor from $V$-modules to almost $(V, \mathfrak{m})$-modules commutes with limits (\cite{Gabber-Ramero}, Proposition 2.2.23), this means that $\varphi$ induces an isomorphism of almost modules between $(N_{0}^{S})^{a}$ and $(N_{1}^{S})^{a}$.\end{proof}
\begin{lemma}\label{Adically complete almost modules}Let $(V, \mathfrak{m})$ be an almost setup and let $\varpi\in\mathfrak{m}\setminus\{0\}$. If $N_{0}$, $N_{1}$ are two $\varpi$-torsion-free, $\varpi$-adically complete $V$-modules with $N_{0}^{a}\cong N_{1}^{a}$, then the condensed $V$-modules $\underline{N_{0}}$ and $\underline{N_{1}}$ obtained by endowing $N_{0}$ and $N_{1}$ with the $\varpi$-adic topology are almost isomorphic.\end{lemma}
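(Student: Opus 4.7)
The plan is to reduce the statement to the following claim: for any injective $V$-module homomorphism $\psi: P\hookrightarrow Q$ of $\varpi$-torsion-free, $\varpi$-adically complete $V$-modules whose cokernel $Q/P$ is annihilated by $\mathfrak{m}$, the induced map $\underline{\psi}: \underline{P}\to \underline{Q}$ of condensed $V$-modules (where $P$ and $Q$ are endowed with their respective $\varpi$-adic topologies) has almost zero kernel and cokernel in $\Cond(V)$. To perform the reduction, set $M:=(N_{0}^{a})_{\ast}=(N_{1}^{a})_{\ast}$; by Lemma \ref{Torsion-free almost modules and almost elements} $M$ is $\varpi$-torsion-free, and by Lemma \ref{Torsion-free complete almost modules} it is $\varpi$-adically complete. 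The adjunction morphisms $N_{i}\to M$ have $\mathfrak{m}$-torsion kernel, but since each $N_{i}$ is $\varpi$-torsion-free and $\varpi\in \mathfrak{m}\setminus\{0\}$, this kernel vanishes; their cokernels are almost zero by construction. Granting the claim, the two resulting almost isomorphisms $\underline{N_{i}}\to \underline{M}$ in $\Cond(V)$ become isomorphisms after passage to the Serre quotient $\Cond(V, \mathfrak{m})$, yielding $\underline{N_{0}}\cong \underline{M}\cong \underline{N_{1}}$ in $\Cond(V, \mathfrak{m})$ as desired.

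To prove the claim, I first observe that $\psi$ is continuous for the $\varpi$-adic topologies since $\psi(\varpi^{n}P)\subseteq \varpi^{n}Q$, so $\underline{\psi}$ is well-defined and, being induced by an injection, is a monomorphism. It therefore suffices to show that the cokernel of $\underline{\psi}$ in $\Cond(V)$ is almost zero, which reduces to the presheaf statement: for every profinite set $S$, the quotient $C(S, Q)/\psi(C(S, P))$ is annihilated by $\mathfrak{m}$ (this annihilation is preserved by sheafification since sheafification commutes with the natural transformation given by multiplication by $\epsilon$). Fix a profinite set $S$, a continuous function $g\in C(S, Q)$, and $\epsilon\in \mathfrak{m}$. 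Since $\epsilon\cdot(Q/P)=0$ we have $\epsilon Q\subseteq P$, so $\epsilon g$ is set-theoretically a function $S\to P$. For each $s_{0}\in S$ and $n\geq 1$, continuity of $g$ yields an open neighborhood $U$ of $s_{0}$ with $g(s)-g(s_{0})\in \varpi^{n}Q$ for all $s\in U$, and then \begin{equation*}\epsilon g(s)-\epsilon g(s_{0})=\epsilon(g(s)-g(s_{0}))\in \epsilon\varpi^{n}Q=\varpi^{n}(\epsilon Q)\subseteq \varpi^{n}P\end{equation*}for all $s\in U$, showing that $\epsilon g: S\to P$ is continuous in the $\varpi$-adic topology on $P$ and thus lies in $\underline{P}(S)$.

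The main obstacle I anticipate is the familiar discrepancy between the subspace topology inherited by $P$ from $Q$ and the intrinsic $\varpi$-adic topology on $P$: these two topologies need not coincide when the cokernel $Q/P$ has unbounded $\varpi$-power torsion, so one cannot simply view continuous functions into $Q$ landing in $P$ as continuous functions into $P$ with its $\varpi$-adic topology. The computation above bypasses this obstacle in one step, because multiplication by any $\epsilon\in\mathfrak{m}$ both forces the image of $g$ into $P$ (via $\epsilon(Q/P)=0$) and, via the identity $\epsilon\varpi^{n}Q\subseteq \varpi^{n}P$, simultaneously upgrades subspace-topological control into $\varpi$-adic control. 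This is the only point at which the almost-mathematical structure $(V, \mathfrak{m})$ intervenes in an otherwise purely topological argument.
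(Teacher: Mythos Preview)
Your proof is correct and takes a genuinely different route from the paper's. Both arguments begin the same way, reducing to the study of the unit maps $N_{i}\hookrightarrow M:=(N_{i}^{a})_{\ast}$, but then diverge. The paper proceeds by reduction modulo $\varpi^{n}$: it invokes Lemma~\ref{Discrete almost modules} (the discrete case) to see that $C(S,N_{0}/\varpi^{n})^{a}\to C(S,M/\varpi^{n})^{a}$ is an isomorphism for each $n$, and then passes to the inverse limit using $\varpi$-adic completeness and the fact that localization to almost modules commutes with limits. Your argument is more direct: you work at finite level only implicitly, observing that for any $\epsilon\in\mathfrak{m}$ the single identity $\epsilon\varpi^{n}Q\subseteq\varpi^{n}P$ simultaneously forces $\epsilon g$ to land in $P$ and upgrades continuity for the subspace topology to continuity for the intrinsic $\varpi$-adic topology on $P$. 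This avoids the separate discrete lemma and the inverse-limit bookkeeping entirely, and in fact gives the slightly sharper conclusion that $\underline{\psi}$ has zero (not merely almost zero) kernel. The paper's approach, on the other hand, is more modular and makes explicit that the result is a formal consequence of the discrete case plus completeness.
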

\begin{proof}Let $\varphi: N_{0}\to (N_{1}^{a})_{\ast}$ be a map of usual $V$-modules which is an almost isomorphism. We prove that, for every profinite set $S$, the induced map \begin{equation*}\varphi_{\ast}: C(S, N_{0})\to C(S, (N_{1}^{a})_{\ast}), f\mapsto \varphi\circ f,\end{equation*}between modules of continuous functions, where $N_{0}$, $(N_{1}^{a})_{\ast}$ are endowed with their $\varpi$-adic topologies, is an almost isomorphism. Consider the induced maps \begin{equation*}\varphi_{n}: N_{0}/\varpi^{n}N_{0}\to (N_{1}^{a})_{\ast}/\varpi^{n}(N_{1}^{a})_{\ast}\end{equation*}for all $n\geq1$. Since the localization functor is symmetric monoidal and exact, the corresponding almost maps $\varphi_{n}^{a}$ are obtained from $\varphi^{a}$ by tensoring with \begin{equation*}V^{a}/\varpi^{n}V^{a}=(V/\varpi^{n}V)^{a}.\end{equation*}By Lemma \ref{Discrete almost modules}, the corresponding almost maps \begin{equation*}\varphi_{n\ast}^{a}: C(S, N_{0}/\varpi^{n}N_{0})^{a}\to C(S, (N_{1}^{a})_{\ast}/\varpi^{n}(N_{1}^{a})_{\ast})^{a}\end{equation*}are isomorphisms for all $n\geq1$. Taking inverse limits in the category of almost $(V, \mathfrak{m})$-modules, we obtain an isomorphism of almost modules\begin{equation*}\varprojlim_{n}C(S, N_{0}/\varpi^{n}N_{0})^{a}\cong \varprojlim_{n}C(S, (N_{1}^{a})_{\ast}/\varpi^{n}(N_{1}^{a})_{\ast})^{a}.\end{equation*}But by \cite{Gabber-Ramero}, Proposition 2.2.23, the localization functor $N\mapsto N^{a}$ from the category of $V$-modules to the category of almost $(V, \mathfrak{m})$-modules is a right adjoint and thus preserves all limits. Using the $\varpi$-adic completeness of $N_{0}$, we conclude that the map \begin{equation*}C(S, N_{0})=\varprojlim_{n}C(S, N_{0}/\varpi^{n}N_{0})\to C(S, \widehat{(N_{1}^{a})_{\ast}})=\varprojlim_{n}C(S, (N_{1}^{a})_{\ast}/\varpi^{n}(N_{1}^{a})_{\ast})\end{equation*}is an almost isomorphism. But by Lemma \ref{Torsion-free complete almost modules}, $(N_{1}^{a})_{\ast}$ is $\varpi$-adically complete. It follows that $\varphi_{\ast}$ is an almost isomorphism, as claimed. On the other hand, the same argument shows that the map $C(S, N_1)\to C(S, (N_{1}^{a})_{\ast})$ induced by the canonical almost isomorphism $N_{1}\to (N_{1}^{a})_{\ast}$ is an almost isomorphism. In this way we obtain isomorphisms of almost modules \begin{equation*}C(S, N_{0})^{a}\to C(S, N_{1})^{a}\end{equation*}which are canonical and thus compatible with restriction along any continuous map of profinite sets $T\to S$.\end{proof}
Lemma \ref{Adically complete almost modules} allows us to define an embedding of the category of $\varpi$-torsion-free, $\varpi$-adically complete almost modules into the category of condensed almost modules. For an almost setup $(V, \mathfrak{m})$ and an element $\varpi\in \mathfrak{m}\setminus \{0\}$, we denote by $\Mod_{V}^{\wedge,\varpi-\tf}$ the category of $\varpi$-torsion-free, $\varpi$-adically complete $V$-modules and by $\Mod_{V^{a}}^{\wedge,\varpi-\tf}$ the category of $\varpi$-torsion-free, $\varpi$-adically complete almost $(V, \mathfrak{m})$-modules.   
\begin{prop}\label{Adic embedding}Let $(V, \mathfrak{m})$ be an almost setup. There exists a categorical embedding \begin{equation*}\Mod_{V^{a}}^{\wedge,\varpi-\tf}\hookrightarrow \Cond(V, \mathfrak{m}), N\mapsto \underline{N},\end{equation*}of the category of $\varpi$-torsion-free, $\varpi$-adically complete almost $(V, \mathfrak{m})$-modules into the category of condensed almost $(V, \mathfrak{m})$-modules which makes the following diagram of functors commute: \begin{center}\begin{tikzcd}\Cond(V) \arrow{r} & \Cond(V, \mathfrak{m}) \\ \Mod_{V}^{\wedge,\varpi-\tf}\arrow{r} \arrow[hook]{u} & \Mod_{V^{a}}^{\wedge,\varpi-\tf}, \arrow[hook]{u}\end{tikzcd}\end{center}where the horizontal arrows are the localization functors from (condensed) modules to (condensed) almost modules and where the left vertical arrow takes a $\varpi$-adically complete $V$-module $N_{0}$ to the condensed $V$-module $\underline{N_{0}}$ associated with $N_{0}$ when the latter is endowed with the $\varpi$-adic topology.\end{prop}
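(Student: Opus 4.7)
The plan is to define the functor on objects by $N \mapsto \underline{N}$, where $\underline{N}$ is the image in $\Cond(V, \mathfrak{m})$ of the condensed $V$-module associated with $N_{\ast} = \Hom_{V^a}(V^a, N)$ endowed with its $\varpi$-adic topology; by Lemma \ref{Torsion-free complete almost modules}, $N_{\ast}$ is $\varpi$-torsion-free and $\varpi$-adically complete, as needed. A morphism $\varphi \colon N \to N'$ in $\Mod_{V^a}^{\wedge,\varpi-\tf}$ induces, by functoriality of $(-)_{\ast}$, a $V$-module homomorphism $\varphi_{\ast} \colon N_{\ast} \to N'_{\ast}$, which is automatically $\varpi$-adically continuous because $\varphi_{\ast}(\varpi^n N_{\ast}) \subseteq \varpi^n N'_{\ast}$ for every $n$, and therefore produces a morphism in $\Cond(V)$ that we push forward to $\Cond(V, \mathfrak{m})$. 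Commutativity of the square is then immediate from Lemma \ref{Adically complete almost modules}: for $N_0 \in \Mod_V^{\wedge,\varpi-\tf}$, the unit $N_0 \to ((N_0)^a)_{\ast}$ of the adjunction $(-)^a \dashv (-)_{\ast}$ is an almost isomorphism of $\varpi$-torsion-free, $\varpi$-adically complete $V$-modules, so the lemma yields $\underline{N_0} \cong \underline{((N_0)^a)_{\ast}}$ in $\Cond(V, \mathfrak{m})$.

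\textbf{Reduction for fully faithfulness.} To establish that the functor is fully faithful, I would use the adjunction $(-)^a \dashv (-)_{\ast}$ and the counit isomorphism $(N_{\ast})^a \cong N$ to identify $\Hom_{V^a}(N, N') = \Hom_V(N_{\ast}, N'_{\ast})$; since every $V$-module homomorphism between modules endowed with $\varpi$-adic topologies is automatically continuous, this further equals $\Hom_{\Cond(V)}(\underline{N_{\ast}}, \underline{N'_{\ast}})$. The task then reduces to showing that the localisation $\Cond(V) \to \Cond(V, \mathfrak{m})$ induces a bijection on $\Hom$-sets between objects of the form $\underline{N_{\ast}}$ and $\underline{N'_{\ast}}$. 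By the Gabriel description of morphisms in a Serre quotient, this splits into two claims: (i) $\underline{N'_{\ast}}$ contains no nonzero almost-zero subobject in $\Cond(V)$, which follows because every section of such a subobject would be annihilated by $\mathfrak{m}$, in particular by the non-zero-divisor $\varpi$, contradicting the $\varpi$-torsion-freeness of $N'_{\ast}$; and (ii) every $V$-linear condensed map $\varphi_0 \colon X_0 \to \underline{N'_{\ast}}$ from a subobject $X_0 \subseteq \underline{N_{\ast}}$ with $\underline{N_{\ast}}/X_0$ almost zero extends uniquely to $\underline{N_{\ast}}$.

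\textbf{Main obstacle.} Claim (ii) is the technical heart of the argument. The almost-vanishing of $\underline{N_{\ast}}/X_0$ forces $\mathfrak{m} \cdot s \subseteq X_0(S)$ for every section $s$ over every profinite $S$, and the assignment $(m_1, m_2) \mapsto m_1 \varphi_0(m_2 s)$ is $V$-balanced, so it descends to a map $\widetilde{\mathfrak{m}} \otimes_V \underline{N_{\ast}} \to \underline{N'_{\ast}}$. The next step is to invoke the flatness of $\widetilde{\mathfrak{m}}$ together with the $\varpi$-torsion-freeness of $N'_{\ast}$, which, exactly as in the proof of Lemma \ref{Module of almost elements and functor of almost elements}, upgrades to $\varpi^{1/p^n}$-torsion-freeness for each $\varpi^{1/p^n} \in \mathfrak{m}$, in order to divide uniquely by the action of $\mathfrak{m}$ and obtain the desired extension $\underline{N_{\ast}} \to \underline{N'_{\ast}}$. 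The delicate part is verifying that the resulting extension is genuinely natural in the profinite test set $S$ and compatible with the inverse limit defining the $\varpi$-adic topology on $N'_{\ast}$, rather than yielding only well-defined $V$-module homomorphisms section-by-section.
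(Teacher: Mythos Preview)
Your construction is correct and agrees with the paper's: both define $\underline{N}$ as the condensed almost module attached to a $\varpi$-torsion-free, $\varpi$-adically complete representative (you use $N_\ast$ specifically), and both deduce commutativity of the square from Lemma~\ref{Adically complete almost modules}.

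For full faithfulness the two arguments diverge. The paper's proof is extremely terse: for faithfulness it takes the equalizer of two lifts $\varphi_1,\varphi_2\colon M_0\to N_0$, observes that its image in $\Cond(V,\mathfrak{m})$ is an isomorphism, hence the map $\eq(\varphi_1,\varphi_2)\hookrightarrow M_0$ is an almost isomorphism of condensed $V$-modules, and then \emph{evaluates at the one-point profinite set} to conclude $\varphi_1^a=\varphi_2^a$ in $\Mod_{V^a}$. For fullness the paper simply asserts that the relevant localization functors are full. Your route through the Gabriel description of Hom in a Serre quotient is more explicit; your claim~(i) is correct and already yields faithfulness.

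The genuine gap is in your claim~(ii). Your sketch --- produce a balanced map $\widetilde{\mathfrak m}\otimes_V\underline{N_\ast}\to\underline{N'_\ast}$ and then ``divide by $\mathfrak m$'' using torsion-freeness --- is morally right but, as you acknowledge, you have not established naturality in $S$, and your appeal to elements $\varpi^{1/p^n}$ is out of place since the proposition is stated for an arbitrary almost setup $(V,\mathfrak m)$, not only for $\mathfrak m=(\varpi^{1/p^\infty})$. There is a much cleaner way to close the gap which avoids the Gabriel calculus entirely: apply the condensed adjunction $(-)^a\dashv(-)_\ast$ of \cite{Mann22}, Lemma~2.2.7(ii), to obtain
\[
\Hom_{\Cond(V,\mathfrak m)}\bigl(\underline{N_\ast}^a,\underline{N'_\ast}^a\bigr)
=\Hom_{\Cond(V)}\bigl(\underline{N_\ast},\,(\underline{N'_\ast}^a)_\ast\bigr),
\]
so everything reduces to showing $(\underline{N'_\ast}^a)_\ast=\underline{N'_\ast}$, i.e.\ $\underline{\Hom}_{\Cond(V)}(\widetilde{\mathfrak m},\underline{N'_\ast})=\underline{N'_\ast}$. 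Since $\widetilde{\mathfrak m}$ carries the discrete topology and $N'_\ast$ with its $\varpi$-adic topology is first-countable Hausdorff (hence compactly generated), \cite{Condensed}, Proposition~4.2, identifies this internal Hom with $\underline{\Hom_V(\widetilde{\mathfrak m},N'_\ast)}$; but $\Hom_V(\widetilde{\mathfrak m},N'_\ast)=(N'_\ast)_\ast=N'_\ast$ because $(-)_\ast$ is idempotent. This is exactly the computation recorded later in the paper as Lemma~\ref{Adic embedding 2}, and once you have it, your chain $\Hom_{V^a}(N,N')=\Hom_V(N_\ast,N'_\ast)=\Hom_{\Cond(V)}(\underline{N_\ast},\underline{N'_\ast})$ matches the condensed almost Hom on the nose, with no extension problem left to solve.
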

\begin{proof}The existence of a well-defined functor $\Mod_{V^{a}}^{\wedge,\varpi-\tf}\to \Cond(V, \mathfrak{m})$ making the diagram commute follows from Lemma \ref{Adically complete almost modules}. We prove that the functor is fully faithful. Let $M, N\in \Mod_{V^{a}}^{\wedge,\varpi-\tf}$ and let $\psi: \underline{M}\to \underline{N}$ be a morphism in $\Cond(V, \mathfrak{m})$. Let $M_{0}, N_{0}\in \Mod_{V}^{\wedge,\varpi-\tf}$ with $M_{0}^{a}=M$, $N_{0}^{a}=N$. Since the diagram commutes, we have $\underline{M_{0}}^{a}=\underline{M}$, $\underline{N_{0}}^{a}=\underline{N}$. Since the functors $\Cond(V)\to \Cond(V, \mathfrak{m})$ and $\Mod_{V}^{\wedge,\varpi-\tf}\hookrightarrow \Cond(V)$ are full, we see that $\psi$ comes from a morphism $\varphi: M_{0}\to N_{0}$. Then $\varphi^{a}$ is a morphism in $\Mod_{V^{a}}^{\wedge\varpi}$ with $\underline{\varphi}=\psi$. This shows that the functor $N\mapsto \underline{N}$ is full. To prove that it is faithful, let $\varphi_{1}, \varphi_{2}: M_{0}\to N_{0}$ be two morphisms in $\Mod_{V}^{\wedge,\varpi-\tf}$ such that $\underline{\varphi_{1}^{a}}=\underline{\varphi_{2}^{a}}$ in $\Cond(V, \mathfrak{m})$. Consider the canonical morphism $\iota: \eq(\varphi_{1}, \varphi_{2})\to M_{0}$ from the equalizer of $\varphi_{1}$, $\varphi_{2}$. Then the image of this morphism in $\Cond(V, \mathfrak{m})$ is an isomorphism. Since the diagram commutes, the image $\underline{\iota}$ of $\iota$ in $\Cond(V)$ is an almost isomorphism of condensed $V$-modules, i.e., $\iota_{S}^{\ast}: C(S, \eq(\varphi_{1}, \varphi_{2}))\to C(S, M_{0})$ is an almost isomorphism for every profinite set $S$. Taking $S$ to be the point, we see that $\iota$ is an almost isomorphism, so $\varphi_{1}^{a}=\varphi_{2}^{a}$ in $\Mod_{V^{a}}^{\wedge,\varpi-\tf}$.\end{proof}
\begin{mydef}[The $\varpi$-adic embedding]\label{Adic embedding, definition}We call the categorical embedding \begin{equation*}\Mod_{V^{a}}^{\wedge,\varpi-\tf}\hookrightarrow \Cond(V, \mathfrak{m}), N\mapsto \underline{N},\end{equation*}from Proposition \ref{Adic embedding} the $\varpi$-adic embedding of $\Mod_{V^{a}}^{\wedge\varpi}$ into $\Cond(V, \mathfrak{m})$.\end{mydef}
This should be contrasted with the analogous embedding obtained by endowing each object of $\Mod_{V}^{\wedge,\varpi-\tf}$ with the discrete topology (instead of the $\varpi$-adic topology), which can also be defined on the larger category of all almost $(V, \mathfrak{m})$-modules. Note that the assumption that the modules be $\varpi$-torsion-free $\varpi$-adically complete in Lemma \ref{Adically complete almost modules} was actually used in the proof of that lemma. 

The $\varpi$-adic completeness assumption also ensures that the condensed almost modules in the essential image of the $\varpi$-adic embedding are solid in the sense of the following definition.
\begin{mydef}[Solid condensed almost module]\label{Solid almost modules}Let $(V, \mathfrak{m})$ be an almost setup. We call a condensed almost $(V, \mathfrak{m})$-module \begin{equation*}N\in\Cond(V, \mathfrak{m})\end{equation*}solid if the condensed $V$-module $N_{\ast}$ (see \cite{Mann22}, Lemma 2.2.7(ii)) is solid. We denote by $\Cond_{\blacksquare}(V, \mathfrak{m})$ the full subcategory of $\Cond(V, \mathfrak{m})$ spanned by the solid condensed almost $(V, \mathfrak{m})$-modules and call the category of solid almost $(V, \mathfrak{m})$-modules.\end{mydef}
We sometimes omit the word "condensed" when speaking of solid condensed almost $(V, \mathfrak{m})$-modules. The lemma below is a sanity check. 
\begin{lemma}\label{Solid implies solid}Let $(V, \mathfrak{m})$ be an almost setup. If $M$ is a solid condensed $V$-module, then $M^{a}$ is a solid condensed almost $(V, \mathfrak{m})$-module.\end{lemma}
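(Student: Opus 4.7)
The plan is to unpack Definition \ref{Solid almost modules} and reduce the claim to a standard closure property of solid condensed $V$-modules. Unwinding the definition, $M^{a}$ being a solid condensed almost $(V, \mathfrak{m})$-module means that $(M^{a})_{\ast}$ is a solid condensed $V$-module. So we need to show that solidity of $M$ propagates to solidity of $(M^{a})_{\ast}$.

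The first step is to identify $(M^{a})_{\ast}$ explicitly as a condensed $V$-module. By \cite{Mann22}, Lemma 2.2.7(ii), the functor $N \mapsto N_{\ast}$ from condensed almost $(V, \mathfrak{m})$-modules to condensed $V$-modules is right adjoint to the localization functor $(-)^{a}$; in particular, it is computed pointwise and given, after evaluation on any profinite set $S$, by the $V$-module of almost morphisms from $V^{a}$ to $M^{a}$ in the appropriate condensed sense. Using that $V^{a} = \widetilde{\mathfrak{m}}^{a}$ with $\widetilde{\mathfrak{m}} = \mathfrak{m} \otimes_{V} \mathfrak{m}$ flat over $V$, this works out to the internal Hom \[(M^{a})_{\ast} \cong \underline{\Hom}_{V}(\widetilde{\mathfrak{m}}, M)\] in the category of condensed $V$-modules, where $\widetilde{\mathfrak{m}}$ is viewed as a discrete condensed $V$-module. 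I would spell this out carefully at both the level of almost modules (using the formula $N_{\ast} = \Hom_{V}(\widetilde{\mathfrak{m}}, N_{0})$ for any presentation $N = N_{0}^{a}$, cf.\ Lemma \ref{Module of almost elements and functor of almost elements} for the non-condensed counterpart) and condensed-functorially on each profinite set $S$.

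With this reformulation in hand, the second step invokes the following standard closure property: for any condensed $V$-module $P$ and any solid condensed $V$-module $Q$, the internal Hom $\underline{\Hom}_{V}(P, Q)$ is again solid. This follows because the subcategory of solid condensed abelian groups (and thus of solid condensed $V$-modules, since solidity is a condition on the underlying condensed abelian group) is closed under all limits, and $\underline{\Hom}_{V}(P, -)$ is a limit-preserving functor which on sufficiently nice $P$ can moreover be expressed as an equalizer of iterated products of the target. Applied to $P = \widetilde{\mathfrak{m}}$ and $Q = M$, this immediately yields that $(M^{a})_{\ast} \cong \underline{\Hom}_{V}(\widetilde{\mathfrak{m}}, M)$ is solid, so $M^{a}$ is solid by Definition \ref{Solid almost modules}.

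The main obstacle is the condensed-level identification in the first step: one has to be sure that Mann's right adjoint $(-)_{\ast}$ on condensed almost modules literally agrees with the internal Hom $\underline{\Hom}_{V}(\widetilde{\mathfrak{m}}, -)$ evaluated in condensed $V$-modules, not merely on global sections. Once this is established, the closure of solid modules under limits (and hence under internal Hom out of any condensed $V$-module) finishes the argument with no further input.
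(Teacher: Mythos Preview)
Your proposal is correct and follows essentially the same approach as the paper: identify $(M^{a})_{\ast}$ with $\underline{\Hom}_{\Cond(V)}(\widetilde{\mathfrak{m}}, M)$ via Mann's Lemma 2.2.7(ii), then use that solid modules are closed under internal Hom. The only cosmetic difference is that the paper first notes $\widetilde{\mathfrak{m}}$ is discrete (hence solid) and then invokes closure of solids under internal Hom of two solid modules, whereas you use the slightly stronger (but equally standard) fact that $\underline{\Hom}_{V}(P,Q)$ is solid for arbitrary $P$ and solid $Q$; also, the identification you flag as the ``main obstacle'' is in fact exactly how Mann defines $(-)_{\ast}$ in the cited reference, so no extra work is needed there.
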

\begin{proof}Set $N=M^{a}$. Recall from \cite{Mann22}, Lemma 2.2.7(ii), that $N_{\ast}$ is defined by \begin{equation*}N_{\ast}=\underline{\Hom}_{\Cond(V)}(\widetilde{\mathfrak{m}}, M),\end{equation*}where $\widetilde{\mathfrak{m}}=\mathfrak{m}\otimes_{V}\mathfrak{m}$. Since $\widetilde{\mathfrak{m}}$ is endowed with the discrete topology, it is solid. But the internal Hom (in the category of condensed $V$-modules) of two solid condensed $V$-modules is solid, see, for example, \cite{Kedlaya24}, Proposition 6.2.4. \end{proof}
\begin{cor}\label{Another notion of almost solid}Let $(V, \mathfrak{m})$ be an almost setup. If $M$ is a condensed $V$-module such that the canonical map $M\to M_{\blacksquare}$ to the solidification of $M$ is an almost isomorphism, then $M^{a}$ is a solid almost $(V, \mathfrak{m})$-module.\end{cor}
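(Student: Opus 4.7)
The plan is to reduce the claim directly to Lemma \ref{Solid implies solid} via the adjunction between the localization functor $(-)^{a}$ and the functor of almost elements $(-)_{\ast}$, and the fact that $(-)^{a}$ sends almost isomorphisms to genuine isomorphisms.

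First, I would unwind the definition: by Definition \ref{Solid almost modules}, in order to show that $M^{a}$ is a solid condensed almost $(V, \mathfrak{m})$-module, it suffices to prove that the condensed $V$-module $(M^{a})_{\ast}=\underline{\Hom}_{\Cond(V)}(\widetilde{\mathfrak{m}}, M)$ is solid. The hypothesis is that the unit of the solidification adjunction $\eta\colon M\to M_{\blacksquare}$ is an almost isomorphism, i.e.\ its kernel and cokernel are annihilated by $\mathfrak{m}$.

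Since the localization functor $(-)^{a}\colon \Cond(V)\to \Cond(V, \mathfrak{m})$ admits both a left and a right adjoint (the condensed analog of \cite{Gabber-Ramero}, Proposition 2.2.14 and Proposition 2.2.23, as used in the proof of Lemma \ref{Adically complete almost modules}) it is exact, and by construction it sends every morphism whose kernel and cokernel are annihilated by $\mathfrak{m}$ to an isomorphism. Applied to $\eta$, this gives an isomorphism $\eta^{a}\colon M^{a}\xrightarrow{\sim} (M_{\blacksquare})^{a}$ in $\Cond(V, \mathfrak{m})$. Applying the right adjoint $(-)_{\ast}$, which preserves limits and in particular sends isomorphisms to isomorphisms, produces an isomorphism of condensed $V$-modules
\begin{equation*}
(M^{a})_{\ast}\;\cong\;\bigl((M_{\blacksquare})^{a}\bigr)_{\ast}.
\end{equation*}

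Now $M_{\blacksquare}$ is solid by construction, so by Lemma \ref{Solid implies solid} the condensed almost $(V, \mathfrak{m})$-module $(M_{\blacksquare})^{a}$ is solid, which, again by Definition \ref{Solid almost modules}, means exactly that $\bigl((M_{\blacksquare})^{a}\bigr)_{\ast}$ is a solid condensed $V$-module. Transporting along the displayed isomorphism, $(M^{a})_{\ast}$ is solid, and hence $M^{a}\in \Cond_{\blacksquare}(V, \mathfrak{m})$, as claimed. There is no real obstacle here: the whole argument is a short formal manipulation with the $((-)^{a}, (-)_{\ast})$ adjunction, once the correct definition of a solid almost module is in place.
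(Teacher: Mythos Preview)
Your argument is correct and follows essentially the same route the paper has in mind: the corollary is stated without proof, as an immediate consequence of Lemma \ref{Solid implies solid}, and your reduction via $M^{a}\cong (M_{\blacksquare})^{a}$ is exactly that. One minor simplification: once you have the isomorphism $M^{a}\cong (M_{\blacksquare})^{a}$ in $\Cond(V,\mathfrak{m})$, you can conclude directly that $M^{a}$ is solid (solidity is preserved under isomorphism), so the extra passage through $(-)_{\ast}$ is not needed.
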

\begin{example}For a condensed $V$-module $M$, the corresponding condensed almost $(V, \mathfrak{m})$-module $M^{a}$ can be almost solid without $M$ itself being solid. For example, consider any solid condensed $V$-module $N$ and let \begin{equation*}M=(\underline{\mathbb{R}}\otimes_{\underline{\mathbb{Z}}}V/\mathfrak{m})\oplus N,\end{equation*}where $-\otimes_{\underline{\mathbb{Z}}}-$ is the tensor product of condensed abelian groups. Since the solidification of $\underline{\mathbb{R}}$ (and of any condensed module over $\underline{\mathbb{R}}$) is zero (\cite{Kedlaya24}, Corollary 6.3.2), the solidification of $M_{\blacksquare}$ of $M$ is $N$. On the other hand, the map $M\to M_{\blacksquare}$ is an almost isomorphism since the first direct summand in the definition of $M$ is almost zero. In particular, $M$ is a condensed $V$-module which is not solid, but the associated condensed almost $(V, \mathfrak{m})$-module $M^{a}$ is solid in the sense of Definition \ref{Solid almost modules}.\end{example}    
\begin{mydef}[Solid tensor product of solid almost modules]\label{Solid tensor product}Let $(V, \mathfrak{m})$ be an almost setup. We endow the category $\Cond_{\blacksquare}(V, \mathfrak{m})$ of solid almost $(V, \mathfrak{m})$-modules with a symmetric monoidal structure given by \begin{equation*}M\otimes_{V^{a}}^{\blacksquare}N:=(M_{\ast}\otimes_{V}^{\blacksquare}N_{\ast})^{a},\end{equation*}where $\otimes_{V}^{\blacksquare}$ denotes the solid tensor product of solid condensed $V$-modules.\end{mydef}   
We can now finally prove Theorem \ref{Relation to condensed mathematics} which was announced in the introduction (see \cite{Diamonds}, \S7, for the definition and basic properties of totally disconnected perfectoid spaces).
\begin{thm}\label{Relation to condensed mathematics, precise version}Let $(A, \lVert\cdot\rVert)$ be a Banach ring with a norm-multiplicative topologically nilpotent unit $\varpi$ of norm $\leq1$ and suppose that $\varpi$ admits a compatible system $(\varpi^{1/p^{n}})_{n}$ of $p$-power roots in $A$ satisfying $\lVert\varpi^{1/p^{n}}\rVert=\lVert\varpi\rVert^{1/p^{n}}$ for all $n\geq1$. Then the closed unit ball functor \begin{equation*}M\mapsto M_{\leq1}^{a}\end{equation*}induces a categorical embedding\begin{equation*}\Ban_{A}^{\leq1}\hookrightarrow \Cond_{\blacksquare}(A_{\leq1}, (\varpi^{1/p^{\infty}})_{A_{\leq1}}).\end{equation*} 

Moreover, if $A$ is a perfectoid Tate ring endowed with a power-multiplicative norm defining its topology (so that $A_{\leq1}=A^{\circ}$, by \cite{Dine22}, Lemma 2.24) and if the affinoid perfectoid space $\Spa(A, A^{\circ})$ is totally disconnected, then the above embedding is symmetric monoidal when $\Ban_{A}^{\leq1}$ is endowed with the complete tensor product $-\widehat{\otimes}_{A}-$ and $\Cond_{\blacksquare}(A^{\circ}, (\varpi^{1/p^{\infty}})_{A^{\circ}})$ is endowed with solid tensor product $-\otimes_{A^{\circ a}}^{\blacksquare}-$ from Definition \ref{Solid tensor product}.\end{thm}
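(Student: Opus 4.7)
The plan is to deduce the first statement by composing two already-constructed embeddings, and to reduce the second (monoidal) statement to a comparison of two explicit descriptions of the unit ball: the one from Corollary \ref{Unit balls and tensor products 2} on the analytic side, and the defining formula for the solid tensor product on the condensed almost side.

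For the first assertion, I would combine Theorem \ref{Seminormed modules and almost lattices} (in its Banach version), which yields an equivalence $\Ban_{A}^{\leq1}\simeq \Mod_{A_{\leq1}^{a}}^{\wedge,\varpi\text{-}\tf}$ via $M\mapsto M_{\leq1}^{a}$, with the $\varpi$-adic embedding of Proposition \ref{Adic embedding}. This composition is automatically fully faithful. To check that the essential image actually lies in $\Cond_{\blacksquare}(A_{\leq1}, (\varpi^{1/p^{\infty}})_{A_{\leq1}})$, note that $M_{\leq1}$ is $\varpi$-adically complete and $\varpi$-torsion-free, so the condensed $A_{\leq1}$-module $\underline{M_{\leq1}}$ obtained from the $\varpi$-adic topology is a limit of the discrete modules $M_{\leq1}/\varpi^{n}M_{\leq1}$ and hence solid; by Lemma \ref{Solid implies solid} the associated almost module $\underline{M_{\leq1}}^{a}=M_{\leq1}^{a}$ is then solid in the sense of Definition \ref{Solid almost modules}.

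For the monoidal statement, with $A$ a perfectoid Tate ring with totally disconnected $\Spa(A, A^{\circ})$ and $A_{\leq1}=A^{\circ}$, the task is to construct natural isomorphisms
\begin{equation*}
(M\widehat{\otimes}_{A}N)_{\leq1}^{a}\;\cong\; M_{\leq1}^{a}\otimes_{A^{\circ a}}^{\blacksquare}N_{\leq1}^{a}
\end{equation*}
for $M,N\in\Ban_{A}^{\leq1}$, compatible with the unit, the associator, and the symmetry. By Corollary \ref{Unit balls and tensor products 2} (applied with $A_{0}=A^{\circ}$, $M_{0}=M_{\leq1}$, $N_{0}=N_{\leq1}$), the left hand side is the almost-module associated with $(M_{\leq1}\widehat{\otimes}_{A^{\circ}}N_{\leq1})/(\varpi^{\infty}\text{-}\tor)$, endowed with its $\varpi$-adic topology. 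On the right hand side, using $(M_{\leq1}^{a})_{\ast}=M_{\leq1}$ and $(N_{\leq1}^{a})_{\ast}=N_{\leq1}$ (Lemma \ref{Two modules of almost elements} applied in the case $A_{0}=A^{\circ}$), Definition \ref{Solid tensor product} gives $M_{\leq1}^{a}\otimes_{A^{\circ a}}^{\blacksquare}N_{\leq1}^{a}=(M_{\leq1}\otimes_{A^{\circ}}^{\blacksquare}N_{\leq1})^{a}$, where $M_{\leq1}$ and $N_{\leq1}$ are viewed as solid condensed $A^{\circ}$-modules via their $\varpi$-adic topologies. Thus the claim reduces to a natural almost isomorphism
\begin{equation*}
M_{\leq1}\otimes_{A^{\circ}}^{\blacksquare}N_{\leq1}\;\simeq\;(M_{\leq1}\widehat{\otimes}_{A^{\circ}}N_{\leq1})/(\varpi^{\infty}\text{-}\tor)
\end{equation*}
between solid condensed $A^{\circ}$-modules. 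The unit and associativity coherences are then formal consequences of the fact that both tensor structures are derived from ordinary tensor products followed by $\varpi$-adic completion (respectively, solidification).

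The main obstacle is precisely this last identification. To handle it I would exploit the totally disconnected hypothesis: when $\Spa(A,A^{\circ})$ is totally disconnected, $A^{\circ}$ is well-behaved enough as a condensed ring (in the sense of Mann \cite{Mann22} and Anschütz-Le Bras-Mann \cite{ALBM24}) that every $\varpi$-adically complete $\varpi$-torsion-free almost $A^{\circ}$-module admits a resolution by solid modules of the shape $\underline{A^{\circ}[[S]]}^{a}$ for profinite sets $S$; on such standard generators the solid tensor product is computed by the $\varpi$-adically completed tensor product of profinite $A^{\circ}$-modules, and the $\varpi^{\infty}$-torsion that appears on the analytic side is almost zero in the chosen almost setup because the tensor products are taken over $A^{\circ}$ rather than over $\mathbb{Z}$. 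Reducing to this generator case and checking the compatibility with restrictions along maps of profinite sets is what requires genuine input from condensed almost mathematics; once this naturality is established, functoriality of the embedding and the coherence diagrams follow by general nonsense.
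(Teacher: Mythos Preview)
Your first part is correct and matches the paper's argument exactly: compose Theorem \ref{Seminormed modules and almost lattices} with the $\varpi$-adic embedding of Proposition \ref{Adic embedding}, then invoke Lemma \ref{Solid implies solid} for solidity.

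For the monoidal statement, your reduction is right, and you correctly identify the crux as the almost identification of $M_{\leq1}\otimes_{A^{\circ}}^{\blacksquare}N_{\leq1}$ with $(M_{\leq1}\widehat{\otimes}_{A^{\circ}}N_{\leq1})/(\varpi^{\infty}\text{-}\tor)$. But your proposed attack on this step has a real gap. The claim that ``the $\varpi^{\infty}$-torsion is almost zero because the tensor products are taken over $A^{\circ}$ rather than over $\mathbb{Z}$'' is not a valid argument: tensoring over $A^{\circ}$ gives no automatic control on $\varpi$-torsion, and the resolution-by-generators strategy you sketch does not by itself explain why the torsion should vanish, even almost. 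Without such control you cannot even be sure that $(M_{\leq1}\widehat{\otimes}_{A^{\circ}}N_{\leq1})/(\varpi^{\infty}\text{-}\tor)$ is $\varpi$-adically complete, so the comparison with the solid tensor product does not go through.

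The paper's route is different and more direct. The totally disconnected hypothesis is used via Lemma \ref{Flatness over totally disconnecteds}, which proves that for any $M\in\Ban_{A}^{\leq1}$ the reduction $M_{\leq1}/\varpi M_{\leq1}$ is \emph{flat} over $A^{\circ}/\varpi A^{\circ}$; the argument passes to connected components $\Spa(K_{x}, K_{x}^{+})$ with $K_{x}^{+}$ a valuation ring, and then uses surjectivity of the specialization map. Combined with Lemma \ref{Torsion-free 2}, this flatness forces $M_{\leq1}\widehat{\otimes}_{A^{\circ}}N_{\leq1}$ to be genuinely $\varpi$-torsion-free, so there is no torsion to kill and Corollary \ref{Unit balls and tensor products 2} yields $(M\widehat{\otimes}_{A}N)_{\leq1}^{a}=(M_{\leq1}\widehat{\otimes}_{A^{\circ}}N_{\leq1})^{a}$ on the nose. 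The identification of the $\varpi$-adically completed tensor product with the solid tensor product is then the standard fact for $\varpi$-adically complete modules, with no almost-mathematics needed at that step.

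One further point: to apply Definition \ref{Solid tensor product} you need $(\underline{M_{\leq1}^{a}})_{\ast}=\underline{M_{\leq1}}$ at the \emph{condensed} level, not merely $(M_{\leq1}^{a})_{\ast}=M_{\leq1}$ at the discrete level; your citation of Lemma \ref{Two modules of almost elements} only gives the latter. The paper handles this separately in Lemma \ref{Adic embedding 2}, which compares the two $(-)_{\ast}$ functors using \cite{Condensed}, Proposition 4.2.
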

We first prove a series of lemmas.
\begin{lemma}\label{Torsion-free}Let $\varpi\in A_{0}$ be a non-zero-divisor in a ring $A_{0}$ and let $M_{0}$ be a $\varpi$-torsion-free $A_{0}$-module. For every $n\geq1$, the kernel of the multiplication-by-$\varpi$ map on $M_{0}/\varpi^{n+1}M_{0}$ is equal to $\varpi^{n}(M_{0}/\varpi^{n+1}M_{0})$.\end{lemma}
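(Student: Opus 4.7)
The plan is to prove the equality by establishing the two inclusions separately, both of which reduce to a one-line computation. The only input we need is that $\varpi$ is a non-zero-divisor on $A_0$ and that $M_0$ is $\varpi$-torsion-free (so that $\varpi$ is a non-zero-divisor on $M_0$); we do not need any completeness or adicness here.

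First I would handle the inclusion $\varpi^{n}(M_{0}/\varpi^{n+1}M_{0})\subseteq \ker(\varpi\cdot-)$, which is immediate: if $\overline{x}=\varpi^{n}\overline{y}$ in $M_{0}/\varpi^{n+1}M_{0}$, then $\varpi\overline{x}=\varpi^{n+1}\overline{y}=0$ since $\varpi^{n+1}y\in \varpi^{n+1}M_{0}$. This inclusion uses only the fact that $\varpi^{n+1}M_{0}$ is the zero submodule of the quotient.

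Next I would prove the reverse inclusion. Let $x\in M_{0}$ represent a class $\overline{x}\in M_{0}/\varpi^{n+1}M_{0}$ with $\varpi\overline{x}=0$. Lifting back to $M_{0}$, this says $\varpi x\in \varpi^{n+1}M_{0}$, so we can write $\varpi x=\varpi^{n+1}y$ for some $y\in M_{0}$. Rearranging, $\varpi(x-\varpi^{n}y)=0$ in $M_{0}$, and the hypothesis that $M_{0}$ is $\varpi$-torsion-free forces $x=\varpi^{n}y$. Reducing modulo $\varpi^{n+1}M_{0}$ gives $\overline{x}\in \varpi^{n}(M_{0}/\varpi^{n+1}M_{0})$, as required.

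There is no real obstacle; the only place any hypothesis is used is the last step, where $\varpi$-torsion-freeness of $M_{0}$ lets us cancel a single $\varpi$. The statement itself is a special case of the standard fact that for a non-zero-divisor $\varpi$ on $M_{0}$, the sequence $0\to M_{0}/\varpi^{n}M_{0}\xrightarrow{\varpi^{\,\cdot}} M_{0}/\varpi^{n+1}M_{0}\to M_{0}/\varpi M_{0}\to 0$ is short exact, and our lemma is precisely the identification of the image of the left arrow with the kernel of the right arrow's predecessor (the multiplication-by-$\varpi$ map on the middle term).
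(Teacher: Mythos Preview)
Your proof is correct and follows essentially the same approach as the paper's: lift a class killed by $\varpi$ to $x\in M_0$ with $\varpi x\in\varpi^{n+1}M_0$ and use $\varpi$-torsion-freeness to cancel. You are a bit more explicit (spelling out the cancellation step and the trivial inclusion), but the argument is the same.
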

\begin{proof}Let $\overline{x}\in M_{0}/\varpi^{n+1}M_{0}$ with $\varpi\overline{x}=0$. Then $\overline{x}$ lifts to an element $x\in M_{0}$ with $\varpi x\in \varpi^{n+1}M_{0}$. Since $M_{0}$ is $\varpi$-torsion-free, we conclude that $x\in \varpi^{n}M_{0}$ and $\overline{x}\in \varpi^{n}(M_{0}/\varpi^{n+1}M_{0})$. \end{proof}
\begin{lemma}\label{Torsion-free 2}Let $\varpi\in A_{0}$ be a non-zero-divisor in a ring $A_{0}$ and let $N_{0}$ be an $A_{0}$-module such that for every $n\geq1$ the $A_{0}/\varpi^{n}A_{0}$-module $N_{0}/\varpi^{n}N_{0}$ is flat. For any $\varpi$-torsion-free $A_{0}$-module $M_{0}$, the $\varpi$-adically completed tensor product $M_{0}\widehat{\otimes}_{A_{0}}B_{0}$ is $\varpi$-torsion-free.\end{lemma}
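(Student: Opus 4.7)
The plan is to show that an element $x \in M_{0}\widehat{\otimes}_{A_{0}}N_{0}$ killed by $\varpi$ must vanish level-by-level in the standard presentation of the completed tensor product as an inverse limit. Concretely, I first observe that
\begin{equation*}
M_{0}\widehat{\otimes}_{A_{0}}N_{0} = \varprojlim_{n}\bigl((M_{0}\otimes_{A_{0}}N_{0})/\varpi^{n}(M_{0}\otimes_{A_{0}}N_{0})\bigr) = \varprojlim_{n}\bigl((M_{0}/\varpi^{n}M_{0})\otimes_{A_{0}/\varpi^{n}A_{0}}(N_{0}/\varpi^{n}N_{0})\bigr),
\end{equation*}
so every element $x$ corresponds to a compatible system $(x_{n})_{n}$ with $x_{n}\in (M_{0}/\varpi^{n}M_{0})\otimes_{A_{0}/\varpi^{n}A_{0}}(N_{0}/\varpi^{n}N_{0})$.

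The main step is to compute the $\varpi$-torsion in each finite layer. By Lemma \ref{Torsion-free} applied to the $\varpi$-torsion-free module $M_{0}$, the kernel of multiplication by $\varpi$ on $M_{0}/\varpi^{n+1}M_{0}$ equals $\varpi^{n}(M_{0}/\varpi^{n+1}M_{0})$, giving the exact sequence of $A_{0}/\varpi^{n+1}A_{0}$-modules
\begin{equation*}
0 \to \varpi^{n}(M_{0}/\varpi^{n+1}M_{0}) \to M_{0}/\varpi^{n+1}M_{0} \xrightarrow{\,\cdot\,\varpi\,} M_{0}/\varpi^{n+1}M_{0}.
\end{equation*}
The flatness hypothesis on $N_{0}/\varpi^{n+1}N_{0}$ over $A_{0}/\varpi^{n+1}A_{0}$ lets me tensor this sequence with $N_{0}/\varpi^{n+1}N_{0}$ while preserving exactness, identifying the $\varpi$-torsion of the tensor product with the image of $\varpi^{n}(M_{0}/\varpi^{n+1}M_{0}) \otimes_{A_{0}/\varpi^{n+1}A_{0}} (N_{0}/\varpi^{n+1}N_{0})$.

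Now suppose $\varpi x = 0$ in $M_{0}\widehat{\otimes}_{A_{0}}N_{0}$. Then $\varpi x_{n+1} = 0$ for every $n$, so by the previous step $x_{n+1}$ is $\varpi^{n}$ times an element of the $(n+1)$-th layer. Applying the transition map to the $n$-th layer kills this element, because $\varpi^{n}$ acts as zero on $(M_{0}/\varpi^{n}M_{0})\otimes_{A_{0}/\varpi^{n}A_{0}}(N_{0}/\varpi^{n}N_{0})$. Hence $x_{n} = 0$ for all $n\geq1$, and therefore $x = 0$.

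I do not anticipate a serious obstacle here: the argument is a straightforward Mittag-Leffler-style manipulation once one writes the completed tensor product as the inverse limit above, and the only non-formal ingredients are Lemma \ref{Torsion-free} and the given levelwise flatness of $N_{0}$. The mildest subtlety is confirming the identification $(M_{0}\otimes_{A_{0}}N_{0})/\varpi^{n} \cong (M_{0}/\varpi^{n})\otimes_{A_{0}/\varpi^{n}}(N_{0}/\varpi^{n})$, which is immediate from right-exactness of tensor product, and verifying that the transition maps in the inverse limit are indeed given by reduction modulo the appropriate power of $\varpi$ on each tensor factor.
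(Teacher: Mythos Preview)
Your proof is correct and follows essentially the same approach as the paper: both arguments use Lemma \ref{Torsion-free} together with the levelwise flatness of $N_{0}/\varpi^{n+1}N_{0}$ to identify the $\varpi$-torsion in the $(n+1)$-th layer with $\varpi^{n}$ times that layer, and then conclude. The only cosmetic difference is that the paper phrases the final step as a contradiction (choose $n$ with $x\notin\varpi^{n}(M_{0}\widehat{\otimes}_{A_{0}}N_{0})$ and derive $x\in\varpi^{n}(M_{0}\widehat{\otimes}_{A_{0}}N_{0})$), whereas you work directly with the inverse-limit presentation and show each $x_{n}$ vanishes.
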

\begin{proof}Let $x\in M_{0}\widehat{\otimes}_{A_{0}}N_{0}$ be an element annihilated by $\varpi$. If $x$ is not zero, then, by completeness, we can choose an integer $n>1$ such that \begin{equation*}x\not\in \varpi^{n}(M_{0}\widehat{\otimes}_{A_{0}}N_{0}).\end{equation*}Then, a fortiori, the image $\overline{x}$ of $x$ in \begin{equation*}(M_{0}\widehat{\otimes}_{A_{0}}N_{0})/\varpi^{n+1}(M_{0}\widehat{\otimes}_{A_{0}}N_{0})=M_{0}/\varpi^{n+1}M_{0}\otimes_{A_{0}/\varpi^{n+1}A_{0}}N_{0}/\varpi^{n+1}N_{0}\end{equation*}is a non-zero element annihilated by $\varpi$. By Lemma \ref{Torsion-free}, we have \begin{equation*}\ker(\times\varpi: M_{0}/\varpi^{n+1}M_{0}\to M_{0}/\varpi^{n+1}M_{0})=\varpi^{n}(M_{0}/\varpi^{n+1}M_{0}),\end{equation*}where $\times\varpi$ denotes the multiplication-by-$\varpi$ map. Since the $A_{0}/\varpi^{n}A_{0}$-module $N_{0}/\varpi^{n+1}N_{0}$ is flat, this entails that \begin{align*}\ker(\times\varpi: M_{0}/\varpi^{n+1}M_{0}\otimes_{A_{0}/\varpi^{n+1}A_{0}}N_{0}/\varpi^{n+1}N_{0}\to M_{0}/\varpi^{n+1}M_{0}\otimes_{A_{0}/\varpi^{n+1}A_{0}}N_{0}/\varpi^{n+1}N_{0})\\=\varpi^{n}(M_{0}/\varpi^{n+1}M_{0}\otimes_{A_{0}/\varpi^{n+1}A_{0}}N_{0}/\varpi^{n+1}N_{0}),\end{align*}so \begin{equation*}\overline{x}\in \varpi^{n}((M_{0}\widehat{\otimes}_{A_{0}}N_{0})/\varpi^{n+1}(M_{0}\widehat{\otimes}_{A_{0}}N_{0})).\end{equation*}But then $x\in \varpi^{n}(M_{0}\widehat{\otimes}_{A_{0}}N_{0})$, a contradiction.\end{proof}
The following lemma is a natural generalization of the first assertion of \cite{Diamonds}, Proposition 7.23.
\begin{lemma}\label{Flatness over totally disconnecteds}Let $(A, A^{+})$ be a perfectoid Tate Huber pair such that the affinoid perfectoid space $X=\Spa(A, A^{+})$ is totally disconnected. Let $\varpi\in A$ be a topologically nilpotent unit and equip $A$ with a power-multiplicative norm defining its topology and making the element $\varpi$ norm-multiplicative (this is always possible by Lemma \ref{Uniform Tate rings}). Then, for every submetric Banach $A$-module $M$, the $A^{+}/\varpi A^{+}$-module $M_{\leq1}/\varpi M_{\leq1}$ is flat.\end{lemma}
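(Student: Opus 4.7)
The plan is to use the main equivalence (Theorem \ref{Seminormed modules and almost lattices}) to translate the flatness statement into one about almost modules over $A^{\circ}$, and then to invoke the structural properties of totally disconnected affinoid perfectoid spaces which generalize those used in \cite{Diamonds}, Proposition 7.23.

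First, Theorem \ref{Seminormed modules and almost lattices} identifies $M$ with the $\varpi$-torsion-free, $\varpi$-adically complete almost $A^{\circ}$-module $M_{\leq1}^{a}$, and identifies $M_{\leq1}$ with $(M_{\leq1}^{a})_{\ast}$. Through the inclusion $A^{+}\subseteq A^{\circ}$ the module $M_{\leq1}$ is a $\varpi$-torsion-free $A^{+}$-module, so that the claim becomes flatness of $M_{\leq1}/\varpi M_{\leq1}$ over $A^{+}/\varpi A^{+}$.

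Second, I would exploit the structural description of $A^{+}$ when $X=\Spa(A,A^{+})$ is totally disconnected. In this case every point of $X$ has a unique rank-one generalization, and the specialization map $X\to\pi_{0}(X)$ presents $A^{+}/\varpi A^{+}$ as a suitable limit of products of quotients of valuation rings; over such local factors, a $\varpi$-torsion-free module is automatically flat. Checking flatness of $M_{\leq1}/\varpi M_{\leq1}$ locally on $\pi_{0}(X)$ therefore reduces the statement to a fiberwise assertion, where it becomes accessible to the rank-one analysis of \cite{Diamonds}, Proposition 7.23. What makes this work for an arbitrary submetric Banach module (rather than only the restricted class explicitly treated by Scholze) is that $M_{\leq1}$ is the full module of almost elements $(M_{\leq1}^{a})_{\ast}$, so the relevant ``integral model'' is canonical and is compatible with localization on $X$.

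The main obstacle I anticipate is the passage from almost flatness to genuine flatness. Even once one has shown that $M_{\leq1}^{a}/\varpi M_{\leq1}^{a}$ is almost flat over $A^{\circ a}/\varpi A^{\circ a}$, upgrading this to flatness of $M_{\leq1}/\varpi M_{\leq1}$ over $A^{+}/\varpi A^{+}$ requires using both that $M_{\leq1}=(M_{\leq1}^{a})_{\ast}$ and that $M_{\leq1}$ is $\varpi$-adically complete, in order to rule out $\varpi^{\infty}$-torsion in the relevant $\mathrm{Tor}$ groups. A clean implementation is the Tor-vanishing criterion: for each finitely generated ideal $I\subseteq A^{+}/\varpi A^{+}$, first show that $\mathrm{Tor}_{1}^{A^{+}/\varpi}(A^{+}/I,\,M_{\leq1}/\varpi M_{\leq1})$ is almost zero via the rank-one localization argument, and then use the module-of-almost-elements structure together with $\varpi$-adic completeness to conclude that it actually vanishes.
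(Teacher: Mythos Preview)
Your plan detours through almost mathematics and then tries to ``upgrade'' almost flatness to honest flatness, but this upgrade step is where the argument breaks down. The Tor group $\mathrm{Tor}_{1}^{A^{+}/\varpi}(A^{+}/I,\,M_{\leq1}/\varpi M_{\leq1})$ is an $A^{+}/\varpi$-module; here $\varpi$ is already zero, so ``$\varpi$-adic completeness of $M_{\leq1}$'' gives no leverage on it, and an almost-zero $A^{+}/\varpi$-module (one killed by every $\varpi^{1/p^{n}}$) is certainly not forced to vanish. Likewise the identity $M_{\leq1}=(M_{\leq1}^{a})_{\ast}$ is a statement about $M_{\leq1}$ inside $M$, not about $M_{\leq1}/\varpi M_{\leq1}$; it does not by itself rule out that the Tor group has elements in the kernel of every $\varpi^{1/p^{n}}$. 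So the obstacle you flag is real, but the mechanism you propose for overcoming it does not work as stated.

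The paper avoids this problem entirely by never passing through almost flatness. It works directly: each connected component of $X$ is $\Spa(K_{x},K_{x}^{+})$ with $K_{x}^{+}$ a valuation ring, and is cut out by an idempotent $e_{x,i}\in A$ via Shilov's idempotent theorem, so $e_{x,i}A^{+}/\varpi^{n}$ is a genuine direct summand of $A^{+}/\varpi^{n}$, hence honestly flat, and in the limit so is $K_{x}^{+}/\varpi^{n}$. A separate lemma (the paper's Lemma~\ref{Torsion-free 2}) then shows that $M_{\leq1}\widehat{\otimes}_{A^{+}}K_{x}^{+}$ stays $\varpi$-torsion-free, hence is flat over the valuation ring $K_{x}^{+}$; thus $M_{\leq1}/\varpi\otimes K_{x}^{+}/\varpi$ is honestly flat over $K_{x}^{+}/\varpi$. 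Finally, an element of the kernel of $M_{1}\otimes M_{\leq1}/\varpi\to M_{2}\otimes M_{\leq1}/\varpi$ dies after passing to each fiber, hence already after passing to some idempotent localization $e_{x,i_{x}}A^{+}/\varpi$; since the specialization map $X\to\Spec(A^{+}/\varpi)$ is surjective, the open sets $D(e_{x,i_{x}})$ cover $\Spec(A^{+}/\varpi)$, and the element is zero. The point is that the idempotent localizations are Zariski-open pieces, so the descent is honest Zariski descent and no almost-to-honest upgrade is ever needed.
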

\begin{proof}Let $M$ be a submetric Banach $A$-module. For every $x\in X$, the connected component $C_{x}$ of $X$ containing $x$ is of the form $C_{x}=\Spa(K_{x}, K_{x}^{+})$ for some nonarchimedean field $K_{x}$ with open valuation subring $K_{x}^{+}\subset K_{x}$ is an open and bounded valuation subring. Write $C_{x}$ as an intersection of a family $(U_{x,i})_{i\in I_{x}}$ of closed-open subsets of $X$. By Shilov's Idempotent Theorem for Tate Huber pairs (\cite{Kedlaya-Liu}, Proposition 2.6.4), there exist idempotent elements $e_{x,i}\in A$ such that \begin{equation*}U_{x,i}=\{\, y\in X\mid \vert e_{x,i}(y)\vert=1\,\}\simeq\Spa(e_{x,i}A, e_{x,i}A^{+})\end{equation*}for all $x\in X$, $i\in I_{x}$, so $K_{x}^{+}$ can be written as a $\varpi$-adically completed direct limit \begin{equation*}K_{x}^{+}=(\varinjlim_{i\in I_{x}}e_{x,i}A^{+})^{\wedge}.\end{equation*}In particular, \begin{equation*}K^{+}/\varpi^{n}K_{x}^{+}=\varinjlim_{i\in I_{x}}e_{x,i}A^{+}/\varpi^{n}e_{x,i}A^{+}\end{equation*}for all $n\geq1$. Since each $e_{x,i}A^{+}/\varpi e_{x,i}A^{+}$ is flat over $A^{+}/\varpi^{n}A^{+}$ (in fact, it is a finite projective $A^{+}/\varpi^{n}A^{+}$-module), we see that $K_{x}^{+}/\varpi^{n}K_{x}^{+}$ is flat over $A^{+}/\varpi^{n}A^{+}$ for all $n\geq1$ and all $x\in X$. By Lemma \ref{Torsion-free 2}, this entails that $M_{\leq1}\widehat{\otimes}_{A^{+}}K_{x}^{+}$ is $\varpi$-torsion-free and hence flat over the valuation ring $K_{x}^{+}$. A fortiori, $M_{\leq1}/\varpi^{n}M_{\leq1}\otimes_{A^{+}/\varpi^{n}A^{+}}K_{x}^{+}/\varpi^{n}K_{x}^{+}$ is flat over $K_{x}^{+}/\varpi^{n}K_{x}^{+}$. 

Now, let $M_{1}\hookrightarrow M_{2}$ be an injection of $A^{+}/\varpi A^{+}$-modules. We have to prove that \begin{equation*}M_{1}\otimes_{A^{+}/\varpi A^{+}}M_{\leq1}/\varpi M_{\leq1}\to M_{2}\otimes_{A^{+}/\varpi A^{+}}M_{\leq1}/\varpi M_{\leq1}\end{equation*}is injective. Let $f$ be an element in the kernel of that map. By the flatness of \begin{equation*}M_{\leq1}/\varpi^{n}M_{\leq1}\otimes_{A^{+}/\varpi^{n}A^{+}}K_{x}^{+}/\varpi^{n}K_{x}^{+}\end{equation*}established in the previous paragraph, we know that the image of $f$ in $M_{1}\otimes_{A^{+}/\varpi A^{+}}M_{\leq1}/\varpi M_{\leq1}\otimes_{A^{+}/\varpi A^{+}}K_{x}^{+}/\varpi K_{x}^{+}$ is zero for every $x\in X$. Therefore, for every $x\in X$, there exists an index $i_{x}\in I_{x}$ such that the image of $f$ is already zero in $M_{1}\otimes_{A^{+}/\varpi A^{+}}M_{\leq1}/\varpi M_{\leq1}\otimes_{A^{+}/\varpi A^{+}}e_{x,i_{x}}A^{+}/\varpi e_{x,i_{x}}A^{+}$. Hence, to verify that $f$ is zero, it suffices to prove that the open subsets \begin{equation*}D(e_{x,i_{x}})=\Spec(e_{x,i_{x}}A^{+}/\varpi), x\in X,\end{equation*}cover $\Spec(A^{+}/\varpi)$. But, by \cite{BhattNotes}, Theorem 8.12, the specialization map \begin{equation*}\spc: X\to \Spec(A^{+}/\varpi)\end{equation*}defined by \begin{equation*}\spc(x)=\{\, f\in A^{+}\mid \vert f(x)\vert<1\,\}/\varpi\end{equation*}is surjective, so the assertion follows from the fact that $\vert e_{x,i_{x}}(x)\vert=1$ for every $x\in X$. \end{proof}
\begin{lemma}\label{Adic embedding 2}Let $(V, \mathfrak{m})$ be an almost setup. The $\varpi$-adic embedding of Definition \ref{Adic embedding, definition} fits into a commutative square \begin{center}\begin{tikzcd}\Cond(V, \mathfrak{m})\arrow{r}{M\mapsto M_{\ast}} & \Cond(V) \\ \Mod_{V^{a}}^{\wedge,\varpi-\tf} \arrow[hook]{u} \arrow{r}{M\mapsto M_{\ast}} & \Mod_{V}^{\wedge,\varpi-\tf}, \arrow[hook]{u}\end{tikzcd}\end{center}where the lower and upper horizontal arrows are the right adjoints of the two localization functors as defined by Gabber-Ramero in \cite{Gabber-Ramero}, Proposition 2.2.14, and by Mann in \cite{Mann22}, Lemma 2.2.7(ii).\end{lemma}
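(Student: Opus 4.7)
The plan is, for each $N \in \Mod_{V^a}^{\wedge, \varpi-\tf}$, to fix a $\varpi$-torsion-free, $\varpi$-adically complete $V$-module $N_0$ with $N_0^a = N$, so the $\varpi$-adic embedding sends $N$ to $\underline{N_0}^a \in \Cond(V, \mathfrak{m})$. First I would observe that $N_* = \Hom_V(\widetilde{\mathfrak{m}}, N_0)$ is $\varpi$-torsion-free (Lemma \ref{Torsion-free almost modules and almost elements}) and $\varpi$-adically complete (since $\Hom_V(\widetilde{\mathfrak{m}}, -)$ preserves inverse limits, combined with Lemma \ref{Torsion-free complete almost modules}), so that $\underline{N_*} \in \Cond(V)$, obtained by equipping $N_*$ with its $\varpi$-adic topology, is well-defined. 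The goal is to construct a natural isomorphism $(\underline{N_0}^a)_* \cong \underline{N_*}$ in $\Cond(V)$.

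The canonical unit map $N_0 \to N_*$ is an almost isomorphism of $\varpi$-torsion-free, $\varpi$-adically complete $V$-modules, so Lemma \ref{Adically complete almost modules} yields an isomorphism $\underline{N_0}^a \cong \underline{N_*}^a$ in $\Cond(V, \mathfrak{m})$. Through the adjunction $\Hom_{\Cond(V)}(-, (\underline{N_0}^a)_*) = \Hom_{\Cond(V, \mathfrak{m})}((-)^a, \underline{N_0}^a)$, this determines a canonical morphism $\underline{N_*} \to (\underline{N_0}^a)_*$ in $\Cond(V)$, namely the adjoint of the composite $\underline{N_*}^a \xrightarrow{\sim} \underline{N_0}^a$, whose image under $(-)^a$ recovers the original isomorphism.

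The main step is to show this morphism is already an isomorphism in $\Cond(V)$. Evaluating on a profinite set $S$, the left-hand side is $\underline{N_*}(S) = C(S, N_*)$. On the right-hand side, using the Serre-quotient identity $\Hom_{\Cond(V, \mathfrak{m})}(M^a, P^a) = \Hom_{\Cond(V)}(M \otimes_V \widetilde{\mathfrak{m}}, P)$ and the discreteness of $\widetilde{\mathfrak{m}}$, one obtains $(\underline{N_0}^a)_*(S) = \Hom_V(\widetilde{\mathfrak{m}}, C(S, N_0))$. Writing $C(S, N_0) = \varprojlim_n C(S, N_0/\varpi^n N_0)$ and $C(S, N_*) = \varprojlim_n C(S, N_*/\varpi^n N_*)$ by $\varpi$-adic completeness, and commuting $\Hom_V(\widetilde{\mathfrak{m}}, -)$ past the inverse limit, the comparison reduces to producing compatible isomorphisms
\begin{equation*}
C(S, N_*/\varpi^n N_*) \;\xrightarrow{\sim}\; \Hom_V(\widetilde{\mathfrak{m}}, C(S, N_0/\varpi^n N_0))
\end{equation*}
for all $n \geq 1$.

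The hard part will be this last identification, since it requires commuting $\Hom_V(\widetilde{\mathfrak{m}}, -)$ with the filtered colimit $C(S, X) = \varinjlim_{S \twoheadrightarrow S'} X^{S'}$ over finite quotients $S'$ of $S$ (for $X = N_0/\varpi^n N_0$) and also identifying $N_*/\varpi^n N_*$ with $(N_0/\varpi^n N_0)_*$; both assertions hold only up to almost isomorphism on the nose. I would handle this in the spirit of the proof of Lemma \ref{Adically complete almost modules}: pass to the category of condensed almost $(V, \mathfrak{m})$-modules where the discrepancies vanish by Lemma \ref{Discrete almost modules}, use that the localization functor preserves all limits (being a right adjoint via $(-)_*$) to take the inverse limit over $n$, and deduce the desired isomorphism in $\Cond(V)$ by combining the resulting almost isomorphism with the adjunction identities already established. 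Naturality of all the ingredients then upgrades this into the commutativity of the full square of functors.
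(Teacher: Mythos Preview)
Your setup is correct: the commutativity amounts to exhibiting, for each profinite $S$, an isomorphism
\[
C(S, N_*) \;\longrightarrow\; \Hom_V\bigl(\widetilde{\mathfrak{m}},\, C(S, N_0)\bigr)
\]
of $V$-modules, naturally in $S$. The paper establishes exactly this identity, but in one stroke: since $\widetilde{\mathfrak{m}}$ is discrete (hence compactly generated Hausdorff) and $N_0$ is Hausdorff, Proposition~4.2 of \cite{Condensed} gives
\[
\underline{\Hom}_{\Cond(V)}(\widetilde{\mathfrak{m}},\underline{N_0}) \;=\; \underline{\Hom_V(\widetilde{\mathfrak{m}},N_0)},
\]
and one then only has to observe that the compact-open topology on $\Hom_V(\widetilde{\mathfrak{m}},N_0)=N_*$ agrees with the $\varpi$-adic one (the single element $\varpi\in\widetilde{\mathfrak{m}}$ already produces a cofinal system of compact-open neighbourhoods).

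Your proposed route---reduce modulo $\varpi^n$ and match $C(S,N_*/\varpi^n N_*)$ with $\Hom_V(\widetilde{\mathfrak{m}},C(S,N_0/\varpi^n N_0))$---runs into a genuine obstacle that your last paragraph does not overcome. As you note, both the identification $N_*/\varpi^n N_* \cong (N_0/\varpi^n N_0)_*$ and the commutation of $\Hom_V(\widetilde{\mathfrak{m}},-)$ with the filtered colimit $C(S,-)=\varinjlim_{S'} (-)^{S'}$ hold only up to almost isomorphism. But the statement you need is an isomorphism in $\Cond(V)$, not in $\Cond(V,\mathfrak{m})$. Passing to the almost category and then invoking ``adjunction identities'' would only work if you already knew that $\underline{N_*}$ lies in the essential image of the condensed $(-)_*$ functor, i.e.\ that the unit $\underline{N_*}\to ((\underline{N_*})^a)_*$ is an isomorphism; but that is precisely (a special case of) the lemma you are trying to prove, so the argument is circular. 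If you want to avoid citing \cite{Condensed}, the honest fix is to prove the displayed isomorphism directly before reducing mod $\varpi^n$: given $\phi\in\Hom_V(\widetilde{\mathfrak{m}},C(S,N_0))$, the induced map $f\colon S\to N_*$ is continuous because continuity of the single function $\phi(\varpi)\colon S\to N_0$ modulo $\varpi^{n+1}N_0$ already forces $f$ to be locally constant modulo $\varpi^n N_*$.
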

\begin{proof}By Proposition \ref{Adic embedding}, for any $M_{0}\in\Mod_{V}^{\wedge,\varpi-\tf}$, we have to prove that \begin{equation*}\underline{\Hom_{V}(\widetilde{\mathfrak{m}}, M_{0})}=\underline{\Hom}_{\Cond(V)}(\widetilde{\mathfrak{m}}, \underline{M_{0}})\end{equation*}in $\Cond(V)$. Recall that $\widetilde{\mathfrak{m}}$ carries the discrete topology; in particular, it is a compactly generated Hausdorff topological abelian group and $\Hom_{V}(\widetilde{\mathfrak{m}}, M_{0})$ is qual to the Hom in the category of topological abelian groups when $M_{0}$ is endowed with the $\varpi$-adic topology. Also, $M_{0}$ is $\varpi$-adically complete and, in particular, Hausdorff. Hence the assumptions of \cite{Condensed}, Proposition 4.2, are satisfied and the desired assertion follows from loc.~cit.\end{proof} 
\begin{proof}[Proof of Theorem \ref{Relation to condensed mathematics, precise version}]We obtain a categorical embedding \begin{equation*}\Ban_{A}^{\leq1}\hookrightarrow \Cond(A_{\leq1}, (\varpi^{1/p^{\infty}})_{A_{\leq1}})\end{equation*}by composing the equivalence \begin{equation*}\Ban_{A}^{\leq1}\tilde{\rightarrow}\Mod_{A_{\leq1}^{a}}^{\wedge,\varpi-\tf}, M\mapsto M_{\leq1}^{a},\end{equation*}from Theorem \ref{Seminormed modules and almost lattices} with the $\varpi$-adic embedding \begin{equation*}\Mod_{A_{\leq1}^{a}}^{\wedge,\varpi-\tf}\hookrightarrow \Cond(A_{\leq1}, (\varpi^{1/p^{\infty}})_{A_{\leq1}})\end{equation*}of Definition \ref{Adic embedding, definition}. By Proposition \ref{Adic embedding}, any object in the image of the embedding is of the form $N^{a}$, where $N$ is the condensed $A_{\leq1}$-module associated with some $\varpi$-adically complete $A_{\leq1}$-module (with its $\varpi$-adic topology). Since the condensed module associated with a $\varpi$-adically complete $V$-module, with the $\varpi$-adic topology, is solid, we conclude that any object in the image of our embedding is a solid almost $(A_{\leq1}, (\varpi^{1/p^{\infty}})_{A_{\leq1}})$-module in the sense of Definition \ref{Solid almost modules}, by Lemma \ref{Solid implies solid}. In other words, the above embedding $\Ban_{A}^{\leq1}\hookrightarrow\Cond(A_{\leq1}, (\varpi^{1/p^{\infty}})_{A_{\leq1}})$ factors through an embedding \begin{equation*}\Ban_{A}^{\leq1}\hookrightarrow \Cond_{\blacksquare}(A_{\leq1}, (\varpi^{1/p^{\infty}})_{A_{\leq1}}).\end{equation*}

It remains to prove that this embedding is compatible with the complete and solid tensor products when $A$ is perfectoid and $\Spa(A, A^{\circ})$ is totally disconnected. So, assume that $A$ (endowed with a power-multiplicative norm defining its topology), that $\varpi$ is a norm-multiplicative topologically nilpotent unit of $A$ which admits a compatible system of $p$-power roots in $A$ and that the adic spectrum $\Spa(A, A^{\circ})$ is totally disconnected. Let $M$, $N$ be two submetric Banach $A$-modules. By Lemma \ref{Flatness over totally disconnecteds}, $N_{\leq1}/\varpi^{n}N_{\leq1}$ is flat over $A^{\circ}/\varpi^{n}A^{\circ}$ for every $n\geq1$. By Lemma \ref{Torsion-free 2}, this implies that $M_{\leq1}\widehat{\otimes}_{A^{\circ}}N_{\leq1}$ is $\varpi$-torsion-free. It then follows from Corollary \ref{Unit balls and tensor products 2} that \begin{equation*}(M\widehat{\otimes}_{A}N)_{\leq1}^{a}=(M_{\leq1}\widehat{\otimes}_{A^{\circ}}N_{\leq1})^{a}.\end{equation*}By standard properties of solid condensed modules and the solid tensor product, the functor \begin{equation*}\Mod_{V}^{\wedge}\to \Cond(V), M_{0}\mapsto \underline{M_{0}},\end{equation*}which takes a $\varpi$-adically complete $A^{\circ}$-module, viewed as a topological module with the $\varpi$-adic topology, to the corresponding condensed $A^{\circ}$-module, factors through the category of solid condensed $A^{\circ}$-modules $\Cond_{\blacksquare}(A^{\circ})$ and takes the $\varpi$-adically completed tensor product to the solid tensor product. In particular, \begin{equation*}\underline{M_{\leq1}\widehat{\otimes}_{A^{\circ}}N_{\leq1}}=\underline{M_{\leq1}}\otimes_{A^{\circ}}^{\blacksquare}\underline{N_{\leq1}}.\end{equation*}Then the commutative square in Proposition \ref{Adic embedding} shows that \begin{equation}\underline{(M_{\leq1}\widehat{\otimes}_{A^{\circ}}N_{\leq1})^{a}}=(\underline{M_{\leq1}}\otimes_{A^{\circ}}^{\blacksquare}\underline{N_{\leq1}})^{a}.\end{equation}Note that $(M_{\leq1}^{a})_{\ast}=M_{\leq1}$, $(N_{\leq1}^{a})_{\ast}=N_{\leq1}$. By Lemma \ref{Adic embedding 2}, this entails \begin{equation*}(\underline{M_{\leq1}}^{a})_{\ast}=\underline{M_{\leq1}}\end{equation*}and \begin{equation*}(\underline{N_{\leq1}}^{a})_{\ast}=N_{\leq1}.\end{equation*}It follows that the right hand side of equation (2) coincides with the solid tensor product of the solid almost $(A^{\circ}, (\varpi^{1/p^{\infty}})_{A^{\circ}})$-modules $\underline{M_{\leq1}^{a}}$ and $\underline{N_{\leq1}^{a}}$. This concludes the proof of the theorem.\end{proof}

\bibliographystyle{plain} 
\bibliography{Bib}

\textsc{Department of Mathematics, University of California San Diego, La Jolla, CA 92093, United States} \newline 

E-mail address: \textsf{ddine@ucsd.edu}

\end{document}